\newenvironment{gap}
  {\color{blue}}%
  {}%
\newenvironment{nap} % nap - it's the new gap.
  {\color{red}}%
  {}%
\newtheorem{defn}{Definition}[section]
\newtheorem{lem}[defn]{Lemma}
\newtheorem{thm}[defn]{Theorem}
\newtheorem{cor}[defn]{Corollary}
\newtheorem{ex}[defn]{Example}
\newtheorem{prop}[defn]{Proposition}
\newtheorem{rem}[defn]{Remark}
\newtheorem{rems}[defn]{Remarks}
\newcommand{\noisetwo}{\noise^{\oplus 2}}
\newcommand{\Kiltwo}{\Kil^{\oplus 2}}
\newcommand{\tA}{\tilde{\mathcal{A}}}
\newcommand{\noise}{\mathsf{k}}
\newcommand{\open}{\mathcal{O}}
\newcommand{\A}{\mathcal{A}}
\newcommand{\Kil}{\mathsf{K}}
\newcommand{\kil}{\mathsf{k}}
\newcommand{\Real}{\mathbb{R}}
\newcommand{\Rplus}{\Real_+}
\newcommand{\Comp}{\mathbb{C}}
\newcommand{\Nat}{\mathbb{N}}
\newcommand{\B}{\mathcal{B}}
\newcommand{\tr}{\mathrm{Tr}}
\newcommand{\wt}{\widetilde}
\newcommand{\ol}{\overline}
\newcommand{\ot}{\otimes}
\newcommand{\opower}[1]{\ensuremath{^{\otimes #1}}}
\newcommand{\op}{\oplus}
\newcommand{\m}{\ensuremath{\mathrm{M}}}
\newcommand{\tu}{\textup}
\DeclareMathOperator{\Dom}{Dom}
\DeclareMathOperator{\Lin}{Lin}
\DeclareMathOperator{\Ker}{Ker}
\DeclareMathOperator{\re}{Re}
\DeclareMathOperator{\im}{Im}
\DeclareMathOperator{\diag}{diag}
\newcommand{\ip}[2]{\langle #1, #2 \rangle}
\newcommand{\norm}[1]{\lVert #1 \rVert}
\DeclareMathOperator*{\slim}{s-lim}
\newcommand{\en}{E$_0$}
\newcommand{\munit}{$\mu$nit~}
\newcommand{\munits}{$\mu$nits~}
\newcommand{\twoone}{II$_1$~}
\newcommand{\munitset}[1]{\ensuremath{\mathcal{U}_{#1,#1'}}}
\newcommand{\Ind}{\ensuremath{\mathrm{Ind}}}
\newcommand{\factor}{\mathrm{M}}
\renewcommand{\H}{\ensuremath{\mathcal{H}}}
\newcommand{\n}{\ensuremath{\mathrm{N}}}
\newcommand{\F}{\ensuremath{\mathfrak{F}}}
\newcommand{\semiflowalg}{\ensuremath{\mathcal{A}}}
\newcommand{\E}{\ensuremath{\mathcal{E}}}
\newcommand{\Iset}{\ensuremath{\mathcal{I}}}
\newcommand{\Jset}{\ensuremath{\mathcal{J}}}
\def\r{\mathcal{R}}
\newenvironment{rlist}
{

\begin{enumerate}}
{\end{enumerate}}
\title[Non-cocycle-conjugate \en-semigroups on factors]{Non-cocycle-conjugate \en-semigroups on factors}
\author[O. Margetts]{Oliver T. Margetts}
\address{Department of Mathematics and Statistics,
 Fylde College,
Lancaster University, Lancaster LA1 4YF, U.K.}
\email{o.margetts@lancaster.ac.uk}
\author[R. Srinivasan]{R. Srinivasan}
\address{Chennai Mathematical Institute, H1, SIPCOT IT Park, Kelambakkam, Siruseri 603103, India.}
\email{vasanth@cmi.ac.in}
\subjclass[2010]{Primary  46L53; Secondary 46L40, 46L55}
 \keywords{*-endomorphism, E$_0$-semigroup, CAR algebra, CCR algebra, quasifree state, super product system, type III factor, type II$_\infty$ factor}
\begin{document}

\begin{abstract}
We investigate \en-semigroups on general factors, which are not  necessarily of type I, and analyse associated invariants like product systems, super product systems etc. By tensoring \en-semigroups on type I factors with \en-semigroups on type II$_1$ factor, we produce several families (both countable and uncountable), consisting of mutually non-cocycle-conjugate of \en-semigroups on the hyperfinite II$_\infty$ factor. Using CCR representations associated with quasi-free states,  we construct for the first time, uncountable families consisting of mutually non-cocycle-conjugate \en-semigroups on all type III$_\lambda$ factors, for $\lambda \in (0,1]$. 
%They are not cocycle conjugate to the \en-semigroups constructed using CAR representations. 
\end{abstract}

\maketitle

\section{Introduction}\label{intro}
\en-semigroups are semigroups of normal unital $*-$endomorphisms on a von Neumann algebra, which are $\sigma$-weakly continuous. They arise naturally in the study of open quantum systems, the theory of interactions,  algebraic quantum field theory, and in quantum stochastic calculus. The study of \en-semigroups lead to the study of interesting objects like product systems, super product systems, $C^*-$semiflows as its associated invariants.

For \en-semigroups on type I factors the subject has grown rapidly since its inception in \cite{Pow Pow}. We refer to the monograph \cite{Arveson book} for an extensive treatment regarding the theory of \en-semigroups on type I factors. Arveson showed that \en-semigroups on type I factors are completely classified by continuous tensor products of Hilbert spaces, called \emph{product systems}. This gives a rough division of \en-semigroups into three types, namely I, II and III. The type I \en-semigroups on type I factors are cocycle conjugate to the CCR flows (\cite{Arveson book}), but there are uncountably many exotic \en-semigroups of types II and III ( \cite{pdct}, \cite{T1}, \cite{genccr}, \cite{toepcar} \cite{lieb}) on type I factors. 

There has been relatively little progress regarding the study of \en-semigroups on type II$_1$ factors, after it was initiated in the 1988 paper \cite{Pow Pow}.  In \cite{alevras} Alexis Alevras introduced an index using Powers' boundary representation (\cite{Pow Pow}), and computed the index for several important cases. Still, this did not classify even the simplest examples of \en-semigroups on the hyperfinite II$_1$ factor called Clifford flows, since it is yet not proved that the Powers-Alevras index is a cocycle conjugacy invariant. The problem of non-cocycle-conjugacy for Clifford flows is resolved in \cite{MS}, even though it is still open to prove that the boundary representation is invariant under cocycle conjugacy.
 
 In \cite{MS} four new cocycle conjugacy invariants for $E_0$-semigroups on II$_1$ factors, namely a coupling index, a dimension for the gauge group, a \emph{super product system} and a $C^*$-semiflow were introduced, and computed for standard examples.  Using the $C^*$-semiflow and the boundary representation of Powers and Alevras, it was shown that the families of Clifford flows and even Clifford flows contain mutually non-cocycle-conjugate \en-semigroups. 
 
 On the other hand there is nearly no work done regarding \en-semigroups on type II$_\infty$ factors and type III factors. There is lot of work done in the frame work of product system of Hilbert modules introduced by Michael Skeide, with contributions from people like B.V.R. Bhat. But this theory of product system of Hilbert modules doesn't seem to be helpful in distinguishing the concrete examples of \en-semigroups we deal in this paper. In this paper for the first time we produce uncountable families containing mutually non-cocycle-conjugate \en-semigroups on the hyperfinite type II$_\infty$ factor and on all type III$_\lambda$ factors for $\lambda \in (0,1]$.
 
 This paper is structured as follows. In Section 2 we fix our notations and  give the basic definitions of  \en-semigroup, and notions of  cocycle conjugacy, units and the gauge group. We recall the definitions of important families of \en-semigroups namely CCR flows, generalized CCR flows, Toeplitz CAR flows on type I factors, and  Clifford flows, even Clifford flows on the hyperfinite II$_1$ factor. We also recall some important results regarding these families.
 
In Section 3, we generalize the definition of coupling index to \en-semigroups  on general factors, which was initially defined for \en-semigroups on type II$_1$ factors. After proving it is well-defined,  we clarify its relationship to the Powers-Arveson index for \en-semigroups on type I factor. 

In Section 4, we generalize the main result of \cite{alevras} for \en-semigroups on II$_1$ factors to \en-semigroups on any general factor, the association of product systems of Hilbert modules as a complete invariant. We use the frame of von Neumann modules, introduced in \cite{BMSS}, which is proved to be equivalent to the framework of \cite{skeide}. 

In Section 5, we associate a super product system to \en-semigroups  on general factors, which was initially defined for \en-semigroups on type II$_1$ factors, and show that this association is invariant under cocycle conjugacy. We also prove that the super product system of tensor product of \en-semigroups is the tensor product of the super product systems of the corresponding \en-semigroups, a fact which has been already proved for the case when the factor is type I or type II$_1$. 

In Section 6, we produce \en-semigroups on type II$_\infty$ factors by tensoring \en-semigroups on type I factor with \en-semigroups on type II$_1$ factors, and study the problem of non-cocycle-conjugacy. We prove that  a tensor product of a CCR flow of index $m$ with a Clifford flow (or with an even Clifford flow) of index $n$ is cocycle conjugate to another tensor product of a CCR flow of index $p$ with a Clifford flow (or with an even Clifford flow) of index $q$ if and only if $(m,n)=(p,q)$. Then we produce uncountable families of non-cocycle-conjugate \en-semigroups on the type II$_\infty$ factor by fixing either a Clifford flow or an even Clifford flow on the hyperfinite II$_1$ factor and tensoring with many  families  containing mutually non-cocycle-conjugate  type III \en-semigroups on type I$_\infty$ factor.

In Section 7, we analyze the \en-semigroups on type III factors, constructed using CCR representations associated with a quasi-free state corresponding to a complex linear positive operator $A\geq 1$, such that $A-1$ is injective. Since it is given by a Toeplitz operator, we call them as Toeplitz CCR flows on type III factors. We show that these Toeplitz CCR flows are equi-modular with respect to the invariant vacuum state (as defined in \cite{BISS}) if and only if the quasi-free state is given by an  operator of the form $A=1\ot R$ on $L^2(0,\infty) \ot \kil$. In this simplest case, we refer to these Toeplitz CCR flows as just CCR flows on type III factors given by $R$. We prove that these CCR flows are canonically extendable (which was defined as extendable in \cite{BISS}), and they canonically extend to CCR flows (on type I factors) of index equal to the rank of $R$. From this it follows that CCR flows associated with operators of the form $A=1\ot R$, with $R$ having different ranks, are not cocycle conjugate.

In Section 8, we further analyze the CCR flows given by positive operators of fixed rank. We prove that two such CCR flows are cocycle-conjugate if and only if they are unitarily equivalent. This in consequence produce uncountably many mutually non-cocycle-conjugate \en-semigroups on all type III$_\lambda$ factors for $\lambda \in (0,1]$. 

 \en-semigroups can also be constructed on type III factors using the CAR representations. 
 %When the CAR flow is given by a quasi-free state of the form $A=1\ot R$ with $0\leq R\leq 1$, these \en-semigroups are equi-modular with respect to the invariant vacuum state. 
But it can be proven that they are not canonically extendable (see \cite{Bk}). Since canonical extendability is a property invariant under cocycle conjugacy it follows that  none of the CAR flows are cocycle conjugate to the canonically extendable CCR flows on type III factors. 

At present the definition of `types' for \en-semigroups on general factors is not very clear. For instance the following reasons contribute to the confusions. If we define type I condition as the property of the product system of Hilbert bimodules being generated by its units (in the strong topology) as a bimodule, then every \en-semigroup on a type II$_1$ factor will satisfy that condition. A different definition of `types' for \en-semigroups on II$_1$ factor is given in \cite{MS}. But that definition is also not a satisfactory one, since every \en-semigroup on a II$_1$ factor will satisfy that definition. This follows from Lemma 8.3, in \cite{MS}, after some minor computations. On the other hand, the traditional notion of spatiality for \en-semigroups on type I factor (as defined by Arveson-Powers) means it is multi-spatial. The type III (non-spatial) examples like generalized CCR flows and Toeplitz CAR flows indeed admit a unit in their product system of Hilbert modules.  So we can not define spatiality for \en-semigroups on general factors as the property of just admitting a unit in the product system of Hilbert modules, if it has to be consistent with the existing definitions for \en-semigroups on type I factors. 

But whatever be the definition of type I for \en-semigroups on general factors, the CCR flows on type III factors (given by quasi-free states associated with operators of the form $I_{L^2(\Rplus)} \ot R$) are among the simplest kind of \en-semigroups, with associated super product systems as type I (Arveson) product systems.

\section{Preliminaries}\label{prelims}
 In this section we cover some of the necessary theory needed to study \en-semigroups on factors, and also fix our notations.
  
{\bf Notations:} $\Nat$ denotes the set of natural numbers, and we set $\Nat_0=\Nat\cup \{0\},$ $\overline{\Nat}=\Nat\cup \{\infty\}$. For any real Hilbert space $G$, we denote the complexification of $G$  by $G^\Comp$. Throughout this paper, we use the symbol $\kil$ to denote a separable real Hilbert space with $dim(\kil)\in \overline{\Nat}$, except in Sections \ref{CCR section} and \ref{CCR section2}. In Sections \ref{CCR section} and \ref{CCR section2},  $\kil$ is a complex Hilbert space, which is mentioned there. For any measurable subset $S\subseteq \Real$, $L^2(S, \kil)$ is the Hilbert space of square integrable functions on  $S$ taking values in $\kil$.

$H$ will always represent a complex Hilbert space and by $\ol{H}$ we denote the dual space anti-isomorphic to $H$. The inner product is always conjugate linear in the first variable and linear in the second variable.  For $E \subseteq B(H)$, we shall write $\left[E\right]$ for the closure of the linear subspace of $B(H)$ spanned by
$E$, in the weak operator topology. Similarly, if $S \subset H$ is a subset of vectors, we shall
write $\left[S\right]$ for the norm-closed subspace of $H$ spanned
by $S$.   By $\Lin(S)$we denote the linear subspace spanned by the set $S$, without taking the closure. 
%We call a subspace  $E\subseteq B(H)$ as a $*-$subspace if it is $*-$closed, that is $S^*=S$. 
For subspaces $E,F \subseteq B(H)$, $$EF=\{xy: x\in E, y\in F\};~E^*=\{x^*: x\in E\}.$$ For $E\subseteq B(H)$, $F\subseteq B(K)$, $E\ot F=\left[\{x\ot y: x\in E, y\in F\}\right].$ 

A subset $\open\subseteq [0,a]$ is an elementary set, if $\open=\cup_{n=1}^N(s_n, t_n)$ a finite disjoint union of open intervals. We assume $s_{n+1}>t_n$. By $\open^c$ we mean the interior of the complement in $[0,a]$. For a Borel set $E\subseteq \Real$, $|E|$ denotes the Lebesgue measure of $E$.

For von Neumann algebras $\m$ and $\n$, we denote by $\m\vee \n$ the von Neumann algebra generated by $\m$ and $\n$. For von Neumann algebras $\m_1, \m_2, \n_1, \n_2$ the following relation  holds; $$\left( \m_1 \ot \n_1\right)\vee \left(\m_2\ot \n_2\right) =\left(\m_1\vee \m_2\right)\ot \left(\n_1\vee \n_2\right);$$ By taking commutants we get the dual version $$\left( \m_1 \ot \n_1\right)\cap \left(\m_2\ot \n_2\right) =\left(\m_1\cap \m_2\right)\ot \left(\n_1\cap \n_2\right).$$  We call this as the distributive property of the tensors.

\bigskip

\begin{defn}
An \en-semigroup on a von Neumann algebra $\m$ is a semigroup $\{\alpha_t: t\geq0\}$ of normal, unital *-endomorphisms of $\m$ satisfying
\begin{itemize}
 \item[(i)] $\alpha_0=id$,
 \item[(ii)] $\alpha_t(\factor)\neq\factor$ for all $t>0$,
 \item[(iii)] $t\mapsto \rho(\alpha_t(x))$ is continuous for all $x\in\factor$, $\rho\in\factor_*$.
\end{itemize}
\end{defn}

\begin{defn} A cocycle for an \en-semigroup $\alpha$ on $\m$ is a strongly continuous family of unitaries $\{U_t: t\geq 0\}\subseteq \m$ satisfying $U_s\alpha_s(U_t)=U_{s+t}$ for all $s,t\geq0$.
\end{defn}

For a cocycle $\{U_t: t\geq 0\}$, we automatically have $U_0=1$. Furthermore the family of endomorphisms $\alpha_t^U(x):=U_t\alpha_t(x)U_t^*$ defines an \en-semigroup. This leads to the following equivalence relations on \en-semigroups.

\begin{defn}\label{conjugacy def} Let $\alpha$ and $\beta$ be \en-semigroups on von Neumann algebras $\m$ and $\n$. 
 \begin{itemize}
  \item[(i)] $\alpha$ and $\beta$ are \emph{conjugate} if there exists a *-isomorphism $\theta:\m\to\n$ such that $\beta_t=\theta\circ\alpha_t\circ\theta^{-1}$ for all $t\geq0$.
  \item[(ii)] $\alpha$ and $\beta$ are \emph{cocycle conjugate} if there exists a cocycle $\{U_t: t\geq 0\}$ for $\alpha$ such that $\beta$ is conjugate to $\alpha^U$. 
 \end{itemize}
\end{defn}

%Skeide has pointed out REFERENCE????? that, for $C^*$-algebras, the isomorphisms implementing cocycle conjugacy may not be spatial, which can lead to technical difficulties. However, for von Neumann algebras, we have the following Lemma.
 
 Two \en-semigroups $\alpha$ and $\beta$, acting on $\m\subseteq B(H_1)$ and $ \n\subseteq B(H_2)$ respectively, are said to be spatially conjugate if there exists a unitary $U:H_1\mapsto H_2$ satisfying
\begin{itemize}
 \item[(i)] $U\m U^*=\n$,
 \item[(ii)] $\beta_t(x)=U\alpha_t(U^*xU)U^*$ for all $t\geq0$, $x\in\n$,
\end{itemize}
 
We say a von Neumann algebra $\m$ is in standard form if $\m\subseteq B(H)$ has a cyclic and separating vector $\Omega \in H$, called as the vacuum vector. 
Without loss of generality we can assume that an \en-semigroup is acting on a von Neumann algebra in a standard form, thanks to the following lemma.

 \begin{lem}\label{spatial conjugacy lemma}
 Let $\alpha$ be an \en-semigroup on the von Neumann algebra $\m$. Then $\alpha$ is conjugate to an \en-semigroup $\beta$ on a von Neumann algebra $\n$ in standard form. Moreover $\beta$ is unique up to spatial conjugacy. 
 \end{lem}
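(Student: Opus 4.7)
The plan is to reduce both existence and uniqueness to the classical theory of standard forms of von Neumann algebras (Haagerup's theorem).

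\textbf{Existence.} First I would invoke the fact that every von Neumann algebra $\m$ admits a faithful normal representation $\theta:\m\to\n\subseteq B(H)$ such that $\n$ acts on $H$ with a cyclic and separating vector $\Omega$; this is the standard form and exists for every von Neumann algebra. Setting $\beta_t:=\theta\circ\alpha_t\circ\theta^{-1}$ for $t\geq 0$, I would check the axioms of an \en-semigroup. The semigroup property and $\beta_0=\id$ are immediate. Normality and unitality of each $\beta_t$ follow because $\theta$ and $\theta^{-1}$ are *-isomorphisms of von Neumann algebras, and every such isomorphism is automatically normal. The non-triviality condition $\beta_t(\n)\neq\n$ transfers directly from $\alpha_t(\m)\neq\m$ via $\theta$. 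The $\sigma$-weak continuity of $t\mapsto\rho(\beta_t(x))$ for $\rho\in\n_*$, $x\in\n$, follows from the corresponding property of $\alpha$ after noting that $\rho\circ\theta\in\m_*$ and $\theta^{-1}(x)\in\m$. By construction $\alpha$ and $\beta$ are conjugate in the sense of Definition~\ref{conjugacy def}(i).

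\textbf{Uniqueness.} Suppose $\beta^{(1)}$ on $\n_1\subseteq B(H_1)$ and $\beta^{(2)}$ on $\n_2\subseteq B(H_2)$ are both \en-semigroups in standard form, each conjugate to $\alpha$ via *-isomorphisms $\theta_1:\m\to\n_1$ and $\theta_2:\m\to\n_2$. Then $\phi:=\theta_2\circ\theta_1^{-1}:\n_1\to\n_2$ is a *-isomorphism intertwining $\beta^{(1)}$ and $\beta^{(2)}$, that is,
\[
\phi\circ\beta^{(1)}_t=\beta^{(2)}_t\circ\phi\qquad(t\geq 0).
\]
Here I would apply Haagerup's uniqueness theorem for standard forms: any *-isomorphism between two von Neumann algebras each in standard form is spatially implemented by a (essentially unique) unitary $U:H_1\to H_2$, i.e., $\phi(x)=UxU^*$ for all $x\in\n_1$, and $U\n_1 U^*=\n_2$. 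Substituting this into the intertwining relation gives
\[
\beta^{(2)}_t(y)=U\beta^{(1)}_t(U^*yU)U^*\qquad(y\in\n_2,\ t\geq 0),
\]
which is precisely the definition of spatial conjugacy.

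\textbf{Main obstacle.} The only non-routine ingredient is the existence and uniqueness of the standard form, and in particular the fact that an abstract *-isomorphism between two standard-form representations is implemented by a unitary. Everything else is a direct transfer of the \en-semigroup axioms along these isomorphisms, using the automatic normality of *-isomorphisms of von Neumann algebras. One minor verification I would want to record is that the spatially implementing unitary $U$ has $U\Omega_1=\Omega_2$ is not needed here — we only need $\Ad U$ to carry $\n_1$ onto $\n_2$ and conjugate the semigroups — so the appeal to Haagerup's theorem can be made in its simplest form.
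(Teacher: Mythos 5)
Your proof is correct and follows the same overall strategy as the paper: existence via a faithful normal representation with cyclic separating vector, uniqueness via spatiality of *-isomorphisms between standard forms. The difference is purely in which theorem you cite to get the implementing unitary. You invoke Haagerup's standard-form uniqueness theorem as a black box; the paper instead works one step lower, appealing to Araki's 1974 result that a faithful normal state on a standardly-represented von Neumann algebra is implemented by a cyclic and separating vector in the same Hilbert space, and then constructs the unitary $U$ explicitly by a GNS-type formula $U(\pi_\varphi(x)\Omega_\Phi):=\Phi(x)\Omega_\psi$. The two routes prove the same spatiality statement, and neither needs the cone-preserving or modular-conjugation-intertwining refinements of Haagerup's theorem (or the fact that the constructed $U$ sends $\Omega_\Phi$ to $\Omega_\psi$), as you correctly observe. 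The paper's version has the minor advantage of making the unitary visible, which it re-uses in the proof of Proposition~\ref{doubly intertwining unitary shizzle}, but as a proof of the lemma as stated your argument is complete.
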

  
 \begin{proof}
Pick a faithful normal state $\varphi$ and let $(\pi_\varphi,H_\varphi,\Omega_\varphi)$ be the corresponding GNS triple. Then the \en-semigroup $\beta=\pi_\varphi\circ\alpha\circ\pi_\varphi^{-1}$ on $\pi_\varphi(\m)$ will suffice. Let $\n\cong\m$ be a von Neumann algebra acting standardly on $H_\psi$ with cyclic and separating vector $\Omega_\psi$ and corresponding state $\psi$. If $\gamma$ is an \en-semigroup on $\n$, conjugate to $\alpha$ via $\Phi:\m\to\n$, then we obtain a faithful normal state $\psi\circ\Phi\circ\pi_\varphi^{-1}$ on $\pi_\varphi(\m)$. By \cite{Araki modular} there exists a cyclic and separating vector $\Omega_\Phi$ in $H_\varphi$ implementing $\psi\circ\Phi\circ\pi_\varphi^{-1}$ and hence a unitary $U:H_\varphi\to H_\psi$ defined by extension of $U(\pi_\varphi(x)\Omega_\Phi):=\Phi(x)\Omega_\psi$ for all $x\in\m$. It follows by definition that $\Phi\circ\pi_\varphi^{-1}=Ad_U$ intertwines $\beta$ and $\gamma$.
 \end{proof}

  Thus, the problem of classifying \en-semigroups up to conjugacy reduces to classifying \en-semigroups on von Neumann algebras in standard form up to spatial conjugacy. In what follows we will assume that all our \en-semigroups acts on  von Neumann algebras in standard form, which we say  \en-semigroups acting standardly. This allows us to use the following result of Araki.
  
 \begin{thm}\cite{Araki modular} \label{Araki's modular theorem}
    Let $\m$ be a von Neumann algebra with cyclic and separating vectors $\Omega_1$ and $\Omega_2$. If $J_1$ and $J_2$ are the corresponding modular conjugations then the *-automorphism $Ad_{J_1J_2}|\m\to\m$ is inner.
 \end{thm}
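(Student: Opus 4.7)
The plan is to reduce the problem to Araki's theory of natural positive cones. For a cyclic and separating vector $\Omega$ of $\m$ with modular conjugation $J_\Omega$, the natural cone
$$P^\natural_\Omega := \overline{\{xJ_\Omega x\Omega : x\in\m\}}$$
has two properties I would take as input from Araki: (i) every faithful normal state on $\m$ admits a unique vector representative in $P^\natural_\Omega$; (ii) any cyclic and separating vector $\xi\in P^\natural_\Omega$ satisfies $J_\xi = J_\Omega$. These are the deep part; everything else is a short polar-decomposition calculation.

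First I would form $P^\natural_{\Omega_1}$ and let $\xi \in P^\natural_{\Omega_1}$ be the unique representative of the faithful normal state $\omega(x) := \langle\Omega_2, x\Omega_2\rangle$ on $\m$. Since $\xi$ is cyclic and separating and lies in $P^\natural_{\Omega_1}$, property (ii) gives $J_\xi = J_1$. Because $\xi$ and $\Omega_2$ implement the same state on $\m$ with $\Omega_2$ cyclic and separating, there is a unitary $u' \in \m'$ with $\xi = u'\Omega_2$. To compare $J_\xi$ with $J_2$ I would use the Tomita operator: for $x\in\m$, since $u'$ commutes with $x$,
$$S_\xi(xu'\Omega_2) = x^* u' \Omega_2 = u' S_2(x\Omega_2),$$
so $S_\xi = u' S_2 u'^*$ after closure. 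Taking $\Delta = S^*S$ then gives $\Delta_\xi = u'\Delta_2 u'^*$, and polar decomposition yields $J_\xi = u' J_2 u'^*$.

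Combining the two identities for $J_\xi$ gives $J_1 = u' J_2 u'^*$, hence
$$J_1 J_2 = u' J_2 u'^* J_2 = u'v, \qquad v := J_2 u'^* J_2.$$
Because $J_2\m'J_2 = \m$, the element $v$ is a unitary lying in $\m$; and because $u' \in \m'$ commutes with every element of $\m$, for $x\in\m$ one has
$$Ad_{J_1J_2}(x) = u' v x v^{-1} u'^{-1} = v x v^{-1} = Ad_v(x),$$
showing $Ad_{J_1J_2}|_\m$ is inner. The genuine obstacle is not this three-line computation but the Araki inputs (i) and (ii)—the self-duality of the natural cone and the rigidity of the modular conjugation on it—which constitute the substantive content of Araki's paper; once those are granted, the remainder is formal.
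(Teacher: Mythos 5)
Your proof is correct, and it follows essentially the same route as Araki's original argument in the cited reference: the paper states this theorem with only a citation to Araki's 1974 paper, and that paper is precisely where the natural positive cone is introduced and used to prove that the modular conjugations of any two cyclic separating vectors are conjugate by a unitary in $\m'$, from which the innerness of $Ad_{J_1J_2}|_\m$ follows by the same factorization $J_1J_2 = u'v$ with $u'\in\m'$, $v\in\m$ that you carry out. Your reduction to the two cone properties --- existence and uniqueness of the vector representative in $P^\natural_{\Omega_1}$, and rigidity $J_\xi = J_{\Omega_1}$ for cyclic separating $\xi$ in the cone --- correctly isolates the substantive content, and the polar-decomposition computation $S_\xi = u'S_2 u'^*$ giving $J_\xi = u'J_2u'^*$ is the standard way to compare modular data of vectors that differ by a unitary in the commutant. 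One small point worth making explicit (you implicitly use it): the Araki representative $\xi$ of the faithful normal state $\omega_{\Omega_2}$ is automatically separating because $\omega_{\Omega_2}$ is faithful, and a separating vector in the natural cone is also cyclic, so $\xi$ is indeed cyclic and separating and property (ii) applies; likewise, cyclicity of both $\xi$ and $\Omega_2$ is what upgrades the GNS partial isometry $u'\in\m'$ to a unitary.
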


 \begin{defn} Let $\alpha$ be an \en-semigroup acting standardly on $\m \subseteq B(H)$ with vacuum vector $\Omega$. A unit for $\alpha$ is a strongly continuous semigroup $T=\{T_t:t \geq 0\}$ of operators in $B(H)$ such that $T_0=1$ and $T_tx=\alpha_t(x)T_t$ for all $t\geq0$, $x\in\factor$. Denote the collection of units by $\mathcal{U}_\alpha$.  \end{defn}

 It will follow from Theorem \ref{prod sys complete invariant} that the collection of units is an invariant for an \en-semigroup. When the vacuum state is invariant under $\alpha$, that is $\ip{\alpha_t(m)\Omega}{\Omega}=\ip{m\Omega}{\Omega}$ for all $t\geq 0$, $m\in \m$, there exists a unit $S_t$, which is the semigroup of isometries determined by $S_t x \Omega:=\alpha_t(x) \Omega$.  We call $\{S_t:t\geq 0\}$ the \emph{canonical $\Omega-$unit} associated to $\alpha$. When $\m$ is a II$_1$ factor, the trace is an invariant state and the associated canonical unit is an invariant under conjugacy. 
 %But in general canonical unit need not be invariant under conjugacy.

 A gauge cocycle for $\alpha$ is a cocycle $\{U_t: t\geq 0\}$, which satisfies the locality  condition $U_t\in \alpha_t(\m)' \cap \m$ for all $t\geq 0$. Under the multiplication $(UV)_t:=U_tV_t$, the collection of all gauge cocycles forms a group, denoted by $G(\alpha)$, called the gauge group of $\alpha$. $G(\alpha)$ is an invariant of $\alpha$ under cocycle  conjugacy. 

\begin{lem}\label{ONB}
Let $\alpha$ be an \en-semigroup on a factor $\m\subseteq B(H)$ in standard form. Then there exists a family  of isometries $\{U_i(t):i \in \Iset\}\subseteq B(H)$ satisfying
 \begin{itemize}
  \item[(i)] $\sum_{i\in \Iset} U_i(t)U_i(t)^*=1,$ 
  \item[(ii)] $\alpha_t(x) = \sum_{i\in \Iset} U_i(t)x U_i(t)^*$ for all $x \in \m$,
 \end{itemize} where the convergence in (i) and (ii) is in $\sigma-$weak topology. When $\m$ is a type III factor the indexing set $\Iset$ is singleton and otherwise $\Iset=\mathbb{N}$
\end{lem}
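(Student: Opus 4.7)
The strategy is to realise the $U_i(t)$ as intertwiners between the identity representation of $\m$ on $H$ and the representation $x\mapsto\alpha_t(x)$. To this end I would introduce the intertwiner space
\[
\mathcal{E}_t := \{T\in B(H): Tx = \alpha_t(x) T \text{ for all } x\in \m\}.
\]
A short computation using $\alpha_t(\m)\subseteq \m$ shows that for $S,T\in\mathcal{E}_t$ one has $S^*T\in\m'$ and $ST^*\in\alpha_t(\m)'$, and since $\m'\subseteq\alpha_t(\m)'$ the algebra $\m'$ also acts on $\mathcal{E}_t$ by left multiplication. Thus $\mathcal{E}_t$ is naturally a right $W^*$-module over $\m'$ with inner product $\langle S,T\rangle_{\m'}:=S^*T$.

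The main body of the proof is a Zorn's-lemma argument. Consider families $\{U_i\}\subseteq\mathcal{E}_t$ of isometries with $U_i^*U_j=\delta_{ij}\cdot 1$, and pick a maximal such family $\{U_i(t)\}_{i\in\Iset}$; separability of $H$ forces $\Iset$ to be at most countable, and one takes $\Iset=\mathbb{N}$. The range projections $U_i(t)U_i(t)^*$ are pairwise orthogonal projections in $\alpha_t(\m)'$, so the $\sigma$-weak sum $P:=\sum_i U_i(t)U_i(t)^*$ is again a projection in $\alpha_t(\m)'$. If $Q:=1-P$ were nonzero, then $\rho(x):=\alpha_t(x)Q$ would define a nonzero normal unital representation of $\m$ on $QH$; standardness of $\m\subseteq B(H)$, combined with the representation theory of factors, would yield an isometry $V\in B(H)$ with $Vx=\alpha_t(x)V$, $V^*V=1$ and $VV^*\leq Q$, contradicting maximality of $\{U_i(t)\}$. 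Hence $P=1$, and
\[
\alpha_t(x) \,=\, \alpha_t(x)\sum_i U_i(t)U_i(t)^* \,=\, \sum_i \alpha_t(x)U_i(t)\,U_i(t)^* \,=\, \sum_i U_i(t)\, x\, U_i(t)^*,
\]
the last equality using $\alpha_t(x)U_i(t)=U_i(t)x$.

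For $\m$ of type III, the commutant $\alpha_t(\m)'$ (which contains $\m'$, itself a type III factor) has the property that every nonzero projection is Murray--von Neumann equivalent to $1$. Given any isometry $U\in\mathcal{E}_t$ produced above, choose a partial isometry $w\in\alpha_t(\m)'$ with $w^*w=UU^*$ and $ww^*=1$; then $\tilde U:=wU$ lies in $\mathcal{E}_t$, is unitary, and satisfies $\alpha_t(x)=\tilde U x \tilde U^*$, so that the family can be taken to be the singleton $\{\tilde U\}$. The delicate step is producing the full isometry $V$ in the maximality argument: for type III and properly infinite factors it is essentially immediate from the unitary equivalence of all nonzero normal representations on a separable Hilbert space, but for $\m$ of type II$_1$ in standard form the identity representation has coupling $1$, and here one must exploit the cyclic separating vector provided by standardness (via the spatial/coupling theory of normal representations of factors) to ensure that $QH$ supports a subrepresentation unitarily equivalent to the identity rep of $\m$. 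This representation-theoretic step is the main obstacle.
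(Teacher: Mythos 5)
Your Zorn's-lemma argument on the intertwiner space $\mathcal{E}_t$ is a genuinely different route from the paper, which cites Arveson (Prop.\ 2.1.1) for type I and Alevras (Prop.\ 3.2) for type II$_1$, argues type II$_\infty$ similarly, and gives a one-line module-uniqueness argument for type III. But the step you flag as ``the main obstacle'' is a genuine, unresolved gap, and it is exactly where the proof breaks. For $\m$ of type II$_1$ in standard form, a maximal family of mutually orthogonal isometries in $\mathcal{E}_t$ need not satisfy $\sum_i U_i(t)U_i(t)^*=1$: the residual representation $x\mapsto\alpha_t(x)|_{QH}$ of $\m$ on $QH$ can have coupling constant strictly between $0$ and $1$, in which case it is nonzero but has no subrepresentation equivalent to the identity representation, so maximality is not contradicted. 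Concretely, since $\rho_t\colon x\mapsto\alpha_t(x)$ has coupling $[\m:\alpha_t(\m)]=\infty$, one can decompose $H$ under $\rho_t$ as $\aleph_0$ orthogonal copies of $L^2(\m,\tau)$ together with a residual piece of coupling $1/2$; using only the $\aleph_0$ copies as the $U_i$ yields a maximal family with $Q\neq0$. The same difficulty appears for type I$_\infty$ and II$_\infty$ factors acting standardly, where the identity representation already has infinite multiplicity, and it also obstructs getting started: showing the family of orthogonal isometric intertwiners is nonempty already requires knowing that $\rho_t$ contains a copy of the identity representation.

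The fix is to bypass Zorn's lemma and use the classification of normal representations of factors directly. One shows $\rho_t$ has infinite coupling (for II$_1$ this is $[\m:\alpha_t(\m)]=\infty$ for $t>0$, which follows from the multiplicativity $[\m:\alpha_{s+t}(\m)]=[\m:\alpha_s(\m)]\,[\m:\alpha_t(\m)]$ combined with Jones' rigidity of the index below $4$), so that $\rho_t$ is unitarily equivalent to the $\aleph_0$-fold amplification of the identity representation of $\m$, and the $U_i(t)$ with $\sum_iU_i(t)U_i(t)^*=1$ are read off directly from this equivalence. That coupling computation is in effect the content of the cited results of Arveson and Alevras. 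Your type III reduction, upgrading the isometry to a unitary by projection equivalence in the type III factor $\alpha_t(\m)'$, is correct and is a reasonable alternative to the paper's appeal to uniqueness of separable nonzero modules over a type III factor.
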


\begin{proof} We refer to proposition 2.1.1,
\cite{Arveson book} when $\m$ is type I factor, and proposition 3.2, \cite{alevras} when $\m$ is type II$_1$ factor. Proof of the case when $\m$ is a type II$_\infty$ factor is similar to the case of type II$_1$. 
When $\m$ is a type III factor, $H$ can be considered as a left module over $\m$ with respect to the identity action and also with $x\cdot \xi=\alpha_t(x)\xi$ for $\xi \in  H$.  Since a separable non-zero module over a type III factor is unique up to isomorphism, the existence of a  unitary $U_t\in B(H)$ satisfying $\alpha_t(x)= U_tx U_t^*$ is guaranteed. 
\end{proof} 

\begin{rems}
It is not clear whether we can choose the family of unitaries $(U_t)_{t\geq 0}$, describing  \en-semigroups on type III factor in the above Lemma, satisfying semigroup property $U_{s+t}=U_s U_t$ for all $s,t \geq 0$. \end{rems}

\begin{prop}
Let $\alpha$ and $\beta$ be two \en-semigroups on factors $\m_1$ and $\m_2$ respectively.Then there exists a unique \en-semigroup $\alpha\otimes \beta$ on $\m_1 \ot \m_2$ satisfying $$ (\alpha_t\otimes \beta_t) (m_1 \otimes m_2) = \alpha_t(m_1) \ot \beta_t(m_2)~~\forall m_1 \in \m_1, m_2 \in \m_2, t \geq 0.$$
\end{prop}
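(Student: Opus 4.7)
The plan is to build $\alpha_t \otimes \beta_t$ pointwise in $t$ as the tensor product of normal unital $*$-endomorphisms, verify the three E$_0$-semigroup axioms, and read off uniqueness from normality together with the $\sigma$-weak density of elementary tensors.

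\textbf{Pointwise construction and uniqueness.} For each fixed $t \geq 0$, the maps $\alpha_t$ and $\beta_t$ are normal unital $*$-endomorphisms of $\m_1$ and $\m_2$. By the standard universal property of the von Neumann (spatial) tensor product applied to the pair $(\alpha_t, \beta_t)$, there exists a unique normal unital $*$-endomorphism $\phi_t: \m_1 \ot \m_2 \to \m_1 \ot \m_2$ satisfying $\phi_t(m_1 \ot m_2) = \alpha_t(m_1) \ot \beta_t(m_2)$. Uniqueness here already gives the uniqueness asserted in the statement, since any candidate $\alpha \ot \beta$ satisfying the given formula must agree with $\phi_t$ on simple tensors and hence, by normality, on all of $\m_1 \ot \m_2$.

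\textbf{Verifying the E$_0$-axioms.} The identity $\phi_0 = \id$ is immediate. For the semigroup property, both $\phi_s \circ \phi_t$ and $\phi_{s+t}$ are normal and agree on simple tensors via the semigroup laws for $\alpha$ and $\beta$, so they coincide. For properness at $t > 0$, normality of $\phi_t$ gives $\phi_t(\m_1 \ot \m_2) = \alpha_t(\m_1) \ot \beta_t(\m_2) \subseteq \alpha_t(\m_1) \ot \m_2$; slicing with a normal state $\rho$ on $\m_2$ would force $\alpha_t(\m_1) \ot \m_2 = \m_1 \ot \m_2$ to imply $\alpha_t(\m_1) = \m_1$, contradicting the E$_0$-hypothesis on $\alpha$. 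Hence $\phi_t(\m_1 \ot \m_2) \subsetneq \m_1 \ot \m_2$.

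\textbf{$\sigma$-weak continuity.} This is the real content. I would fix $x \in \m_1 \ot \m_2$ and $\rho \in (\m_1 \ot \m_2)_*$ and show that $t \mapsto \rho(\phi_t(x))$ is continuous at each $t_0 \geq 0$. Since the linear span of product functionals $\rho_1 \ot \rho_2$ is norm-dense in $(\m_1 \ot \m_2)_*$ and $\|\phi_t\| \leq 1$, a uniform approximation reduces the problem to $\rho = \rho_1 \ot \rho_2$. For such $\rho$ and for $x = a \ot b$ the continuity is immediate from that of $t \mapsto \rho_1(\alpha_t(a))$ and $t \mapsto \rho_2(\beta_t(b))$. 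For general $x$ in the von Neumann tensor product, I would use that the net $\{\phi_t(x)\}_{t \in [0,T]}$ is norm-bounded by $\|x\|$ and hence $\sigma$-weakly relatively compact: it suffices to identify each subnet limit as $\phi_{t_0}(x)$, which is achieved by testing against enough product functionals, using the slice-map identity $(\rho_1 \ot \rho_2) \circ \phi_t = (\rho_1 \circ \alpha_t) \ot (\rho_2 \circ \beta_t)$ and the individual $\sigma$-weak continuity of $\alpha$ and $\beta$.

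\textbf{Expected obstacle.} The only genuinely nontrivial point is the last one: extending $\sigma$-weak continuity from the algebraic tensor product to the full von Neumann tensor product, since the algebraic tensor product is only $\sigma$-weakly and not norm dense. The compactness/slice-map argument sketched above handles this in a standard way, and after it is in place the remaining verifications are entirely formal consequences of normality.
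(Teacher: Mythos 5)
Your construction via the universal property of the von Neumann tensor product for normal unital $*$-homomorphisms is a genuinely different route from the paper's. The paper simply chooses, for each fixed $t$, families of implementing isometries $\{U_i(t)\}$, $\{V_j(t)\}$ for $\alpha_t$ and $\beta_t$ (Lemma \ref{ONB}) and takes $\phi_t(x) = \sum_{i,j}(U_i(t)\ot V_j(t))x(U_i(t)\ot V_j(t))^*$; your pointwise construction, uniqueness claim, semigroup law, and properness-via-slice-maps are all fine, and your argument is cleaner for the uniqueness claim. You are also right that $\sigma$-weak continuity is the only genuinely non-formal point, and it is notable that the paper's one-line proof does not address it at all (the implementing isometries of Lemma \ref{ONB} are chosen separately for each $t$, with no continuity claimed).

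There is, however, a real gap in your continuity sketch. After reducing to product functionals by density, you need $t\mapsto\bigl((\rho_1\circ\alpha_t)\ot(\rho_2\circ\beta_t)\bigr)(x)$ to be continuous for a fixed but arbitrary $x\in\m_1\ot\m_2$. The ``individual $\sigma$-weak continuity of $\alpha$ and $\beta$'' only gives you that $t\mapsto\rho_1\circ\alpha_t$ is $\sigma((\m_1)_*,\m_1)$-continuous, and that is not enough: for $x$ outside the algebraic tensor product, the bilinear pairing $(\sigma,\tau)\mapsto(\sigma\ot\tau)(x)$ is \emph{not} jointly continuous in the weak-$*$ topologies on norm-bounded sets. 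For instance, with $\m_1=\m_2=B(\ell^2)$, take $\sigma_n=\tau_n=\omega_{e_n}$ and $x$ the flip unitary on $\ell^2\ot\ell^2$; then $\sigma_n,\tau_n\to 0$ weak-$*$ but $(\sigma_n\ot\tau_n)(x)=1$ for every $n$. So the subnet-compactness argument as you have written it circles back to the very continuity statement you are trying to prove: to identify the subnet limit with $\phi_{t_0}(x)$ you already need $(\rho_1\ot\rho_2)(\phi_{t_{n_k}}(x))\to(\rho_1\ot\rho_2)(\phi_{t_0}(x))$.

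The repair is to upgrade from $\sigma$-weak to \emph{norm} continuity of $t\mapsto\rho\circ\alpha_t$ in $(\m_1)_*$. This is automatic for any E$_0$-semigroup: the pre-adjoint semigroup $t\mapsto(\alpha_t)_*$ is a weakly continuous one-parameter semigroup of contractions on the Banach space $(\m_1)_*$, and weak continuity of such a semigroup implies strong (norm) continuity; see \cite{Arveson book}, Proposition 2.3.1 and Section 2.3 (which the paper itself invokes in the proof of Proposition \ref{samepdct}). Once one has $\lVert\rho_1\circ\alpha_t-\rho_1\circ\alpha_{t_0}\rVert\to 0$ and likewise for $\beta$, the estimate
$$\bigl|\bigl((\rho_1\circ\alpha_t)\ot(\rho_2\circ\beta_t)\bigr)(x)-\bigl((\rho_1\circ\alpha_{t_0})\ot(\rho_2\circ\beta_{t_0})\bigr)(x)\bigr|\le\bigl(\lVert\rho_1\circ\alpha_t-\rho_1\circ\alpha_{t_0}\rVert\,\lVert\rho_2\rVert+\lVert\rho_1\rVert\,\lVert\rho_2\circ\beta_t-\rho_2\circ\beta_{t_0}\rVert\bigr)\lVert x\rVert$$
closes the argument immediately, and the compactness detour becomes unnecessary.
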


\begin{proof} 
For each $t \geq 0$, thanks to Lemma \ref{ONB}, choose isometries $\{U_i(t): i \in \Iset\}$ and $\{V_j(t): j \in \Jset\}  $, satisfying (i) and (ii) for $\alpha$ and $\beta$ respectively. Now the endomorphism $\alpha_t \otimes \beta_t$ is implemented by the family of isometries $\{U_i(t) \otimes V_j(t): i\in \Iset, j \in \Jset\}$. 
\end{proof}

We end this section by defining the basic examples of \en-semigroups on the type I$_\infty$ and hyperfinite II$_1$ factors. We recall the definitions of exotic type III examples on type I factors, and ask the reader to see relevant references for more details.
%should probably add references here

For a complex separable Hilbert space $K$, let $\Gamma_s(K):=\bigoplus_{n=0}^\infty{K^{\vee n}}$ be the the symmetric Fock space over $K$, i.e. the sum of symmetric tensor powers of $K$, and define the exponential vectors $\varepsilon(u):=\oplus_{n=0}^\infty{\frac{u\opower{n}}{\sqrt{n!}}}$, for each $u\in K$, and the vacuum vector is $\varepsilon(0)$. The exponential vectors are linearly independent and total in $\Gamma_s(K)$. The well-known isomorphism between $\Gamma_s(K_1)\ot\Gamma_s(K_2)\to \Gamma_s(K_1\op K_2)$, is given by the extension of $\varepsilon(u)\ot\varepsilon(v)\mapsto \varepsilon(u+v)$. Define the Weyl operator by
 $$W_0(u)\varepsilon(v):=e^{-\frac{1}{2}\norm{u}^2-\ip{u}{v}}\varepsilon(u+v) \qquad (u,v\in K),$$ which extends to a unitary operator on $\Gamma_s(K)$. $\{W_0(u):u \in K\}$ satisfies the well-known Weyl commutation relations
  $$W_0(u)W_0(v) = e^{-i\im \ip{u}{v} }W_0(u+v)~~ \forall u,v \in K.$$  For a unitary operator $U$ between 
$K_1$ and $K_2$, define the second quantisation $\Gamma(U)$ by $$ \Gamma(U)(\varepsilon(u))= \varepsilon(Uu)~u\in K,$$ which again extends to a unitary operator between $\Gamma_s(K_1)$ and $\Gamma_s(K_2)$.  We can also define the second quantisation for anti-unitaries as well in the same way, first by defining on exponential vectors but then extending anti-linearly.

Let $\kil$ be a real Hilbert space. 
%(We denote by $\kil$ a real Hilbert space and by $\kil^\Comp$ the complexification of $\kil$, so that to be consistent in notation everywhere.) 
Let $K= L^2((0,\infty),\kil^\Comp)$ denote the square integrable functions taking values in $\kil^\Comp$. Throughout this paper we denote by $(T_t)_{t\geq 0}$ the right shift semigroup on $L^2((0,\infty),\kil^\Comp)$ (or its restriction to $L^2((0,\infty),\kil)$) defined by 
\begin{eqnarray*}(T_tf)(s) & = & 0, \quad s<t,\\
& = & f(s-t), \quad s \geq t,
\end{eqnarray*} 
for $f \in K$.
The \textit{CCR flow} of index $\dim \kil$ is the \en-semigroup $\theta=\{\theta_t:t\geq 0\}$ acting on $B(\Gamma_s(L^2((0,\infty),\kil^\Comp)))$ 
defined by the extension of $$\theta_t(W_0(f)):=W_0(S_tf), ~~ f \in L^2((0,\infty),\kil^\Comp).$$ The CCR flow of index $n$ is cocycle conjugate to the CCR flow of index $m$ if and only if $m=n$ (see \cite{Arveson book}).

Generalised CCR flows are defined in \cite{genccr} as follows. Let $\{T^1_t\}$ and $\{T^2_t\}$ be two $C_0$-semigroups acting on a real Hilbert space $G$.
We say that $\{T^1_t\}$ is a perturbation of $\{T^2_t\}$, if they satisfy,
\begin{enumerate}
\item[(i)] ${T^1_t}^*T^2_t =1.$
\item[(ii)] $T^1_t -T^2_t$ is a Hilbert Schmidt operator.
\end{enumerate}
\medskip
Given a perturbation $\{T^1_t\}$ of $\{T^2_t\}$, there exists a unique \en-semigroup $\theta=\{\theta_t: t\geq 0\}$ on 
$B(\Gamma_s(G^\Comp))$ defined and extended by 
$$\alpha_t(W_0(x+iy))=W_0(T^1_tx+iT^2_ty),\quad x,y\in G.$$ $\theta$
is called as the \textit{generalised CCR flow} associated with the pair $(\{T^1_t\},\{T^2_t\})$. 

Toeplitz CAR flows are introduced in \cite{toepcar}. Let $K$ be a complex Hilbert space.  
We denote by $\A(K)$ the CAR algebra over $K$, which is the universal $C^*$-algebra generated by 
$\{a(x): x \in K\}$, where $x \mapsto a(x)$ is an antilinear map satisfying the CAR relations:  
\begin{eqnarray*}
a(x)a(y) +a(y)a(x)&  = & 0, \\
a(x)a(y)^* +a(y)^*a(x) & = & \ip{x}{y}1, 
\end{eqnarray*} for all $x,y \in K$. 
Since $\A(K)$ is known to be simple, and any set of operators satisfying the CAR relations generates a $C^*$-algebra 
canonically isomorphic to $\A(K)$. 
%For any state $\varphi$ of $\A(K)$, there exists a unique positive contraction $A\in B(K)$ satisfying 
%$\varphi(a(f)a(g)^*)=\inpr{Af}{g}$ for $\forall f,g\in K$. 
%We call $A$ the covariance operator (or 2-point function) of $\varphi$. 
The \textit{quasi-free state} $\omega_A$ on $\A(K)$, associated with a positive contraction $A \in B(K)$, 
is the state determined by its $2n$-point function as  
$$\omega_A(a(x_n) \cdots a(x_1)a(y_1)^* \cdots a(y_m)^* ) = \delta_{n,m} \det (\langle x_i, Ay_j\rangle) ,$$ 
where $\det(\cdot)$ denotes the determinant of a matrix (see Chapter 13, \cite{Arveson book}).
Given a positive contraction, it is a fact that such a state always exists and is uniquely determined by the above relation. 
%This is usually called the gauge invariant quasi-free state (or generalised free state). 
%Since we will be dealing only with gauge invariant quasi-free states, we just call them as quasi-free states. 
We denote by $(H_A, \pi_A, \Omega_A)$ the GNS triple associated with 
a quasi-free state $\omega_A$ on $\A(K)$, and set $\m_A:=\pi_A(\A(K))''$. 
%We call $\pi_A$ the \textit{quasi-free representation} associated with $A$. 

Now let $K= L^2((0,\infty),\kil^\Comp)$ and $A\in B(K)$ be a positive contraction satisfying $\tr(A-A^2)<\infty$ and  $T_t^*AT_t=A$ for all $t$. Then $\m_A$ is a type I factor and there exists a unique \en-semigroups $\theta=\{\theta_t:t\geq 0\}$ on  $\m_A$, determined by 
$$\theta_t(\pi_A(a(f)))=\pi_A(a(T_tf)),\quad \forall f\in K.$$
$\theta$ is called the \textit{Toeplitz CAR flow} associated with $A$ (see Chapter 13, \cite{Arveson book}).

Next we recall the examples of \en-semigroups on hyperfinite type II$_1$ factors (see \cite{Pow Pow}, \cite{alevras} and \cite{MS} for discussions on these examples). For a real Hilbert space $K$, let
$\Gamma_a(K^\Comp):=\bigoplus_{n=0}^\infty{(K^\Comp)^{\wedge n}}$ be the the antisymmetric Fock space over $K^\Comp$, i.e. the sum of antisymmetric tensor powers of $K$.
For any $f\in K^\Comp$ the Fermionic creation operator $a^*(f)$ is the bounded operator defined by the linear extension of
$$a^*(f)\xi=\left\{\begin{array}{ll} f & \hbox{if}~\xi=\Omega, \\ f\wedge \xi &  \hbox{if}~ \xi\perp\Omega, \end{array}\right.$$ where $\Omega$ is the vacuum vector ($1$ in the 0-particle space $\Comp$), and 
$f\wedge\xi$ is the antisymmetric tensor product. The annihilation operator is defined by $a(f)=a^*(f)^*$.
The unital $C^*$-algebra $Cl(K)$ generated by the self-adjoint elements $$\{u(f)=(a(f)+a^*(f))/\sqrt{2}:~f\in K\}$$ is the Clifford algebra over $K$. The vacuum $\Omega$ is cyclic and defines a tracial state for $Cl(K)$, so the weak completion yields a II$_1$ factor; in fact it is the hyperfinite II$_1$ factor $\mathcal{R}$.

Now if $K=L^2((0,\infty), \kil)$, where $\kil$ is a separable real Hilbert space with dimension $n\in \overline{\Nat}$ as mentioned before, then there exists a unique  \en-semigroup on $\mathcal{R}$ by extension of $$\alpha^n_t(u(f_1)\cdots u(f_k))=u(T_tf_1)\cdots u(T_tf_k), ~~ f_1\cdots f_k \in K,$$ called the \textit{Clifford flow} of rank $n$.
The von Neumann algebra generated by the even products $$\r_e=\{u(f_1)u(f_2)\cdots u(f_{2n}): f_i \in L^2((0,\infty), \kil),~n \in \Nat\}$$ is also isomorphic to the hyperfinite II$_1$ factor. The restriction of the Clifford flow $\alpha^n$ of rank $n$ to this subfactor is called as the even Clifford flow of rank $n$. In \cite{alevras}, an index for \en-semigroups on II$_1$ factors is defined, and it is shown that the index of a Clifford flow (or an even Clifford flow) equals to its rank.

In \cite{MS}, it was shown that a Clifford flow (respectively an even Clifford flow) of rank $n$ is cocycle conjugate to a Clifford flow (respectively an even Clifford flow) of rank $m$ if and only if $m=n$. This was proven by using a theory of $C^*-$semiflows, which are defined as follows.
Let $\alpha$ be an \en-semigroup on a II$_1$ factor. For each $t\geq0$ let $\semiflowalg_\alpha(t):=\alpha_t(\m)'\cap\m$. These algebras form an increasing filtration. Define the inductive limit $C^*$-algebra $\mathcal{A}_\alpha:=\overline{\bigcup_{t\geq0} \mathcal{A}_\alpha(t)}^{\norm{\cdot}}$, together with a semigroup of *-endomorphisms $\alpha|_{\semiflowalg_\alpha}$. This is called the $C^*$-semiflow corresponding to $\alpha$. Since this is a subalgebra of $\m$ there is a canonical trace on $\semiflowalg_\alpha$ which we denote by $\tau_{\alpha}$. Two cocycle conjugate \en-semigroups have isomorphic (in the obvious sense of the word) $\tau$-semiflows (see \cite{MS}, Section 9).  We will be using the following fact in section \ref{IIinfty}. Again see \cite{MS} for the details of the proof.

\begin{prop}\label{semiflow}
Any two Clifford flows (or even Clifford flows) are cocycle conjugate if and only if they are conjugate if and only if they have isomorphic $\tau$-semiflows if and only if they have the same index. 
\end{prop}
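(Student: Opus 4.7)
The plan is to establish the four-way equivalence by a short cycle of implications. Label the conditions (A) cocycle conjugate, (B) conjugate, (C) isomorphic $\tau$-semiflows, (D) same index. The implication (B) $\Rightarrow$ (A) is trivial with the constant cocycle $U_t = 1$, and (A) $\Rightarrow$ (C) is the general fact about the $\tau$-semiflow stated in the paragraph preceding the proposition (cocycle conjugate \en-semigroups have isomorphic $\tau$-semiflows). So the real content is (D) $\Rightarrow$ (B) and (C) $\Rightarrow$ (D).

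For (D) $\Rightarrow$ (B): Let $\alpha^n$ and $\beta^n$ be Clifford flows (or even Clifford flows) of the same rank $n$, built from real Hilbert spaces $\kil_1,\kil_2$ of equal dimension $n$. Choose any real unitary $u:\kil_1\to\kil_2$ and let $U:=\mathrm{id}_{L^2(0,\infty)}\ot u$ on $L^2((0,\infty),\kil_1)\to L^2((0,\infty),\kil_2)$; this $U$ commutes with the right-shift semigroup. Its Bogoliubov/second-quantisation lift implements a trace-preserving $*$-isomorphism of the Clifford $C^*$-algebras, which extends to a normal $*$-isomorphism $\Phi$ of the ambient hyperfinite II$_1$ factors (restricting to the even subalgebras in the ``even'' case) and satisfies $\Phi\circ\alpha^n_t=\beta^n_t\circ\Phi$ on the generators $u(f)$, hence on all of $\mathcal{R}$. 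This gives conjugacy.

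For (C) $\Rightarrow$ (D): Here is the crux. One computes the $\tau$-semiflow of a Clifford flow of rank $n$ explicitly: $\semiflowalg_{\alpha^n}(t)=\alpha_t^n(\mathcal{R})'\cap \mathcal{R}$ is the von Neumann algebra generated by $\{u(f):f\in L^2((0,t),\kil)\}$, a hyperfinite II$_1$ subfactor; the inductive-limit $C^*$-algebra together with the trace and endomorphism semigroup carries all of this local structure. One then has to extract the rank $n$ as an invariant of the abstract triple $(\semiflowalg_\alpha,\alpha|_{\semiflowalg_\alpha},\tau_\alpha)$. The most direct route is to show that the Powers--Alevras index of $\alpha$ can be read off from this semiflow (for example via the dimension of the $\tau$-orthogonal complement of $\alpha_t(\semiflowalg_\alpha(s))$ inside $\semiflowalg_\alpha(s+t)$, or via a suitable multiplicity of the Powers boundary representation restricted to $\semiflowalg_\alpha$), and then invoke the known computation from \cite{alevras} that this index equals the rank $n$ for every Clifford (or even Clifford) flow.

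The main obstacle is this last step: producing a dimensional invariant of the $\tau$-semiflow which is provably both (i) cocycle-conjugacy invariant, and (ii) equal to the rank $n$ in the concrete Clifford case. Once this invariant is in hand, the chain (D) $\Rightarrow$ (B) $\Rightarrow$ (A) $\Rightarrow$ (C) $\Rightarrow$ (D) closes and the proposition follows; all the technical details of this computation are carried out in \cite{MS}, to which we defer.
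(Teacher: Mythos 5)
Your cycle $(D)\Rightarrow(B)\Rightarrow(A)\Rightarrow(C)\Rightarrow(D)$ is a correct organization of the equivalence, and both you and the paper ultimately rest the hard implication $(C)\Rightarrow(D)$ (extracting the rank from the $\tau$-semiflow) on the computation in \cite{MS}; the paper in fact offers no proof at all beyond that citation, so your sketch is a faithful expansion of the intended argument. The explicit Bogoliubov conjugacy for $(D)\Rightarrow(B)$ is a small but welcome concrete detail the paper leaves implicit.
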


 \section{The Coupling Index}

In this section we extend the definition of the coupling index from \cite{MS} to \en-semigroups on an arbitrary factor, and show that it is a cocycle conjugacy invariant. Let $\alpha$ be an \en-semigroup on a factor $\m$ with cyclic and separating vector $\Omega$ and let $J_\Omega$ be the modular conjugation associated to the vector $\Omega$ by Tomita-Takesaki theory. We can define a complementary \en-semigroup on $\m'$ by setting
 $$\alpha^{J_\Omega}_t(x')=J_\Omega\alpha_t(J_\Omega x'J_\Omega)J_\Omega\qquad (x'\in\m').$$
 When the context is clear (i.e. for fixed $\Omega$) we sometimes denote $\alpha^{J_\Omega}$ simply by $\alpha^\Omega$, or $\alpha'$.

 \begin{prop}\label{doubly intertwining unitary shizzle}
 Let $\m$ and $\n$ be von Neumann algebras acting standardly with respective cyclic and separating vectors $\Omega_1\in H_1$, $\Omega_2\in H_2$. If the \en-semigroups $\alpha$ on $\m$, and $\beta$ on $\n$ are cocycle conjugate, then $\alpha^{J_{\Omega_1}}$ and $\beta^{J_{\Omega_2}}$ are cocycle conjugate. Moreover, if $\alpha$ and $\beta$ are conjugate, then $\alpha^{J_{\Omega_1}}$ and $\beta^{J_{\Omega_2}}$ are spatially conjugate and the implementing unitary can be chosen so that it also intertwines $\alpha$ and $\beta$.
 \end{prop}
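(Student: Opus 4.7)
The plan is to prove the conjugacy (``moreover'') clause first, and then deduce the cocycle-conjugacy clause by a short cocycle-transport argument.

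\textbf{Conjugacy case.} Suppose $\Phi:\m\to\n$ is a $*$-isomorphism with $\beta_t\circ\Phi = \Phi\circ\alpha_t$. Following the argument in the proof of Lemma \ref{spatial conjugacy lemma}, I would invoke Theorem \ref{Araki's modular theorem} (or rather the underlying standard-form theory) to produce a cyclic and separating vector $\Omega'\in H_1$ that implements the faithful normal state $\omega_2\circ\Phi$, where $\omega_2 = \ip{\cdot\,\Omega_2}{\Omega_2}$, and that is chosen in the natural positive cone of $\Omega_1$, so that $J_{\Omega'}=J_{\Omega_1}$. The unitary $U:H_1\to H_2$ defined by extension of $x\Omega'\mapsto\Phi(x)\Omega_2$ for $x\in\m$ then satisfies $U\m U^* = \n$ and $UxU^* = \Phi(x)$, so $U$ spatially conjugates $\alpha$ to $\beta$. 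A short check on the dense subspace $\Phi(\m)\Omega_2$ shows that $UJ_{\Omega_1}U^*$ sends $\Phi(x)\Omega_2 = U(x\Omega')$ to $\Phi(x)^*\Omega_2$, which is the action of $J_{\Omega_2}$; hence $UJ_{\Omega_1}U^* = J_{\Omega_2}$. Combining this identity with $U\m'U^* = \n'$ and the intertwining of $\alpha,\beta$, substitution into the definition of the complementary semigroups yields $U\alpha^{J_{\Omega_1}}_t(y')U^* = \beta^{J_{\Omega_2}}_t(Uy'U^*)$ for $y'\in\m'$.

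\textbf{Cocycle-conjugacy case.} By definition there is a cocycle $\{U_t:t\geq 0\}$ for $\alpha$ such that $\alpha^U$ is conjugate to $\beta$. Set $V_t := J_{\Omega_1}U_tJ_{\Omega_1}$, which lies in $J_{\Omega_1}\m J_{\Omega_1} = \m'$ and is strongly continuous. Using $J_{\Omega_1}^2 = 1$, a direct calculation (in which $J_{\Omega_1}V_tJ_{\Omega_1}$ collapses to $U_t$) shows both that $V$ is a cocycle for $\alpha^{J_{\Omega_1}}$ and that $(\alpha^U)^{J_{\Omega_1}} = (\alpha^{J_{\Omega_1}})^V$. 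Hence $\alpha^{J_{\Omega_1}}$ is cocycle conjugate to $(\alpha^U)^{J_{\Omega_1}}$, and the conjugacy case already proved gives that $(\alpha^U)^{J_{\Omega_1}}$ is spatially conjugate to $\beta^{J_{\Omega_2}}$; chaining these yields the required cocycle conjugacy.

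\textbf{Main obstacle.} The delicate point is producing a spatial implementer of $\Phi$ that \emph{simultaneously} intertwines $J_{\Omega_1}$ and $J_{\Omega_2}$. An arbitrary unitary implementing $\Phi$, arising from some cyclic separating vector realising $\omega_2\circ\Phi$ chosen without regard to the natural cone, would only intertwine the modular conjugations up to an inner automorphism in the sense of Theorem \ref{Araki's modular theorem}, which is not enough to transport the complementary semigroup. Fixing $\Omega'$ in the natural positive cone of $\Omega_1$ removes precisely this ambiguity, and this refinement of the proof of Lemma \ref{spatial conjugacy lemma} is what makes the strategy succeed.
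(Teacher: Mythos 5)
Your argument is correct, and the conjugacy clause takes a genuinely different and cleaner route than the paper's. The paper picks the implementing vector $\Omega_\theta\in H_1$ for $\omega_2\circ\Phi$ without regard to the natural cone; the resulting $U$ (defined by $x\Omega_\theta\mapsto\Phi(x)\Omega_2$) then satisfies $UJ_{\Omega_\theta}=J_{\Omega_2}U$ but not the desired relation with $J_{\Omega_1}$, so the paper invokes Theorem \ref{Araki's modular theorem} to obtain $V\in\m'$ with $J_{\Omega_\theta}J_{\Omega_1}xJ_{\Omega_1}J_{\Omega_\theta}=VxV^*$ for $x\in\m'$, and takes $UV$ as the implementer. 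You instead choose $\Omega'$ in the natural positive cone of $\Omega_1$, which forces $J_{\Omega'}=J_{\Omega_1}$ outright, so the correction factor $V$ disappears and $U$ alone does everything. Both routes ultimately rest on the modular theory of \cite{Araki modular}, but yours is the more economical, and your ``main obstacle'' paragraph correctly diagnoses the source of the extra work in the paper's version. The cocycle-conjugacy clause is essentially the same calculation as the paper's (set $V_t=J_{\Omega_1}U_tJ_{\Omega_1}\in\m'$ and verify the cocycle identity), differing only cosmetically in that you reduce to the conjugacy case via the identity $(\alpha^U)^{J_{\Omega_1}}=(\alpha^{J_{\Omega_1}})^V$, whereas the paper first normalises to $\m=\n$, $\Omega_1=\Omega_2$ and checks $\beta^{J_\Omega}_t(m')=V_t\alpha^{J_\Omega}_t(m')V_t^*$ directly.

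One small slip in the writeup of the ``short check'': you assert that $J_{\Omega_2}$ sends $\Phi(x)\Omega_2$ to $\Phi(x)^*\Omega_2$, but that is the action of the closed Tomita operator $S_{\Omega_2}=J_{\Omega_2}\Delta_{\Omega_2}^{1/2}$, not of $J_{\Omega_2}$ alone (the two agree only when $\Delta_{\Omega_2}=1$). The claim $UJ_{\Omega'}U^*=J_{\Omega_2}$ is nevertheless correct, and is proved by the nearby argument you presumably intended: from $Ux\Omega'=\Phi(x)\Omega_2$ one gets $US_{\Omega'}x\Omega'=Ux^*\Omega'=\Phi(x)^*\Omega_2=S_{\Omega_2}\Phi(x)\Omega_2$, hence $US_{\Omega'}U^*=S_{\Omega_2}$ on the core $\Phi(\m)\Omega_2$, and uniqueness of the polar decomposition of the closure gives $UJ_{\Omega'}U^*=J_{\Omega_2}$ and $U\Delta_{\Omega'}^{1/2}U^*=\Delta_{\Omega_2}^{1/2}$. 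With this repair the proposal is complete.
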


\begin{proof}
 If $\alpha$ is conjugate to $\beta$ via the isomorphism $\theta$, then by Lemma \ref{spatial conjugacy lemma} there is a unitary $U:H_1\to H_2$ implementing the conjugacy and a cyclic separating vector $\Omega_\theta\in H_1$ with $Ux\Omega_\theta=\theta(x)\Omega_2$ for all $x\in\m$. It is clear that $UJ_{\Omega_\theta}=J_{\Omega_2}U$ and hence
 $$\beta^{J_{\Omega_2}}_t(x)=J_{\Omega_2}U\alpha_t(U^*J_{\Omega_2}xJ_{\Omega_2}U)U^*J_{\Omega_2}=UJ_{\Omega_\theta}\alpha_t(J_{\Omega_\theta}U^*xUJ_{\Omega_\theta})J_{\Omega_\theta}U^*$$
 for all $x\in\n'$. It follows from Theorem \ref{Araki's modular theorem} that the *-isomorphism $\m'\to\m'$, $x\mapsto J_{\Omega_\theta}J_{\Omega_1}xJ_{\Omega_1}J_{\Omega_\theta}$ is inner, so let $V\in\m'$ be the implementing unitary. Then the right hand side becomes
 $$UV J_{\Omega_1}\alpha_t(J_{\Omega_1}V^*U^*xUV J_{\Omega_1})J_{\Omega_1}V^*U^*=UV\alpha^{J_{\Omega_1}}_t((UV)^*xUV)(UV)^*.$$
 So $\alpha^{J_{\Omega_1}}$ and $\beta^{J_{\Omega_2}}$ are spatially conjugate and, since $V\in\m'$, we also have
 $$UV\alpha_t((UV)^*xUV)(UV)^*=U\alpha_t(U^*xU)U^*=\beta_t(x)$$
 for all $x\in\n$.
 
 For cocycle conjugacy we may assume that $\m=\n$, $\Omega=\Omega_1=\Omega_2$ and $\{U_t:t\geq0\}$ is an $\alpha-$cocycle such that $\beta_t(\cdot)=U_t\alpha_t(\cdot)U_t^*$. For any $t \geq 0$, let $V_t=J_\Omega U_tJ_\Omega$, then $V_t \in \m'$ and $V_t$ satisfies $$V_{s+t}=J_\Omega U_{s+t}J_\Omega=J_\Omega U_sJ_\Omega J_\Omega\alpha_s(U_t)J_\Omega= J_\Omega U_sJ_\Omega\alpha_s^{J_\Omega}(J_\Omega U_tJ_\Omega)=V_s\alpha_s^{J_\Omega}(V_t).$$ So $\{V_t:t\geq0\}$ forms an $\alpha^{J_\Omega}$-cocycle. We also have \begin{align*}\beta_t^{J_\Omega}(m') &=J_\Omega\beta_t(J_\Omega m'J_\Omega)J_\Omega =J_\Omega U_t\alpha_t(J_\Omega m'J_\Omega)U_t^*J_\Omega \\ &=(J_\Omega U_tJ_\Omega)(J_\Omega \alpha_t(J_\Omega m'J_\Omega)J_\Omega)(J_\Omega U_t^*J_\Omega) =V_t\alpha_t^{J_\Omega}(m')V_t^*, \end{align*} for all $m'\in \m'$. \end{proof}

\begin{defn} Let $\alpha$ be an \en-semigroup on the von Neumann algebra $\m$ acting standardly on $H$ with cyclic and separating vector $\Omega$. An $\Omega$-\munit or $\Omega$-multi-unit for the \en-semigroup $\alpha$ is a strongly continuous semigroup of bounded operators $(T_t)_{t\geq0}$ in $B(H)$ satisfying
$$T_tx=\left\{\begin{array}{ll}{\alpha}_t(x)T_t & \text{if } x\in \m, \\ {\alpha}_t^{J_\Omega}(x)T_t& \text{if } x\in\m',\end{array}\right.$$ 
together with $T_0=1$. That is, an $\Omega$-multi-unit is an $\Omega$-unit for both $\alpha$ and ${\alpha}^{J_\varphi}$. Denote the collection of $\Omega$-\munits for $\alpha$ by $\munitset{\alpha}^\Omega$ (or by $\munitset{\alpha}^\varphi$, if $\varphi$ is the faithful normal state associated with $\Omega$). We say that $\alpha$ is multi-spatial, or $\mu$-spatial, if it admits a multi-unit.
\end{defn}

\begin{ex}\label{multi spatial example}
 An \en-semigroup $\alpha$ on a \twoone factor $\m$ is automatically multi-spatial. Indeed, the canonical unit with respect to the trace  
is a   \munit for $\alpha$. On the other hand a type III \en-semigroup on a type I factor is not multi-spatial, as it follows from Example \ref{B(H) example}. In Section \ref{IIinfty}, we provide examples of \en-semigroups on type II$_\infty$ factors which are not multi-spatial. 
\end{ex}

 The following proposition gives a large number of multi-spatial examples, for which \en-semigroups on \twoone factors are a special case. %The condition (ii) was called as equi-modularity in \cite{BISS}. 
 
 \begin{prop}\label{invariant state}
  Let $\alpha$ be an \en-semigroup acting standardly on a factor $\m$ with cyclic and separating vector $\Omega$, and $\varphi$ be the faithful normal state associated with $\Omega$.  Then the following are equivalent:
 \begin{rlist}
   \item $\varphi$ is an invariant state for $(\m,\alpha)$, and the corresponding canonical unit $(S_t)_{t\geq 0}$ is a $\Omega$-\munit.
   \item $\varphi$ is an invariant state for $(\m,\alpha)$, and for all $t\geq0$, the canonical unit $(S_t)_{t\geq 0}$ and modular conjugation $J$ satisfy $S_t=JS_tJ$.
   \item For all $t\geq0$, $s\in\Real$ the modular group satisfies $\alpha_t=\sigma_{-s}^{\Omega}\circ\alpha_t\circ\sigma_s^\Omega$.
  \end{rlist}
   \end{prop}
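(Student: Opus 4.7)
The plan is to establish the cycle (i) $\Rightarrow$ (ii) $\Rightarrow$ (iii) $\Rightarrow$ (i). The workhorse for (i) $\Leftrightarrow$ (ii) is the identification of $JS_tJ$ as the canonical $\Omega$-unit for $\alpha^{J_\Omega}$: for $x\in\m$, one directly verifies
$$(JS_tJ)\,JxJ\,\Omega \;=\; JS_tx\Omega \;=\; J\alpha_t(x)\Omega \;=\; \alpha^{J_\Omega}_t(JxJ)\,\Omega,$$
and density of $\m'\Omega$ together with uniqueness of $\Omega$-fixing units pins $JS_tJ$ down as that canonical unit. Under the invariance hypothesis common to (i) and (ii), the multi-unit condition on $S_t$ is precisely the requirement that $S_t$ also be an $\Omega$-unit for $\alpha^{J_\Omega}$, i.e.\ that $S_t = JS_tJ$, yielding (i) $\Leftrightarrow$ (ii).

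For (ii) $\Leftrightarrow$ (iii) (still assuming invariance), the key ingredient is the Tomita polar decomposition $S = J\Delta^{1/2}$ of the closure of $x\Omega \mapsto x^*\Omega$. A one-line calculation using that $\alpha_t$ is a $*$-morphism shows $S_t$ \emph{always} intertwines $S$, namely $S_tSx\Omega = \alpha_t(x^*)\Omega = \alpha_t(x)^*\Omega = SS_tx\Omega$. Combined with $S_tJ = JS_t$ from (ii), uniqueness of polar decomposition forces $S_t\Delta^{1/2} = \Delta^{1/2}S_t$, and the Borel functional calculus promotes this to $S_t\Delta^{is} = \Delta^{is}S_t$ for all $s\in\Real$. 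Since $\Delta^{is}\Omega = \Omega$, one computes
$$\Delta^{is}S_t\Delta^{-is}x\Omega \;=\; \bigl(\sigma^\Omega_s\circ\alpha_t\circ\sigma^\Omega_{-s}\bigr)(x)\,\Omega,$$
and separativity of $\Omega$ converts the identity $\Delta^{is}S_t\Delta^{-is} = S_t$ into (iii). The computation is reversible, so (ii) $\Leftrightarrow$ (iii).

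The main obstacle is the (iii) $\Rightarrow$ (i) direction, because $S_t$ is not even defined without the invariance of $\varphi$ asserted in (i); this invariance must be extracted from (iii) as the first step. My plan here: (iii) makes $\alpha_t(\m)$ globally $\sigma^\Omega$-invariant, so Takesaki's theorem supplies a $\varphi$-preserving conditional expectation $\m\to\alpha_t(\m)$; a direct KMS computation using (iii) shows $\varphi\circ\alpha_t$ shares the modular group $\sigma^\varphi$, so the Pedersen--Takesaki noncommutative Radon--Nikodym theorem represents $\varphi\circ\alpha_t = \varphi(h_t\cdot)$ for some positive $h_t$ in the centralizer $\m^\varphi$ with $\varphi(h_t) = 1$. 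By (iii) the endomorphism $\alpha_t$ restricts to a unital $*$-endomorphism of the finite von Neumann algebra $\m^\varphi$ carrying the tracial state $\varphi|_{\m^\varphi}$, and combining the semigroup identity $\varphi(h_{s+t}x) = \varphi(h_s\alpha_t(x))$ with the base case $h_0 = 1$ and the tracial structure on the centralizer should force $h_t = 1$. Once invariance is in hand, running the (ii) $\Leftrightarrow$ (iii) argument backwards delivers (ii) and thence (i).
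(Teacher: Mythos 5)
Your proofs of (i)$\Leftrightarrow$(ii) and (ii)$\Rightarrow$(iii) track the paper's closely and are correct: the direct computation showing $S_t m'\Omega = JS_tJ m'\Omega$, the use of $S_tSx\Omega=SS_tx\Omega$ together with the polar decomposition $S=J\Delta^{1/2}$, and the passage from $\Delta^{1/2}S_t\supseteq S_t\Delta^{1/2}$ to commutation with $\Delta^{is}$ are exactly the paper's route. (One small caveat: you say the (ii)$\Leftrightarrow$(iii) computation is ``reversible,'' but the $\Leftarrow$ direction already needs the invariance of $\varphi$ to make sense of $S_t$, so it is not literally a reversible computation -- the invariance must be extracted first, just as you note for (iii)$\Rightarrow$(i).)

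The gap is in your closure of (iii)$\Rightarrow$(i). You correctly observe that (iii) forces $\varphi\circ\alpha_t$ to satisfy the KMS condition for $\sigma^\varphi$; at that point the paper simply invokes uniqueness of the KMS state for $\sigma^\varphi$ on a factor and concludes $\varphi\circ\alpha_t=\varphi$ in one line. You instead detour through Pedersen--Takesaki, writing $\varphi\circ\alpha_t=\varphi(h_t\,\cdot)$ with $h_t\in\m^\varphi$, and then assert that the semigroup identity $\varphi(h_{s+t}x)=\varphi(h_s\alpha_t(x))$, $h_0=1$, and traciality of $\varphi|_{\m^\varphi}$ ``should force'' $h_t=1$. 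This last step does not close as written: the centralizer $\m^\varphi$ need not be a factor, so traciality alone does not determine a normalized positive element, and the semigroup identity with $x=1$ gives only $\varphi(h_{s+t})=\varphi(h_s)$, which is vacuous. The correct way to finish from where you are is the observation you already have in hand but did not use: since $\sigma^{\varphi(h_t\cdot)}_s=h_t^{is}\sigma^\varphi_s(\cdot)h_t^{-is}$ and this equals $\sigma^\varphi_s$, the unitaries $h_t^{is}$ lie in $Z(\m)=\Comp1$, so $h_t$ is a scalar and $\varphi(h_t)=1$ gives $h_t=1$. That is the factoriality step the paper's KMS-uniqueness argument encapsulates; the Takesaki conditional-expectation and centralizer-semiflow apparatus you introduce is not needed.
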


 \begin{proof}
  (i)$\Rightarrow$(ii). For all $m'\in\m'$, $t\geq0$, we have
  $$S_tm'\Omega=\alpha^J_t(m')\Omega=J\alpha_t(Jm'J)\Omega=JS_tJm'\Omega,$$
  so $S_t=JS_tJ$ for all $t\geq0$.
  
  (ii)$\Rightarrow$(i). For all $m'\in\m'$, $t\geq0$, we have
  $$S_tm'=JS_tJm'J^2=J\alpha_t(Jm'J)S_tJ=\alpha^J_t(m')S_t.$$
  
  (ii)$\Rightarrow$(iii). For all $t\geq0$, $m\in\m$,
  $$\Delta^{1/2} S_t m\Omega=\Delta^{1/2}\alpha_t(m)\Omega=J\alpha_t(m^*)\Omega=JS_tm^*\Omega=S_tJm^*\Omega=S_t\Delta^{1/2}m\Omega,$$
  so $\Delta^{1/2} S_t\supseteq S_t\Delta^{1/2}$. Thus  $$\sigma^\Omega_s\circ\alpha_t(m)\Omega=\Delta^{is}S_tm\Omega=S_t\Delta^{is}m\Omega=\alpha_t\circ\sigma^\Omega_s(m)\Omega.$$
  (See e.g. \cite{Conway} Section X.)
 
  (iii)$\Rightarrow$(ii). From the commutation relation we see that, for all $t\geq0$, the state $\varphi\circ\alpha_t$ satisfies the KMS condition for $\sigma^\Omega$. Thus, by uniqueness, $\varphi\circ\alpha_t=\varphi$ for all $t\geq0$. It also follows from the commutation relation that $\Delta^{is}S_t=S_t\Delta^{is}$ for all $s\in\Real$, thus we can infer that $\Delta^{1/2}S_t\supseteq\Delta^{1/2}S_t$ and, by the *-preserving property of $\alpha$, $JS_t=S_tJ$ for all $t\geq0$.
 \end{proof}

 For multi-spatial \en-semigroups we can introduce a numerical index. The first step towards this is the following Lemma.

 \begin{lem} Let $\m$ be a factor acting standardly and $X$ and $Y$ be $\Omega$-\munits for the \en-semigroup $\alpha$ on the factor $\m$. Then $X_t^*Y_t=e^{\lambda t}1$ for some constant $\lambda\in \Comp$.\end{lem}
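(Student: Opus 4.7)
My plan is to show $X_t^*Y_t$ lies in the center $\m \cap \m' = \mathbb{C}1$ (since $\m$ is a factor), and then identify the resulting scalar function of $t$ via continuity and the semigroup law.

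First I would verify that $X_t^*Y_t$ commutes with every $x \in \m$. Using the multi-unit intertwining relation $Y_t x = \alpha_t(x) Y_t$ and the adjoint of $X_t x^* = \alpha_t(x^*) X_t$ (which rearranges, since $\alpha_t$ is a $*$-endomorphism, to $x X_t^* = X_t^* \alpha_t(x)$), one computes
\[
(X_t^*Y_t)x = X_t^*\alpha_t(x) Y_t = x(X_t^*Y_t).
\]
The analogous computation on the commutant side, using $Y_t x' = \alpha_t^{J_\Omega}(x') Y_t$ and the corresponding rearranged adjoint relation $x' X_t^* = X_t^* \alpha_t^{J_\Omega}(x')$ for $x' \in \m'$, shows $(X_t^*Y_t)x' = x'(X_t^*Y_t)$. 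Hence $X_t^*Y_t \in \m \cap \m' = \mathbb{C}1$, so there is a function $c\colon [0,\infty) \to \mathbb{C}$ with $X_t^*Y_t = c(t)1$.

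Next I would establish that $c$ is a continuous one-parameter multiplicative function. Continuity is immediate from the strong continuity of the semigroups $X$ and $Y$, via the identity $c(t)\langle \xi,\eta\rangle = \langle X_t\xi, Y_t\eta\rangle$ for any fixed $\xi,\eta \in H$. For multiplicativity, using the semigroup property $X_{s+t} = X_s X_t$ and $Y_{s+t} = Y_s Y_t$,
\[
c(s+t)1 = X_{s+t}^* Y_{s+t} = X_t^* X_s^* Y_s Y_t = c(s) X_t^* Y_t = c(s)c(t)1,
\]
and $c(0)=1$ follows from $X_0 = Y_0 = 1$.

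Finally, the only continuous solution to $c(s+t) = c(s)c(t)$ on $[0,\infty)$ with $c(0)=1$ is $c(t) = e^{\lambda t}$ for some $\lambda \in \mathbb{C}$, completing the proof. There is no substantive obstacle here; the only delicate point is carefully taking adjoints of the intertwining relations to replace $\alpha_t$ acting on the right with $\alpha_t$ acting on the left, after which the factoriality of $\m$ does the work in one line.
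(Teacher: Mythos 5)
Your proof is correct and follows the same route as the paper's (which compresses your first two paragraphs into a single claim that $X_t^*Y_t\in(\m\cup\m')'=\Comp 1$ "by routine arguments"). The adjoint manipulations, the identification of $\m\cap\m'$ with the scalars via factoriality, and the continuity-plus-multiplicativity argument are all exactly what is intended.
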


 \begin{proof} %For any $x\in\m$ we have $$X^*_tY_tx=X_t^*\alpha_t(x)Y_t=(\alpha_t(x^*)X_t)^*Y_t=(X_tx^*)^*Y_t=xX_t^*Y_t,$$ and for any $x\in\m'$ we have $$X^*_tY_tx=X_t^*\alpha'_t(x)Y_t=(\alpha'_t(x^*)X_t)^*Y_t=(X_tx^*)^*Y_t=xX_t^*Y_t,$$
 By routine arguments, it is easy to see that $X_t^*Y_t\in(\m\cup\m')'= \Comp 1$. 
 %We note that $$X_s^*Y_sX_t^*Y_t=X_t^*X_s^*Y_sY_t=X_{s+t}^*Y_{s+t}$$ and
Further the complex valued function $f$ satisfying  $X_t^*Y_t = f(t)1$ is continuous and satisfies $f(s+t)=f(s)f(t)$. Since $f(0)=1$ we have $f(t)=e^{\lambda t}$ for some $\lambda\in\Comp$.   \end{proof}

 Thus, for a multi-spatial \en-semigroup $\alpha$, we can define a covariance function $c:\mathcal{U}_{\alpha,\alpha'}^\Omega\times\mathcal{U}_{\alpha,\alpha'}^\Omega\to\Comp$ by $X_t^*Y_t=e^{c(X,Y) t}1$ for all $t\in\Rplus$. Since the covariance function is conditionally positive definite (see Proposition 2.5.2 of \cite{Arveson book}) the assignment
 $$ \ip{f}{g}\mapsto \sum_{X,Y\in \mathcal{U}_{\alpha,\alpha'}^\Omega}{c(X,Y)\overline{f(X)}g(Y)} $$
 defines a positive semidefinite form on the space of finitely supported functions $f:\mathcal{U}^\Omega_{\alpha,\alpha'}\to\Comp$ satisfying $\sum_{X\in \mathcal{U}^\Omega_{\alpha,\alpha'}}{f(X)}=0$. Hence, if this space is nonempty, we may quotient and complete to obtain a Hilbert space $H(\mathcal{U}^\Omega_{\alpha,\alpha'})$. The following proposition shows that this space is independent of $\Omega$ and is a cocycle conjugacy invariant for $\alpha$. 

 \begin{prop}\label{coupling ind invariant}
  Let $\alpha$ and $\beta$ be cocycle conjugate \en-semigroups on respective factors $\m$ and $\n$ acting standardly with cyclic and separating vectors $\Omega_1$ and $\Omega_2$. Then there is a bijection $\munitset{\alpha}^{\Omega_1}\to\munitset{\beta}^{\Omega_2}$ which preserves the covariance function. In particular, if one \en-semigroup is multi-spatial, then so is the other, and we have $H(\mathcal{U}^{\Omega_1}_{\alpha,\alpha'})\cong H(\mathcal{U}^{\Omega_2}_{\beta,\beta'})$.
 \end{prop}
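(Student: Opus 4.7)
\emph{Proof plan.} My strategy is to reduce the abstract cocycle conjugacy to an explicit spatial form and then transport multi-units by conjugating with commuting cocycles drawn from $\m$ and $\m'$.

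First, I would carry out the reduction. Using Lemma~\ref{spatial conjugacy lemma} to implement the conjugacy spatially, and then Theorem~\ref{Araki's modular theorem} to absorb the ambiguity between the reference vector and the GNS-implementation vector into an inner perturbation of the cocycle (this is the same maneuver that appears in the proof of Proposition~\ref{doubly intertwining unitary shizzle}), we may assume without loss of generality that $\m=\n$ acts on a common $H$ with common cyclic-separating vector $\Omega=\Omega_1=\Omega_2$, and that there is an $\alpha$-cocycle $(U_t)_{t\geq 0}\subseteq\m$ with $\beta_t(\cdot)=U_t\alpha_t(\cdot)U_t^*$. The proof of Proposition~\ref{doubly intertwining unitary shizzle} then shows that $V_t:=J_\Omega U_t J_\Omega \in \m'$ is an $\alpha^{J_\Omega}$-cocycle implementing $\beta^{J_\Omega}_t(\cdot)=V_t\alpha^{J_\Omega}_t(\cdot)V_t^*$. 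Crucially $U_t$ and $V_t$ commute, since they lie in commuting algebras.

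Next I would define the transport map
$$
\Phi:\munitset{\alpha}^{\Omega_1}\to\munitset{\beta}^{\Omega_2},\qquad \Phi(X)_t := U_t V_t X_t,
$$
and verify directly that $\Phi(X)$ is a $\Omega$-multi-unit for $\beta$. For $m\in\m$ we compute
$$
\Phi(X)_t\, m \;=\; U_t V_t \alpha_t(m) X_t \;=\; U_t \alpha_t(m) V_t X_t \;=\; \beta_t(m)\, \Phi(X)_t,
$$
using $\alpha_t(m)\in\m$, $V_t\in\m'$; and for $m'\in\m'$ the parallel calculation,
$$
\Phi(X)_t\, m' \;=\; U_t V_t \alpha^{J_\Omega}_t(m') X_t \;=\; U_t \beta^{J_\Omega}_t(m') V_t X_t \;=\; \beta^{J_\Omega}_t(m')\, \Phi(X)_t,
$$
uses $\beta^{J_\Omega}_t(m')\in\m'$ commuting with $U_t$. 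The semigroup property $\Phi(X)_s\Phi(X)_t=\Phi(X)_{s+t}$ drops out by moving $U_t,V_t$ past $X_s$ via the intertwining identities $X_sU_t=\alpha_s(U_t)X_s$ and $X_sV_t=\alpha^{J_\Omega}_s(V_t)X_s$, applying the cocycle relations for $U$ and $V$, and using $[\alpha_s(U_t),V_s]=0$. Strong continuity is inherited termwise. The inverse map $Y_t\mapsto V_t^*U_t^*Y_t$ is justified by the symmetric observation that $(U_t^*)$ and $(V_t^*)$ are cocycles for $\beta$ and $\beta^{J_\Omega}$ respectively, with $\alpha_t=\ad_{U_t^*}\circ\beta_t$.

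Finally, for covariance: given $\Omega$-multi-units $X,X'$ of $\alpha$ with $X_t^*X'_t=e^{c(X,X')t}1$, the unitarity of $U_t$ and $V_t$ gives
$$
\Phi(X)_t^*\,\Phi(X')_t \;=\; X_t^*\, V_t^* U_t^*\, U_t V_t\, X'_t \;=\; X_t^* X'_t \;=\; e^{c(X,X')t}\, 1,
$$
so $c(\Phi(X),\Phi(X'))=c(X,X')$. Thus $\Phi$ is a covariance-preserving bijection, and in particular $\munitset{\alpha}^{\Omega_1}\neq\emptyset$ iff $\munitset{\beta}^{\Omega_2}\neq\emptyset$; passing to the quotient and completion yields the asserted Hilbert space isomorphism $H(\munitset{\alpha}^{\Omega_1})\cong H(\munitset{\beta}^{\Omega_2})$. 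The main obstacle is the reduction step: starting from abstract cocycle conjugacy between $E_0$-semigroups on different Hilbert spaces, we must unify the ambient data so that the implementing cocycle for the complementary $E_0$-semigroup is \emph{exactly} $J_\Omega U_t J_\Omega$, which is where the argument from Proposition~\ref{doubly intertwining unitary shizzle} must be invoked in full.
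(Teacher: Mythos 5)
Your proposal is correct and follows essentially the same route as the paper: first reduce, via Lemma \ref{spatial conjugacy lemma}, Theorem \ref{Araki's modular theorem}, and the unitary $UV$ from Proposition \ref{doubly intertwining unitary shizzle} (which intertwines $\alpha$ with $\beta$ \emph{and} $\alpha'$ with $\beta'$), to the situation $\m=\n$, $\Omega_1=\Omega_2=\Omega$, $\beta=\alpha^U$; then transport multi-units by $S\mapsto U_t J_\Omega U_t J_\Omega S_t$ — exactly the map $S\mapsto UJ_\Omega UJ_\Omega S$ the paper uses. The only difference is that you spell out the verifications (intertwining, semigroup property via $X_sU_t=\alpha_s(U_t)X_s$, continuity, covariance preservation) that the paper leaves as "one checks."
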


 \begin{proof}
  If $\alpha$ and $\beta$ are conjugate, then the unitary $UV$ constructed in the proof of Proposition \ref{doubly intertwining unitary shizzle} induces a bijection $\munitset{\alpha}^{\Omega_1}\to\munitset{\beta}^{\Omega_2}$, $S\mapsto UVS(UV)^*$, since it intertwines  both the pairs of \en-semigroups $(\alpha,\beta)$ and their associated complementary \en-semigroups $(\alpha', \beta')$. Moreover, for any $S,T\in\munitset{\alpha}^{\Omega_1}$ and $t\geq0$ we have $S_t^*T_t\in\Comp1_\m$ so
  $$e^{tc(UVS(UV)^*,UVT(UV)^*)}1_\n=UVS_t^*T_tV^*U^*=e^{tc(S,T)}1_\n.$$
  If $\m=\n$, $\Omega_1=\Omega_2=\Omega$ and $\alpha=\beta^U$, for an $\alpha$-cocycle $U=(U_t)_{t\geq0}$, then one checks that the map $S\mapsto UJ_\Omega UJ_\Omega S$ gives a bijection $\munitset{\alpha}^\Omega\to\munitset{\beta}^\Omega$, and clearly this preserves the covariance function. The proposition follows.
 \end{proof}
 
 \begin{defn} For a multi-spatial \en-semigroup $\alpha$, define the coupling index $\Ind_c(\alpha)$ as the cardinal $\dim H(\mathcal{U}^\varphi_{\alpha,\alpha'})$ for some faithful normal state $\varphi$ on $\m$.
 \end{defn}

 \begin{lem} For \en-semgiroups $\alpha$, $\beta$ on $\m$ and $\n$ respectively we have $$\Ind_c(\alpha\otimes\beta)\geq \Ind_c(\alpha)+\Ind_c(\beta).$$\end{lem}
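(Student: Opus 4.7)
The plan is to build an isometric embedding
\[
\Phi\colon H(\mathcal{U}^{\Omega_1}_{\alpha,\alpha'}) \oplus H(\mathcal{U}^{\Omega_2}_{\beta,\beta'}) \hookrightarrow H(\mathcal{U}^{\Omega_1\otimes\Omega_2}_{\alpha\otimes\beta,(\alpha\otimes\beta)'}),
\]
from which the dimensional inequality is immediate. The foundational step is the standard Tomita--Takesaki fact that, for cyclic and separating vectors $\Omega_1 \in H_1$ and $\Omega_2 \in H_2$ of $\m$ and $\n$, the vector $\Omega_1\otimes\Omega_2$ is cyclic and separating for $\m\otimes\n$ with modular conjugation $J_{\Omega_1}\otimes J_{\Omega_2}$. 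Consequently $(\alpha\otimes\beta)^{J_{\Omega_1\otimes\Omega_2}} = \alpha^{J_{\Omega_1}}\otimes\beta^{J_{\Omega_2}}$, so that for any $\Omega_1$-multi-unit $S=(S_t)$ of $\alpha$ and $\Omega_2$-multi-unit $T=(T_t)$ of $\beta$, the tensor $(S_t\otimes T_t)_{t\geq 0}$ is an $\Omega_1\otimes\Omega_2$-multi-unit of $\alpha\otimes\beta$ (the intertwining relations are verified separately on $\m\otimes\n$ and on its commutant $\m'\otimes\n'$, and extended by density).

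Next, I would compute the covariance function on such product multi-units. Since
\[
(S_t\otimes T_t)^*(S'_t\otimes T'_t) = (S_t^*S'_t)\otimes (T_t^*T'_t) = e^{(c_\alpha(S,S')+c_\beta(T,T'))t}\cdot \mathbf{1},
\]
one obtains the additive identity $c(S\otimes T, S'\otimes T') = c_\alpha(S,S') + c_\beta(T,T')$. Now fix reference multi-units $S^0\in\mathcal{U}^{\Omega_1}_{\alpha,\alpha'}$ and $T^0\in\mathcal{U}^{\Omega_2}_{\beta,\beta'}$ (they exist by hypothesis of multi-spatiality). For a finitely supported zero-sum function $f$ on $\mathcal{U}^{\Omega_1}_{\alpha,\alpha'}$, define $\tilde f$ on $\mathcal{U}^{\Omega_1\otimes\Omega_2}_{\alpha\otimes\beta,(\alpha\otimes\beta)'}$ by $\tilde f(S\otimes T^0):=f(S)$ and zero elsewhere; analogously define $\tilde g$ from zero-sum $g$ on the $\beta$-side, using $S^0$. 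These are again zero-sum, so they represent vectors in the ambient Hilbert space. Define $\Phi$ by sending $(f,g) \mapsto \tilde f + \tilde g$ on these dense subspaces.

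The key computation is then to verify that $\Phi$ is isometric on each summand and that the two images are orthogonal. For the isometry,
\[
\|\tilde f\|^2 = \sum_{S,S'}\bigl(c_\alpha(S,S') + c_\beta(T^0,T^0)\bigr)\,\overline{f(S)}f(S') = \|f\|^2 + c_\beta(T^0,T^0)\Bigl|\sum_S f(S)\Bigr|^2 = \|f\|^2,
\]
the cross term vanishing by the zero-sum condition; the analogous computation handles $\tilde g$. For orthogonality,
\[
\langle \tilde f,\tilde g\rangle = \sum_{S,T}\bigl(c_\alpha(S,S^0) + c_\beta(T^0,T)\bigr)\,\overline{f(S)}g(T) = \Bigl(\sum_S c_\alpha(S,S^0)\overline{f(S)}\Bigr)\Bigl(\sum_T g(T)\Bigr) + \Bigl(\sum_S \overline{f(S)}\Bigr)\Bigl(\sum_T c_\beta(T^0,T)g(T)\Bigr) = 0,
\]
where both factors vanish because of the respective zero-sum conditions on $f$ and $g$. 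Passing to the completions, $\Phi$ extends to an isometric embedding, yielding $\dim H(\mathcal{U}^{\Omega_1\otimes\Omega_2}_{\alpha\otimes\beta,(\alpha\otimes\beta)'})\geq \dim H(\mathcal{U}^{\Omega_1}_{\alpha,\alpha'}) + \dim H(\mathcal{U}^{\Omega_2}_{\beta,\beta'})$, i.e.\ $\Ind_c(\alpha\otimes\beta)\geq \Ind_c(\alpha)+\Ind_c(\beta)$.

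The main obstacle is essentially conceptual rather than technical: one must be confident that the modular data of the tensor product of two standard forms is the tensor of the modular data, so that the complementary E$_0$-semigroup of $\alpha\otimes\beta$ decomposes as a tensor product and the multi-unit condition really does tensor. Everything else reduces to the algebraic splitting of the covariance function combined with the zero-sum cancellations.
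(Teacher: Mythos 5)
Your proof is correct and follows essentially the same route as the paper: identify the modular conjugation of the tensor product as the tensor of the modular conjugations, observe that products of multi-units are multi-units with additive covariance, and deduce the isometric embedding of the direct sum of Hilbert spaces. The only difference is that you carry out the zero-sum cancellation argument for the embedding explicitly, whereas the paper outsources exactly that step to Lemma 3.7.5 of Arveson's book.
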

 
 \begin{proof}
  Pick faithful normal states $\varphi$ and $\psi$ on $\m$ and $\n$ and note that every pair $(X^\alpha$, $X^\beta)\in\munitset{\alpha}^\varphi\times\munitset{\beta}^\psi$ gives a $(\varphi\ot\psi)$-\munit $X^\alpha\otimes X^\beta$ for $\alpha\otimes\beta$. As
  $$({X_t^\alpha}\otimes {X_t^\beta})^*({Y_t}^\alpha\otimes {Y_t^\beta})=e^{(c(X^\alpha,Y^\alpha)+c(X^\beta,Y^\beta))t}1$$
  there exists an isometry $$H(\munitset{\alpha}^\varphi)\oplus H(\munitset{\beta}^\psi)\hookrightarrow H(\mathcal{U}_{\alpha\otimes\beta,(\alpha\otimes\beta)'}^{\varphi\ot\psi})$$ (see \cite{Arveson book} Lemma 3.7.5). 
 \end{proof}

\begin{prop}\label{extn} Let $\alpha$ be an \en-semigroup on a factor $\m\subseteq B(H)$ with cyclic and separating vector $\Omega$. If there exists an \en-semigroup $\sigma$ on $B(H)$ satisfying 
 $$\sigma_t(x)=\left\{\begin{array}{ll}  {\alpha}_t(x) & \text{if }x\in\m, \\  {\alpha}_t^{J_\Omega}(x) & \text{if }x\in \m', \end{array}\right.\qquad\text{for all }t\geq0,$$
 then $\Ind_c(\alpha)$ is equal to the Powers-Arveson index of $\sigma$. If $\beta$ be another \en-semigroup on a factor $\n\subseteq B(K)$ with cyclic and separating vector $\Omega_1$, which is  cocycle conjugate to $\alpha$,  then there exists an \en-semigroup $\theta$ on $B(K)$ extending both $\beta$ and $\beta^{J_{\Omega_1}}$.

In this case we say $\alpha$ is canonically extendable, a property which is invariant under cocycle conjugacy, and $\sigma$ is called as the canonical extension.

\end{prop}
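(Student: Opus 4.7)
For the first claim (the index equality), the plan is to identify $\mathcal{U}_\sigma$ with $\munitset{\alpha}^\Omega$ and verify that the covariance functions coincide. A strongly continuous semigroup $T$ in $B(H)$ with $T_0 = 1$ is a unit of $\sigma$ precisely when $T_t x = \sigma_t(x) T_t$ for every $x \in B(H)$; splitting this across $\m$ and $\m'$ and applying the hypothesis on $\sigma$ shows that $\mathcal{U}_\sigma \subseteq \munitset{\alpha}^\Omega$. Conversely, since $\m$ is a factor one has $\m \vee \m' = (\m \cap \m')' = B(H)$, so any $\Omega$-multi-unit of $\alpha$ intertwines $\sigma_t$ on the generating set $\m \cup \m'$ and hence on all of $B(H)$ by linearity and $\sigma$-weak continuity. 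The covariance functions agree because $X_t^* Y_t \in B(H)' = \Comp 1$ however the units are interpreted. Thus $H(\mathcal{U}^\Omega_{\alpha,\alpha'}) = H(\mathcal{U}_\sigma)$, and $\Ind_c(\alpha)$ equals the Powers-Arveson index of $\sigma$.

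For the second claim, I would split the cocycle conjugacy of $\alpha$ and $\beta$ into (a) a conjugacy and (b) a cocycle perturbation. In case (a), Proposition \ref{doubly intertwining unitary shizzle} supplies a single unitary $W: H \to K$ that simultaneously spatially intertwines $(\alpha, \beta)$ and $(\alpha^{J_\Omega}, \beta^{J_{\Omega_1}})$, so conjugating $\sigma$ by $W$ produces the required $\theta$ on $B(K)$. Case (b) is the substantive one: with $\m = \n$, $\Omega = \Omega_1$ and $\beta_t(\cdot) = U_t \alpha_t(\cdot) U_t^*$, I would set $V_t := J_\Omega U_t J_\Omega \in \m'$, which (as shown in the proof of Proposition \ref{doubly intertwining unitary shizzle}) is an $\alpha^{J_\Omega}$-cocycle implementing $\beta^{J_\Omega}$, and put $W_t := U_t V_t$, then define $\theta_t(x) := W_t \sigma_t(x) W_t^*$.

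The crux is verifying that $W = (W_t)_{t\geq 0}$ is a unitary $\sigma$-cocycle. Using that $\sigma_t$ restricts to $\alpha_t$ on $\m$ and to $\alpha^{J_\Omega}_t$ on $\m'$, combined with the commutativity of $\m$ and $\m'$, the cocycle identity reduces to
\begin{equation*}
W_s \sigma_s(W_t) = \bigl(U_s \alpha_s(U_t)\bigr)\bigl(V_s \alpha^{J_\Omega}_s(V_t)\bigr) = U_{s+t} V_{s+t} = W_{s+t}.
\end{equation*}
Once this holds, checking that $\theta$ restricts to $\beta$ on $\m$ and to $\beta^{J_\Omega}$ on $\m'$ is routine: $V_t \in \m'$ commutes with $\alpha_t(\m)$, and dually $U_t \in \m$ commutes with $\alpha^{J_\Omega}_t(\m')$, so the $V_t$ and $U_t$ factors disappear against the ``wrong'' algebra. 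The main obstacle is precisely the cocycle identity above; everything else is bookkeeping once the commutativity of $\m$ and $\m'$ is combined with the block-diagonal description of $\sigma_t$.
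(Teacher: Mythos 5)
Your proposal is correct and follows essentially the same route as the paper: the index equality via the identification $\mathcal{U}_\sigma=\munitset{\alpha}^\Omega$ (one direction trivial, the other by extending the intertwining from $\m\cup\m'$ to $B(H)=\m\vee\m'$ using $\sigma$-weak continuity), and the extension of $\beta$ via the perturbed semigroup $Ad_{U_tJ_\Omega U_tJ_\Omega}\circ\sigma_t$ conjugated by the unitary from Proposition \ref{doubly intertwining unitary shizzle}. Your explicit verification that $W_t=U_tJ_\Omega U_tJ_\Omega$ is a $\sigma$-cocycle is a detail the paper leaves implicit, but the argument is the same.
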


\begin{proof} 
 Clearly if $T$ is a unit for $\sigma$ then it is an $\Omega$-\munit for $\alpha$. Conversely if $T$ is an $\Omega$-\munit for $\alpha$, since multiplication is separately ultraweak continuous, and  
 %and $x\in B(H)$ we may pick nets $(y_i)_{i\in I}\subset \m$ and $(z_i)_{i\in I}\subset \m'$ satisfying $y_iz_i\to x$ in the ultraweak topology. 
by the ultraweak continuity of $\sigma$, it follows that
% $\sigma_t(x)T_t=\uwlim_{i\in I}\sigma_t(y_iz_i)T_t$, but $$\sigma_t(y_iz_i)T_t=\alpha_t(y_i)\alpha_t'(z_i)T_t=T_ty_iz_i\to T_tx,$$ that is, 
$T$ is a unit for $\sigma$. Thus $\mathcal{U}^\Omega_{\alpha,\alpha'}=\mathcal{U}_\sigma$ and the induced covariance function on $\mathcal{U}_\sigma$ is precisely that of \cite{Arveson book}, Section 2.5.
 
 Suppose $U:H\mapsto K$ be unitary and $(U_t)_{t \geq 0}$ be a unitary cocyle for $\alpha$ satisfying $\beta_t=  Ad_U Ad_{U_t}\alpha_t Ad_{U^*}$ for all $t\geq 0$. By Proposition \ref{doubly intertwining unitary shizzle} there exists a $V:H\mapsto K$ implementing the conjugacy of both $(Ad_{U_t} \alpha_t)_{t\geq 0}$ and $\beta$ as well as the conjugacy of $\left(\left( Ad_{U_t}\alpha_t \right)^{J_\Omega}\right)_{t\geq 0}(= \left(Ad_{J_{\Omega} U_t J_{\Omega}}\alpha_t^{J_\Omega}\right)_{t\geq 0})$ and $\beta^{J_{\Omega_1}}$. Now $$\theta_t = Ad_{V} Ad_{U_t} Ad_{J_\Omega U_t J_\Omega}\sigma_t Ad_{V^*}$$  provides the canonical extension for $\beta$.
   \end{proof}

\begin{rem} All \en-semigroups on type I factors are canonically extendable. All our known examples of \en-semigroups on II$_1$ factors (and type II$_\infty$ factors)  are not canonically extendable (see [MS]). It is an interesting open problem to construct a canonically extendable \en-semigroup on a II$_1$ factor.  On type III factors we know both extendable and non-extendable examples.
\end{rem}

 The following is a fundamental example for the theory.

 \begin{ex}\label{B(H) example}
  Let $\m=B(H)$ and $\ol{H}$ be the dual space of $H$, with an anti-isomorphism $\xi \mapsto \ol{\xi}$ from $H\mapsto \ol{H}$. Consider the standard representation $\pi:\m\to B({H}\otimes\ol{H}))$, defined by linear extension of $\pi(X)({\xi}\ot\ol{\eta})={X\xi}\ot\ol{\eta}$, with cyclic and separating vector $\Omega=\sum_{n=1}^\infty{\frac{1}{n}{e_n}\ot\ol{e_n}}$, where $\{e_n\}_{n=1}^\infty$ is an orthonormal basis for $H$. We claim that the corresponding modular conjugation is given by $$J{\xi}\ot\ol{\eta}={\eta}\ot\ol{\xi}.$$
  To see this, define an operator $\Delta^{1/2}_0$ on $\pi(\m)\Omega$ by
  $$\Delta^{1/2}_0\left(\sum_{n=1}^\infty{\frac{1}{n} Qe_n  \ot \ol{e_n}}\right) =\sum_{n=1}^\infty{\frac{1}{n} e_n \ot \ol{Q^*e_n}}$$
  for all $Q\in B(H)$, and note that if $Qe_n=\sum_{m=1}^\infty{Q_{mn}e_m}$, then
 \begin{align*}
  \ip{\pi(Q)\Omega}{\Delta_0^{1/2}\pi(Q)\Omega}&=\sum_{n,m=1}^\infty \frac{1}{nm}\left< \left(\sum_{i=1}^\infty Q_{in}{e_i}\right)\ot \ol{e_n},    {e_m} \ot \left(\sum_{j=1}^\infty \ol{Q_{mj}}\ol{e_j}\right) \right>\\
  % &=\sum_{j,k,l=1}^\infty\ip{\Omega}{\frac{1}{j} \ol{Q}_{jl}Q_{jk} \ket{e_l}\ot\bra{e_k}}\\
   &=\sum_{n,m=1}^\infty\frac{1}{mn}|Q_{mn}|^2\geq0 
  \end{align*}
  %\begin{align*}
  %\ip{\pi(Q)\Omega}{\Delta_0^{1/2}\pi(Q)\Omega}&=\sum_{j,k=1}^\infty\ip{\Omega}{{\frac{1}{j} \pi(Q^*)Q_{jk}\ket{e_j}\ot\bra{e_k} }}\\
  % &=\sum_{j,k,l=1}^\infty\ip{\Omega}{\frac{1}{j} \ol{Q}_{jl}Q_{jk} \ket{e_l}\ot\bra{e_k}}\\
  % &=\sum_{j,k=1}^\infty\frac{1}{jk}|Q_{kj}|^2\geq0 
  %\end{align*}
  hence $\Delta_0^{1/2}$ extends to a closed, densely defined, positive operator $\Delta^{1/2}$. Clearly $J\Delta^{1/2}\pi(Q)\Omega=\pi(Q)^*\Omega$, so $\Delta^{1/2}$ is the modular operator and $J$ is the modular conjugation.
 
  If $X$ is an operator on $H$  then let $\ol{X}$ be the operator on the dual space defined by $\ol{X}\ol{\eta}=\ol{X\eta}$, so $J(X\ot 1)J=1\ot\ol{X}$. Let $\alpha$ be an \en-semigroup on $\m$ and denote by $\beta$ the conjugate semigroup $\pi\circ\alpha\circ\pi^{-1}$ on $\pi(\m)$. Then we have
  $$\beta^J_t(1\ot \ol{X})=J(\alpha_t(X)\ot 1)J=1\ot\ol{\alpha_t(X)}.$$
  Thus the dual \en-semigroup $\beta^J$ is conjugate to an \en-semigroup $\ol{\alpha}$ on $B(\ol{H})$ given by $\ol{\alpha}_t(\ol{X})=\ol{\alpha_t(X)}$.  Clearly the \en-semigroup $\alpha\ot\ol{\alpha}$ extends both $\beta$ and $\beta^J$, so by \cite{Arveson book} $\alpha$ is multi-spatial if and only if it is spatial, in which case the index is $\Ind_c(\alpha)=\Ind(\alpha\ot\ol{\alpha})=2\Ind(\alpha)$; its coupling index is twice its Powers-Arveson index.
 \end{ex}

 \begin{rem}
  The previous example suggests that a better definition for coupling index would be half the dimension of $H_{\alpha,\alpha'}$. However, by historical accident, our first paper on coupling index considered only \en-semigroups on type \twoone factors. We will not attempt to redact the original definition, since we do not have any reason to believe an arbitrary \en-semigroup must have an even coupling index, though constructing an example with odd coupling index is an open problem.
 \end{rem}

 %%%%%%%%%%%%%%%%%%%%%
 %%%%%%%%%%%%%%%%%%%%%%
 %%%%%%%%%%%%%%%%%%%%%%%
 \section{Product systems}\label{pdct}
 
Product systems of Hilbert modules have been extensively studied by M. Skiede, with contributions from Bhat and others (see \cite{skeide} for an elaborate discussion). In \cite{alevras}, Alevras associated intertwiner spaces as a product system of Hilbert modules for an \en-semigroup on a II$_1$ factor, and showed that two \en-semigroups are cocycle conjugate if and only if the associated product systems are isomorphic (for other, similar results see \cite{skeide holyoke}). 

In this section we generalize Alevras' association of product systems for \en-semigroups on general factors. We prove that they form a complete invariant with respect to the equivalence of cocycle conjugacy. 
When the proofs are exactly similar, we do not give full details of the proofs, but ask the reader to refer to \cite{alevras}.

For our purposes, we use a different but equivalent definition of Hilbert modules from \cite{BMSS}. All our modules are von Neumann modules. 

\begin{defn} For a von Neumann algebra $\m \subseteq B(H)$ in standard form, a Hilbert von Neumann $\m-$module is defined as a weakly closed (equivalently, strongly closed) subspace $E\subseteq B(H)$ satisfying $EE^*E\subseteq E$ and $E^*E=M$. $E$ is a Hilbert von Neumann $\m-\m-$bimodule if further 
$\m \subseteq \left[EE^*\right]$. 
\end{defn}

$E^*E$ is the von Neumann algebra acting on the right and $\left[EE^*\right]$ is the collection of adjointable operators acting on the left. The inner product is $\ip{x}{y}=x^*y$ (see \cite{skeide}, Part I, Chapter 3  or \cite{BMSS} for details).  Further in this definition non-degeneracy is automatic.

The following version of Riez' Lemma is useful for our purposes. For a proof the reader can refer either to \cite{skeide} or to \cite{BMSS}, Proposition 1.7. In this lemma $E_1$, $E$ can be taken as only right von Neumann modules. By a submodule of a von Neumann $\m-$module $E$, we mean a strongly closed subspace $E_1$ satisfying $E_1E^*E=E_1\m\subseteq E_1$.

\begin{lem}\label{riez}
If $E_1$ is an $\m$-submodule of a Hilbert von Neumann $\m-$module $E$ and $E_1 \neq E$. Then there exists a non-zero $y \in E$ such that $y^*x=0$ for all $x \in E_1$. 
\end{lem}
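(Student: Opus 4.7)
The plan is to produce $y$ as $(1-p)z$ for an arbitrary $z \in E\setminus E_1$, where $p$ is the unit projection of the von Neumann algebra
\[
\mathcal{A}_1 := [E_1 E_1^*] \subseteq B(H).
\]
First I would verify that $\mathcal{A}_1$ is genuinely a von Neumann algebra on $H$. Self-adjointness of $\Lin(E_1 E_1^*)$ is immediate, and closure under products reduces to the identity $(ab^*)(cd^*) = a(b^*c)d^*$, where $b^*c \in E_1^* E_1 \subseteq \m$ and hence $a(b^*c) \in E_1 \m \subseteq E_1$, so the product lies back in $E_1 E_1^* \subseteq \mathcal{A}_1$. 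Let $p \in \mathcal{A}_1$ denote the unit of this (a priori possibly non-unital) von Neumann algebra.

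Next I would show that $p$ behaves like a projection onto $E_1$ in two complementary senses. (i) $px = x$ for every $x \in E_1$: since $xx^* \in \mathcal{A}_1$ we have $xx^* = p(xx^*) = (xx^*)p$, and then $\|x^*(1-p)h\|^2 = \langle (1-p)h,\, xx^*(1-p)h\rangle = 0$ forces $x^*(1-p) = 0$, i.e.\ $x = px$. (ii) $pE \subseteq E_1$: by Kaplansky density, pick a net $(s_\lambda) \subseteq \Lin(E_1 E_1^*)$ with $\|s_\lambda\| \le 1$ converging strongly to $p$. Then $s_\lambda z \to pz$ strongly for each $z \in E$, and each $s_\lambda z$ lies in $\Lin(E_1 E_1^* E) \subseteq E_1 \cdot (E^* E) = E_1 \m \subseteq E_1$; strong-closedness of $E_1$ then yields $pz \in E_1$.

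Combining (i) and (ii) gives $E_1 = pE$. To conclude, choose any $z \in E \setminus E_1$ and set $y := (1-p)z$. Then $y \in E$ because both $z$ and $pz$ lie in $E$; $y \neq 0$ because $z \notin pE$ forces $z \neq pz$; and for every $x \in E_1$, using (i),
\[
y^* x \;=\; z^*(1-p)x \;=\; z^*(x - px) \;=\; 0,
\]
which is the required orthogonality.

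The delicate points are that $\mathcal{A}_1$ is closed under products (and not merely a self-adjoint subspace) and that its unit $p$ admits the strong approximation by elements of $\Lin(E_1 E_1^*)$ needed to push $pz$ back into $E_1$. Both steps use the submodule condition $E_1 \m \subseteq E_1$ and the identity $E^* E = \m$ in an essential way; everything else is linear-algebraic bookkeeping once $p$ is in place.
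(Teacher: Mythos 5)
Your argument is correct. The paper does not prove this lemma itself but defers to \cite{skeide} and to Proposition 1.7 of \cite{BMSS}, and your proof is essentially the standard one given there: one checks that $\Lin(E_1E_1^*)$ is a $*$-algebra (using $E_1^*E_1\subseteq E^*E=\m$ and $E_1\m\subseteq E_1$), takes the unit projection $p$ of its weak closure, and shows $pE=E_1$, so that $y=(1-p)z$ works for any $z\in E\setminus E_1$. All the delicate points you flag — closure of $[E_1E_1^*]$ under products, $px=x$ for $x\in E_1$ via $\|x^*(1-p)h\|^2=\langle (1-p)h, xx^*(1-p)h\rangle=0$, and $pE\subseteq E_1$ via Kaplansky density together with strong closedness of $E_1$ — are handled correctly, so the proof is complete and self-contained.
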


The following definition of isomorphism between von Neumann modules is equivalent to the usual definition of Hilbert modules (see `if' implication in Lemma 2.5, \cite{BMSS}).

\begin{defn} Two von Neumann modules $E\subseteq B(H)$ and $F \subseteq B(K)$ over von Neumann algebras $\m\subseteq B(H)$ and $\n\subseteq B(K)$ in standard form are isomorphic if there exist unitaries $U_1, U_2: H \mapsto K$ satisfying $$U_i\m U_i^*=\n~ i=1,2;~ U_1^*U_2 \in \m'; ~U_1EU_2^*=F.$$ We say $E$ and $F$ are isomorphic through $(U_1, U_2)$.
\end{defn}

\begin{defn}
A complete orthonormal basis for an $\m-\m-$bimodule $E$  is a countable family of isometries with orthogonal ranges $\{S_i; i\in \Iset\}\subseteq E$ satisfying $\sum_{i \in \Iset} S_iS_i^*=1$ in strong topology. When it exists we say the bimodule admits a complete orthonormal basis.
\end{defn}

The countable family of isometries in the above definition can possibly be a single unitary. Notice when an $\m-\m-$bimodule admits a complete orthonormal basis $\{S_i;i\in \Iset\}$, any element $T\in E$ can be written as $T= \sum_{i\in \Iset} S_iS_i^* T$ in strong topology. Since $S_i^* T \in \m$, we have $[S_i m_i: i\in \Nat, m_i \in \m]=E$.

For an \en-semigroup $\alpha$ on a factor $\m \subseteq B(H)$ in standard form with cyclic and separating vector $\Omega$, define $$E_t^\alpha = \{T \in B(H): \alpha_t(x)T=Tx~\forall x \in \m\}.$$

\begin{prop} Let $\alpha_t$ be an \en-semigroup acting standardly on a factor $\m$. For each $t\geq 0$ $E_t^\alpha$ is an $\m'-\m'-$bimodule. Further $\left[EE^*\right]=\alpha_t(\m)'$. 
\end{prop}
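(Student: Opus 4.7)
The plan is to use Lemma \ref{ONB} to produce enough explicit elements of $E_t^\alpha$ to generate its entire module structure. Let $\{U_i(t): i \in \Iset\}$ be the family of isometries from that lemma, so $\sum_i U_i(t)U_i(t)^* = 1$ and $\alpha_t(x) = \sum_i U_i(t) x U_i(t)^*$ for all $x \in \m$. Multiplying the second identity on the right by $U_j(t)$ and using the orthogonality $U_i(t)^* U_j(t) = \delta_{ij} 1$, one obtains $\alpha_t(x) U_j(t) = U_j(t) x$, so each $U_i(t)$ lies in $E_t^\alpha$. For an arbitrary $T \in E_t^\alpha$, the coefficients $m_i' := U_i(t)^* T$ satisfy $m_i' x = U_i(t)^* \alpha_t(x) T = x U_i(t)^* T = x m_i'$ for all $x \in \m$, so $m_i' \in \m'$, and $T = \sum_i U_i(t) m_i'$ in the strong topology.

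From this Kraus-type decomposition, the Hilbert von Neumann module axioms for $E_t^\alpha$ over $\m'$ follow directly. Weak closure is automatic, since the defining relations $\alpha_t(x) T = T x$ pass to weak limits of operators in $B(H)$. The identity $(E_t^\alpha)^* E_t^\alpha = \m'$ follows from computing, for $T = \sum_i U_i(t) m_i'$ and $S = \sum_j U_j(t) n_j'$, that $T^* S = \sum_i (m_i')^* n_i' \in \m'$, while conversely any $m' \in \m'$ equals $U_1(t)^* (U_1(t) m')$ with $U_1(t) m' \in E_t^\alpha$. The inclusion $E_t^\alpha (E_t^\alpha)^* E_t^\alpha \subseteq E_t^\alpha$ is immediate from the intertwining definition.

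For the final claim $[E_t^\alpha (E_t^\alpha)^*] = \alpha_t(\m)'$, the $\subseteq$ direction is a direct check: for $T, S \in E_t^\alpha$ and $x \in \m$, taking adjoints in $\alpha_t(x^*) S = S x^*$ yields $S^* \alpha_t(x) = x S^*$, and hence $\alpha_t(x)(T S^*) = T x S^* = (T S^*) \alpha_t(x)$. Conversely, given $A \in \alpha_t(\m)'$, write $A = \sum_{i,j} U_i(t) A_{ij} U_j(t)^*$ with $A_{ij} := U_i(t)^* A U_j(t)$, and verify $A_{ij} \in \m'$ via $A_{ij} x = U_i(t)^* A \alpha_t(x) U_j(t) = U_i(t)^* \alpha_t(x) A U_j(t) = x A_{ij}$. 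Since $\alpha_t$ is a unital endomorphism into $\m$, we have $\alpha_t(\m)' \supseteq \m'$, so this same equality delivers the remaining bimodule condition $\m' \subseteq [E_t^\alpha (E_t^\alpha)^*]$. There is no real obstacle; the only mild subtlety is handling strong convergence of the sums when $\Iset = \Nat$, which is controlled by the orthogonality of the $U_i(t)$ together with separate strong continuity of multiplication on bounded sets, and the type III case degenerates to single unitary terms.
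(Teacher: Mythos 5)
Your argument is correct, and it departs from the paper's route at the key step $[E_t^\alpha(E_t^\alpha)^*]=\alpha_t(\m)'$. The paper (following Alevras, Prop.\ 3.1) shows that $[E_t^\alpha(E_t^\alpha)^*]$ is a nonzero, weakly closed two-sided ideal in $\alpha_t(\m)'$, and concludes by noting that $\alpha_t(\m)'$ is a factor (its center is $Z(\alpha_t(\m))=\Comp 1$), so such an ideal must be everything. You instead give an explicit Kraus-type decomposition: for $A\in\alpha_t(\m)'$, writing $A=\sum_{i,j}U_i(t)A_{ij}U_j(t)^*$ with $A_{ij}=U_i(t)^*AU_j(t)\in\m'$ directly exhibits $A$ as a weak limit of elements of $E_t^\alpha(E_t^\alpha)^*$, since $U_i(t)A_{ij}\in E_t^\alpha$ by the right $\m'$-module property. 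Your version is more self-contained (it never needs the ideal-structure theory for factors), at the cost of tracking convergence of the double sum, which you correctly flag as controlled by the orthogonality of the $U_i(t)$. One small inefficiency: for $T^*S\in\m'$ you go through the coordinate expansion $\sum_i(m_i')^*n_i'$, whereas the direct computation $T^*Sx=T^*\alpha_t(x)S=xT^*S$ does it in one line with no convergence issues (this is the route the paper takes, via the intertwining property). Everything else, including the bimodule condition $\m'\subseteq[E_t^\alpha(E_t^\alpha)^*]$ via $\m'\subseteq\alpha_t(\m)'$, matches the intended argument.
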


\begin{proof} It is easy to verify from the intertwining property of elements in $E$, exactly as in  proposition 3.1 \cite{alevras}, that $E^*E\subseteq \m'$, $EE^*\subseteq \alpha_t(\m)'$ and $E\m'\subseteq E$, $\alpha_t(E) E\subseteq E$. Let $S_t$ be either any one of  the isometries or a unitary provided by Lemma \ref{ONB}. Notice that $S_t \in E$, and any $m'\in \m'$ can be expressed as $S_t^*(S_t m)\in E^*E$. Hence $E^*E=\m'$. Again it is exactly similar, as in  proposition 3.1 \cite{alevras}, to verify that $\left[EE^*\right]$ forms a two sided ideal in $\alpha_t(\m)'$.
\end{proof}

For two Hilbert von Neumann $\m-\m-$bimodules $E_1$ and $E_2$ the internal tensor product is defined by $$E_1 \odot E_2 =\left[xy: x \in E_1, y\in E_2\right].$$ This definition coincides with the definition of internal tensor product of Hilbert bimodules in \cite{skeide} (see \cite{BMSS}).
 
\begin{defn}\label{prdct} Let $\m\subseteq B(H)$ be a factor in standard form. A concrete product system of $\m-\m-$Hilbert bimodules is a one parameter family $\{E_t:t\geq 0\}$ of $\m-\m-$von Neumann bimodules, such that 
$\E=\{(t, T_t): t \in (0,\infty), T_t \in E_t\}$ is a standard Borel subset of $(0,\infty)\times B(H)$ (equipped with the product structure coming from $(0,\infty)$ and strong topology in $B(H)$),
satisfying
$E_s \odot E_t =E_{s+t}~~\forall s,t\geq 0.$

Further, for each $t\geq 0$,  the von Neumann $\m-\m-$module $E_t$ admits a complete orthonormal basis $\{S^i_t: i \in \Iset\}$, such that 
$\{(t,S^i_t):t\in (0,\infty)\}$ is a Borel subset of $\E$ for each $i \in \Iset$.
\end{defn}

The definition for isomorphism between product systems of von Neumann modules given below is equivalent to the one in \cite{alevras} and elsewhere. The condition  ${U^i_s}^*U^j_t\in \m'~\forall s,t\geq 0, i,j=1,2,$ is to ensure all unitaries implement the same isomorphism on $\m$.

\begin{defn}
Two product systems $\{E_t:t\geq 0\}$ and $\{F_t:t\geq 0\}$ over $\m$ and $\n$ are isomorphic if there exists families of unitaries $\{U_t^1: t\geq 0\}$ and $\{U^2_t:t\geq 0\}$ such that $E_t$ and $F_t$ are isomorphic through $(U_t^1, U_t^2)$ for each $t\geq 0$, satisfying ${U^i_s}^*U^j_t\in \m'~\forall s,t\geq 0, i,j=1,2,$ and  \begin{align}\label{product morphism}  \left(U^1_sT_s{U_s^2}^*\right)\left(U^1_tT_t{U_t^2}^*\right) & = U_{s+t}T_sT_tU_{s+t}^*~~\forall T_s \in E_s, T_t\in E_t~s,t\geq 0.\end{align} Further $(t,T_t)) \mapsto  (t, U^1_t T_t{U^2_t}^*)$ is a Borel isomorphism from $\E$ onto $\F = \{(t, T_t): t \in (0,\infty), T_t \in F_t\}$.
%Two product systems $\{E_t:t\geq 0\}$ and $\{F_t:t\geq 0\}$ over $\m$ and $\n$ are isomorphic if there exists families of unitaries $\{U_t^1: t\geq 0\}$ and $\{U^2_t:t\geq 0\}$ such that $E_t$ and $F_t$ are isomorphic through $(U_t^1, U_t^2)$ for each $t\geq 0$, and \begin{align}\label{product morphism}\left(U^1_sT_s(U_s^2)^*\right)\left(U^1_tT_t(U_t^2)^*\right) & = U_{s+t}T_sT_tU_{s+t}^*~~\forall T_s \in E_s, T_t\in E_t~s,t\geq 0.\end{align} Further $(t,T_t) \mapsto  (t, U^1_t T_t(U^2_t)^*)$ is a Borel isomorphism from $\E$ onto $\F = \{(t, T_t): t \in (0,\infty), T_t \in F_t\}$.
\end{defn}

\begin{thm}
Let $\alpha$ be an \en-semigroup acting standardly on a factor $\m\subseteq B(H)$. Then $\{E^\alpha_t:t\geq 0\}$ is a concrete product system of Hilbert modules. 
\end{thm}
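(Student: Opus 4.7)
The plan is to verify each of the four conditions in Definition \ref{prdct} in turn. The bimodule structure of each $E^\alpha_t$ has already been established in the preceding proposition, so the main content is (a) the product relation $E^\alpha_s \odot E^\alpha_t = E^\alpha_{s+t}$, (b) the existence of complete orthonormal bases, and (c) the Borel measurability of the total space $\E$.

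For the product relation, the inclusion $E^\alpha_s \odot E^\alpha_t \subseteq E^\alpha_{s+t}$ is the straightforward computation: if $X\in E^\alpha_s$, $Y\in E^\alpha_t$ and $x\in\m$, then
\[ \alpha_{s+t}(x)XY = \alpha_s\bigl(\alpha_t(x)\bigr)XY = X\alpha_t(x)Y = XYx, \]
so $XY\in E^\alpha_{s+t}$; taking weak closure of spans gives the claim. For the reverse inclusion, I would apply Lemma \ref{ONB} to obtain a family $\{U_i(s):i\in\Iset\}$ of isometries with $\sum_i U_i(s)U_i(s)^*=1$ and $\alpha_s(x)=\sum_i U_i(s)xU_i(s)^*$. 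Each $U_i(s)$ lies in $E^\alpha_s$. For any $Z\in E^\alpha_{s+t}$, a short intertwining check shows $U_i(s)^*Z\in E^\alpha_t$, since for $x\in\m$,
\[ \alpha_t(x)U_i(s)^*Z = U_i(s)^*\alpha_s(\alpha_t(x))Z = U_i(s)^*\alpha_{s+t}(x)Z = U_i(s)^*Zx. \]
Then $Z=\sum_i U_i(s)(U_i(s)^*Z)$ converges $\sigma$-weakly, exhibiting $Z$ as an element of $E^\alpha_s\odot E^\alpha_t$ (after observing that the internal tensor product is closed under such $\sigma$-weakly convergent sums because it is weakly closed by definition).

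For the complete orthonormal basis condition, the same isometries $\{U_i(t):i\in\Iset\}$ from Lemma \ref{ONB} do the job. They lie in $E^\alpha_t$ by construction, and since $\sum_i U_i(t)U_i(t)^*=1$ with each summand a projection, the ranges are pairwise orthogonal; consequently $U_i(t)^*U_j(t)=\delta_{ij}1\in\m'$, so they form a complete orthonormal basis in the sense required. When $\m$ is type III, Lemma \ref{ONB} provides a single unitary, which is itself a one-element basis.

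Finally, for the measurability, I would argue that $\E$ is standard Borel because the defining condition $\alpha_t(x)T=Tx$ for all $x\in\m$ is, by separability and $\sigma$-weak continuity of $t\mapsto\alpha_t(x)$, a Borel condition on $(t,T)\in(0,\infty)\times B(H)$; restricting to a countable $\sigma$-weakly dense subset of $\m$ suffices. The Borel selection of isometries $t\mapsto U_i(t)$ is the most delicate technical point: one must choose the family from Lemma \ref{ONB} in a $t$-measurable fashion. I expect this to be the main obstacle, and would handle it by a Kuratowski--Ryll-Nardzewski style measurable selection applied to the set-valued map $t\mapsto\{T\in E^\alpha_t:\,T\text{ isometry}\}$, inductively peeling off orthogonal isometries, with the type III case being trivial since a single Borel selection of a unitary suffices.
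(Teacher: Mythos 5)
Your proof is correct and follows essentially the same route as the paper. One small improvement: your single-sum decomposition $Z=\sum_i U_i(s)(U_i(s)^*Z)$ for the reverse inclusion $E^\alpha_{s+t}\subseteq E^\alpha_s\odot E^\alpha_t$ is cleaner than the paper's double-indexed version (which as printed even contains a typo in the ordering of the factors), and your observation that the partial sums are uniformly bounded justifies passing to the weak closure. The only genuine divergence is at the measurability step: rather than invoking a Kuratowski--Ryll-Nardzewski selection directly, the paper defers to Alevras (Lemma 3.8 and Corollary 3.9 of \cite{alevras}), whose measurable construction hinges on the observation --- which the paper records explicitly --- that for all $s,t>0$ the representations $\alpha_s$ and $\alpha_t$ of $\m$ are mutually unitarily equivalent (being of infinite multiplicity when $\m$ is not type III, and all equivalent to the standard representation when it is), so you would want to supply that structural fact before running any selection argument.
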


\begin{proof} Let $s,t\geq 0$. Clearly $ST\in E^\alpha_{s+t}$ for $S \in E^\alpha_s$ and $T \in E^\alpha_t$. Choose $\{U_i(s):i \in \Iset\}$ and $\{U_j(s):j \in \Jset\}$  as in Lemma \ref{ONB}. Then any $A\in E_{s+t}$ can be expressed in strong limit as $A=\sum_{i,j} U_i(s)U_j(t)U_i(s)^*U_j(t)^* A.$ But  $U_i(s)\in E_s^\alpha$ and $U_i(t)U_i(s)^*U_j(t)^* A \in E_t$. It follows  that  $$E_s^\alpha\odot E_t^\alpha=E^\alpha_{s+t}, ~~\forall s,t\geq 0.$$

It can be proven, by using exactly same arguments as in \cite{alevras} (see arguments before Lemma 3.8), that $\E^\alpha =\{(t,T_t): t \in (0,\infty), T_t\in E^\alpha_t\}$ is a Borel subset of $(0,\infty)\times B(H)$. 
$E^\alpha_t$ admits a complete orthonormal basis, thanks to Lemma \ref{ONB}.  When $\m$ is not a type III factor, both $\alpha_s$ and $\alpha_t$ are representations of infinite multiplicity. Hence they are both unitarily equivalent to the amplification of the standard representation on $H\ot K$ with $dim(K)=\infty$. When $\m$ is a type III factor all representations are unitarily equivalent to the standard representation. So for any $s, t\in (0,\infty)$, $\alpha_s$ and $\alpha_t$ are unitarily equivalent considered as representations of $\m$. Now the measurability of the complete orthonormal basis, as in Definition \ref{prdct},  can be proven exactly in the same manner as in \cite{alevras} (see Lemma 3.8 and Corollary 3.9).
\end{proof}

%We remark that, though the measurability of the complete orthonormal basis is guaranteed by lemma 3.8 and corollary 3.9 of \cite{alevras}, the bimodule isomorphism between $E_t$ and $\Delta \times \{t\}$, as required in the local triviality  axiom of definition 3.6 of \cite{alevras}, is not proved. This is  because $Ad_{U_t}$ does not provide an isomorphism between $\m'$ acting on the left.

The proof of the following proposition also implies that an \en-semigroup acting standardly on $\m \subseteq B(H)$ has an extension to an \en-semigroup on $B(H)$.

\begin{prop}\label{samepdct}
Let $\{E_t:t\geq 0\}$ be a  concrete product system of bimodules over $\m'$, contained in $B(H)$. Then there exists a unique \en-semigroup $\alpha$ on  $\m$  whose associated product system is $\{E_t:t\geq 0\}$.
\end{prop}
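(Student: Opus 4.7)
The plan is to use the complete orthonormal basis supplied by the product system structure to construct each $\alpha_t$ explicitly, verify that the family is a semigroup of normal unital $*$-endomorphisms of $\m$, and then recover $\{E_t\}$ as the intertwiner spaces of $\alpha$.

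First I would fix, for each $t \geq 0$, a complete orthonormal basis $\{S^i_t : i \in \Iset\}$ of $E_t$ and define
$$\alpha_t(m) := \sum_{i \in \Iset} S^i_t\, m\, (S^i_t)^*, \qquad m \in \m,$$
the series converging strongly. Since $(S^i_t)^* T \in E_t^* E_t = \m'$ commutes with every $m \in \m$, and since $\sum_i S^i_t (S^i_t)^* = 1$, one obtains at once the intertwining characterization
$$\alpha_t(m)\, T \;=\; T m, \qquad T \in E_t,\ m \in \m,$$
which shows that $\alpha_t(m)$ does not depend on the chosen basis. The same identity shows $\alpha_t(m)$ commutes with every $S T^*$ ($S, T \in E_t$) and hence with $[E_t E_t^*] \supseteq \m'$, so $\alpha_t(m) \in \m$.

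From the intertwining identity it is routine to check that $\alpha_t$ is linear, multiplicative, $*$-preserving and unital; normality is inherited from the strong convergence of the defining sum. For the semigroup property, if $T \in E_s$ and $T' \in E_t$, then $TT' \in E_{s+t}$ and two applications of the intertwining identity give $\alpha_s(\alpha_t(m))\, TT' = T \alpha_t(m) T' = TT' m$. Since $E_s \odot E_t = E_{s+t}$ and right-multiplication is strongly continuous, this identity extends to all $U \in E_{s+t}$. The operator $\alpha_{s+t}(m) \in \m$ is characterised by the same equation, and any two operators obeying it must agree (evaluate at $U = S^i_{s+t}$ and sum against $(S^i_{s+t})^*$), so $\alpha_s \circ \alpha_t = \alpha_{s+t}$; for $t = 0$ one takes $E_0 = \m'$, forcing $\alpha_0 = \id$.

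The Borel structure on $\E$ together with the Borel selection $t \mapsto S^i_t$ makes $t \mapsto \alpha_t(m)$ $\sigma$-weakly Borel-measurable for each $m$, and a standard argument for measurable semigroups of normal unital endomorphisms upgrades this to $\sigma$-weak continuity. To identify the product system of $\alpha$ with $\{E_t\}$, $E_t \subseteq E_t^\alpha$ holds by construction; if the inclusion were proper, Lemma \ref{riez} applied to the $\m'$-submodule $E_t$ of $E_t^\alpha$ would give a non-zero $y \in E_t^\alpha$ with $y^* S^i_t = 0$ for every $i$, whence $y^* = y^* \sum_i S^i_t (S^i_t)^* = 0$, a contradiction. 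Uniqueness of $\alpha$ given $\{E_t\}$ follows from the same intertwining characterization: any two \en-semigroups with intertwiner spaces $E_t$ must agree on each $S^i_t$, and hence everywhere.

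The main obstacle I anticipate is the passage from Borel measurability of $t \mapsto \alpha_t(m)$ to $\sigma$-weak continuity, which in the general-factor setting must be handled via abstract measurable-semigroup arguments rather than the type I machinery used by Arveson. A secondary point is the condition $\alpha_t(\m) \neq \m$ for $t > 0$ from the definition of \en-semigroup, which should follow from a non-triviality implicit in the product system (for instance, the $S^i_t$ not belonging to $\m$); without it one would merely obtain a semigroup of normal unital $*$-endomorphisms, possibly $*$-automorphisms.
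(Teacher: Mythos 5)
Your construction and verification follow the paper's own proof essentially step by step: define $\alpha_t$ by summing over a complete orthonormal basis, derive the intertwining identity to show $\alpha_t(\m) \subseteq \m$ and basis-independence, use Lemma \ref{riez} to identify the intertwiner space of $\alpha_t$ with $E_t$ exactly, invoke a measurability-to-continuity argument for the time regularity, and establish uniqueness via the intertwining characterization together with $[E_t H]=H$. The concern you raise about verifying the properness condition $\alpha_t(\m) \neq \m$ for $t>0$ is a fair one, but the paper's proof does not address it either, implicitly relying on the non-triviality of the given product system.
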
 

\begin{proof} Suppose two unital $*-$endomorphism $\alpha_t$ and $\beta_t$ on $\m$ have $E_t$  as their intertwiner space, then $\alpha_t(x)T\xi=Tx\xi=\beta_t(x)T\xi$ for all $T \in E_t, \xi \in H, x \in \m$. Since each $E_t$ admits a complete orthonormal basis we have $\left[E_tH\right]=H$ for all $t\geq 0$. Hence $\alpha_t=\beta_t$. So the product system uniquely determines \en-semigroup on $\m \subseteq B(H)$.

Now choose a complete orthonormal basis $\{S^i_t:i \in \Iset\}$ in $E_t$ for each $t\geq 0$. Define a unital $*-$endomorphism $$\theta_t(X) = \sum_{i\in \Iset} S_t^i X (S^i_t)^* \qquad X \in B(H).$$ We have $\theta_t(X)S^i_t =S_t^iX$ for all $X \in B(H)$, $i \in \Iset$. 
When  $m \in \m$,  
since $\{S^i_t: i \in \Iset\}$ generates $E_t$ as a right $\m'$ module, we have $\theta_t(m)T= T m$ for all $T \in E_t$. In particular $\theta_t(m)$ commutes with $\left[E_tE_t^*\right]$ which contains $\m'$. Hence $\theta_t$ leaves $\m$ invariant, and we denote the restriction of $\theta_t$ to $\m$ by $\alpha_t$.  The intertwiner space of $\alpha_t$ is a right Hilbert $\m'$ module, containing $E_t$ as a submodule. Suppose $T\in B(H)$ satisfies $T^*S=0$ for all $S \in E_t$, then $T^*=0$, since $[E_tH]=H$. Hence $T=0$, and thanks to Lemma \ref{riez}, the intertwiner space of $\alpha_t$ is exactly $E_t$. 

Notice that $\alpha_t$ does not depend on the particular choice of the complete orthonormal basis, since $E_t$ determines $\alpha_t$ uniquely.  As we can choose the complete orthonormal basis in a measurable way, we have obtained a measurable family $\alpha=\{\alpha_t:t\geq 0\}$ of unital normal $*-$endomorphisms. The set of isometries $\{S^i_sS_t^j: i,j \in \Iset\}$ provides a complete orthonormal basis in $E_{s+t}$. So  
$$ \alpha_{s+t}(m)  = \sum_{i,j\in \Iset} S_s^i S^j_t m (S^j_t)^* (S^i_s)^*
  = \sum_{i\in \Iset} S_s^i \alpha_t(m) (S^i_s)^*
 = \alpha_s(\alpha_t(m))$$ for all $m \in \m$, 
and hence $\alpha$ is indeed a semigroup. Now Section 2.3 of \cite{Arveson book} implies $\alpha$ is strongly continuous, and $\alpha$ is an \en-semigroup on $\m$. 
\end{proof}

%\begin{rem}
%The proof of the following proposition also implies that a \en-semigroup acting standardly on $\m \subseteq B(H)$ has an extension to an \en-semigroup on $B(H)$. Also the product system of Hilbert spaces of the extension is given by  the Hilbert spaces generated by $\{S^n_t: n \in \Nat\}$ for each $t\geq 0$.
%\end{rem}

The following theorem asserts that the isomorphism class of the product system of Hilbert modules is well-defined up to cocycle conjugacy and that it is a complete invariant. In particular it does not depend on the particular standard representation.

\begin{thm}\label{prod sys complete invariant}
Let $\alpha$ and $\beta$ be two \en-semigroups acting standardly on factors $\m$ and $\n$ respectively.  Then $\alpha$ and $\beta$ are cocycle conjugate if and only if the associated product system of Hilbert modules are isomorphic.
\end{thm}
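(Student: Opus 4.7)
I will prove the two implications separately. Throughout, $\m\subseteq B(H)$ and $\n\subseteq B(K)$ act in standard form.

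$(\Rightarrow)$ If $\alpha$ and $\beta$ are cocycle conjugate, by Lemma~\ref{spatial conjugacy lemma} we may pick a unitary $V:H\to K$ with $V\m V^*=\n$ and an $\alpha$-cocycle $(U_t)_{t\geq 0}\subseteq\m$ such that $\beta_t(\,\cdot\,)=VU_t\alpha_t(V^*(\cdot)V)U_t^*V^*$. Setting $U_t^1:=VU_t$ and $U_t^2:=V$, the map $T\mapsto VU_tTV^*=U_t^1T(U_t^2)^*$ carries $E_t^\alpha$ bijectively onto $E_t^\beta$: given $T\in E_t^\alpha$ and $y\in\n$, the intertwining $\alpha_t(V^*yV)T=TV^*yV$ gives $\beta_t(y)(VU_tTV^*)=(VU_tTV^*)y$. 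The conditions $U_t^i\m'(U_t^i)^*=\n'$ and ${U_s^i}^*U_t^j\in\m$ are immediate from $U_t\in\m$ and the unitarity of $V$; the multiplicativity relation~(\ref{product morphism}) collapses, after cancelling the outer $V$'s, to $U_sSU_tT=U_{s+t}ST$ for $S\in E_s^\alpha$ and $T\in E_t^\alpha$, which follows from the cocycle identity $U_{s+t}=U_s\alpha_s(U_t)$ together with $SU_t=\alpha_s(U_t)S$.

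$(\Leftarrow)$ Conversely, assume $(U_t^1,U_t^2)_{t\geq 0}$ implements an isomorphism of product systems. Put $V:=U_0^1$, so $V\m V^*=\n$; the compatibility ${(U_0^1)}^*U_t^i\in\m$ lets us write $U_t^1=Vw_t$ and $U_t^2=Vv_t$ for unitaries $w_t,v_t\in\m$ (one checks $w_0=v_0=1$ by applying (\ref{product morphism}) at $s=t=0$ with $S=T=1$). Transport $\beta$ to $\m$ via $\tilde\beta_t:=\mathrm{Ad}_{V^*}\circ\beta_t\circ\mathrm{Ad}_V$; then $E_t^{\tilde\beta}=V^*E_t^\beta V=w_tE_t^\alpha v_t^*$. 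For any $S\in E_t^\alpha$ and $x\in\m$, expanding $\tilde\beta_t(x)(w_tSv_t^*)=(w_tSv_t^*)x$ and absorbing $v_t^*xv_t$ via $S(v_t^*xv_t)=\alpha_t(v_t^*xv_t)S$ yields
\begin{equation*}
w_t^*\tilde\beta_t(x)w_t\cdot S=\alpha_t(v_t^*xv_t)\cdot S;
\end{equation*}
since $[E_t^\alpha H]=H$ by (the proof of) Proposition~\ref{samepdct}, $S$ cancels, giving $\tilde\beta_t=\mathrm{Ad}_{C_t}\circ\alpha_t$ with $C_t:=w_t\alpha_t(v_t^*)\in\m$.

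It remains to verify the cocycle identity. Substituting $U_t^i=Vw_t,Vv_t$ into~(\ref{product morphism}) and sweeping $v_s^*w_t$ across $S$ using $Sz=\alpha_s(z)S$, together with $v_t^*$ and $v_{s+t}^*$ across $ST$ using $(ST)z=\alpha_{s+t}(z)(ST)$, we obtain, after cancelling $ST$ via $[E_{s+t}^\alpha H]=H$,
\begin{equation*}
w_s\alpha_s(v_s^*)\alpha_s(w_t)\alpha_{s+t}(v_t^*)=w_{s+t}\alpha_{s+t}(v_{s+t}^*),
\end{equation*}
which expands exactly to $C_s\alpha_s(C_t)=C_{s+t}$. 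Strong continuity of $t\mapsto C_t$ follows from the Borel structure on $\mathcal{E}^\alpha$ via the standard measurable-to-continuous argument for unitary-valued cocycles. Setting $D_t:=VC_tV^*\in\n$ produces a cocycle for the spatially conjugate \en-semigroup $V\alpha_{\cdot}V^*$ on $\n$ that implements $\beta$, completing the proof. I expect the main obstacle to be the algebraic bookkeeping in the $(\Leftarrow)$ direction: respecting the two-sided $\m'$-module structure throughout, remembering that the commutant condition for our bimodules becomes membership in $\m=(\m')'$, and correctly tracking how scalars from $\m$ are transformed by $\alpha_s$ when pushed across elements of $E_s^\alpha$.
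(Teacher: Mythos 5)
Your proof is correct and follows essentially the same route as the paper's: in the reverse direction the paper also first conjugates by $V=U_0^1$ to bring everything into $\m\subseteq B(H)$, then observes $U_t^1T(U_t^2)^*=U_t^1\alpha_t((U_t^2)^*)T$ to reduce to an isomorphism of the form $(U_t,1)$ with $U_t\in\m$ — your $C_t=w_t\alpha_t(v_t^*)$ is precisely this $U_t$ — and then reads off the cocycle identity from equation~(\ref{product morphism}) using $[E_tH]=H$, with strong continuity deduced from the Borel structure as in Arveson. Your presentation keeps the two unitary families $(w_t,v_t)$ visible throughout rather than collapsing to $(U_t,1)$ at the outset, but the underlying manipulations are identical, so this is a stylistic rather than substantive difference.
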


\begin{proof}
If $\alpha$ and $\beta$ are conjugate \en-semigroups with conjugacy implemented by unitary $U$, then the isomorphism between $E^\alpha_t$ and $E^\beta_t$ is implemented by $(U,U)$. When $\beta$ is a cocycle perturbation of $\alpha$ by a cocycle $\{U_t: t\geq 0\}$, then the isomorphism between $E^\alpha_t$ and $E^\beta_t$ is implemented by $(U_t, 1)$. The measurability follows from of the strong continuity of $\{U_t:t\geq 0\}$.

Conversely assume $\{E^\alpha_t:t\geq 0\}$ and $\{E^\beta_t:t\geq 0\}$ are isomorphic as product systems of Hilbert modules. If the isomorphism is implemented by a single unitary $(U, U)$ for all $t\geq 0$, then $Ad_{U}\circ\alpha\circ Ad_{U^*}$ and $\beta$ have $\{E^\beta_t:t\geq 0\}$ as their product systems. Thanks to the uniqueness assured by Lemma 
\ref{samepdct} $\alpha$ and $\beta$ are conjugate \en-semigroups. 
Now by considering  a conjugate \en-semigroup, we may assume that both $\{E^\alpha_t:t\geq 0\}$ and $\{E^\beta_t:t\geq 0\}$ are contained in $B(H)$ and that the isomorphism is implemented by $(U^1_t, U^2_t)$ with $U^1_t, U^2_t \in \m$. But $U^1_t T (U^2_t)^* = U^1_t \alpha_t ((U^2_t)^* T$ for any $T \in E^\alpha_t$. 
So we assume, without loss of generality, that the isomorphism is implemented by $(U_t^1, 1 )$ for $U_t \in \m  ~~\forall t\geq 0$.   Since $(U_t,1)$ satisfies equation \ref{product morphism}, we have $$U_s\alpha_s(U_t)T_sT_t=U_sT_sU_tT_t= U_{s+t}T_s T_t, ~~\forall T_s\in E_s, T_t\in E_t~ s,t,\geq 0.$$ Since $[E_{s}H]=H=[E_{t}H]$ we conclude that $\{U_t:t\geq \}$ satisfies the cocycle relation for $\alpha$. The measurability assumption implies the strong continuity of $\{U_t:t\geq 0\}$ (see section 2.3, \cite{Arveson book}). Now again appealing to lemma \ref{samepdct}, we conclude that $\beta$ is $\{U_t\}-$perturbation of $\alpha$.
\end{proof}

\begin{rem}\label{type I pdct} 
Suppose $\alpha$ is an \en-semigroup acting standardly on a type I factor, for instance as in example \ref{B(H) example}. Let $\{H_t:t\geq 0\}$ be the concrete Arveson product system associated with $\alpha$, contained in $\m$. Then the product system of Hilbert modules associated with  $\alpha$ is described by $$E_t^\alpha = \left[\{T_t \ot S: T_t \in H_t, S \in B(\ol{H})\}\right].$$ This can be verified by Lemma \ref{riez}. 
%If the product systems $\{E^\alpha_t: t\geq 0\}$ and $\{E^\alpha_t: t\geq 0\}$ are isomorphic through $(U^1_t, U^2_t)$ then $T \ot 1_{\ol{H}}\mapsto U_1\left(T_t\ot 1_\ol{H}}\left)U_2$ 
It is clear for two \en-semigroups, the associated product system of Hilbert modules are isomorphic if and only if the respective product system of Hilbert spaces are isomorphic.  
\end{rem}

 \section{Super-product systems}\label{SPS}
 
In this section we associate a super-product system to each \en-semigroup on a factor and show that it is a cocycle conjugacy invariant. The super-product system of an \en-semigroup is a generalisation of Arveson's product system for an \en-semigroups on a type I factor. However, in contrast to type I factors, the super-product system for an \en-semigroup on an arbitrary factor is not a complete invariant, as shown for the case of type III factors, in Section \ref{CCR section}. These were originally defined in \cite{MS}, but the idea was already known to some experts.
 
 \begin{defn}
 \label{superproductsystem}
 A super-product system of Hilbert spaces is a one parameter family
 of separable Hilbert spaces $\{H_t: t > 0 \}$, together with isometries $$U_{s,t} : H_s \otimes H_t ~\mapsto H_{s+t}~ \mbox{for}~ s, t \in (0,\infty),$$
 satisfying the following two axioms of associativity and measurability.

 \medskip
 \noindent(i) (Associativity) For any $s_1, s_2, s_3 \in (0,\infty)$
 $$U_{s_1, s_2 + s_3}( 1_{H_{s_1}} \otimes U_{s_2 ,
 s_3})=  U_{s_1+ s_2 , s_3}( U_{s_1 ,
 s_2} \otimes 1_{H_{s_3}}).$$

 \noindent (ii) (Measurability)  The space $\H=\{(t, \xi_t): t \in (0,\infty), \xi_t \in H_t\}$ is equipped with a structure of standard Borel space that is compatible with the projection $p:\H\mapsto (0,\infty)$ given by $p((t, \xi_t)=t$, tensor products and the inner products (see 3.1.2, \cite{Arveson book}). 
\end{defn}

A super-product system is an Arveson product system if the isometries $U_{s,t}$ are unitaries and further the axiom of local triviality is satisfied, which is equivalent to the existence of countable measurable total set of sections. 
%That is there exists a collection  $\{\xi^n_t: n\in \Nat, t> 0, \xi^n_t\in H_t\}$ such that $\{\xi^n_t: t> 0\}$ is a measurable section for each $n\in\Nat$, and $\{\xi^n_t:~n\in\Nat\}$ is total for each $t>0$.
 
 \begin{prop} Let $\m\subset B(H)$ be a factor acting standardly with cyclic and separating vector $\Omega$ and $\alpha$ an \en-semigroup on $\m$. For each $t>0$, let
 $$H^{\alpha,\Omega}_t=E^\alpha_t \cap E^{\alpha^\Omega}_t=\{ X\in B(H):~\forall_{m\in\m}~Xm=\alpha_t(m)X, \forall_{m'\in\m'}~Xm'=\alpha^\Omega_t(m')X \}, $$
 then $H^{\alpha,\Omega}=\{H^{\alpha,\Omega}_t: t>0\}$ is a concrete super-product system with respect to the the family of isometries $U_{s,t}\left(X\ot Y \right)=XY$.
 \end{prop}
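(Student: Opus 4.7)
The plan is to check in turn the four ingredients that a super-product system requires: the Hilbert-space structure on each fiber, the well-definedness and isometry property of the multiplication maps $U_{s,t}$, associativity, and measurability.

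First, I would show that each $H^{\alpha,\Omega}_t$ is a Hilbert space under the inner product $\langle X,Y\rangle 1_H := X^*Y$. For any $X,Y\in H^{\alpha,\Omega}_t$ and $m\in\m$ the intertwining relation gives
\[
mX^* = (Xm^*)^* = (\alpha_t(m^*)X)^* = X^*\alpha_t(m),
\]
so $mX^*Y = X^*\alpha_t(m)Y = X^*Ym$; the analogous computation with $m'\in\m'$ uses $Xm' = \alpha^\Omega_t(m')X$. Therefore $X^*Y\in (\m\cup\m')' = \Comp 1_H$ since $\m$ is a factor, and $\langle\cdot,\cdot\rangle$ is a bona fide sesquilinear form. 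Completeness is inherited from the operator-norm topology on $B(H)$ (which coincides with the Hilbert-space norm on $H^{\alpha,\Omega}_t$), and separability from that of $H$.

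Next, I would verify that $U_{s,t}$ is well-defined and isometric. If $X\in H^{\alpha,\Omega}_s$ and $Y\in H^{\alpha,\Omega}_t$, then for $m\in\m$,
\[
XYm = X\alpha_t(m)Y = \alpha_s(\alpha_t(m))XY = \alpha_{s+t}(m)XY,
\]
and similarly $XYm' = \alpha^\Omega_{s+t}(m')XY$ for $m'\in\m'$, giving $XY\in H^{\alpha,\Omega}_{s+t}$. On elementary tensors,
\[
\langle X_1Y_1, X_2Y_2\rangle 1_H = Y_1^*(X_1^*X_2)Y_2 = \langle X_1,X_2\rangle\langle Y_1,Y_2\rangle 1_H,
\]
so the bilinear map $(X,Y)\mapsto XY$ is isometric on the algebraic tensor product and extends by continuity to an isometry on the Hilbert-space tensor product. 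Associativity $U_{s_1,s_2+s_3}(1\ot U_{s_2,s_3}) = U_{s_1+s_2,s_3}(U_{s_1,s_2}\ot 1)$ is then immediate from the associativity of operator multiplication.

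The main obstacle is the measurability axiom. Here my approach is to exploit that
\[
H^{\alpha,\Omega}_t = E^\alpha_t \cap E^{\alpha^\Omega}_t
\]
is the intersection of fibers of two concrete product systems of Hilbert modules, namely those associated with $\alpha$ (acting on $\m$) and with $\alpha^\Omega$ (acting on $\m'$), both realized inside $B(H)$. By Section \ref{pdct}, each of $\E^\alpha$ and $\E^{\alpha^\Omega}$ is a standard Borel subset of $(0,\infty)\times B(H)$ for the product of the usual topology and the strong operator topology, so their intersection $\mathcal{H}^{\alpha,\Omega}$ is again a Borel, hence standard Borel, subset compatible with the projection to $(0,\infty)$. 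Compatibility with the inner products and with the multiplication (tensor) maps then follows because $(t,X,Y)\mapsto X^*Y$ and $(s,X,t,Y)\mapsto (s+t,XY)$ are Borel operations in the strong topology.
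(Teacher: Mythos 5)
Your proof is correct and follows essentially the same route as the paper's: verify that $X^*Y$ lands in $(\m\cup\m')'=\Comp 1$, check closure under operator multiplication and that $U_{s,t}$ is isometric, and derive measurability from that of the two product systems of Hilbert modules $(E^\alpha_t)$ and $(E^{\alpha^\Omega}_t)$. The only difference is that you spell out the computations the paper labels ``routine'' and ``straightforward''.
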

 
 \begin{proof} It is routine to verify that
%  Firstly note that, for any $m\in\m,~m'\in\m'$ and $X,Y\in H^{\alpha,\Omega}_t$,  $$X^*Ym=X^*\alpha_t(m)Y=mX^*Y,\quad\text{and}\quad X^*Ym'=X^*\alpha^\Omega_t(m')Y=m'X^*Y,$$ so 
$X^*Y\in (\m\cup\m')'=\Comp1$ for any $X, Y \in H^{\alpha,\Omega}_t$. Clearly each $H^{\alpha,\Omega}_t$ is closed under the operator norm, and this coincides with the norm induced by the inner-product $\ip{X}{Y}1:=X^*Y$, hence each $H^{\alpha,\Omega}_t$ is a Hilbert space with respect to this inner product. It is straightforward to check for $X\in H^{\alpha,\Omega}_s$, $Y\in H^{\alpha,\Omega}_t$, that $XY\in H^{\alpha,\Omega}_{s+t}$ and that the map $U_{s,t}(X \ot Y)=XY$ is an isometry. Since $H^{\alpha,\Omega}_t =E^\alpha_t\cap E^{\alpha'}_t$ and the measurability axiom follows from the measurability of the product system of Hilbert modules $(E^\alpha_t)_{t>0}$ and $(E^{\alpha'}_t)_{t>0}$.
 %MEASURABILITY!!!!ARV????
 \end{proof}

\begin{defn} By an isomorphism between super product systems $(H^1_t, U^1_{s,t})$ and $(H^2_t, U^2_{s,t})$ we mean an isomorphism of Borel spaces $V:\H^1 \mapsto \H^2$ whose restriction to each fiber provides an unitary operator $V_t:H^1_t\mapsto H^2_t$ satisfying \begin{equation*}\label{prodiso}V_{s+t}U^1_{s,t}= U_{s,t}^2 (V_s \otimes V_t).\end{equation*} 
\end{defn}

 A priori, the super-product system appears to depend upon the chosen state $\Omega$. The following theorem shows that this is not the case.
 
 \begin{thm}\label{sps theorem}
  Let $\alpha$ and $\beta$ be \en-semigroups acting standardly on respective factors $\m$ and $\n$ with cyclic and separating vectors $\Omega_1$ and $\Omega_2$. If $\alpha$ and $\beta$ are cocycle conjugate then $H^{\alpha,\Omega_1}$ and $H^{\beta,\Omega_2}$ are spatially isomorphic.
 \end{thm}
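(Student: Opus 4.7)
The strategy is to split cocycle conjugacy into two cases, namely (i) (spatial) conjugacy and (ii) cocycle perturbation (same algebra, same cyclic separating vector), since a general cocycle conjugacy is a composition of the two. In each case I will exhibit an explicit unitary--or family of unitaries--on $B(H)$ that carries $H^{\alpha,\Omega_1}_t$ onto $H^{\beta,\Omega_2}_t$ and check the four required properties: well-definedness (lands in the right intertwiner intersection), isometric inner product, compatibility with the product maps $U_{s,t}^\alpha(X\otimes Y)=XY$, and measurability.

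Case (i): conjugacy. Here I simply invoke Proposition \ref{doubly intertwining unitary shizzle}, which supplies a single unitary $W=UV\colon H_1\to H_2$ implementing the spatial conjugacy of \emph{both} pairs $(\alpha,\beta)$ and $(\alpha^{J_{\Omega_1}},\beta^{J_{\Omega_2}})$ simultaneously. Conjugation $X\mapsto WXW^*$ then carries $E^\alpha_t$ to $E^\beta_t$ and $E^{\alpha^{J_{\Omega_1}}}_t$ to $E^{\beta^{J_{\Omega_2}}}_t$, hence their intersections. Product compatibility is immediate: $W(XY)W^*=(WXW^*)(WYW^*)$. Measurability is automatic.

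Case (ii): cocycle perturbation. Assume $\m=\n$, $\Omega_1=\Omega_2=\Omega$, and $\beta_t=U_t\alpha_t(\cdot)U_t^*$ for an $\alpha$-cocycle $(U_t)$. Following the proof of Proposition \ref{doubly intertwining unitary shizzle}, $V_t:=J_\Omega U_tJ_\Omega$ is an $\alpha^{J_\Omega}$-cocycle implementing $\beta^{J_\Omega}$. The natural candidate is
\[
\Phi_t\colon H^{\alpha,\Omega}_t\to H^{\beta,\Omega}_t,\qquad \Phi_t(X):=U_tV_tX.
\]
A direct calculation, using $U_t\in\m$, $V_t\in\m'$ and hence $U_tV_t=V_tU_t$, shows that if $X$ intertwines $\alpha_t$ on $\m$ and $\alpha_t^{J_\Omega}$ on $\m'$, then $U_tV_tX$ intertwines $\beta_t$ on $\m$ and $\beta_t^{J_\Omega}$ on $\m'$; the key identity is $V_t^*U_tV_t=U_t$, which reduces $\beta_t(m)(U_tV_tX)$ to $U_tV_t\alpha_t(m)X=U_tV_tXm$ and similarly on the commutant side. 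Isometry follows from $\Phi_t(X)^*\Phi_t(Y)=X^*V_t^*U_t^*U_tV_tY=X^*Y$, and surjectivity from the obvious inverse $Y\mapsto V_t^*U_t^*Y$.

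For the product axiom one computes, for $X\in H^{\alpha,\Omega}_s$ and $Y\in H^{\alpha,\Omega}_t$,
\[
\Phi_s(X)\Phi_t(Y)=U_sV_sXU_tV_tY=U_sV_s\alpha_s(U_t)\alpha_s^{J_\Omega}(V_t)XY,
\]
using that $X$ intertwines both semigroups. Commuting $V_s\in\m'$ past $\alpha_s(U_t)\in\m$ and recognising $U_s\alpha_s(U_t)=U_{s+t}$ and $V_s\alpha_s^{J_\Omega}(V_t)=V_{s+t}$ yields $\Phi_{s+t}(XY)$. Measurability of $t\mapsto\Phi_t$ is a consequence of the strong continuity of $(U_t)$ and $(V_t)$, combined with the Borel structure on the product systems of Hilbert modules discussed in Section \ref{pdct}.

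The main obstacle is pinpointing the correct implementing unitary in case (ii): neither $U_t$ alone nor $V_t$ alone preserves the second intertwining relation, and it is only the product $U_tV_t$ (equivalently $V_tU_t$), combined with the commutation of $\m$ with $\m'$ and the two cocycle identities, that yields a map respecting both intertwinings and the product structure simultaneously. Once this is found, every other step is a short calculation.
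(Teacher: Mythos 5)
Your proposal is correct and follows essentially the same route as the paper: the paper splits the argument into three stages (change of cyclic separating vector via Araki's theorem, conjugacy, and cocycle perturbation), and in the last stage uses exactly the same map, left multiplication by $J_\Omega U_t J_\Omega U_t = V_t U_t = U_t V_t$. Your only variation is that you fold the first two stages into a single application of Proposition \ref{doubly intertwining unitary shizzle}, which is a legitimate shortcut since that proposition already encapsulates both the $\Omega$-independence and the conjugacy intertwining.
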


 \begin{proof}
  We prove the theorem in three stages. First we show that for any two cyclic and separating vectors $\Omega_1$, $\Omega_2$, $H^{\alpha,\Omega_1}$ and $H^{\alpha,\Omega_2}$ are isomorphic. By Theorem \ref{Araki's modular theorem} there exists a unitary $V\in \m'$ such that $J_{\Omega_1}J_{\Omega_2}m'J_{\Omega_2}J_{\Omega_1}=Vm'V^*$ for any $m'\in\m'$. We claim the maps $H^{\alpha}_t\ni X\mapsto VXV^*$ give the required isomorphism. Indeed, $VXV^*$ is clearly an intertwiner for $\alpha$, and
  \begin{align*}VXV^*m'&=VX(V^*m'V)V^*=VJ_{\Omega_1}\alpha_t(J_{\Omega_1}V^*m'VJ_{\Omega_1})J_{\Omega_1}XV^*\\
   &=J_{\Omega_2}\alpha_t(J_{\Omega_2}m'J_{\Omega_2})J_{\Omega_2}VXV^*,
  \end{align*}
  so $VXV^*$ is an intertwiner for $\alpha^{\Omega_2}$. 
  
  Next, if $\alpha$ and $\beta$ are conjugate then, letting $U$ be the unitary implementing the conjugacy, we get an isomorphism $Ad_{J_\Omega UJ_\Omega U}:H^{\alpha,\Omega}_t\to H^{\beta,U\Omega}_t$.  Lastly, if $\beta$ is a cocycle perturbation of $\alpha$ by the cocycle $(U_t)_{t\geq0}$, then left multiplication by $J_\Omega U_tJ_\Omega U_t$ gives the required family of unitaries $H_t^{\alpha,\Omega}\to H_t^{\beta,\Omega}$.  
 \end{proof}
 
 We will thus talk freely of the (abstract) super-product system $\{H^{\alpha}_t,U_{s,t}\}$ for $\alpha$. As in the case of product systems, one can define units and an index for a super-product system (see \cite{Arveson book}, Section 3.6), and the index of the super-product system coincides with the coupling index of the \en-semigroup.
 
 \newcommand{\typeIsps}{\ensuremath{H_t\ot\ol{H}_t}}
 \newcommand{\tIsps}[1]{\ensuremath{H_t^{#1}\ot\ol{H}^{#1}_t}}
 \begin{ex}\label{H ot barH}
An Arveson product system $(\ol{H_t}, \ol{U}_{s,t})$ is dual to another Arveson product system if there exists a family of anti-unitaries $J_t: H_t\mapsto \ol{H_t}$ satisfying $$J_{s+t}U_{s,t}=\ol{U}_{s,t}\left( J_s \ot J_t\right).$$ There is no need to impose any measurability condition, since it is proved in \cite{lieb} that all measurable structures on an Arveson product system are isomorphic. Clearly dual of an Arveson product system is determined uniquely up to isomorphism, and double dual is isomorphic to the original Arveson system. Type I Arveson systems are self dual, but in general it is not clear whether every Arveson product system is isomorphic to its dual.

Suppose  $\alpha$ is an \en-semigroup acting standardly on a type I factor $\m$, as in example \ref{B(H) example}. If $\{H_t:t > 0\}$ is the Arveson's product system for $\alpha$,  then  the Arveson system of the dual \en-semigroup $\alpha^\Omega$ is dual to $\{H_t:t > 0\}$; indeed the map $T\mapsto J_{\Omega}TJ_{\Omega}$ provides the required dual isomorphism. We further have  $E^\alpha_t=[H_t \ot B(\ol{H})]$ and $E^{\alpha^\Omega}_t = [B(H)\ot \ol{H_t}]$. So the super-product system is 
$H^\alpha_t = \typeIsps$.
 \end{ex}
 
 \begin{defn}
Let $\alpha$ be an \en-semigroup on $\m\subseteq B(H)$, which is in a standard form. The super-product system $(H_t)_{t > 0}$, associated with $\alpha$ is said to be full if $H_tH=H$ for all $t > 0$.
\end{defn}

\begin{rem}
If a super product system $(H_t)_{t > 0}$ associated with an \en-semigroup $\alpha$ is full then it is a product system. In fact if $R \in H_{s+t}$ satisfying $R^*ST=0$ for all $S\in H_s, T\in H_t$, then $R^*\xi=0$ for all $\xi \in H$. But the converse is not true. The super product associated with free flows on $L(F_\infty)$  are one dimensional product systems and they are not full (see \cite{MS}). The tensor products of free flows with  \en-semigroups on a type I factor have associated super product systems which are infinite dimensional product systems but not full. 

The super product system associated with an \en-semigroup is full if and only if it is canonically extendable, as defined in Proposition \ref{extn}. If $\{U_i: i \in I\}$ be an orthonormal basis for $H_t$, then thanks to the fullness, $U_i$s are isometries with ranges summing to the whole of $H$. The \en-semigroup $\sigma_t(X)=\sum_{i \in I} U_iX U_i^*$ provides the canonical extension. The converse is well known (see \cite{Arveson book}).
\end{rem}

 \begin{prop}\label{tensorSPS}
Let $\alpha$ and $\beta$ be \en-semigroups on factors $\m_1$ and $\m_2$, then the super-product system for $\alpha\ot\beta$ is the tensor product of the super-product systems for $\alpha$ and $\beta$. \end{prop}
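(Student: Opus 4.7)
The plan is to show $H^{\alpha\otimes\beta}_t$ is isomorphic to $H^\alpha_t \otimes H^\beta_t$ via the natural map $X\otimes Y \mapsto X\otimes Y$ (operator tensor product), and to check that this identification intertwines the product-system isometries. First I would note that with $\Omega = \Omega_1\otimes\Omega_2$ the modular conjugation factors as $J_\Omega = J_{\Omega_1}\otimes J_{\Omega_2}$, so $(\alpha\otimes\beta)^{J_\Omega} = \alpha^{J_{\Omega_1}} \otimes \beta^{J_{\Omega_2}}$. Hence $H^{\alpha\otimes\beta}_t$ consists of those $Z\in B(H_1\otimes H_2)$ satisfying $Z(m_1\otimes m_2) = (\alpha_t(m_1)\otimes\beta_t(m_2))Z$ and $Z(m_1'\otimes m_2') = (\alpha_t^{J_{\Omega_1}}(m_1')\otimes \beta_t^{J_{\Omega_2}}(m_2'))Z$. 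The easy direction $H^\alpha_t \otimes H^\beta_t \hookrightarrow H^{\alpha\otimes\beta}_t$ then follows by factoring these intertwiner conditions, and the embedding is isometric since $(X\otimes Y)^*(X'\otimes Y') = \langle X,X'\rangle\langle Y,Y'\rangle 1$. Compatibility with the isometries $U_{s,t}$ is immediate from $(X_s\otimes Y_s)(X_t\otimes Y_t) = X_sX_t\otimes Y_sY_t$.

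For the reverse inclusion I would use a slicing argument. Given $Z\in H^{\alpha\otimes\beta}_t$, define the slice $Z_{\eta,\xi}\in B(H_1)$ by $\langle u, Z_{\eta,\xi}v\rangle := \langle u\otimes\eta, Z(v\otimes\xi)\rangle$; applying the intertwiner conditions to elements of the form $m_1\otimes 1$ and $m_1'\otimes 1$ shows $Z_{\eta,\xi}\in H^\alpha_t$. Fix orthonormal bases $\{e_k\}$ of $H^\alpha_t$ and $\{f_l\}$ of $H^\beta_t$ (so $e_k^* e_{k'} = \delta_{kk'}1$ and similarly for $f_l$). Since every slice $Z_{\eta,\xi}$ lies in the closed span of $\{e_k\}$, the range of $Z$ lies in $pH_1\otimes H_2$ where $p=\sum_k e_ke_k^*$; equivalently $(p\otimes 1)Z = Z$.

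I would then examine the operator $(e_k\otimes 1)^*Z$: each slice of it equals $e_k^*Z_{\eta,\xi}\in\Comp 1$ (using $(H^\alpha_t)^*H^\alpha_t\subseteq\Comp 1$), so the operator is of the form $1\otimes Y_k$ for a unique $Y_k\in B(H_2)$. Transferring the intertwiner conditions of $Z$ via the rewrites $e_k^*\alpha_t(m_1) = m_1 e_k^*$ and $e_k^*\alpha_t^{J_{\Omega_1}}(m_1') = m_1' e_k^*$, both coming from $e_k\in H^\alpha_t$, gives $Y_k\in H^\beta_t$. The identity $(p\otimes 1)Z = Z$ then expands as the strongly convergent sum $Z = \sum_k e_k\otimes Y_k$, and the pairwise orthogonality $(e_k\otimes Y_k)^*(e_{k'}\otimes Y_{k'}) = \delta_{kk'}\|Y_k\|^2 1$ yields $\|Z\|^2 = \sum_k\|Y_k\|^2$ in $H^{\alpha\otimes\beta}_t$. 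Expanding each $Y_k = \sum_l d_{kl}f_l$ finally writes $Z = \sum_{k,l} d_{kl}(e_k\otimes f_l)$ as an element of $H^\alpha_t\otimes H^\beta_t$. The Borel structure on $H^{\alpha\otimes\beta}$ matches the tensor product Borel structure via measurable selection of these bases.

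The main obstacle will be the identification $(e_k\otimes 1)^*Z = 1\otimes Y_k$ with $Y_k\in H^\beta_t$, which packages both the cross-slicing (so that $(e_k\otimes 1)^*Z$ has product form) and the transfer of the two intertwiner conditions from $Z$ across to $Y_k$. This step crucially uses that $e_k$ intertwines both $\alpha_t$ on $\m_1$ and $\alpha_t^{J_{\Omega_1}}$ on $\m_1'$; the full super-product system condition on $H^\alpha_t$ is needed, not merely membership in the product system $E^\alpha_t$.
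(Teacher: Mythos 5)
Your proposal is correct, and its engine is the same as the paper's: compressing an element of $H^{\alpha\otimes\beta}_t$ in one tensor factor produces an operator intertwining the other factor's pair $(\alpha_t,\alpha_t^{J})$, i.e.\ an element of that factor's super-product system. The difference is in how this observation is deployed. The paper uses it exactly once, inside a proof by contradiction: it takes $X\perp H^\alpha_t\otimes H^\beta_t$, notes that $X\Omega$ must lie in $\left(\left[H^\alpha_tH_1\right]\otimes\left[H^\beta_tH_2\right]\right)^\perp$, chooses (after a ``without loss of generality'' that glosses over a genuine case split) a vector $\xi$ so that the slice $T=E^\xi XE_{\Omega_2}$ is nonzero with $T\Omega_1\in(H^\alpha_tH_1)^\perp$, and derives a contradiction from $T\in H^\alpha_t$. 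You instead run the slicing systematically to get the explicit expansion $Z=\sum_{k,l}d_{kl}\,e_k\otimes f_l$: first the support of $\mathrm{Ran}(Z)$ in $\left[H^\alpha_tH_1\right]\otimes H_2$, then $(e_k\otimes 1)^*Z=1\otimes Y_k$ with $Y_k\in H^\beta_t$ and $\sum_k\|Y_k\|^2=\|Z\|^2$. This buys a constructive isomorphism rather than a density-plus-contradiction argument, and it avoids both the paper's case analysis and its detour through fullness (``the statement is true for product systems''). Two small repairs: (i) the rewrite $e_k^*\alpha_t(m_1)=m_1e_k^*$ is not what transfers the intertwining to $Y_k$ --- only the trivial fact that $e_k^*\otimes 1$ commutes with $1\otimes\beta_t(m_2)$ and $1\otimes\beta_t^{J_{\Omega_2}}(m_2')$ is needed there; where membership of $e_k$ in $H^\alpha_t$ rather than merely $E^\alpha_t$ is genuinely used is in forcing $e_k^*Z_{\eta,\xi}\in\Comp 1$, so your closing caveat is aimed at the right step. (ii) You should record that $H^\alpha_t$ is separable (via the isometry $X\mapsto X\Omega_1$ into $H_1$, using $X^*X=\|X\|^2 1$) so that the countable orthonormal basis $\{e_k\}$ exists, and note that the degenerate case $H^\alpha_t=\{0\}$ is handled directly: all slices vanish, hence $Z=0$.
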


 \begin{proof}
Assume $\m_1\subseteq B(H_1)$ and $\m_2\subseteq B(H_2)$  are in standard form with respective cyclic and separating vectors $\Omega_1$ and $\Omega_2$, evaluating faithful normal states $\varphi_1$ on $\m_1$ and $\varphi_2$ on $\m_2$ respectively.
%If $S^\alpha$ and $S^\beta$ are sections in the super-product system for $\alpha$, respectively $\beta$, then $S^{\alpha\ot\beta}:=S^{\alpha}\otimes S^\beta$ is a section in the super-product system for $\alpha\ot\beta$, and 
%If $J_1$ and $J_2$ are modular conjugation operators for $\varphi_1$ and $\varphi_2$ respectively, then the modular conjugation for $\varphi_1\ot\varphi_2$ is given by $J=J_1 \ot J_2$.   
It is clear that $H^\alpha_t \ot H^\beta_t \subseteq H^{\alpha\ot \beta}_t$, we prove the other inclusion as follows.

Let $X \in H^{\alpha\ot \beta}_t$. 
%Suppose both $(H^\alpha_t)_{t>0}$ and $(H^\beta_t)_{t>0}$ are full. Then if $X^*(T\ot S)=0$ for all $T \in H^\alpha_t$ and $S \in H^\beta_t$ we have $X^*=0$ on the whole of $H_1 \ot H_2$. Hence $H^\alpha_t \ot H^\beta_t = H^{\alpha\ot \beta}_t$. 
%To complete the proof of the proposition it is enough to prove that $$H^\alpha_t\Omega_1 \ot H^\beta_t\Omega_2 = H^{\alpha\ot \beta}_t(\Omega_1\ot \Omega_2).$$
Since the statement of the proposition is true for product systems, we assume that at least one of the super-product systems is not full.  Let $\Omega= \Omega_1 \ot \Omega_2$. Notice that any operator in the super-product system is determined by its value on the cyclic vector, through the relation $$X(m_1 \ot m_2) \Omega=\left(\alpha_t(m_1)\ot \beta_t(m_2)\right)X\Omega.$$ Suppose $X \in H^{\alpha\ot \beta}_t$ such that $X \perp H^\alpha_t \ot H^\beta_t$, then $X^*$ is zero on $H_t^\alpha H_1\ot H_t^\beta H_2$. This implies that the projection of $X\Omega$ onto $H_t^\alpha H_1\ot H_t^\beta H_2$ is zero.  Our strategy is to show that the projection of $X \Omega$ onto  $\left(H_t^\alpha H_1\ot H_t^\beta H_2\right)^\perp$ is also $0$, so it follows that $X=0$. 

Assume towards a contradiction that $0\neq X \Omega \in \left(H_t^\alpha H_1\ot H_t^\beta H_2\right)^\perp$. Let $\tilde{H}_t=\left(H_t^\alpha H_1\right)^\perp\subseteq H_1$.  Without loss of generality we assume that $H_1$ is not full, and hence that $\tilde{H}_t\neq \{0\}$ and that there exists a unit vector $\xi \in H_2$ such that $0\neq (1\ot P_\xi)X\Omega \in \tilde{H}_t \ot \Comp\xi$, where $P_\xi$ is the projection onto the one dimensional subspace spanned by $\xi$. (The other case can be dealt similarly).
 
Let $E_\xi:H_1\to H_1\ot H_2$ denote the isometry $\eta\mapsto\eta\ot\xi$, write $E^\xi:H_1\ot H_2\to H_1$ for its adjoint and note that $E_\xi E^\xi=1\ot P_\xi$. Define $T\in B(H_1)$ by $T=E^\xi XE_{\Omega_2}$, so that
 $$0\neq T\Omega_1 =E^\xi X\Omega\in\wt{H}_t.$$
 Then, for all $m_1,m_2\in\m$,
 \begin{align*}
  Tm_1m_2\Omega_1&=E^\xi X(m_1m_2\ot1)\Omega=E^\xi(\alpha_t(m_1)\ot1)X(m_2\ot1)\Omega\\
  &=\alpha_t(m_1)E^\xi X(m_2\ot1)\Omega=\alpha_t(m_1)Tm_2\Omega_1
 \end{align*}
 so that $Tm_1=\alpha_t(m_1)T$ and, similarly, $Tm_1'=\alpha_t'(m_1')T$ for all $m'\in\m'$. Thus $T\in H^\alpha_t$, contradicting $T\Omega_1\in\wt{H}_t$.
\end{proof}
 
 For a super-product system $H=\{H_t:t\geq 0\}$, we say $K=\{K_t\subseteq H_t:t\geq 0\}$ a super-product subsystem  if the product map of $H$ restricts to the product map of $K$.
 
 \begin{rem}\label{units tensors} If two super-product systems $H^1$ and $H^2$ can be embedded into product systems as super-product subsystems, then for any unit $u=\{u_t:t\geq0\}$ in $H^1\ot H^2$ there exists units $u^1=\{u_t^1:t\geq0\}$ in $H^1$ and $u^2=\{u_t^2:t\geq0\}$ in $H^2$ such that $u_t=u^1_t\ot u^2_t$. Since any unit in the super-product subsystem is also a unit for the bigger product system, this follows immediately from the corresponding statement in \cite{Arveson book} for product systems. 
 \end{rem}
 
%\remark: Let $\mathcal{U}((H_t))$ denote the collection of units in the super-product system $(H_t)_{t\geq 0}$. 

 \section{\en-semigroups on II$_\infty$ factors}\label{IIinfty}
 
In this section we consider tensor products of \en-semigroups on a type I factor with \en-semigroups on type II$_1$ factors. This way we produce several (both countable and uncountable) families of  \en-semigroups on II$_\infty$ factors. Let $\r$ be the hyperfinite II$_1$ factor, and we always assume $\r \subseteq L^2(\r)$ with respect to the tracial state. Let $\r_\infty=B(H)\otimes \r$, then $\r_\infty$ is the hyperfinite II$_\infty$ factor.

In this section, $\alpha^{n}$ denotes either the Clifford flow or the even Clifford of rank $n $, with $n \in \overline{\Nat}$, and  when $n$ is fixed we just denote it by $\alpha$. The super-product systems of Clifford flows and even Clifford flows are computed in \cite{MS}. 
Set $$H_t^{e,n} =[\xi_1 \wedge\xi_2 \wedge \cdots \wedge \xi_{2m};~ \xi_1, \xi_2 \cdots \xi_{2m} \in L^2((0,t), \kil^\Comp), ~m \in \Nat_0],$$ for all $t\geq 0$, and $\dim(\kil)=n\in \overline{\Nat}$. We may write just $H^e_t$ in many instances when $n$ is fixed.  The super-product system of the Clifford flow (isomorphic to the super  product system of the even the Clifford flow) of rank $n$ is described by $H_t^{\alpha^{n}}\Omega =H^{e,n}_t$ for all $t\geq 0,$ where $\Omega\in L^2(\r)$ is the vacuum vector. The isometries $U_{s,t}:H_s^{\alpha^n}\Omega \otimes H_t^{\alpha^n}\Omega\mapsto H_{s+t}^{\alpha^n}\Omega$ are given by 
\begin{align*}  U_{s,t}((\xi_1 \wedge\xi_2 \wedge \cdots \wedge \xi_{2m}) \otimes (\eta_1 \wedge\eta_2  \wedge \cdots \wedge \eta_{2m'}) )&\\ =\xi_1 \wedge\xi_2  \wedge \cdots \wedge \xi_{2m}\wedge T_s \eta_1 \wedge T_s \eta_2\wedge  \cdots \wedge T_s \eta_{2m'}& \end{align*} where $\xi_1, \xi_2\cdots \xi_{2m} \in L^2(0,s)$, $\eta_1, \eta_2 \cdots \eta_{2m} \in L^2(0,t)$.  We consider different families of \en-semigroups of the form $\theta \ot \alpha$ on $\r_\infty$  by varying $\theta$ on $B(H)$.

\subsection{Tensoring with CCR flows}
Throughout this subsection, let $\theta^{m}=\{\theta^{m}_t:t\geq 0\}$ denote the CCR flow of index $m\in \overline{\Nat}$ on  $B(H^m)$, where $H^m=\Gamma_s(L^2(\Rplus,\kil^\Comp))$, $dim(\kil)=m$. The (Arveson) product system of Hilbert spaces associated with $\theta^m$ is the well-known exponential product system $\{H^m_t: t \geq 0\}$ of index $m$ which are described as follows:  $H^m_t = \Gamma_s(L^2((0,t),\kil^\Comp))$ with $dim(\kil)=m$ and the unitaries $U_{s,t}: H^m_s\ot H^m_t\mapsto H^m_{s+t}$ are the extensions of $\varepsilon(x)\ot \varepsilon(y)\mapsto \varepsilon(x+y)$.

\begin{thm}
$\theta^{m} \otimes \alpha^{n}$ is cocycle conjugate to $\theta^{p} \otimes \alpha^{q}$ if and only if $(m,n) =(p,q)$.
\end{thm}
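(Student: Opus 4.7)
The ``if'' direction is immediate, since the cocycle conjugacy class of a tensor product depends only on the cocycle conjugacy classes of the factors. For the converse, assume $\theta^{m}\ot\alpha^{n}$ and $\theta^{p}\ot\alpha^{q}$ are cocycle conjugate. The strategy is to read off $m$ and $n$ separately from invariants of the super-product system, the first from the units and the second from a $C^{*}$-semiflow invariant.

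Combining Theorem~\ref{sps theorem}, Proposition~\ref{tensorSPS} and Example~\ref{H ot barH}, the hypothesis yields a spatial isomorphism of super-product systems
\[
(H^{m}\ot\ol{H^{m}})\ot H^{e,n}\;\cong\;(H^{p}\ot\ol{H^{p}})\ot H^{e,q},
\]
where $H^{m}$ stands for the Arveson exponential product system of index $m$ and $H^{e,n}$ is the Clifford super-product system recalled at the start of this section.

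To extract $m$, I would first verify that the only units of $H^{e,n}$ are the scalar exponentials $t\mapsto e^{\lambda t}\Omega$. Write a candidate unit in the particle-number grading as $v_{t}=\sum_{k\geq 0}v_{t}^{(2k)}$; if the highest nonvanishing index $K$ is positive, the multiplicativity relation $v_{s}v_{t}=v_{s+t}$ produces the top term $v_{s}^{(2K)}\wedge T_{s}v_{t}^{(2K)}$ of degree $4K>2K$ on the left, forcing $v_{s}^{(2K)}\wedge T_{s}v_{t}^{(2K)}=0$ for all $s,t$, which, together with the disjoint-support structure of the wedge and continuity in $t$, gives $v_{t}^{(2K)}=0$, a contradiction. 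Combined with Remark~\ref{units tensors} (applicable since $H^{e,n}$ sits inside the full Fermionic Fock product system), every unit of the tensor super-product system has the form $u\ot\lambda\Omega$ with $u$ a unit of $H^{m}\ot\ol{H^{m}}$. Hence the dimension of $H(\mathcal{U})$ for the tensor super-product system equals that of $H^{m}\ot\ol{H^{m}}$, which is $2m$ by Example~\ref{B(H) example}. Invariance of this dimension under super-product system isomorphism forces $m=p$.

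For $n$ the remaining task is to deduce $n=q$ from $(H^{m}\ot\ol{H^{m}})\ot H^{e,n}\cong(H^{m}\ot\ol{H^{m}})\ot H^{e,q}$; this is the main obstacle, since tensor cancellation is not automatic for super-product systems. The most promising route is to adapt the $C^{*}$-semiflow invariant of Proposition~\ref{semiflow} to the II$_{\infty}$ setting. For the \en-semigroup on $B(H)\ot\mathcal{R}$, the distributive property of commutants stated in the preliminaries gives
\[
(\theta^{m}\ot\alpha^{n})_{t}(B(H)\ot\mathcal{R})'\cap(B(H)\ot\mathcal{R})\;=\;\mathcal{A}_{\theta^{m}}(t)\ot\mathcal{A}_{\alpha^{n}}(t),
\]
a tensor product of a type~I factor and the even Clifford algebra over $L^{2}((0,t),\kil^{\Comp})$. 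The II$_{1}$ tensor component, intrinsically detectable through the unique finite normal tracial state on the Clifford side, should be a cocycle conjugacy invariant of the whole \en-semigroup, and Proposition~\ref{semiflow} would then force $n=q$. The principal difficulty will be making this factor-recovery rigorous, i.e.\ showing intrinsically that the II$_{1}$ tensor component of the semiflow depends only on the cocycle conjugacy class of $\theta^{m}\ot\alpha^{n}$, and verifying that the $C^{*}$-semiflow construction of \cite{MS} extends without loss to the II$_{\infty}$ setting.
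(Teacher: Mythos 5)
Your Step~1 ($m=p$) follows essentially the paper's route: by Proposition~\ref{tensorSPS} and Example~\ref{H ot barH} the super-product system is $(\tIsps{m})\ot H^{e,n}_t$, every unit factors by Remark~\ref{units tensors}, and since $H^{e,n}$ has only the canonical unit up to scalar, the coupling index is $2m$ and so $m=p$. Your particle-grading argument for uniqueness of units in $H^{e,n}$ is plausible but is exactly what the paper delegates to Section~8 of~\cite{MS}; as written it is sketchy (one needs more care than ``disjoint-support structure and continuity'' to kill the top degree component), but the strategy is sound and matches the paper's.

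The gap is in your Step~2, and it is the real content of the theorem. You correctly write the relative commutant as a tensor product $\mathcal{A}_{\theta^{m}}(t)\ot\mathcal{A}_{\alpha^{n}}(t)$ and correctly guess that the $C^{*}$-semiflow invariant of~\cite{MS} should decide $n=q$, but you then concede: ``The principal difficulty will be making this factor-recovery rigorous.'' That difficulty is precisely the heart of the argument, and it is \emph{not} handled by an intrinsic characterization of the II$_1$ tensor factor. The isomorphism of $C^{*}$-semiflows produced by cocycle conjugacy is $\phi_t=Ad_{VU_t}$ on $\mathcal{A}_{\theta^{m}}(t)\ot\mathcal{A}_{\alpha^{n}}(t)$, and there is no a priori reason that an isomorphism between $\mathcal{A}_{\theta^{m}}(t)\ot\mathcal{A}_{\alpha^{n}}(t)$ and $\mathcal{A}_{\theta^{m}}(t)\ot\mathcal{A}_{\alpha^{q}}(t)$ should preserve the two tensor legs (these are all copies of the hyperfinite II$_\infty$ factor in the limit, and the tensor split is far from unique at the level of a bare $*$-isomorphism). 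What the paper actually does is use the multi-units: it shows that the action of $VU_tJ U_t J$ on units of $\typeIsps$ is an automorphism of $(\mathcal{U},c)$, hence is implemented by a gauge cocycle $W_t$ of $\theta\ot\theta'$ (Arveson's classification in Section~3.8 of~\cite{Arveson book}), then uses fullness of $\typeIsps$ to establish the identity $Ad_{VU_t}(X\ot1)=Ad_{W_t}(X)\ot1$ for $X\in\theta_t(\m)'\cap\m$, from which the distributive property of tensors shows $Ad_{(W_t^*\ot1)VU_t}$ restricts to a trace-preserving isomorphism $1\ot\mathcal{A}^n_t\to 1\ot\mathcal{A}^q_t$ intertwining the semiflows. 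That is the step which lets Proposition~\ref{semiflow} be invoked, and it is missing from your proposal; without it the conclusion $n=q$ does not follow.
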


\begin{proof} We prove the theorem in two steps. First we assume the cocycle conjugacy and prove that $m=n$. In the next step we assume $m=n$ and prove that the cocycle conjugacy implies $p=q$.

\emph{Step 1: Assume $\theta^{m} \otimes \alpha^{n}$ is cocycle conjugate to $\theta^{p} \otimes \alpha^{q}$. }

Thanks to Proposition \ref{tensorSPS}, the super-product system of $\theta^{m} \otimes \alpha^{n}$ is given by $(\tIsps{m})\otimes H^{e,n}_t$. Since the super-product system $H^{e,n}$ can be embedded into the product system corresponding to the CAR flow (on type I factor) of index $n$, thanks to Remark \ref{units tensors}, units in $(\tIsps{m}) \otimes H^{e,n}_t$ are of the form $u_t \otimes v_t$, with $u_t$ a unit for $(\tIsps{m})_{t\geq 0}$ and $v_t$ a unit for $(H^{e,n}_t)_{t\geq 0}$. But the super-product system $(H^{e,n}_t)_{t\geq 0}$ has only the canonical unit, as the unique unit up to a scalar (see Section 8, \cite{MS}). So by comparing the coupling index we get $m=p$. 

\emph{Step 2: Take $\theta^{m}=\theta^{p}=\theta$,  $H^m_t=H_t$ and assume  $\theta \otimes \alpha^{n}$
is cocycle conjugate to $\theta \otimes \alpha^{q}$.}

Set $\m=B(H)$. We assume $\r_\infty=\m \ot \r\subseteq B(H \ot \ol{H})\ot B(L^2(\r))$ is in standard form, by identifying $\m$ with $B(H)\ot 1$, and (without loss of generality) that both the semigroups act on the same algebra.
Suppose that there exists a $\theta\ot\alpha^n$-cocycle $U$ in $\r_\infty$ and a unitary $V\in B(H  \ot \ol{H}\ot L^2(\r))$ such that
$$ \theta_t\ot \alpha_t^p= Ad_{VU_t}\circ(\theta_t\ot\alpha_t^n)\circ Ad_{V^*} \qquad \forall t\geq0 .$$
Let $(S^n_t)_{t\geq 0}, (S^p_t)_{t \geq 0}$ be the canonical units in $B(L^2(\m))$ for $\alpha^n$ and $\alpha^p$ respectively. Notice that $\theta$ and its complementary \en-semigroup $\theta'$ extends to $\theta\ot \theta'$ on $B(H \ot  \ol{H})$, and the super-product system $\typeIsps$ is the product system of Hilbert spaces associated with $\theta\otimes \theta'$. The multi-units of $\theta$ are just the units of $\typeIsps$ in the sense of Arveson (see \cite{Arveson book}). 

Let $u_t\otimes S_t^n$ be a unit for $\typeIsps \ot H^{e,n}_t $, with $u_t$ a unit for $\typeIsps$. 
Let $J=J_1 \otimes J_2$, with $J_1, J_2$ modular conjugation for $\m$ and $\r$, with respect to vacuum vectors $\Omega_1$ and $\Omega_2$ respectively. 
Let $U_t'=JU_tJ$. Then $(VU_t'U_t (u_t\otimes S^n_t) V^*)_{t\geq 0}$ is a unit for $(\typeIsps \otimes H^{e,p}_t)_{t\geq 0}$, which is of the form $(v_t \ot  S^p_t)_{t\geq 0}$, for some unit $(v_t)_{t\geq 0}$ for $\typeIsps$. Since the (left)action of $(U_t'U_t)_{t\geq 0}$ and $Ad_V$ on the units preserves the covariance function, the map $u\mapsto v$ also preserves the covariance function.  So there is an induced automorphism of $(\mathcal{U}, c)$ (see Definition 3.74 and Section 3.8, \cite{Arveson book}),  where $\mathcal{U}$ is the collection of units for $\typeIsps$ and $c$ is the corresponding covariance function.  As proved in Section 3.8, \cite{Arveson book}, this automorphism is given by a gauge cocycle of $\theta\ot \theta'$; so there exists a gauge cocycle $(W_t)_{t\geq 0}$ of $\theta\ot \theta'$ satisfying \begin{align}\label{unit products} VU_t'U_t (u_t\ot S^n_t)V^* & =W_tu_t\ot S^p_t \qquad \forall u_t \in \mathcal{U}.\end{align} 
It is also clear that $$(U_t'U_t)^*V^*(v_t\ot S^p_t)V=W_t^*v_t\ot  S^n_t \qquad \forall v_t \in \mathcal{U}.$$
For every choice of units $u_1, \cdots , u_n$ in $\typeIsps$, $t_1,\cdots, t_n\in \Rplus$ satisfying $t_1+\cdots +t_n=t$,
% and scalars $\lambda_1, \cdots \lambda_n\in \Comp$
we have
% \begin{align*} VU_t'U_t ( ( \lambda_1 u_{t_1}\cdots \lambda_n u_{n} ) \otimes S^n_t )V^*& = \lambda_1 (VU_{t_1}'U_{t_1} (u_{t_1}\ot S^n_{t_1} )V^* )\cdots  \lambda_n (VU'_{t_n}U_{t_n} (u_{t_n}\ot S^n_{t_n} )V^* )\\ & = \lambda_1 (W_{t_1}u_{t_1}\ot S^p_{t_1} )\cdots  \lambda_n (W_{t_n}u_{t_n}\ot S^p_{t_n} )\\ &=  (W_t \lambda_1 u_{t_1}\cdots \lambda_n u_{n} \otimes S^p_t ),
%  \end{align*}
\begin{align*} VU_t'U_t ( ( u_{t_1}\cdots  u_{n} ) \otimes S^n_t )V^*& = (VU_{t_1}'U_{t_1} (u_{t_1}\ot S^n_{t_1} )V^* )\cdots  (VU'_{t_n}U_{t_n} (u_{t_n}\ot S^n_{t_n} )V^* )\\ & =  (W_{t_1}u_{t_1}\ot S^p_{t_1} )\cdots  (W_{t_n}u_{t_n}\ot S^p_{t_n} )\\ &=  W_t u_{t_1}\cdots u_{n} \otimes S^p_t ,
 \end{align*}
 where we have used the properties of $(U_t)_{t\geq 0}$ and $(W_t)_{t\geq 0}$ being cocycles, $(u_t)_{t\geq 0},$ $(S^n_t)_{t\geq 0}$ and $(S^p_t)_{t\geq 0}$ being units, and equation (\ref{unit products}). Since the product system of a CCR flow is generated by units, (and by a similar argument) we get
 \begin{align}\label{TR} VU_t'U_t (T\ot S^n_t )V^* & =W_tT\ot  S^p_t;\qquad (U_t'U_t)^*V^* (R\ot S^p_t )V=W_t^*R\ot  S^n_t,  \end{align} for all $T, R \in \typeIsps$.

 Now, for any $X\in\theta_t(\m)'\cap\m$, $T\in\typeIsps$, we have
\begin{align*} VU_t (X\ot 1)U_t^*V^* (T\ot S_t^p) & = VU_t'U_t(X\ot 1)(U_t'U_t)^*V^* (T\ot S_t^p)V V^* \\
 &=VU_t'U_t(X\ot1) (W_t^*T\ot S_t^n)V^*\\ 
 &= VU_t'U_t (XW_t^*T\ot S_t^n) V^*
  \\ &=W_tXW_t^*T\ot S_t^p , \end{align*}
where we have used equation (\ref{TR}) and the fact that $X W_t^*T \in \typeIsps$.  
It follows that for any $\xi\in H \ot\ol{H}$ and $m'\in \r'\cap B(L^2(\r))$
\begin{align*}
 VU_t(X\ot 1)U_t^*V^* (T\xi\ot m'\Omega_2)&=(1\ot m')VU_t(X\ot 1)U_t^*V^*(T\xi\ot S_t^p\Omega_2)\\
  &=(1\ot m') (W_tXW_t^* T\xi \ot S_t^p\Omega_2)
  \\ &= (W_tXW_t^*\ot 1)(T\xi\ot m'\Omega_2).
\end{align*}
Since the product system $\typeIsps$ is full and $\Omega_2$ is cyclic for $\m'$, we have
\begin{align}\label{gaugecocycle} Ad_{VU_t} (X\otimes 1 )= & Ad_{W_t}(X)\otimes 1 \qquad \forall X \in \theta_t(\m)'\cap \m.\end{align}
 Since $U_t \in \r_\infty$ and $Ad_V$ is an automorphism of $\r_\infty$ it follows a fortiori that $Ad_{W_t}(X)\in \m$ for all $X \in \theta_t(\m)'\cap \m$. Now, from the explicit description of gauge cocycles given in Section 9.8 of \cite{Arveson book}, it follows that $W_t$ is product of gauge cocycles of $\theta$ and $\theta^\prime$, and we assume, without loss of generality, that $(W_t)_{t\geq 0}\subseteq  \m$ is a gauge cocycle of $\theta$.

Now we consider the $C^*-$semiflows associated with these \en-semigroups.  For $i=n,p$, let
$$ \mathcal{C}^i_t =  ((\theta_t\ot \alpha^i_t) (\r_\infty ) )' \cap\r_\infty; \qquad \mathcal{A}^i_t= \alpha^i_t (\r )' \cap\r \qquad t\geq0,$$ 
$$ \mathcal{C}^i = \overline{\bigcup_{t\geq 0} (((\theta_t\ot \alpha_t^i) (\r_\infty ))' \cap\r_\infty )}^{\|\cdot\|}; \qquad \mathcal{A}^i= \overline{\bigcup_{t\geq 0} (\alpha^i_t (\r )' \cap\r )}^{\|\cdot\|} .$$ 
The inductive limit $\phi$ of the maps $\phi_t:=Ad_{VU_t}|\mathcal{C}^n_t\to\mathcal{C}^p_t$ provides an isomorphism between $\mathcal{C}^n$ and $\mathcal{C}^p$ intertwining the $C^*-$semiflows.

By equation (\ref{gaugecocycle}), we have that
\begin{align*} Ad_{(W_t^*\ot 1)VU_t}(1\ot Y)(X\ot 1)&=  (W_t^*\ot 1)VU_t(1\ot Y) (X\ot 1)U_t^*V^*(W_t\ot 1)
  \\&=(X\ot 1) Ad_{(W_t^*\ot1)VU_t}(1\ot Y)
\end{align*} 
for all $X\in\theta_t(\m)\cap\m$ and $Y\in\r$. Hence, for all $Y\in\mathcal{A}^n_t$,
$$Ad_{(W_t^*\ot 1)VU_t}(1\ot Y)\in ((\theta_t(\m)'\cap\m)\ot 1)'\cap\mathcal{C}^p_t=1\ot\mathcal{A}^p_t,$$
where the latter equality follows from the distributive property of tensors.
It follows that for each $t\geq0$, $\phi_t$ restricts to a map from $1\ot\mathcal{A}^n_t$ to $1\ot\mathcal{A}^p_t$, and hence $\phi$ restricts
to an isomorphism intertwining the $C^*$-semiflows for $\alpha^n$ and $\alpha^p$.

We claim that $\phi$ intertwines the tracial states on the $\mathcal{A}^i_t$ induced by the canonical trace on $\r$.
Indeed, by \cite{alevras} Proposition 2.9 each $\mathcal{A}^i_t$ is a \twoone factor and hence the maps $\phi_t$
intertwine the induced traces on each of the corresponding subalgebras - the statement follows by taking inductive limits.
In the terminology of \cite{MS}, $\alpha^n$ and $\alpha^p$ have isomorphic $\tau$-semiflows, and hence by Proposition \ref{semiflow} $n=p$.
\end{proof}

\subsection{Tensoring with Generalised CCR flows} Throughout this subsection, we denote by $\theta=\{\theta_t:t\geq 0\}$ a generalised CCR flow associated with a pair $\left(\{T^1_t\}_{t\geq 0}, \{T^2_t\}_{t\geq 0}\right)$, where $\{T^1_t: t\geq 0\}$ and $\{T^2_t: t\geq 0\}$ are two $C_0-$semigroups which are perturbations of one another. In our examples we assume the semigroup $\{T^1_t: t\geq 0\}$ is the right shift on $L^2(0,\infty)$ with index $1$.

In \cite{genccr}, local algebras associated with product systems were used to distinguish generalised CCR flows given by off-white noises with spectral density converging to $1$ at infinity. Here we define and use local algebras associated with super product systems to study \en-semigroups on the hyperfinite II$_\infty$ factor, given by tensor products of such generalised CCR flows with $\alpha$ (either a Clifford flow or an even Clifford flow with a fixed index). 

Let $H=(H_t, U_{s,t})$ be any super product system. Fix an arbitrary $a >0$. The local algebra $\A^H(I)$ associated with the super product system $H$ for any interval $I=(s,t)\subseteq [0,a]$ is defined by 
$$\A^H(I)=U_I^a\left(\Comp 1_{H_s} \ot B(H_{t-s}) \ot \Comp 1_{H_{a-t}}\right)(U_I^a)^*,$$ where $U_I^a$ is the canonical isometry $U_I^a:H_s\ot H_{t-s} \ot H_{a-t}\mapsto H_a$ determined uniquely by the associativity axiom. Here we consider $\A^H(I)$ as a von Neumann subalgebra of $B(P_I^aH_a)$, where $P_I^a=U_I^a(U_I^a)^*$.  

For an elementary open set $\open_N=\cup_{n=1}^N(s_n, t_n)$, denote the projection $P_{\open_N}^a= U_{\open_N}^a (U^a_{\open_N})^*$ where \begin{align}\label{Uopen} U^a_{\open_N}:\bigotimes_{n=1}^N H_{t_n-s_n} \otimes \bigotimes_{n=0}^N H_{s_{n+1}-t_n}\mapsto H_a\end{align} is the canonical isometry uniquely determined by the associativity axiom of the super product system. (Here we have set $t_0=0$ and $s_{N+1}=a$.) We just write $U_{\open_N}$ for $U^a_{\open_N}$ and $P_{\open_n}$ for $P^a_{\open_N}$ when $a$ is unambiguously fixed. For $1\leq k \leq N$, if we denote $ I_k=(s_k, t_k), ~\open_{k]} =\cup_{n=1}^{k-1}(s_n, t_n), ~ \open_{[k} =\cup_{n=k+1}^{N}(s_n-t_k, t_n-t_k), $  then using the associativity axiom, it is not difficult to verify that $$U^a_{\open_N} =  U^a_{I_k}\left(U^{s_k}_{\open_{k]}}\otimes 1_{H_{t_k-s_k}}\otimes U^{a-t_k}_{\open_{[k}}\right).$$ Using this we see for $x \in B(H_{t_k-s_k})$, \begin{align*} P^a_{\open_N} U_{I_k}^a\left( 1_{H_{s_k}} \ot x \ot 1_{H_{a-t_k}}\right)(U_I^a)^*  =  & U^a_{\open_N} \left( 1_{H_{s_k}} \ot x \ot 1_{H_{a-t_k}}\right) (U^a_{\open_N})^*\\   =  & U_{I_k}^a  \left( 1_{H_{s_k}} \ot x \ot 1_{H_{a-t_k}}\right)  (U_I^a)^*  P^a_{\open_N}, \end{align*} and hence $P^a_{\open_N} \in \A_{s_k, t_k}'$ for all $1\leq k\leq N$.

For a general open set  $\open \subseteq [0,a]$ with $\open= \cup_{n=1}^\infty I_n$ as disjoint union of intervals, define $$P_\open= \bigwedge_{n=1}^\infty P_{\open_n},$$ where
%$\open=\cup_{n=1}^\infty\open_n$, with 
$\open_n=\cup_{k=1}^n I_k$ an increasing sequence of elementary open sets. 
%If $\open_n'$ be another sequence of disjoint union of intervals increasing to $\open$, then for any given $k\in \Nat$ there exists $n_k\in \Nat$ such that $\open_n' \subseteq \ope3n_{n_k}$. So $\bigwedge_{k=1}^\infty P_{\open_k'} \geq \bigwedge_{k=1}^\infty P_{\open_{n_k}} = P_{\open}.$ Similarly the other way. So
$P_\open$ does not depend on the choice of the intervals or the elementary open sets $\{\open_n\}_{n=1}^\infty$, since $P_{\open_n}\leq P_{\open_m}$ if the elementary sets satisfies  $\open_m \subseteq \open_n$. (Caution: The relation $P_{\open_2}\leq P_{\open_1}$ does not hold in general for arbitrary elementary sets satisfying  $\open_1 \subseteq \open_2$; but it holds for sets in this collection, since the interval components of the elementary open subset is a subcollection of the interval components of the bigger elementary open set.)   Every $P_{\open_m}$ commutes with $\A(I_n)$ if $I_n \subseteq \open_m$. So $P_{\open}$ also commutes with $\A(I_n)$. Define $$\A^H(\open) = \bigvee_{n=1}^\infty P_{\open}\A^H(I_n),$$ the von Neumann algebra generated by $\{P_{\open}\A^H(I_n)\}_{n=1}^\infty$ in $B(P_\open H_a)$.

If the family $(V_t)_{t\geq 0}$ provides an isomorphism between two super product systems $(H_t, U_{s,t})$ and $(H_t', U_{s,t}')$, then $Ad(V_a)$ provides an isomorphism between $\A^H(\open)$ and $\A^{H'}(\open)$. Hence the family of von Neumann algebras $\{\A^H(\open): \open \subseteq [0,a]\}$ is an invariant for the super product system $(H_t, U_{s,t})$, hence for the associated \en-semigroup.

\begin{lem} \label{alg-typeI} Let $H$ be a super product system and $\open =\bigcup_{n=1}^\infty I_n\subseteq [0,a]$ is an open set for mutually disjoint open intervals $I_n=(s_n,t_n)$. Then
\begin{itemize}
\item [$(1)$] If $H$ is spatial, then $\A^H(\open)$ has a direct summand that is a type I$_\infty$ factor.  Further if $t_n<s_{n+1}$, then  $\A^H(\open)$ is a type I factor.
\item [$(2)$] If $H=H^e$ be the super product system associated with a Clifford flow of any fixed index, then $\A^H(\open)$ is a type I$_\infty$ factor for any open set $\open\subseteq [0,a]$. 
\end{itemize} 
\end{lem}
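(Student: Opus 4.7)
The strategy is to use the associativity axiom to obtain explicit tensor decompositions of $P_{\open_N}H_a$ for elementary approximations $\open_N$ of $\open$, and then use spatiality (via a unit $u$) to pass to the infinite limit while keeping the GNS structure pure. Part (1) is handled first for elementary sets, then for infinite unions with strict gaps, and finally the direct-summand claim for arbitrary $\open$ is obtained by restricting to a cyclic subrepresentation. Part (2) is argued by combining part (1) for strict gaps with a direct Clifford-algebra computation in the remaining case.

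For an elementary open set $\open_N=\cup_{n=1}^N(s_n,t_n)$ (where $s_{n+1}>t_n$ by convention), iterated use of associativity canonically identifies
\[
P_{\open_N}H_a \;\cong\; H_{s_1}\ot H_{t_1-s_1}\ot H_{s_2-t_1}\ot\cdots\ot H_{t_N-s_N}\ot H_{a-t_N}.
\]
Under this identification each $\A^H(I_k)$ becomes $B(H_{t_k-s_k})$ on the $k$-th ``interval'' slot and acts as $\Comp1$ on every other slot; these $N$ commuting type I factors join to the type I factor $\bigotimes_{k=1}^N B(H_{t_k-s_k})$, disposing of the elementary case.

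For the main assertion of (1), assume $t_n<s_{n+1}$ for every $n$ and put $\open_N=\cup_{k=1}^N I_k$, so $\open_N\uparrow\open$ as elementary sets. Let $u=(u_t)_{t>0}$ be a unit of $H$. Iterated application of $U_{s,t}(u_s\ot u_t)=u_{s+t}$ gives $u_a\in P_{\open_N}H_a$ for each $N$, hence $u_a\in P_\open H_a$ and $P_\open\neq 0$. Since $P_\open$ commutes with the factor $\A^H(\open_N)$, the compression $x\mapsto xP_\open$ is faithful, so $P_\open\A^H(\open_N)\cong\bigotimes_{k\leq N}B(H_{t_k-s_k})$ remains a type I factor on $P_\open H_a$, and the inclusions $P_\open\A^H(\open_N)\hookrightarrow P_\open\A^H(\open_{N+1})$ are the natural tensor inclusions $x\mapsto x\ot 1$. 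Under the identification furnished by $u$, the vector $u_a$ corresponds to the product vector $\bigotimes_n u_{t_n-s_n}\ot\bigotimes_n u_{s_{n+1}-t_n}$; consequently $\A^H(\open)=\bigvee_N P_\open\A^H(\open_N)$ is the GNS algebra of the infinite tensor product $\bigotimes_n B(H_{t_n-s_n})$ with respect to this pure product vector state. Since the reference state is pure and each component algebra is of the form $B(\cdot)$, the resulting von Neumann algebra exhausts all bounded operators on the GNS Hilbert space, and thus $\A^H(\open)$ is the type I$_\infty$ factor on $P_\open H_a$. For the direct-summand claim of (1) without the strict-gap hypothesis, the same analysis applied to the cyclic subrepresentation on $\overline{\A^H(\open)u_a}\subseteq P_\open H_a$ yields a cyclic type I$_\infty$ representation of $\A^H(\open)$, and the central support in the bicommutant of the corresponding cyclic projection supplies the required type I$_\infty$ direct summand.

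For part (2), the Clifford super product system $H^{e,n}$ is spatial, as the vacuum vectors form a unit, so part (1) already covers $\open$ with strict gaps. For arbitrary $\open$, I would identify $\A^{H^e}(\open)$ with the von Neumann algebra generated by the even Clifford operators $u(f)u(g)$ with $f,g$ supported in the intervals of $\open$, restricted to $P_\open H^{e,n}_a$, and show by a direct computation using the CAR relations that this algebra exhausts $B(P_\open H^{e,n}_a)$. The main obstacle is the case of touching intervals ($t_n=s_{n+1}$), where the associativity isometry $H^e_{t_n-s_n}\ot H^e_{t_{n+1}-s_{n+1}}\to H^e_{t_{n+1}-s_n}$ is strict rather than unitary; one must check that the even Clifford operators supported in $(s_n,t_n)\cup(s_{n+1},t_{n+1})$ still act irreducibly on the range of the corresponding compression, so that the generated algebra is all of $B(P_\open H^{e,n}_a)$.
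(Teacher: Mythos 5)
Your approach reverses the paper's order (strict gaps first, then direct summand; the paper does the direct summand first via the pure product state, and then appeals to the Araki--Woods theorem for the strict-gap case). The underlying idea for the direct-summand part is essentially the paper's --- use a unit $u$, note $u_a\in P_\open H_a$, observe $\omega(\cdot)=\ip{u_a}{\cdot\, u_a}$ is a pure product state on $\bigvee_N P_\open\A^H(\open_N)$, and deduce a type~I$_\infty$ factor on the cyclic subspace --- and your final paragraph for that part is fine.

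However, your strict-gap argument has a real gap. You assert that $\A^H(\open)=\bigvee_N P_\open\A^H(\open_N)$ ``is the GNS algebra of the infinite tensor product'' and that it ``exhausts all bounded operators on the GNS Hilbert space, and thus $\A^H(\open)$ is the type I$_\infty$ factor on $P_\open H_a$.'' This silently identifies the GNS Hilbert space $\overline{\A^H(\open)u_a}$ with $P_\open H_a$, and that identification fails. Already for the exponential (CCR) product system one has $P_\open=1$, so $P_\open H_a=H_a$, yet $u_a$ is the vacuum and $\overline{\A^H(\open)u_a}\cong\Gamma_s(L^2(\open))\ot\Omega$ is a proper subspace; $\A^H(\open)\cong B(\Gamma_s(L^2(\open)))\ot 1$ is a proper subfactor of $B(H_a)$, not $B(P_\open H_a)$. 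So your infinite-tensor-product reasoning establishes only that the compression $\A^H(\open)P_L$ to the cyclic subspace is type~I$_\infty$; it does not establish that $\A^H(\open)$ itself is a factor, and without factoriality the compression isomorphism $\A^H(\open)\cong\A^H(\open)P_L$ that you are implicitly using is circular. The paper closes exactly this gap by appealing to the Araki--Woods theorem (Theorem~\ref{ArakiWoods}): the unit gives a factorizable vector for the type~I factorization $\{P^A\A^H(I_n)\}$ when $t_n<s_{n+1}$, hence the factorization is a CABATIF and in particular the join $\A^H(\open)$ is a type~I factor. You would need to invoke this (or supply an independent factoriality argument) to complete the strict-gap case. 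Your part~(2) is only a sketch: you correctly flag the touching-interval issue, but the required irreducibility computation --- which the paper disposes of by exhibiting $\A^{H^e}(\open)$ as $B(H_\open)$ for the explicitly described even-particle subspace $H_\open$ --- is left open.
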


\begin{proof} 
%Let $\open = \bigcup_{n=1}^\infty I_n $ for mutually disjoint open intervals $I_n=(s_n,t_n)$. 
%The von Ne$\A^H(\open)$ acts on $P_\open H_a$. 
Let $(S_t)_{t>0}$ be a unit for $H$. 
Without loss of generality we assume that $\|S_t\|=1$ for all $t>0$. Notice $P_\open S_a = S_a$.
Let $L=\left[\A^H(\open)S_a\right]\subseteq P_\open H_a$ and $P_L$ be the projection from $P_\open H_a$ onto $L$, which belongs to 
$\A^H(\open)'$.  
We introduce a state $\omega$ of $\A^H(\open)$ by $\omega(x)=\ip{xS_a}{S_a}$.  We have $\omega(x)=\ip{x S_{t_i-s_i}}{S_{t_i-s_i}}$ for any $x \in P_{\open} \A^H(I_i)$.
Now for $x_i\in \A^H(I_{n_i})$ $i=1,2 \cdots N$, we have 
\begin{align*}\omega(P_\open x_1 x_2\cdots x_N) & = \ip{P_\open P_{\open_N}x_1x_2\cdots x_N S_a} {S_a}\\
& =\ip{U_{\open_N} \left(x_1 \ot \cdots x_N \ot 1_{H_{\open_N^c}}\right) U^*_{\open_N} S_a}{S_a}\\
&=\ip{x_1 S_{t_{n_1}-s_{n_1}}}{S_{t_{n_1}-s_{n_1}}}\cdots \ip{x_N S_{t_{n_N}-s_{n_N}}}{S_{t_{n_N}-s_{n_N}}}\\
% = \ip{ \left(x_1S_{t_{n_1}-s_{n_1}} \ot \cdots x_nS_{t_{n_N}-s_{n_N}} \ot S_{\open_N^c}\right)}{\left(S_{t_{n_1}-s_{n_1}} \ot \cdots S_{t_{n_N}-s_{n_N}} \ot S_{\open_N^c}\right)}\\
& = \omega(x_1)\omega(x_2)\cdots \omega(x_N),\end{align*} where $H_{\open_N^c}= \ot_{k=0}^N H_{s_{n_{k+1}}-t_{n_k}}$ with $t_{n_0}=0$ and $s_{n_{N+1}}=a$. This shows that $\omega$ is a product pure state of $\bigotimes_{i=1}^NP_\open\A(I_{n_i})\subset \A^H(\open)$ for 
all $N$. Therefore $\A^H(\open)P_L$ is a type I$_\infty$ factor. 

The other statement, when $t_n<s_{n+1}$, follows from \ref{ArakiWoods}.

%If $b_n<a_{n+1}$, then 
%we indeed have
%$$\left[\A^H(\open)'L\right]= \left[\A^H(\open)'\A^H(\open)S_a\right] = P_{\open}H_a.$$ Therefore the defining representation of $\A^H(\open)$ is quasi-equivalent to its restriction to $L$.

(2) For an interval $I$, denote $H_k(I)=\left[ f_1\wedge f_2\wedge  \cdots \wedge f_k: f_i\in L^2(I, \kil^\Comp)  \right]$ the $k-$particle space of the antisymmetric Fock space of $L^2(I)$, and $\kil$ is the multiplicity space of the Clifford flow. Define $$H_\open=\left[\Omega, \xi_{n_1}\wedge\xi_{n_2} \wedge \cdots\wedge \xi_{n_N}: \xi_{n_i}\in H_{2k_i}(I_{n_i}),~ k_i, n_i, N \in \Nat\right].$$  It is not difficult to verify that $\A^{H^e}(\open)$ is nothing but $B(H_\open)$.  
\end{proof}

We denote by $\A^\gamma(\open)$ the local algebra associated with the super product system $H^\gamma$ of an \en-semigroup $\gamma$.

\begin{prop}
Let $\gamma$ and $\beta$ be two \en-semigroups and $\open \subseteq [0,a]$.  Then $$ \A^{\gamma\ot \beta}(\open) =\A^\gamma(\open)\ot \A^\beta(\open).$$
\end{prop}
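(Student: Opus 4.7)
The plan is to prove the claim in three stages of increasing generality: a single interval, an elementary open set, and finally an arbitrary open set, with each stage reducing to the previous one via the monotone tensor-product structure of the projections $P_\open$.

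First, I would invoke Proposition \ref{tensorSPS} to identify the super-product system $H^{\gamma\ot\beta}$ with $H^\gamma\ot H^\beta$. At the level of the defining isometries, this identification amounts to asserting that for any partition $0=u_0<u_1<\cdots<u_k=a$ the canonical isometry for $H^{\gamma\ot\beta}$ factors as
\begin{equation*}
U^{a,\gamma\ot\beta}_{u_1,\dots,u_{k-1}} \;=\; \bigl(U^{a,\gamma}_{u_1,\dots,u_{k-1}}\ot U^{a,\beta}_{u_1,\dots,u_{k-1}}\bigr)\circ\Sigma,
\end{equation*}
where $\Sigma$ is the natural flip rearranging $\bigotimes_{j=1}^{k}\bigl(H^\gamma_{u_j-u_{j-1}}\ot H^\beta_{u_j-u_{j-1}}\bigr)$ into $\bigl(\bigotimes_j H^\gamma_{u_j-u_{j-1}}\bigr)\ot\bigl(\bigotimes_j H^\beta_{u_j-u_{j-1}}\bigr)$. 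Specialising to a single interval $I=(s,t)$ with the three-piece partition $0<s<t<a$ and conjugating the slot $\Comp 1\ot B(H^{\gamma\ot\beta}_{t-s})\ot\Comp 1$ by $U_I^{a,\gamma\ot\beta}$, the identification $B(H^\gamma_{t-s}\ot H^\beta_{t-s})=B(H^\gamma_{t-s})\ot B(H^\beta_{t-s})$ immediately gives
\begin{equation*}
\A^{\gamma\ot\beta}(I)\;=\;\A^\gamma(I)\ot\A^\beta(I).
\end{equation*}

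For an elementary open set $\open_N=\cup_{k=1}^{N}I_k$, the same flip identification applied to the isometry $U^{a,\gamma\ot\beta}_{\open_N}$ of \eqref{Uopen} at once yields the projection identity
\begin{equation*}
P^{\gamma\ot\beta}_{\open_N}\;=\;P^\gamma_{\open_N}\ot P^\beta_{\open_N},
\end{equation*}
and combining this with the interval case and the distributive property of tensors recalled in the Preliminaries (applied by induction to the $N$ subintervals) yields $\A^{\gamma\ot\beta}(\open_N)=\A^\gamma(\open_N)\ot\A^\beta(\open_N)$.

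For a general open set $\open=\cup_{n=1}^{\infty}I_n$, I would then pass to the limit using the elementary sets $\open_n=\cup_{k=1}^{n}I_k$. The sequence $P_{\open_n}$ is decreasing in SOT to $P_\open$ for each of $H^\gamma$, $H^\beta$, and $H^{\gamma\ot\beta}$; using that $p_\alpha\ot q_\alpha\to p\ot q$ in SOT whenever $p_\alpha\downarrow p$ and $q_\alpha\downarrow q$ are decreasing nets of projections, the elementary-set identity above gives $P^{\gamma\ot\beta}_\open=P^\gamma_\open\ot P^\beta_\open$. The interval identity, applied in tandem with this projection identity, yields
\begin{equation*}
P^{\gamma\ot\beta}_\open\,\A^{\gamma\ot\beta}(I_n)\;=\;\bigl(P^\gamma_\open\,\A^\gamma(I_n)\bigr)\ot\bigl(P^\beta_\open\,\A^\beta(I_n)\bigr)
\end{equation*}
for every $n$, and taking the countable join (by reducing to an SOT-increasing sequence of finite joins and applying the distributive property at each finite stage) delivers the desired equality $\A^{\gamma\ot\beta}(\open)=\A^\gamma(\open)\ot\A^\beta(\open)$.

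The main technical burden — rather than a deep conceptual obstacle — is verifying that the flip unitaries $\Sigma$ implementing the tensor product of super-product systems commute correctly with the canonical isometries $U^a_{\open_N}$ built from the associativity axiom, and ensuring that the distributive property of tensors survives the countable SOT-closures used to define $\A^H(\open)$ for non-elementary $\open$.
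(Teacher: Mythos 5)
Your proof is correct and follows the same route as the paper's (much terser) argument: reduce to intervals via Proposition \ref{tensorSPS}, extend to elementary sets, and then to general open sets using the distributive property of tensors. The extra details you supply — the flip unitary factorisation of the canonical isometries, the projection identity $P^{\gamma\ot\beta}_{\open}=P^\gamma_{\open}\ot P^\beta_{\open}$ via SOT limits of decreasing projections, and the finite-stage application of distributivity before taking joins — are exactly the verifications the paper leaves implicit.
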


\begin{proof}
Thanks to Proposition \ref{tensorSPS}, the above proposition holds true for intervals. For elementary sets, it follows from the distributive property of the tensors, and hence for any open set. 
\end{proof}

Let $\theta$ be a generalised CCR flow and $\alpha$ be either a Clifford flow or an even Clifford flow of fixed any fixed rank. From the above proposition it follows immediately, thanks to Lemma \ref{alg-typeI}, 2, that $\A^{\theta\ot \alpha}(\open)$ is a type I factor if and only if $\A^{\theta}(\open)$ is a type I factor, for any open $\open \in [0,a]$.
%\begin{cor}
%Let $\theta$ be a generalised CCR flow and $\alpha$ be either a Clifford flow or an even Clifford flow of fixed any fixed rank. Then for any $\open \in [0,a]$, $\A^{\theta\ot \alpha}(\open)$ is a type I factor if and only if $\A^{\theta}(\open)$ is a type I factor.
%\end{cor}
It is shown in \cite{genccr} that there exists a one parameter continuous family of off-white noises, whose spectral density functions converge to 1 at infinity, such that the associated family of generalised CCR flows $\{\theta_{\lambda}: \lambda \in (0,\frac{1}{2}]\}$  contains mutually non-cocycle-conjugate \en-semigroups. This is accomplished by producing an open set $\open$, for any given  $\lambda_1, \lambda_2  \in (0,\frac{1}{2}]$ such that $\A^{\theta^{\lambda_1}}(\open)$ is a type III factor, but  $\A^{\theta^{\lambda_2}}(\open)$ is a type I factor. From the proceeding discussions we have the following theorem. 

\begin{thm}
There exist uncountably many mutually non-cocycle-conjugate \en-semigroups on the hyperfinite type II$_\infty$ factor of the form $\{\theta^\lambda\ot \alpha:\lambda \in (0,\frac{1}{2}]  \}$, where each $\theta^\lambda$ is a generalised CCR flow arising from off-white noise with spectral density converging to $1$ at infinity, and $\alpha$ is a fixed \en-semigroup, which is   either a Clifford flow or an even Clifford flow of any index.\end{thm}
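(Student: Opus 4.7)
The plan is to exploit the invariance of the family of local algebras $\{\A^\gamma(\open)\}$ of the super product system under cocycle conjugacy, and the multiplicativity of this construction under tensor products established in the preceding proposition. Specifically, for any pair of distinct indices $\lambda_1,\lambda_2 \in (0,1/2]$, I would produce a single open set $\open \subseteq [0,a]$ for which the algebras $\A^{\theta^{\lambda_1}\otimes \alpha}(\open)$ and $\A^{\theta^{\lambda_2}\otimes \alpha}(\open)$ have different types as von Neumann algebras, thereby preventing cocycle conjugacy between $\theta^{\lambda_1}\otimes \alpha$ and $\theta^{\lambda_2}\otimes \alpha$.

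First, I would invoke the result of \cite{genccr} quoted just before the theorem: for the one-parameter family $\{\theta^\lambda:\lambda\in(0,1/2]\}$ of generalised CCR flows coming from off-white noises with spectral density tending to $1$ at infinity, for any pair $\lambda_1\neq \lambda_2$ there exists an open set $\open \subseteq [0,a]$ (depending on the pair) such that $\A^{\theta^{\lambda_1}}(\open)$ is a type III factor while $\A^{\theta^{\lambda_2}}(\open)$ is a type I factor. Next, by the tensor product formula just proved,
\begin{equation*}
\A^{\theta^{\lambda_i}\otimes \alpha}(\open) \;=\; \A^{\theta^{\lambda_i}}(\open)\,\overline{\otimes}\,\A^{\alpha}(\open), \qquad i=1,2.
\end{equation*}
By Lemma \ref{alg-typeI}(2) the factor $\A^{\alpha}(\open)$ is a type I$_\infty$ factor, since $\alpha$ is (the restriction of) a Clifford flow. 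Consequently $\A^{\theta^{\lambda_1}\otimes \alpha}(\open)$ is a type III factor (type III tensored with type I is again type III), whereas $\A^{\theta^{\lambda_2}\otimes \alpha}(\open)$ is a type I factor.

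Finally, since the family of local algebras $\{\A^\gamma(\open):\open \subseteq [0,a]\}$ is a cocycle-conjugacy invariant of $\gamma$ (the unitaries implementing an isomorphism of super product systems induce normal *-isomorphisms of the corresponding local algebras), the two \en-semigroups $\theta^{\lambda_1}\otimes\alpha$ and $\theta^{\lambda_2}\otimes\alpha$ cannot be cocycle conjugate for any $\lambda_1\neq \lambda_2$. This exhibits uncountably many mutually non-cocycle-conjugate \en-semigroups on $\r_\infty = B(H)\otimes \r$, all of the stated form.

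The only mildly delicate point is the verification that type III $\overline{\otimes}$ type I$_\infty$ remains a type III factor (in particular not type I), which is standard from the classification of tensor products of factors but should be cited explicitly; all other steps reduce to assembling results already proved or quoted in the excerpt, so I do not expect any substantive obstacle.
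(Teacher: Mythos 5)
Your proposal is correct and follows essentially the same route as the paper: the paper also combines the cocycle-conjugacy invariance of the local algebras $\A^H(\open)$, the tensor product formula $\A^{\gamma\ot\beta}(\open)=\A^\gamma(\open)\ot\A^\beta(\open)$, Lemma \ref{alg-typeI}(2), and the type I/type III dichotomy for $\A^{\theta^{\lambda_i}}(\open)$ established in \cite{genccr}. The one point you flag as delicate is exactly the content of the paper's remark that $\A^{\theta\ot\alpha}(\open)$ is type I if and only if $\A^{\theta}(\open)$ is, which the paper likewise treats as immediate.
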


When the spectral density converges to $\infty$ at $\infty$, the local algebras $\A(\open)$ are not useful in distinguishing the associated generalised CCR flows. Tsirelson used $\liminf$ and $\limsup$ of subspaces of the sum system, associated with elementary sets, to distinguish those generalised CCR flows. Tsirelson's invariants can be equivalently described by $\limsup$ of local von Neumann algebras associated with elementary sets, as shown in \cite{pdct}. We adopt an analogous approach in the context of type II$_\infty$ factors, for tensor products of such \en-semigroups with Clifford flows or even Clifford flows. 

%\begin{defn} For a sequence of weakly closed $*-$subspaces $\A_n \subseteq B(H)$ 
%define $$\limsup{\A_n} =\left[\{T\in B(H): \exists ~T_{n_k}\in \A_{n_k}~ \mbox{such that} ~ w-\lim_{k\mapsto \infty}{T_{n_k}}=T \}\right],$$ where the
%limit of the subsequence $\{T_{n_k}\}$ is taken in the weak operator topology. (We realized this should be termed as $\limsup$ rather than $\liminf$ as  initially defined in \cite{pdct}.) Also 
%define $$\liminf{\A_n} =\left[\{T\in B(H): \exists ~T_{n}\in \A_{n}~ \mbox{s.t.}~s-\lim_{n\rightarrow \infty}{T_{n}}=T,~s-\lim_{n\rightarrow \infty}{T_{n}^*}=T^*\}\right],$$ where the
%limit of the sequences $\{T_{n}\}$ and  $\{T_{n}^*\}$ are taken in the strong operator topology. 
%\end{defn}
\begin{defn} For a sequence of von Neumann algebras $\A_n \subseteq B(H)$ 
define $$\limsup{\A_n} =\{T\in B(H): \exists ~T_{n_k}\in \A_{n_k}~ \mbox{such that} ~ w-\lim_{k\mapsto \infty}{T_{n_k}}=T \}'',$$ where the
limit of the subsequence $\{T_{n_k}\}$ is taken in the weak operator topology. (We realized this should be termed as $\limsup$ rather than $\liminf$ as  initially defined in \cite{pdct}.) 

Also 
define $$\liminf{\A_n} = \{T\in B(H): \exists ~T_{n}\in \A_{n}~\mbox{s.t.}~s-\lim_{n\rightarrow \infty}{T_{n}}=T,~s-\lim_{n\rightarrow \infty}{T_{n}^*}=T^*\}'',$$ where the
limits of the sequences $\{T_{n}\}$ and  $\{T_{n}^*\}$ are taken in the strong operator topology. 
\end{defn}

%both liming and limsup are closed. For limsup may be we need span
Since the local algebras $\A^H(\open)$ for super product systems are not proper von Neumann subalgebras of $B(H_a)$, we need to modify the definition slightly.
 For an elementary open set $\open\subseteq [0,1]$, define $$\tilde{\A}^H(\open) = \A^H(\open)''\cap B(H_1)=\A^H(\open)\oplus \Comp\left(1-P_{\open}\right).$$ For product systems $\tA^H(\open)=\A^H(\open)$.
Given any sequence of elementary open sets $\open_n\subseteq [0,1]$, $\limsup{\tA^H(\open_n)}\subseteq B(H_1)$ is an invariant for the super product system $H = (H_t, U_{s,t})$.

\begin{lem}\label{liminfsup}
For a sequence of von Neumann algebras $\A_n \subseteq B(H)$ $$\limsup{\A_n} \subseteq \left(\liminf{\A_n'}\right)'.$$
\end{lem}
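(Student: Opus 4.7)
My plan is to reduce the asserted inclusion to a pointwise commutation at the level of the defining generating sets, and then exploit the double commutant theorem. Let $X = \{T : \exists\, T_{n_k}\in \A_{n_k},\ T_{n_k}\to T \text{ weakly}\}$ and $Y = \{S : \exists\, S_n\in \A_n',\ S_n\to S \text{ and } S_n^*\to S^* \text{ strongly}\}$, so that $\limsup \A_n = X''$ and $\liminf \A_n' = Y''$. The first observation is that $Y$ is \emph{self-adjoint} by construction (the definition treats $S$ and $S^*$ symmetrically), hence $Y'$ is a commutant of a self-adjoint set, and therefore a von Neumann algebra satisfying $(Y'')' = Y''' = Y'$. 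Consequently, it will suffice to prove $X\subseteq Y'$: taking commutants twice reverses and re-reverses inclusions, giving $X''\subseteq (Y')'' = Y'' {}' = \liminf(\A_n')'$ as required.

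To establish $X\subseteq Y'$, fix $T\in X$ and $S\in Y$ with witnessing sequences $T_{n_k}\in \A_{n_k}$ and $S_n\in \A_n'$. Since $\A_{n_k}$ and $\A_{n_k}'$ commute, $T_{n_k}S_{n_k} = S_{n_k}T_{n_k}$ for every $k$, so it is enough to show that both sides converge weakly to $TS$ and $ST$ respectively. For $T_{n_k}S_{n_k}\to TS$ weakly I will invoke the familiar joint continuity statement: if $A_n\to A$ weakly and $B_n\to B$ strongly, then $A_nB_n\to AB$ weakly. The proof is the standard split
$\langle x, A_nB_n y\rangle - \langle x, ABy\rangle = \langle x, A_n(B_n-B)y\rangle + \langle x, (A_n-A)By\rangle$,
where the first term is bounded by $\|A_n^*x\|\cdot\|(B_n-B)y\|$ and vanishes thanks to uniform boundedness of $\{A_n^*x\}$ (Banach--Steinhaus) combined with strong convergence of $B_n$, and the second vanishes by weak convergence of $A_n$.

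The dual direction $S_{n_k}T_{n_k}\to ST$ weakly is where the strong-$*$ clause in the definition of $\liminf$ earns its keep. I would rewrite $\langle x, S_{n_k}T_{n_k}y\rangle = \langle S_{n_k}^*x, T_{n_k}y\rangle$ and split it as $\langle S^*x, T_{n_k}y\rangle + \langle (S_{n_k}^*-S^*)x, T_{n_k}y\rangle$. The first summand converges to $\langle S^*x, Ty\rangle = \langle x, STy\rangle$ by the weak convergence of $T_{n_k}$, while the second is bounded by $\|(S_{n_k}^*-S^*)x\|\cdot\|T_{n_k}y\|$; the factor $\|T_{n_k}y\|$ is bounded by uniform boundedness, and the factor $\|(S_{n_k}^*-S^*)x\|$ tends to zero precisely because $S_n^*\to S^*$ strongly. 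Putting the two weak limits together gives $TS=ST$ for every $T\in X$ and $S\in Y$, hence $X\subseteq Y'$, which by the reduction of the first paragraph completes the proof.

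There is no serious obstacle here, but the one delicate point worth flagging is exactly why the definition of $\liminf$ asks for strong convergence of both $S_n$ and $S_n^*$: without it, the argument for $S_{n_k}T_{n_k}\to ST$ would break down, since handling a weakly convergent factor on the right of a strongly convergent factor forces us to pass to adjoints, and only strong-$*$ convergence behaves well under that transpose.
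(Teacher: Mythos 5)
Your proposal is correct and follows essentially the same route as the paper: both arguments verify that each weak limit $T$ of a subsequence $T_{n_k}\in\A_{n_k}$ commutes with each strong-$*$ limit $S$ of $S_n\in\A_n'$, using exactly the same two splittings (one exploiting uniform boundedness of $\|T_{n_k}^*\eta\|$ against strong convergence of $S_{n_k}$, the other exploiting boundedness of $\|T_{n_k}\xi\|$ against strong convergence of $S_{n_k}^*$). Your explicit double-commutant wrap-up, noting that the generating set of $\liminf\A_n'$ is self-adjoint so its commutant is already a von Neumann algebra, is left implicit in the paper but is the intended conclusion.
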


\begin{proof} 
Suppose  $T \in \limsup{\A_n}$ and $S \in \liminf{A_n'}$, so that there exists subsequence $T_{n_k}\in \A_{n_k}$ such that $T_{n_k} \rightarrow T$ weakly, and there exists $S_n \in A_n'$ such that $(S_n,S_n^*) \mapsto (S,S^*)$ strongly. 
%Suppose $A,B \in B(H)$ be such that there exists sequences $A_n \rightarrow A$ weakly (so that $A_n^* \rightarrow A^*$ weakly), and  $(B_n, B_n^*) \mapsto (B,B^*)$ strongly. 
Then for any $ \xi, \eta \in H,$ we have \begin{align*} |\ip{ TS \xi} {\eta}- \ip{ T_{n_k}S_{n_k} \xi} {\eta}| & \leq |\ip{ S \xi}{T^*\eta}-\ip{ S \xi}{T_{n_k}^*\eta}|+\|S\xi-S_{n_k}\xi\|\|T_{n_k}^* \eta\|;\\
|\ip{ ST \xi} {\eta}- \ip{ S_{n_k} T_{n_k} \xi} {\eta}| & \leq |\ip{ T \xi}{S^*\eta}-\ip{ T_{n_k} \xi}{S^*\eta}|+\|S^*\eta-S_{n_k}^*\eta\|\|T_{n_k} \xi\|.
\end{align*}
Since $\{\|T_{n_k}^*\eta\|\}$ and $\{\|T_{n_k} \xi\|\}$ are bounded we have 
%\begin{align*}
%\ip{ TS \xi} {\eta} =\lim_k{\ip{ T_{n_k}S_{n_k} \xi} {\eta}}&=\ip{ ST\xi} {\eta} =\lim_n{\ip{ S_{n_k}T_{n_k} \xi} {\eta}}
%\end{align*}
%Consequently we have 
$$\ip{T S\xi} {\eta}=\lim_{k}\ip{T_{n_k} S_{n_k}\xi} {\eta}= \lim_{k}\ip{S_{n_k}T_{n_k}\xi} {\eta}= \ip{ST\xi} {\eta}~\forall \xi, \eta\in H.$$ 
%Hence $\limsup{\A_n} \subseteq  \left(\liminf{\A_n'}\right)'.$
%For any $ \xi, \eta \in H,$ we have $\ip{T S\xi} {\eta} = \lim_k \ip{S \xi} {T_{n_k}^*\eta}$. On the other hand
%$$\ip{ S \xi} {T_{n_k}^*\eta} \leq \ip{ S_{n_k} \xi}{T_{n_k}^*\eta}+\|S\xi-S_{n_k}\xi\|\|T_{n_k}^* \eta\|.$$ 
%As $\|T_{n_k}^* \eta\|$ is bounded, we get $$\ip{ T S\xi} {\eta} = \lim_k {\ip{ T_{n_k}S_{n_k} \xi}{\eta}} = \lim_k{\ip{ S_{n_k}T_{n_k} \xi}{ \eta}}=\ip{ ST\xi} {\eta},$$ where we have retraced the same arguments while concluding the last equality.
\end{proof}

For an open set $\open\subseteq [0,1]$ we denote $\open^c$ the interior of the complement in $[0,1]$. Since we are dealing with $L^2$-spaces with respect to Lebesgue measure, end points of the intervals does not matter.  As before $H^e$ denotes the super product system associated with Clifford flow of any rank.

\begin{prop}\label{limsupHe}
Let $\{\open_n:n \in \Nat\}$ be a sequence of elementary sets contained in $[0,1]$ such that $|\open_n|\rightarrow 0$. Then $$\liminf{\tA^{H^e}(\open_n)'}=B(H_1)~~ \mbox{and} ~~ \limsup{\tA^{H^e}(\open_n)}=\Comp.$$
%Further for any sequence $\{\open_n':n \in \Nat\}$ of elementary sets contained in $[0,1]$  $\limsup{\A^{H^e}(\open_n')}=\Comp$ if and only if $|\open_n'|\rightarrow 0$.
\end{prop}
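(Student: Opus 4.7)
The strategy exploits the factorization of $H^e_1$ induced by $\open_n$ together with a weak-vanishing phenomenon for vectors supported in shrinking sets. For an elementary open set $\open = \bigsqcup_{k=1}^N I_k$ with complementary gaps $\bigsqcup_l J_l$, the canonical isometry $U^1_\open$ identifies $P_\open H^e_1$ with $K_\open \ot L_\open$, where $K_\open = \bigotimes_k H^e_{|I_k|}$ is built from the intervals of $\open$ and $L_\open = \bigotimes_l H^e_{|J_l|}$ from the gaps; consequently $\tA^{H^e}(\open) = (B(K_\open) \ot 1_{L_\open}) \oplus \Comp(1-P_\open)$ and its commutant is $(\Comp\, 1_{K_\open} \ot B(L_\open)) \oplus B((1-P_\open) H^e_1)$. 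The key analytic input is that for any uniformly bounded sequence $\xi_n \in K_{\open_n} \ominus \Comp\,\Omega_{K_{\open_n}}$, one has $\xi_n \to 0$ weakly in $H^e_1$ as $|\open_n|\to 0$: inner products with Slater determinants $g_1 \wedge \cdots \wedge g_{2m}$ reduce to $\det(\langle h_i, g_j \rangle_{\open_n})$, and each factor is bounded by $\|h_i\| \cdot \|g_j\|_{L^2(\open_n)} \to 0$ by absolute continuity of the Lebesgue integral.

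For the $\limsup$ statement, I take a uniformly bounded weak-limit point $T$ of $T_{n_k} = (A_{n_k} \ot 1) \oplus \lambda_{n_k}(1 - P_{\open_{n_k}}) \in \tA^{H^e}(\open_{n_k})$. Decomposing $A_{n_k}\Omega_{K_{\open_{n_k}}} = a_{n_k}\Omega_{K_{\open_{n_k}}} + \zeta_{n_k}$ into vacuum and non-vacuum parts, the analytic input forces $\zeta_{n_k}\to 0$ weakly; on a subsequence $a_{n_k}\to a$. I then argue that $T$ acts as the scalar $a$ on any $\xi$ approximable in norm by elements of $\Omega_{K_{\open_{n_k}}} \ot L_{\open_{n_k}}$ for large $k$. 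Such vectors are dense in $H^e_1$: a smooth Slater determinant $f_1\wedge\cdots\wedge f_{2m}$ is approximated by restricting each $f_i$ to $\open_{n_k}^c$ (with error $\lilo(1)$) and redistributing the restricted factors within each gap to enforce the even-count-per-gap constraint. Closure under products, adjoints, and weak limits then yields $\limsup \tA^{H^e}(\open_n) = \Comp\,1$.

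For the $\liminf$ statement, given $T \in B(H^e_1)$ and $\xi_1,\ldots,\xi_m \in H^e_1$, I construct $T_n \in \tA^{H^e}(\open_n)'$ approximating $T$ on each $\xi_i$: on $(1-P_{\open_n})H^e_1$, where the commutant is the unrestricted $B((1-P_{\open_n})H^e_1)$, set $T_n(1-P_{\open_n}) = (1-P_{\open_n})T$; on $P_{\open_n} H^e_1 = K_{\open_n}\ot L_{\open_n}$, where the action is confined to $1_{K_{\open_n}} \ot B(L_{\open_n})$, choose $Y_n \in B(L_{\open_n})$ matching the gap-content of $P_{\open_n} T\xi_i$ against $P_{\open_n}\xi_i$. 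The main obstacle is exactly this last step: the operator $T$ may change the $K_{\open_n}$-content of $\xi_i$, which the restricted commutant cannot reproduce. It is surmounted using the same weak-vanishing input: as $|\open_n|\to 0$, the non-vacuum $K_{\open_n}$-content of each $\xi_i$ concentrates weakly on $\Omega_{K_{\open_n}}$, so $P_{\open_n}\xi_i$ essentially lies in $\Omega_{K_{\open_n}}\ot L_{\open_n}$, where $1_{K_{\open_n}} \ot Y_n$ acts as $Y_n$ and can be chosen to match the corresponding component of $T\xi_i$ up to vanishing error. Arguing symmetrically for $T_n^*$ and extending by density of such approximations yields $\liminf \tA^{H^e}(\open_n)' = B(H^e_1)$.
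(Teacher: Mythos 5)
Your identification of $\tA^{H^e}(\open)$ with $(B(K_\open)\ot 1_{L_\open})\oplus\Comp(1-P_\open)$ is correct, but both halves of your argument then rest on a density claim that is false: vectors norm-approximable by elements of $\Omega_{K_{\open_n}}\ot L_{\open_n}$ need not be dense in $H^e_1$. The obstruction is the parity constraint built into $L_{\open_n}=\bigotimes_l H^e_{|J_l|}$, which demands an even number of factors in \emph{each gap separately}, whereas the truncation $f_1 1_{\open_n^c}\wedge\cdots\wedge f_{2m}1_{\open_n^c}$ only has an even total number of factors in $\open_n^c$. Its components with odd counts in two or more gaps are orthogonal to $K_{\open_n}\ot L_{\open_n}$ (they lie in $(1-P_{\open_n})H^e_1$), and no ``redistribution within gaps'' can change a parity. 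Concretely, let $\open_n$ consist of $n$ intervals of length $n^{-3}$ spread evenly through $[0,1]$, so $|\open_n|\to 0$ while $\open_n^c$ splits into $n+1$ gaps of length about $1/n$; for bounded orthonormal $f_1,f_2$ the even-per-piece component of $f_1\wedge f_2$ is $\sum_l f_1 1_{J_l}\wedge f_2 1_{J_l}$ up to a term of vanishing norm, and this sum of orthogonal terms has norm $O(n^{-1/2})$. Hence $\|P_{\open_n}(f_1\wedge f_2)\|\to 0$ and $f_1\wedge f_2$ is not approximable from $\Omega_{K_{\open_n}}\ot L_{\open_n}$ at all; this breaks the $\limsup$ argument as written (and you would in any case still have to reconcile the scalars $\lambda_{n_k}$ on the $(1-P_{\open_{n_k}})$ block with $a$). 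The $\liminf$ half has two further problems: you try to realize \emph{every} $T\in B(H^e_1)$ as a strong limit of commutant elements, but these are block-diagonal for $P_{\open_n}$ and act as $1\ot Y_n$ on $P_{\open_n}H^e_1$, so with the $\open_n$ above and $T$ mapping $f_1\wedge f_2$ to the vacuum no uniformly bounded such sequence can converge strongly to $T$ --- this is not needed, since $\liminf$ is by definition the von Neumann algebra \emph{generated} by the strong limits, so a generating family suffices; and at the point where you need $P_{\open_n}\xi_i$ to be close to $\Omega_{K_{\open_n}}\ot L_{\open_n}$ you invoke weak vanishing, whereas strong convergence of $T_n\xi_i$ requires a norm estimate.

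The paper sidesteps all of this by using the coarser two-fold factorization $\Gamma^e_a(L^2(\open,\kil^\Comp))\ot\Gamma^e_a(L^2(\open^c,\kil^\Comp))\hookrightarrow H^e_1$, which carries only a global parity constraint on $\open$ and on $\open^c$ and no per-interval or per-gap constraints. Setting $\B(\open)=V_\open\left(B(\Gamma^e_a(L^2(\open,\kil^\Comp)))\ot 1\right)V_\open^*$, one checks $\tA^{H^e}(\open)\subseteq\tilde{\B}(\open)$, hence $\tilde{\B}(\open^c)\subseteq\tilde{\B}(\open)'\subseteq\tA^{H^e}(\open)'$; since $f1_{\open_n^c}\to f$ in $L^2$, one gets $\liminf\tilde{\B}(\open_n^c)=B(H^e_1)$ and therefore the $\liminf$ statement, and the $\limsup$ statement then follows at once from Lemma \ref{liminfsup} rather than from any direct analysis of weak limit points. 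If you wish to salvage your approach, replace the target subspace $\Omega_{K_{\open_n}}\ot L_{\open_n}$ by $\Omega_{K_{\open_n}}\ot\Gamma^e_a(L^2(\open_n^c,\kil^\Comp))$ and enlarge the approximating operators accordingly; that is essentially the paper's argument.
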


\begin{proof}
Set $\Gamma^e_a(L^2(\open, \kil^\Comp))  =[\xi_1 \wedge\xi_2 \wedge \cdots \wedge \xi_{2m};~ \xi_1, \xi_2 \cdots \xi_{2m} \in L^2(\open, \kil^\Comp), ~m \in \Nat_0],$ when $m=0$ the wedge product is just the vacuum vector $\Omega$. The map \begin{align*}V_\open((\xi_1 \wedge\xi_2 \wedge \cdots \wedge \xi_{2m}) \otimes (\eta_1 \wedge\eta_2  \wedge \cdots \wedge \eta_{2m'}) )&\\ =\xi_1 \wedge\xi_2  \wedge \cdots \wedge \xi_{2m}\wedge  \eta_1 \wedge  \eta_2\wedge  \cdots \wedge  \eta_{2m'},& \end{align*} where $\xi_1, \xi_2\cdots \xi_{2m} \in L^2(\open, \kil^\Comp)$, $\eta_1, \eta_2 \cdots \eta_{2m'} \in L^2(\open^c, \kil^\Comp)$, extends to an isometry between $\Gamma^e_a(L^2(\open, \kil^\Comp))\ot \Gamma^e_a(L^2(\open^c, \kil^\Comp))\mapsto H^e_1$. Define $$\B(\open)=V_\open \left(B(\Gamma^e_a(L^2(\open, \kil^\Comp)))\ot 1_{\Gamma^e_a(L^2(\open^c, \kil^\Comp))}\right)V_\open^*; ~~ \tilde{\B}(\open)={\B(\open)}''.$$

Since $|\open_n|\rightarrow 0$, for any $f\in L^2((0,1), \kil^\Comp)$, we have$f1_{\open_{n}^c}\rightarrow f$.  Using this it is easy to verify that $\liminf{\tilde{\B}(\open_n^c)}=B(H_1)$. 

Notice that for any elementary set $\open=\cup_{i=1}^N(s_i, t_i)$ $$U_\open\left(\bigotimes _{i=1}^n H^e_{t_i-s_i}\otimes \Phi\right) \subseteq V_{\open}\left( \Gamma^e_a(L^2(\open, \kil^\Comp))\ot \Phi\right),$$ where $\Phi$ denotes the tensor products of vacuum vectors in the remaining tensors and $U_\open$ is the canonical isometry as in \ref{Uopen}. This consequently imply that $\tA^{H^e}(\open)\subseteq \tilde{\B}(\open)$ for any elementary open set $\open \subseteq [0, 1]$.
 Hence  we have $$\tilde{\B}(\open_n^c) \subseteq \tilde{\B}(\open_n)' \subseteq \tA^{H^e}(\open_{n})'~~\forall n \in \Nat.$$ So we have $\liminf{\tA^{H^e}(\open_{n})'}=B(H_1)$.  Now it follows from Lemma \ref{liminfsup} that  $\limsup{\tA^{H^e}(\open_n)}=\Comp$. 
%Conversely if $|\open_n'|$ does not converge to zero. Let $f \in L^2((0,1), \kil^\Comp)$ the sequence $f1_{\open_n'}$ is bounded and has weakly convergent subsequence. 
%If $E_n$ be the orthogonal projection onto $\Gamma^e_a(L^2(\open_n^c, \kil)) \subseteq H^e_1$ (sitting canonically), then $E_nT_nE_n$ converges strongly to $T$, for any $T \in B(H_1)$.  So we have $\liminf{E_n B(H_1)E_n}= B(H_1)$. But we have 
%Also notice that $E_nTE_n$ commutes with $\A^{H^e}(\open_n)$.
%Notice that $E_n$ commutes with $P_{\open_n}=P_{\open_n^c}$. 
%Let $T \in \limsup{\A_{\open_n}}$, so that there exists $T_{n_k}\in \A^{H^e}(\open_{n_k})$ such that $T_{n_k} \rightarrow T$ weakly. It is enough to prove that $T$ commutes with any self adjoint $S \in B(H_1)$. Let $S_{n_k}=E_{n_k}SE_{n_k}$. 
%For any $ \xi, \eta \in H_1,$ we have $\ip{T S\xi} {\eta} = \lim_k \ip{S \xi} {T_{n_k}^*\eta}$. On the other hand
%$$\ip{ S \xi} {T_{n_k}^*\eta} \leq \ip{ S_{n_k} \xi}{T_{n_k}^*\eta}+\|S\xi-S_{n_k}\xi\|\|T_{n_k}^* \eta\|.$$ 
%As $\|T_{n_k}^* \eta\|$ is bounded, we get $$\ip{ T S\xi} {\eta} = \lim_k {\ip{ T_{n_k}S_{n_k} \xi}{\eta}} = \lim_k{\ip{ S_{n_k}T_{n_k} \xi}{ \eta}}=\ip{ ST\xi} {\eta},$$ where we have retraced the same arguments while concluding the last equality.
\end{proof}

%check the actual numbers in fpdct with the reprints.

The Arveson product system of Hilbert spaces associated with generalised CCR flows are described by sum systems. For the definition of sum systems and for the construction of the product systems from sum systems (and also for the definitions/facts/notations regarding $\liminf, \limsup$ of Hilbert subspaces), we ask the reader to refer to  \cite{pdct} and \cite{genccr}. For product systems arising from sum systems also,  end points of an intervals does not matter, while dealing with local algebras (see corollary 25, \cite{pdct}). 

\begin{prop}\label{limsupsum}
Let $H =(H_t, U_{s,t})$ be the product system  constructed from a sum system $(G_{s,t}, S_t)_{s,t \in (0,\infty)}$. For a sequence of elementary sets $\open_n \subseteq [0,1]$, $$\liminf{\A^H(\open_n)}=\{W_0(x+iy): x \in \liminf{G_{\open_n}},~y \in \liminf{G_{\open_n^c}^\perp}\}'';$$ 
$$\limsup{\A^H(\open_n)}=\{W_0(x+iy): x \in \limsup{G_{\open_n}},~y \in \limsup{G_{\open_n^c}^\perp}\}''.$$

Further 
$\limsup{\A^H(\open_n)} = \left(\liminf{\left({\A^H(\open_n)}'\right)}\right)'=\left(\liminf{\A^H(\open_n^c)}\right)'.$
\end{prop}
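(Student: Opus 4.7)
The plan is to reduce everything to a concrete Weyl-operator description of the local algebras on elementary sets, and then handle the limit constructions using continuity properties of Weyl operators combined with the commutant duality inherent to sum systems. The first ingredient I would assemble, following Corollary 25 of \cite{pdct}, is the explicit description that for an elementary open set $\open \subseteq [0,1]$ the local algebra is
$$\A^H(\open) = \{W_0(x+iy) : x \in G_\open,\ y \in G_{\open^c}^\perp\}'',$$
together with the commutant duality $\A^H(\open)' = \A^H(\open^c)$ inherited from the tensor factorisation of the symmetric Fock space over the sum system. The second ingredient is the elementary continuity fact that if $\xi_n \to \xi$ strongly, then $W_0(\xi_n) \to W_0(\xi)$ strongly (and the same for adjoints); this follows by evaluating on exponential vectors using $W_0(u)\varepsilon(v) = e^{-\|u\|^2/2 - \langle u, v\rangle}\varepsilon(u+v)$ and invoking uniform boundedness together with the density of exponential vectors.

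With these in hand, the $\liminf$ inclusion $\supseteq$ is direct: given $x \in \liminf G_{\open_n}$ and $y \in \liminf G_{\open_n^c}^\perp$ with strongly approximating sequences $x_n \in G_{\open_n}$ and $y_n \in G_{\open_n^c}^\perp$, the unitaries $W_0(x_n + iy_n) \in \A^H(\open_n)$ and their adjoints converge strongly to $W_0(x+iy)$ and its adjoint, so $W_0(x+iy) \in \liminf \A^H(\open_n)$. The reverse inclusion I would obtain by exploiting the commutant duality: any $T \in \liminf \A^H(\open_n)$ is approximated strongly, together with its adjoint, by $T_n \in \A^H(\open_n)$, and each $T_n$ commutes with every Weyl generator of $\A^H(\open_n^c)$; passing to strong limits using the Weyl-operator continuity shows $T$ commutes with the Weyl generators attached to $\liminf G_{\open_n^c}$ and $\liminf G_{\open_n}^\perp$, which pins $T$ down inside the asserted Weyl algebra.

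For the concluding string of equalities, Lemma \ref{liminfsup} combined with the duality $\A^H(\open_n)' = \A^H(\open_n^c)$ gives immediately
$$\limsup \A^H(\open_n) \subseteq (\liminf \A^H(\open_n)')' = (\liminf \A^H(\open_n^c))'.$$
Applying the $\liminf$ description already proved to $\open_n^c$ reduces the right-hand side to a Weyl commutant, which I would compute using the Weyl commutation relations together with the standard duality $(\liminf L_n)^\perp = \limsup L_n^\perp$ for sequences of closed subspaces. This identifies the right-hand side with the claimed Weyl expression for $\limsup \A^H(\open_n)$. The reverse inclusion $\limsup \A^H(\open_n) \supseteq \{W_0(x+iy) : x \in \limsup G_{\open_n},\ y \in \limsup G_{\open_n^c}^\perp\}''$ then comes by exhibiting each such Weyl generator as a weak limit along an appropriate diagonal subsequence extracted from the respective $\limsup$ subspaces, and together the three inclusions force equality throughout.

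The main obstacle I anticipate lies in the $\limsup$ half. Weak convergence of the arguments of Weyl operators does not, in general, imply weak convergence of the Weyl operators themselves, because the exponential prefactor $e^{-\|u\|^2/2}$ resists weak limits whenever the norms $\|u_n\|$ misbehave. The saving idea is precisely the commutant detour via $\A^H(\open)' = \A^H(\open^c)$, which bypasses any direct weak-limit manipulation of Weyl operators and replaces it by a strong-limit argument for the complementary algebras. So the overall strategy is to perform all of the real analytic work on the $\liminf$ side using strong convergence, and then transport the result to $\limsup$ through the Fock-space commutant duality; after that, the remainder is a bookkeeping exercise matching closed-subspace perp-duals against the symplectic structure of the Weyl commutant.
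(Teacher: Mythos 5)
Your overall architecture matches the paper's: the explicit Weyl description of $\A^H(\open)$ for elementary sets, the duality $\A^H(\open)'=\A^H(\open^c)$, strong continuity of $x\mapsto W_0(x)$ for the easy $\liminf$ inclusion, and Lemma \ref{liminfsup} to convert a $\limsup$ into the commutant of a $\liminf$. But your argument for the hard half of the $\liminf$ identity has a genuine gap. You show that a generator $T$ of $\liminf{\A^H(\open_n)}$ (a strong limit of $T_n\in\A^H(\open_n)$ with strongly convergent adjoints) commutes with $W_0(u+iv)$ for every $u\in\liminf{G_{\open_n^c}}$ and $v\in\liminf{G_{\open_n}^\perp}$. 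By the Weyl commutation duality together with $(\liminf{L_n})^\perp=\limsup{L_n^\perp}$, the commutant of that family of Weyl operators is $\{W_0(x+iy):x\in\limsup{G_{\open_n}},\ y\in\limsup{G_{\open_n^c}^\perp}\}''$ --- that is, the \emph{limsup} Weyl algebra, not the asserted $\liminf$ one. Since in the situations this proposition is built for (Tsirelson's examples, and compare Proposition \ref{limsupHe}) the $\liminf$ and $\limsup$ subspaces genuinely differ, your commutation argument only places $T$ in a possibly strictly larger algebra and does not ``pin it down'' where claimed. The paper closes this gap differently: it takes the commutant of the candidate $\liminf$ Weyl algebra, rewrites it as $\{W_0(x+iy):x\in\limsup{G_{\open_n^c}},\ y\in\limsup{G_{\open_n}^\perp}\}''$, embeds this into $\limsup{\A^H(\open_n^c)}$ via Lemma 3.2(i) of \cite{pdct}, and then applies Lemma \ref{liminfsup} to land inside $\left(\liminf{\A^H(\open_n)}\right)'$; one more commutant yields the sharp inclusion.

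Relatedly, your closing claim that the commutant detour ``bypasses any direct weak-limit manipulation of Weyl operators'' cannot be sustained: both the step just described and your own $\supseteq$ direction for the $\limsup$ formula require exactly the statement that $W_0(\xi)$ lies in $\limsup{\{W_0(\xi_n):\xi_n\in L_n\}''}$ whenever $\xi$ is a weak limit of $\xi_{n_k}\in L_{n_k}$. This is Lemma 3.2(i) of \cite{pdct}, and it survives the misbehaving exponential prefactor because a weakly convergent sequence is norm-bounded, so after passing to a further subsequence along which $\|\xi_{n_k}\|$ converges, $W_0(\xi_{n_k})$ converges weakly to a \emph{nonzero} scalar multiple of $W_0(\xi)$; this suffices because $\limsup$ of algebras is defined as a double commutant. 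You need to invoke or reprove that lemma explicitly (and address the choice of a common subsequence for the two arguments $x$ and $y$) rather than declare the issue circumvented.
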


\begin{proof}  For an elementary set $\open\subseteq [0,1]$, $$\A^H(\open) = \{W_0(x+iy): x \in G_{\open_n},~y \in G_{\open_n^c}^\perp\}''; ~~{\A^\H(\open)}'=\A^H(\open^c),$$ (see section 3, \cite{pdct}). 
%For sequence of elementary sets $\open_n \subseteq [0,1]$, we claim $$\liminf{\A^\theta(\open_n)}=\{W_0(x+iy): x \in \liminf{G_{\open_n}},~y \in \liminf{G_{\open_n^c}^\perp}\}''$$ 
The strong continuity of $x\mapsto W_0(x)$ (see \cite{KRP}) implies $$\{W_0(x+iy): x \in \liminf{G_{\open_n}},~y \in \liminf{G_{\open_n^c}^\perp}\}''\subseteq \liminf{\A^H(\open_n)}.$$ On the other hand $\{W_0(x+iy): x \in \liminf{G_{\open_n}},~y \in \liminf{G_{\open_n^c}^\perp}\}'$ 
\begin{align*} & =\{W_0(x+iy): x \in \left(\liminf{G_{\open_n^c}^\perp}\right)^\perp,~y \in \left(\liminf{G_{\open_n}}\right)^\perp\}''\\  & = \{W_0(x+iy): ~x \in \limsup{G_{\open_n^c}} ,~y \in \limsup{G_{\open_n}^\perp}\}''~~\mbox{(by lemma 3.1, \cite{pdct})}\\ 
& \subseteq \limsup{\{W_0(x+iy): x \in G_{\open_n^c} ,~ y \in G_{\open_n}^\perp\}}''~~\mbox{(by lemma 3.2 (i), \cite{pdct})}\\ 
& \subseteq \left(\liminf{\{W_0(x+iy): x \in G_{\open_n^c},~ y \in G_{\open_n}^\perp \}'}\right)'~~ \mbox{(by Lemma \ref{liminfsup})}\\
& = \left(\liminf{\A^H(\open_n)}\right)'.\end{align*} 
%\begin{align*} & =\{W_0(x+iy): x \in \left(\liminf{G_{\open_n^c}^\perp}\right)^\perp,~y \in \left(\liminf{G_{\open_n}}\right)^\perp\}''\\ \mbox{(lemma 31, \cite{pdct})} ~~~ ~~~& = \{W_0(x+iy): ~x \in \limsup{G_{\open_n^c}} ,~y \in \limsup{G_{\open_n}^\perp}\}''\\ \mbox{(lemma 32 (i), \cite{pdct})}~~~ ~~~
%& \subseteq \limsup{\{W_0(x+iy): x \in G_{\open_n^c} ,~ y \in G_{\open_n}^\perp\}}''\\ \mbox{(by Lemma \ref{liminfsup})}~~~ ~~~
%& \subseteq \left(\liminf{\{W_0(x+iy): x \in G_{\open_n^c},~ y \in G_{\open_n}^\perp \}'}\right)'\\
%& = \left(\liminf{\A^\theta(\open_n)}\right)'.\end{align*} 
Hence $\liminf{\A^H(\open_n)}=\{W_0(x+iy): x \in \liminf{G_{\open_n}},~y \in \liminf{G_{\open_n^c}^\perp}\}''.$  For the proof of the corresponding statement of $\limsup{\A^H(\open_n)}$: one inclusion follows  from lemma 3.2 (i), \cite{pdct}; the other inclusion can be proven  by flipping $\liminf{\A^H(\open_n)}$ with $\limsup{\A^H(\open_n)}$ in the above arguments. The remaining statements follow from above and lemma 3.1, \cite{pdct}.
\end{proof}
 
Let  $(\{T_t^1\}, \{T_t^2\})$ be a perturbation pair and $\theta$ be the associated generalized CCR flow on $B(\Gamma_s(G^\Comp))$.  Let $j:G^\Comp \mapsto G^\Comp$ be the anti-unitary $x+iy\mapsto y+ix$ for $x,y \in G$, and $\Gamma(j): \Gamma_s(G^\Comp)\mapsto \Gamma_s(G^\Comp)$ be the second quantization of $j$ defined by $\Gamma(j)(\varepsilon(\xi))=\varepsilon(j\xi)$ and extended antilinearly to $\Gamma_s(G^\Comp)$. Then $$\Gamma(j) W(x+iy)\Gamma(j)=W(y+ix) ~~ \forall x,y \in G.$$  By the discussion in Example \ref{B(H) example},  the dual \en-semigroup of $\theta$ on $B(\Gamma_s(G^\Comp))$ is conjugate to the \en-semigroup $\ol{\theta}$, given by $$\ol{\theta}_t (W(x+iy)) = \Gamma(j)\theta_t(\Gamma(j)W(x+iy)\Gamma(j))\Gamma(j) = W(T_t^2x +i T^1_ty)~~ \forall x, y \in G.$$ So $\ol{\theta}$ is the  generalized CCR flow given by the perturbation pair $(\{T_t^2\}, \{T_t^1\})$, and in particular the associated Arveson product system $(\ol{H}_t, \ol{U}_{s,t})$   is also given by a sum system, say $(\ol{G}_{s,t}, \ol{S_t})$.  
%(For a sum system $(G_{s,t}, S_t)$ define $\ol{S_t}= (A_{t,\infty}^*)^{-1}A_t^{-1}(S_t^*)^{-1}$ and $\ol{G}_{0,t} =G_{t, \infty}

\begin{cor}\label{limsupgccr}
Let $\theta$ be a generalised CCR flow. Then 
$$\limsup{\tA^\theta(\open_n)} = \left(\liminf{\left({\tA^\theta(\open_n)}'\right)}\right)'.$$
\end{cor}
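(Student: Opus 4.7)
The plan is to reduce the statement to Proposition \ref{limsupsum} applied not to $\theta$ itself, but to the canonical dilation $\theta\ot\ol\theta$ to a type I factor, which is again a generalised CCR flow. The point is that the super-product system of $\theta$ is literally the Arveson product system of this dilation, so the obstruction to applying Proposition \ref{limsupsum} to $\theta$ directly (namely, that $\tA^\theta$ rather than $\A^\theta$ appears, and that $\theta$'s super-product system need not be full) evaporates once we pass to $\theta\ot\ol\theta$.

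First I would verify that $\theta\ot\ol\theta$ is a generalised CCR flow. By the discussion preceding the statement, $\ol\theta$ is the generalised CCR flow on $B(\Gamma_s(G^\Comp))$ associated with the perturbation pair $(T^2_t,T^1_t)$. A direct computation on Weyl operators, using $\Gamma_s(G^\Comp)\ot\Gamma_s(G^\Comp)\cong\Gamma_s((G\op G)^\Comp)$, shows that $\theta\ot\ol\theta$ is the generalised CCR flow on $\Gamma_s((G\op G)^\Comp)$ associated with the perturbation pair $(T^1_t\op T^2_t,\,T^2_t\op T^1_t)$. In particular, its Arveson product system arises from a sum system, and Proposition \ref{limsupsum} applies to it.

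Next, using Example \ref{H ot barH}, the super-product system of $\theta$ is $H^\theta_t=H_t\ot\ol{H}_t$ with isometries $U^\theta_{s,t}=U_{s,t}\ot\ol{U}_{s,t}$, where $(H_t,U_{s,t})$ is the Arveson product system of $\theta$ and $(\ol{H}_t,\ol{U}_{s,t})$ that of $\ol\theta$. But this is precisely the Arveson product system of $\theta\ot\ol\theta$. Under this identification, the canonical isometry $U^\theta_\open$ built from the super-product structure of $H^\theta$ agrees with the corresponding isometry for $\theta\ot\ol\theta$, so it is unitary, whence $P^\theta_\open=1$ and $\tA^\theta(\open)=\A^\theta(\open)$ coincides with $\A^{\theta\ot\ol\theta}(\open)$ as a subalgebra of $B(H^\theta_1)=B(H_1\ot\ol{H}_1)$, for every elementary open set $\open\subseteq[0,1]$.

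With the identification in place, Proposition \ref{limsupsum} applied to the generalised CCR flow $\theta\ot\ol\theta$ yields
\[
\limsup\A^{\theta\ot\ol\theta}(\open_n)=\bigl(\liminf\bigl(\A^{\theta\ot\ol\theta}(\open_n)\bigr)'\bigr)',
\]
which translates directly to
\[
\limsup\tA^\theta(\open_n)=\bigl(\liminf\bigl(\tA^\theta(\open_n)\bigr)'\bigr)'.
\]
The main obstacle is the bookkeeping in the middle step: one must check that the canonical isometries $U^\theta_{s,t}$ and projections $P^\theta_\open$ coming from the super-product system of $\theta$ are the same operators as those coming from the Arveson product system of $\theta\ot\ol\theta$, so that the two local algebras really do agree on the nose. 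Once this is verified, the corollary is immediate from Proposition \ref{limsupsum}.
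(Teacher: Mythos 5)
Your proposal is correct and takes essentially the same route as the paper: both proofs identify the super-product system of $\theta$ with the Arveson product system arising from a sum system (you via the observation that it is the Arveson system of the generalised CCR flow $\theta\ot\ol\theta$, the paper via the explicit sum system $(G_{s,t}\oplus\ol{G}_{s,t}, S_t\oplus\ol{S}_t)$, which is the same thing in different language), note that $\tA^\theta(\open)=\A^\theta(\open)$ because the super-product system here is a genuine product system, and then invoke the final assertion of Proposition~\ref{limsupsum}.
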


\begin{proof}
Let $(H_t, U_{s,t})$ be the Arveson's product system of $\theta$. By \ref{H ot barH}, the super product system of $\theta$ is given by $(H_t \ot  \ol{H_t}, U_{s,t} \ot \ol{U_{s,t}})$, which arises from the sum system $\left(G_{s,t} \oplus \ol{G}_{s,t}, S_t \oplus \ol{S_t}\right)$.
Also for any two sequences of Hilbert subspaces $\{G_n\}$ and $\{F_n\}$, it is easy to see that $\liminf{(G_n\oplus F_n)}=\liminf{G_n} \oplus \liminf{F_n}$ and $\limsup{(G_n\oplus F_n)}=\limsup{G_n}\oplus \limsup{F_n}.$
%$$\liminf{(G_n\oplus F_n)}=\liminf{G_n} \oplus \liminf{F_n};~~\limsup{(G_n\oplus F_n)}=\limsup{G_n}\oplus \limsup{F_n}.$$  
Now the corollary follows from the above Proposition \ref{limsupsum}. 
\end{proof}

\begin{prop}\label{limsuptensors} Let a sequence of elementary sets $\{\open_n\subseteq [0,1]:n \in \Nat\}$ be such that $|\open_n|\rightarrow 0$. Let $\theta$ be any generalised CCR flow and $\alpha$ be either a Clifford flow or an even Clifford flow of any index.
Then $\limsup{\left(\A^\theta(\open_n)\ot \A^\alpha(\open_n)\right)}$ is $\Comp 1$ if and only if $\limsup{\A^\theta(\open_n)}=\Comp 1$.
\end{prop}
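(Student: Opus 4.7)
The plan is to prove the two implications separately: the forward direction via a $\liminf$-tensor argument combined with Lemma \ref{liminfsup}, and the reverse direction by transporting a non-scalar weak limit across the $\alpha$-factor. A preliminary observation is that the super-product system of a generalised CCR flow $\theta$ is in fact an Arveson product system (Example \ref{H ot barH}), so every structure map $U^\theta_{s,t}\ot\ol{U^\theta_{s,t}}$ is a unitary, $P^\theta_\open=1$ for every elementary $\open$, and hence $\tA^\theta(\open)=\A^\theta(\open)$. Consequently Corollary \ref{limsupgccr} reads $\limsup\A^\theta(\open_n)=\bigl(\liminf\A^\theta(\open_n)'\bigr)'$, so the hypothesis $\limsup\A^\theta(\open_n)=\Comp 1$ is equivalent to $\liminf\A^\theta(\open_n)'=B(H^\theta_a)$.

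For the forward implication I would combine this with Proposition \ref{limsupHe}, which gives $\liminf\tA^\alpha(\open_n)'=B(H^\alpha_a)$, together with the routine tensor-product inclusion $\liminf\mathcal{M}_n\otol\liminf\mathcal{N}_n\subseteq\liminf(\mathcal{M}_n\otol\mathcal{N}_n)$---valid because strong convergence with adjoint control plus uniform boundedness (via Banach--Steinhaus) propagates to elementary tensors and then by density to the whole Hilbert space---and the tensor commutation theorem $(\mathcal{M}\otol\mathcal{N})'=\mathcal{M}'\otol\mathcal{N}'$. These ingredients deliver
$$\liminf\bigl(\A^\theta(\open_n)\ot\tA^\alpha(\open_n)\bigr)'\supseteq B(H^\theta_a)\otol B(H^\alpha_a)=B(H^\theta_a\ot H^\alpha_a),$$
and Lemma \ref{liminfsup} then bounds $\limsup(\A^\theta(\open_n)\ot\tA^\alpha(\open_n))\subseteq\Comp 1$. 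Since $\A^\alpha(\open_n)\subseteq\tA^\alpha(\open_n)$, the same bound descends to $\A^\theta(\open_n)\ot\A^\alpha(\open_n)$.

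For the converse the crucial auxiliary fact is that $P^\alpha_{\open_n}\to 1$ strongly when $|\open_n|\to 0$. Rather than computing this directly (delicate, because $\open_n^c$ may itself split into many small intervals, each imposing its own parity constraint), I would extract it from Proposition \ref{limsupHe} itself: each $P^\alpha_{\open_n}\in\tA^\alpha(\open_n)$ is a projection fixing the vacuum $\Omega\in H^\alpha_a$ (since $\Omega$ lies in the range of $U^\alpha_{\open_n}$), so any weak subsequential limit $Q$ sits in $\limsup\tA^\alpha(\open_n)=\Comp 1$ while also satisfying $Q\Omega=\Omega$, forcing $Q=1$; hence every subsequence has a sub-subsequence converging weakly to $1$, so the entire sequence converges weakly to $1$, and for projections this coincides with strong convergence via $\|P_n\xi-\xi\|^2=\|\xi\|^2-\ip{\xi}{P_n\xi}\to 0$. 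Now pick any non-scalar $T\in\limsup\A^\theta(\open_n)$. The identity $\limsup=S''$, where $S$ denotes the set of weak subsequential limits, forces $S\not\subseteq\Comp 1$, so I may take $T=\wlim_k T_{n_k}$ with $T_{n_k}\in\A^\theta(\open_{n_k})$. Because $P^\alpha_{\open_{n_k}}$ is the identity of $\A^\alpha(\open_{n_k})$, the product $T_{n_k}\ot P^\alpha_{\open_{n_k}}$ lies in $\A^\theta(\open_{n_k})\ot\A^\alpha(\open_{n_k})$ and (weak $\times$ strong, both uniformly bounded) converges weakly to $T\ot 1$, exhibiting a non-scalar element of $\limsup(\A^\theta(\open_n)\ot\A^\alpha(\open_n))$.

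The main technical obstacle throughout is the bookkeeping between the $\A$-algebras (which are non-unital in $B(H^\alpha_a)$ because $P^\alpha_\open<1$ for the Clifford super-product system) and the $\tA$-algebras that are needed to invoke Proposition \ref{limsupHe}. The economical trick is to extract the strong convergence of $P^\alpha_{\open_n}$ directly from Proposition \ref{limsupHe} by probing with the vacuum vector, which simultaneously bridges the $\A/\tA$ gap and drives the reverse implication.
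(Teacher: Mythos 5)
Your proof is correct, and the forward implication tracks the paper's argument closely (Lemma \ref{liminfsup}, the $\liminf$--tensor inclusion, Corollary \ref{limsupgccr} and Proposition \ref{limsupHe}), with the $\A$-versus-$\tA$ bookkeeping handled more cleanly than the paper does. Where you genuinely add something is the reverse implication. The paper disposes of it in one sentence via the inclusion $\limsup\A_n\ot\limsup\B_n\subseteq\limsup(\A_n\ot\B_n)$, a statement that is routine when all the $\A_n,\B_n$ are \emph{unital} subalgebras of fixed $B(H)$, $B(K)$ (one pairs a weakly convergent $T_{n_k}$ with the constant sequence $1_K$). But $\A^\alpha(\open_n)$ is not unital in $B(H^e_1)$---its unit is the proper projection $P^\alpha_{\open_n}$---so the inclusion needs the extra input that $P^\alpha_{\open_n}\to 1$ as $|\open_n|\to 0$. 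You supply exactly this, and do so economically by leveraging Proposition \ref{limsupHe} itself (any weak cluster point $Q$ of the projections lies in $\limsup\tA^\alpha(\open_n)=\Comp 1$ and fixes $\Omega$, so $Q=1$; weak compactness then gives weak convergence of the full sequence, and for projections weak convergence to $1$ upgrades to strong). With that in hand, pairing $T_{n_k}$ with $P^\alpha_{\open_{n_k}}$ along the same subsequence produces $T\ot 1$ as an honest weak subsequential limit, which is what the paper's one-liner implicitly assumes. So: same forward route, and a welcome repair of a real (if small) gap in the reverse direction.
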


\begin{proof} Let $\A_n,\B_n$ be any two families of von Nuemann algebras. It immediately follows if $\limsup{\left(\A_n\ot \B_n\right)}=\Comp$, then both $\limsup{\A_n}=\Comp 1= \limsup{\B_n}$, since $ \limsup{\A_n} \ot \limsup{\B_n}\subseteq \limsup{\left(\A_n\ot \B_n\right)}$.

Also  $ \liminf{\A_n'} \ot \liminf{\B_n'}  \subseteq \liminf{\left(\A_n'\ot \B_n'\right)}.$ Using this and 
Lemma \ref{liminfsup}, we have 
%\begin{align*}\limsup{\left(\A_n\ot\B_n\right)} & \subseteq \left(\liminf{\left(\A_n'\ot \B_n'\right)}\right)'\\
%& \subseteq \left(\liminf{\A_n'} \ot \liminf{\B_n'}\right)'\\
%&\subseteq \left(\left(\limsup{A_n}\right)'\ot 
%\end{align*}
\begin{align*} \limsup{\left(\A_n\ot\B_n\right)}  \subseteq \left(\liminf{\left(\A_n'\ot \B_n'\right)}\right)' & \subseteq \left(\liminf{\A_n'} \ot \liminf{\B_n'}\right)'.\end{align*} 

If $\limsup{\A^\theta(\open_n)}=\Comp 1$ and $|\open_n|\rightarrow 0$ then, thanks to corollary \ref{limsupgccr} and Proposition \ref{limsupHe}, both $\liminf{\left(\A^\theta(\open_n)'\right)} =B(H_1^\theta)$ and  $\liminf{\left(\A^\alpha(\open_n)'\right)}=B(H^e_1)$. Hence $$\limsup{\left(\A^\theta(\open_n)\ot \A^\alpha(\open_n)\right)}=\Comp.$$
\end{proof}

%Again let $\theta$ be a generalised CCR flow and $\alpha$ be either a Clifford flow or an even Clifford flow of fixed any fixed rank. For any sequence of elementary open sets $\open_n \subseteq [0,1]$ such that $|\open_n|\rightarrow 0$, thanks to Proposition \ref{limsupHe} and \ref{limsuptensors}, $\limsup{\A^{\theta\ot\alpha}}(\open_n)=\Comp$ if and only if $\limsup{\A^{\theta}}(\open_n)=\Comp$.

For $r>0$, let $\sigma_r$ be a smooth positive even function with $\sigma_r(\lambda)=\log^r |\lambda|$ for large $|\lambda|$. Then $\sigma_r$ is a spectral density function of an off-white noise, and gives rise to a family of generalised CCR flows $\{\theta^r: r>0\}$. In \cite{T1}, a sequence of elementary sets (with Lebesgue measure converging to $0$) is produced for any given $r_1\neq r_2$, so that $\limsup{\A^{\theta^{r_1}}}(\open_n)=\Comp$ but $\limsup{\A^{\theta^{r_2}}}(\open_n)$ is non-trivial. (Tsirelson produced invariants through sum systems, but this is equivalent to the above statement, as explained in Section 3, \cite{pdct}.) Thanks to Proposition \ref{limsuptensors} we have the following theorem.

\begin{thm}
There exits uncountably many mutually non-cocycle-conjugate \en-semigroups on the hyperfinite type II$_\infty$ factor of the form $\{\theta_r\ot \alpha:r>0\}$, where $\theta_r$ is a generalised CCR flow arising from off-white noise with spectral density $\sigma_r$ converging to $\infty$ at infinity, and $\alpha$ is a fixed \en-semigroup which is  either a Clifford flow or an even Clifford flow of any index.
\end{thm}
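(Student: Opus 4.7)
The plan is to distinguish the members of $\{\theta_r\ot\alpha: r>0\}$ up to cocycle conjugacy using the $\limsup$-of-local-algebras invariant of the super-product system, combined with the Tsirelson-type sequences of elementary sets produced in \cite{T1}. Since $\theta_r$ acts on a type I factor and $\alpha$ on the hyperfinite II$_1$ factor, each $\theta_r\ot\alpha$ is an \en-semigroup on the hyperfinite II$_\infty$ factor, so only mutual non-cocycle-conjugacy needs to be established.

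I would fix $r_1\neq r_2$ in $(0,\infty)$ and argue by contradiction. If $\theta_{r_1}\ot\alpha$ and $\theta_{r_2}\ot\alpha$ are cocycle conjugate, then by Theorem~\ref{sps theorem} their super-product systems are isomorphic. Because the algebras $\tA^H(\open)$ (and hence $\limsup\tA^H(\open_n)$ along any sequence $\{\open_n\}$ of elementary sets) are built entirely from the product maps and the associativity isomorphisms of $H$, any such isomorphism must transport these invariants to isomorphic algebras.

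Next I would invoke, for the chosen pair $r_1\neq r_2$, the sequence of elementary sets $\open_n\subseteq[0,1]$ with $|\open_n|\to 0$ constructed in \cite{T1} (and reinterpreted via local algebras in Section~3 of \cite{pdct}); this sequence satisfies $\limsup\A^{\theta_{r_1}}(\open_n)=\Comp 1$ while $\limsup\A^{\theta_{r_2}}(\open_n)\neq \Comp 1$. Feeding this sequence into Proposition~\ref{limsuptensors} in both directions then yields
\[
 \limsup \A^{\theta_{r_1}\ot\alpha}(\open_n)=\Comp 1,\qquad \limsup \A^{\theta_{r_2}\ot\alpha}(\open_n)\neq \Comp 1,
\]
contradicting the preservation of this invariant under the hypothesised cocycle conjugacy.

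The only step that warrants any care is the transfer of $\limsup\tA^H(\open_n)$ across an isomorphism of super-product systems; this is essentially automatic from the coordinate-free definition of the isometries $U^a_\open$ in terms of the associators, and should be recorded explicitly once. Everything else is assembly of tools already in place: Theorem~\ref{sps theorem} converts cocycle conjugacy into isomorphism of super-product systems, Proposition~\ref{limsuptensors} reduces the local-algebra invariant of $\theta_r\ot\alpha$ to that of $\theta_r$, and \cite{T1,pdct} supply the separating sequences. The heavy lifting has therefore already been done, and no genuine obstacle remains.
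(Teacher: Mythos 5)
Your proposal is correct and follows essentially the same route as the paper: cocycle conjugacy gives isomorphic super-product systems (Theorem~\ref{sps theorem}), the $\limsup$-of-local-algebras invariant is transported across that isomorphism, the Tsirelson sequences from \cite{T1} (as reinterpreted in \cite{pdct}) separate $\theta_{r_1}$ from $\theta_{r_2}$, and Proposition~\ref{limsuptensors} (together with $\A^{\theta\ot\alpha}(\open)=\A^\theta(\open)\ot\A^\alpha(\open)$) lifts this separation to the tensor products. No substantive differences to note.
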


\subsection{Tensoring with Toeplitz CAR flows} To study Toeplitz CAR flows discussed in \cite{toepcar}, we need to further specialize the idea of local algebras to the notion of type I factorizations as defined  by Araki and Woods \cite{AW}. Here we define these invariants with respect to super product systems and use them to study \en-semigroups on hyperfinite II$_\infty$ factor, given by tensor products of Toeplitz CAR flows with $\alpha$.  Throughout this subsection, every index set (indexing a type I factorization) is assumed to be countable, and every Hilbert space is assumed to be separable.

\begin{defn} 
Let $H$ be a Hilbert space. 
We say that a family of type I subfactors $\{\m_{\lambda}\}_{\lambda\in \Lambda}$ of $B(H)$ is a \textit{type I factorization} of $B(H)$ if 
\begin{itemize}
\item [(i)] $\m_\lambda\subset \m_\mu'$ for any $\lambda,\mu\in \Lambda$ with $\lambda\neq \mu$, 
\item [(ii)] $B(H)=\bigvee_{\lambda\in \Lambda}\m_\lambda$.
\end{itemize}
We say that a type I factorization $\{\m_{\lambda}\}_{\lambda\in \Lambda}$ is a \textit{complete atomic Boolean algebra of type I factors} 
(abbreviated as \textit{CABATIF}) if for any subset $\Gamma\subset \Lambda$, the von Neumann algebra $\bigvee_{\lambda\in \Gamma}\m_{\lambda}$ is a type I factor. 
\end{defn}

Two type I factorizations $\{\m_{\lambda}\}_{\lambda\in \Lambda}$ of $B(H)$ and $\{\m'_{\mu}\}_{\mu\in \Lambda'}$ of $B(H')$ 
are said to be unitarily equivalent if there exist a unitary $U$ from $H$ onto $H'$ and a bijection $\sigma:\Lambda\rightarrow \Lambda'$ 
such that $U\m_\lambda U^*=\m'_{\sigma(\lambda)}$. For super product systems, we associate type I factorizations of $B(K)$ for a subspace of $K\subseteq H_a$ as follows. 

Let $A=\{a_n\}_{n=0}^\infty$ be a strictly increasing sequence of non-negative numbers starting from $0$ and converging to $a<\infty$. Define $P^A_N= U_N U_N^*$ where $$U_N:\bigotimes_{n=0}^{N-1} H_{a_{n-1}-a_{n}} \ot H_{a-a_N} \mapsto H_a$$ is the canonical isometry uniquely determined by the associativity axiom of the super product system. Clearly $\{P^A_N:N\in \Nat\}$ is a decreasing family of projections in $N$. Define $P^A= \bigwedge_{n=1}^\infty P^A_N.$  (We write $P^{A,\theta}$ to remember the associated \en-semigroup.)

\begin{lem} Let $H=(H_t, U_{s,t})$ be a super product system which can be embedded into a product system, and let $A=\{a_n\}_{n=0}^\infty$ be 
a strictly increasing sequence of non-negative numbers starting 
from $0$ and converging to $a<\infty$. 
Then $\{P^A\A^H((a_n,a_{n+1}))\}_{n=0}^\infty$ is a type I factorization of $B(P^AH_a)$.  
\end{lem}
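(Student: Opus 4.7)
The plan is to exploit the hypothesised embedding of $H$ as a super-product subsystem of a product system $K = (K_t, V_{s,t})$, so that every super-product isometry $U_{s,t}$ is the restriction of a unitary $V_{s,t}$, and to reduce the three axioms of a type I factorisation to tensor-product manipulations. I would begin by applying associativity of the super-product maps to obtain, for every $N>n$, the identity
\begin{equation*}
   \A^H((a_n,a_{n+1})) \;=\; U_N\bigl(1\ot\cdots\ot B(H_{a_{n+1}-a_n})\ot\cdots\ot 1\bigr) U_N^{*},
\end{equation*}
with $B(H_{a_{n+1}-a_n})$ occupying the $(n{+}1)$-th slot. This expression manifestly lies in the commutant of $P^A_N = U_N U_N^{*}$, and passing to the decreasing strong limit $N\to\infty$ gives $[P^A,\A^H((a_n,a_{n+1}))]=0$; hence the compressions $P^A\A^H((a_n,a_{n+1}))$ are well-defined $*$-subalgebras of $B(P^A H_a)$.

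Iterating the same associativity identity, $U_N^{*}\,P^A\,U_N = 1^{\otimes N}\ot P^{(N)}$, where $P^{(N)}$ is the analogous tail projection on $H_{a-a_N}$ built from the shifted sequence $\{a_{N+k}-a_N\}_{k\geq 0}$. Consequently
\begin{equation*}
   P^A H_a \;\cong\; H_{a_1}\ot H_{a_2-a_1}\ot\cdots\ot H_{a_N-a_{N-1}}\ot P^{(N)}H_{a-a_N}
\end{equation*}
for every $N\geq 1$, and under this identification each $P^A\A^H((a_n,a_{n+1}))$ with $n<N$ acts exactly as $B(H_{a_{n+1}-a_n})$ on the $(n{+}1)$-th tensor slot. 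This delivers two of the three factorisation axioms at once: each compressed algebra is a type I subfactor of $B(P^A H_a)$, and for $n\neq m$ one can pick $N>\max(n,m)$, placing the two algebras on disjoint slots so that they commute.

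The remaining and main difficulty is the generation axiom $\bigvee_n P^A\A^H((a_n,a_{n+1})) = B(P^A H_a)$. For each fixed $N$ the first $N$ compressions generate precisely $B(H_{a_1})\ot\cdots\ot B(H_{a_N-a_{N-1}})\ot 1_{P^{(N)}H_{a-a_N}}$, so density of the increasing union requires the tail space $P^{(N)}H_{a-a_N}$ to collapse asymptotically as $N\to\infty$. This is where the product-system hypothesis becomes essential: the unit of $K$ restricts to a stabilising reference sequence in the fibres $K_{a_{k+1}-a_k}$, which after intersection with $H$ yields an Araki--Woods infinite-tensor-product identification $P^A H_a \cong \bigotimes_{k\geq 0} H_{a_{k+1}-a_k}$ along that reference sequence. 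Once this identification is in place, the weak-operator closure of the increasing union of $B(H_{a_1})\ot\cdots\ot B(H_{a_N-a_{N-1}})\ot 1$ is visibly the full $B(P^A H_a)$, which completes the verification that $\{P^A\A^H((a_n,a_{n+1}))\}_{n\geq 0}$ is a type I factorisation. The hard part is exactly this last step, namely making the stabilisation of the tail projections $P^{(N)}$ precise; all other steps are direct applications of associativity and standard tensor-product operator-algebra facts.
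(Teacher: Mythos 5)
Your structural setup agrees in substance with what falls out of the paper's argument (the tensor decomposition of $P^AH_a$ along any finite segment of $A$, and the identification of each compressed local algebra with $B(H_{a_{n+1}-a_n})$ acting on its own slot), modulo one small slip: for $N>n$ the right-hand side of your first display equals $P^A_N\A^H((a_n,a_{n+1}))$ rather than $\A^H((a_n,a_{n+1}))$ itself, since $U_NU_N^*$ is strictly smaller than the support projection of $\A^H((a_n,a_{n+1}))$; the commutation $[P^A,\A^H((a_n,a_{n+1}))]=0$ that you then extract is nevertheless correct.

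The real gap is in the generation step, and the mechanism you propose for it would not work. It is \emph{not} the case that density of the increasing union requires the tail spaces $P^{(N)}H_{a-a_N}$ to collapse: already when $H$ is itself a product system one has $P^A=1$ and $P^{(N)}H_{a-a_N}=H_{a-a_N}$, which stays infinite-dimensional for every $N$, yet generation still holds by Arveson's Proposition 4.2.1. The appeal to ``the unit of $K$'' is also unavailable in general: a product system need not have any units (type III Arveson systems have none), and even when a unit exists there is no reason for it to take values inside the fibres $H_t$ of the super-product subsystem, so the Araki--Woods ITPFI identification of $P^AH_a$ that you are aiming for has no starting point. The paper proves generation by a compression argument instead: Arveson's result gives $B(E_a)=\bigvee_n\A^E((a_n,a_{n+1}))$ for the ambient product system $E=K$, with no reference to units or to collapsing tails; one then identifies $Q^A\A^E((a_n,a_{n+1}))Q^A=P^A\A^H((a_n,a_{n+1}))$, where $Q^A$ is the projection of $E_a$ onto $P^AH_a$, and because $Q^A$ factorizes tensorially across every finite local decomposition of $E_a$, the compression $X\mapsto Q^AXQ^A$ is multiplicative on products drawn from distinct $\A^E((a_n,a_{n+1}))$, so the generating property descends from $B(E_a)$ to $B(P^AH_a)$. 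This compression step is what should replace your ITPFI argument.
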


\begin{proof}
Let $E=(E_t,V_{s,t})$ be a product system where the super product system $H$ can be embedded. Then $\{\A^E((a_n,a_{n+1}))\}_{n=0}^\infty$ is a type I factorization of $B(E(a))$ because  $$B(E_a)=\bigvee_{0<t<a}\A^E((0,t))$$ 
holds (see \cite[Proposition 4.2.1]{Arveson book}). 
Let $Q^A$ be the orthogonal projection from $E_a$ onto $P_AH_a$. Then $Q^A\A^E((a_n,a_{n+1}))Q^A = P^A\A^H((a_n,a_{n+1}))$. 
\end{proof}

The following proposition is immediate,  since $$\bigvee_{\lambda\in \Gamma}\left(\m^1_{\lambda}\ot \m^2_{\lambda}\right) = \left(\bigvee_{\lambda\in \Gamma}\m^1_{\lambda}\right)\otimes \left(\bigvee_{\lambda\in \Gamma}\m_\lambda^2\right).$$

\begin{prop}
For two type I factorizations $\{\m^1_{\lambda}\}_{\lambda\in \Lambda}$ and $\{\m^2_{\lambda}\}_{\lambda\in \Lambda}$, $\{\m^1_{\lambda}\ot \m^2_\lambda\}_{\lambda\in \Lambda}$is a \textit{CABATIF} if and only if both $\{\m^1_{\lambda}\}_{\lambda\in \Lambda}$ and $\{\m^2_{\lambda}\}_{\lambda\in \Lambda}$ are \textit{CABATIF} 
\end{prop}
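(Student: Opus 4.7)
The plan is to deduce both implications directly from the distributive identity
$$\bigvee_{\lambda\in \Gamma}\left(\m^1_{\lambda}\ot \m^2_{\lambda}\right) = \left(\bigvee_{\lambda\in \Gamma}\m^1_{\lambda}\right)\otimes \left(\bigvee_{\lambda\in \Gamma}\m_\lambda^2\right)$$
already displayed immediately before the statement, combined with two standard facts about spatial tensor products of von Neumann algebras: (a) the center satisfies $Z(\Nil_1\ot\Nil_2)=Z(\Nil_1)\ot Z(\Nil_2)$, and (b) $\Nil_1\ot\Nil_2$ is a type I factor if and only if $\Nil_1$ and $\Nil_2$ are both type I factors.

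For the $(\Leftarrow)$ direction, assume both $\{\m^1_\lambda\}$ and $\{\m^2_\lambda\}$ are CABATIFs. Given any $\Gamma\subseteq\Lambda$, each of $\bigvee_{\lambda\in\Gamma}\m^1_\lambda$ and $\bigvee_{\lambda\in\Gamma}\m^2_\lambda$ is by hypothesis a type I factor. Their tensor product is then a type I factor by fact (b), and by the distributive identity this tensor product equals $\bigvee_{\lambda\in\Gamma}(\m^1_\lambda\ot\m^2_\lambda)$. Hence $\{\m^1_\lambda\ot\m^2_\lambda\}$ is a CABATIF.

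For the $(\Rightarrow)$ direction, assume $\{\m^1_\lambda\ot\m^2_\lambda\}$ is a CABATIF. Fix $\Gamma\subseteq\Lambda$ and write $\Nil_i=\bigvee_{\lambda\in\Gamma}\m^i_\lambda$ for $i=1,2$. The distributive identity gives $\Nil_1\ot\Nil_2=\bigvee_{\lambda\in\Gamma}(\m^1_\lambda\ot\m^2_\lambda)$, which by assumption is a type I factor. By fact (a) the center of $\Nil_1\ot\Nil_2$ is trivial forces $Z(\Nil_1)=Z(\Nil_2)=\Comp$, so both $\Nil_i$ are factors; by fact (b) they are both of type I. Thus each $\{\m^i_\lambda\}$ is a CABATIF.

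There is no real obstacle: the entire argument is a packaging of the distributive property with elementary facts about tensor products of von Neumann algebras. The only thing worth being explicit about is that one needs both the center calculation and the type-I characterization of tensor products; the latter is what prevents, say, one factor from being of type II while the other absorbs it into a type I product.
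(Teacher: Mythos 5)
Your proof is correct and is exactly the argument the paper intends: the paper simply displays the distributive identity $\bigvee_{\lambda\in\Gamma}(\m^1_\lambda\ot\m^2_\lambda)=(\bigvee_{\lambda\in\Gamma}\m^1_\lambda)\ot(\bigvee_{\lambda\in\Gamma}\m^2_\lambda)$ and declares the proposition immediate, and you have supplied the standard tensor-product facts (centers multiply, and a tensor product of von Neumann algebras is a type I factor iff both legs are) that make it so. Nothing further is needed.
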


When $\{\m_\lambda\}_{\lambda\in \Lambda}$ is a type I factorization of $B(H)$, we say that a non-zero vector $\xi$ is \textit{factorizable} 
if for any $\lambda$, there exists a minimal projection $p_\lambda$ of $\m_\lambda$ such that $p_\lambda\xi=\xi$.  Araki and Woods characterized a CABATIF as a type I factorization with a decomposable vector.  One can find the following theorem in \cite[Lemma 4.3, Theorem 4.1]{AW}.  

\begin{thm}[Araki--Woods]\label{ArakiWoods} 
A type I factorization is a \textit{CABATIF} if and only if it has a factorizable vector. 
%When this condition holds, then it is a tensor product factorization. 
\end{thm}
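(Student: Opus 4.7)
The plan is to establish the two implications separately, following the structure of the original Araki--Woods argument and exploiting the standard tensor decompositions that accompany each individual factor in the family. Since every $\m_\lambda$ is a type I factor, for each $\lambda$ we may fix a decomposition $H\cong K_\lambda\otimes L_\lambda$ with $\m_\lambda=B(K_\lambda)\ot 1$ and $\m_\lambda'=1\ot B(L_\lambda)$. Minimal projections of $\m_\lambda$ have the form $|e\rangle\langle e|\ot 1$, so factorizability of a vector $\xi$ is the same as saying that $\xi$ is a simple tensor $e_\lambda\ot\eta_\lambda$ under each of these decompositions.

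For the implication \emph{factorizable vector $\Rightarrow$ CABATIF}, I would start from a factorizable $\xi$, note that the vector state $\omega_\xi$ restricts to a pure state on each $\m_\lambda$, and observe that for every finite $F\subset\Gamma$ the algebra $\m_F:=\bigvee_{\lambda\in F}\m_\lambda$ is a finite tensor product of type I factors, hence itself type I, with $\omega_\xi|_{\m_F}=\bigotimes_{\lambda\in F}\omega_\xi|_{\m_\lambda}$ a pure product state. Letting $F$ run over the directed set of finite subsets of $\Gamma$, these states are compatible and their GNS spaces embed isometrically into the cyclic subspace $K_\Gamma:=[\m_\Gamma\xi]\subseteq H$. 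Identifying $K_\Gamma$ with the incomplete infinite tensor product $\bigotimes^{(e_\lambda)}_{\lambda\in\Gamma} K_\lambda$ in the sense of von~Neumann, the restriction of $\m_\Gamma$ to $K_\Gamma$ is unitarily equivalent to $B(\bigotimes^{(e_\lambda)} K_\lambda)$, hence a type I factor. A standard cyclicity/separating argument, together with the fact that the central support of $[\cdot\xi]$ in $\m_\Gamma$ is central in $\m_\Gamma$ and that $H=K_\Gamma\ot L_\Gamma$ (obtained from the decomposition along the commutant $\m_\Gamma'\supseteq\bigvee_{\mu\notin\Gamma}\m_\mu$), then transfers the type I property back to $\m_\Gamma\subseteq B(H)$ itself.

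For the converse \emph{CABATIF $\Rightarrow$ factorizable vector}, the plan is to build the desired vector via a product-state construction. Using CABATIF for subsets of $\Lambda$, every finite $F$ gives a decomposition $H=K_F\ot L_F$ with $\m_F=B(K_F)\ot 1$, and these decompositions are coherent in the sense that refining $F$ to $F'\supset F$ refines the tensor splitting. For each $\lambda$ choose a pure state $\varphi_\lambda$ on $\m_\lambda$ (a unit vector $e_\lambda\in K_\lambda$); then on the $*$-algebra $\bigcup_F\m_F$ one obtains a product state $\varphi=\bigotimes_{\lambda}\varphi_\lambda$ which is pure on each $\m_F$. The crux is to show that CABATIF makes $\varphi$ ultraweakly continuous on all of $\bigvee_{\lambda\in\Lambda}\m_\lambda=B(H)$, so that $\varphi=\omega_\xi$ for some unit vector $\xi\in H$; this $\xi$ is then factorizable by construction. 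Normality of $\varphi$ on each $\m_F$ is automatic, and CABATIF applied to arbitrary $\Gamma$ ensures that, after adjusting the $e_\lambda$ by scalars lying in a suitable equivalence class, the compatible family $(\varphi|_{\m_F})_F$ admits a limit which is normal on each $\m_\Gamma$ and in particular on $\m_\Lambda=B(H)$.

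The main obstacle is the second implication: constructing a \emph{single} unit vector $\xi\in H$ that simultaneously satisfies $p_\lambda\xi=\xi$ for \emph{every} $\lambda$ in an uncountable index set is exactly the delicate convergence issue that Araki--Woods resolve by their equivalence-class analysis of $C_0$-sequences in the incomplete infinite tensor product. Concretely, one must choose the representing vectors $e_\lambda\in K_\lambda$ carefully so that the product $\prod_\lambda\langle e_\lambda,e_\lambda\rangle$ is $C_0$-convergent, and CABATIF is exactly the hypothesis needed to guarantee that such a coherent choice exists; any naive choice will fail because the infinite product state is typically not normal. The rest of the argument (finite tensor products, GNS identification, transfer of the type I property) is standard once this selection is made.
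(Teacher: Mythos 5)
The paper does not prove this result; it is quoted directly from \cite[Lemma~4.3, Theorem~4.1]{AW}, so there is no in-paper argument to compare against, and I assess the sketch on its own terms.

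Writing $\m_\Gamma := \bigvee_{\lambda\in\Gamma}\m_\lambda$, the overall shape of the plan follows Araki--Woods, but the converse implication is circular as written. You correctly identify the crux --- showing that the infinite product state is normal, equivalently vector-implemented --- and then state that CABATIF ``is exactly the hypothesis needed to guarantee that such a coherent choice exists.'' That sentence is the conclusion, not an argument. The actual mechanism by which the type~I hypothesis on every $\m_\Gamma$ produces a $C_0$-convergent reference sequence (for instance via the tail factors $\m_{\{\lambda_n,\lambda_{n+1},\dots\}}$ and a maximizing choice of minimal projections) is absent, and ``adjusting the $e_\lambda$ by scalars'' cannot supply it, since multiplying $e_\lambda$ by a phase leaves each pure state $\varphi_\lambda$ unchanged. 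The forward direction is closer to complete, but the transfer of the type~I property from $\m_\Gamma$ restricted to $[\m_\Gamma\xi]$ back to $\m_\Gamma\subseteq B(H)$ needs two facts the sketch elides: that the cyclic projection onto $[\m_\Gamma\xi]$ equals $\bigwedge_{\lambda\notin\Gamma}p_\lambda$, hence lies in $\bigvee_{\lambda\notin\Gamma}\m_\lambda\subseteq\m_\Gamma'$, and that its central support in $\m_\Gamma$ is $1$ (both follow from $\bigvee_\lambda\m_\lambda=B(H)$, minimality of the $p_\lambda$, and cyclicity of $\xi$); the parenthetical ``$H=K_\Gamma\ot L_\Gamma$'' in your sketch presupposes exactly what is to be proved. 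As a side remark, the paper restricts to countable index sets (stated at the start of the Toeplitz CAR subsection), so the uncountable case you single out as the principal obstacle does not arise here.
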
 

As before let $A=\{a_n\}_{n=0}^\infty$ be a strictly increasing sequence of non-negative numbers starting from $0$ and converging to $a<\infty$. When a super product system $H$ has a unit $\{S_t:t\geq 0\}$, then $P^AS_a=S_a$ and further it gives a factorizable vector for the type I factorization 
$\{P^A\A^H((a_n,a_{n+1}))\}_{n=0}^\infty$, which is necessarily a \textit{CABATIF}  thanks to Theorem \ref{ArakiWoods}. So type I factorization associated with the super product system of Clifford flow of any rank is a \textit{CABATIF} for any sequence $A=\{a_n\}$. 

Now if $\theta$ be a Toeplitz CAR flow and $\alpha$ be either a Clifford flow or an even Clifford flow of any fixed rank, then the type I factorization $\{P^{A,\theta\ot \alpha}\A^{\theta\ot\alpha}((a_n,a_{n+1}))\}_{n=0}^\infty$  is a \textit{CABATIF} if and only if $\{P^{A, \theta}\A^\theta((a_n,a_{n+1}))\}_{n=0}^\infty$ is \textit{CABATIF}.  In \cite{toepcar} an uncountable family of mutual non-cocycle-conjugate Toeplitz CAR flows $\{\theta^\nu: \nu \in (0,\frac{1}{4}]\}$ is constructed. This family is distinguished by providing a sequence $A=\{a_n\}_{n=0}^\infty$ for any given  $ \nu_1, \nu_2 \in (0,\frac{1}{4}]$, so that $\{P^A\A^{\theta^{\nu_1}}((a_n,a_{n+1}))\}_{n=0}^\infty$ is a \textit{CABATIF} but $\{P^A\A^{\theta^{\nu_2}}((a_n,a_{n+1}))\}_{n=0}^\infty$ is not a\textit{CABATIF}. From the above discussions we have the following theorem.

\begin{thm}
There exits uncountably many mutually non-cocycle-conjugate \en-semigroups on the hyperfinite type II$_\infty$ factor of the form $\{\theta_\nu\ot \alpha:\nu \in (0,\frac{1}{4}] \}$, where $\theta_\nu$  is a Toeplitz CAR flow, and $\alpha$ is a fixed \en-semigroup which is either a Clifford flow or even Clifford flow of any index.
\end{thm}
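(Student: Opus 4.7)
The plan is to combine the existing uncountable family of Toeplitz CAR flows from \cite{toepcar} with the tensor factorisation behaviour of the Clifford (or even Clifford) super product system, using the \textit{CABATIF} property as the invariant transferred up to the II$_\infty$ setting.

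First I would recall that, for each $\nu\in (0,\tfrac14]$, $\theta_\nu$ is an \en-semigroup on a type I factor and, by construction in \cite{toepcar}, for any distinct $\nu_1,\nu_2\in(0,\tfrac14]$ there exists a sequence $A=\{a_n\}_{n=0}^\infty$ increasing to some $a<\infty$ such that the type I factorisation $\{P^{A,\theta_{\nu_1}}\A^{\theta_{\nu_1}}((a_n,a_{n+1}))\}_{n=0}^\infty$ is a \textit{CABATIF} while $\{P^{A,\theta_{\nu_2}}\A^{\theta_{\nu_2}}((a_n,a_{n+1}))\}_{n=0}^\infty$ is not. This, plus the fact that the isomorphism class of such factorisations is a cocycle conjugacy invariant (it is built from the super product system, which is invariant by Theorem \ref{sps theorem}), is what distinguishes the Toeplitz CAR flows.

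Next I would propagate this invariant to the tensor products $\theta_\nu\otimes\alpha$. The super product system of $\theta_\nu\otimes\alpha$ factors as the tensor of super product systems (Proposition \ref{tensorSPS}), and the local algebras therefore factor as $\A^{\theta_\nu\otimes\alpha}(I)=\A^{\theta_\nu}(I)\otimes \A^{\alpha}(I)$ for any interval $I$, and so also $P^{A,\theta_\nu\otimes\alpha}=P^{A,\theta_\nu}\otimes P^{A,\alpha}$ by the associativity axiom. Consequently the type I factorisation associated with $\theta_\nu\otimes\alpha$ along $A$ is the tensor factorisation
\[
\bigl\{P^{A,\theta_\nu}\A^{\theta_\nu}((a_n,a_{n+1}))\ot P^{A,\alpha}\A^{\alpha}((a_n,a_{n+1}))\bigr\}_{n=0}^\infty .
\]
By the proposition on tensors of type I factorisations, this is a \textit{CABATIF} if and only if each of the two factor factorisations is a \textit{CABATIF}. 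Since the Clifford flow's super product system $H^{e,n}$ admits units (for example, the canonical $\Omega$-unit), the factorisation $\{P^{A,\alpha}\A^\alpha((a_n,a_{n+1}))\}_{n=0}^\infty$ always has a factorisable vector and is a \textit{CABATIF} by the Araki--Woods theorem (Theorem \ref{ArakiWoods}).

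Putting these ingredients together, the factorisation for $\theta_\nu\otimes\alpha$ is a \textit{CABATIF} precisely when the one for $\theta_\nu$ is. Hence for the sequence $A$ separating $\nu_1$ from $\nu_2$, the factorisation is \textit{CABATIF} for $\theta_{\nu_1}\otimes\alpha$ but fails to be \textit{CABATIF} for $\theta_{\nu_2}\otimes\alpha$, so $\theta_{\nu_1}\otimes\alpha$ and $\theta_{\nu_2}\otimes\alpha$ cannot be cocycle conjugate. Since $\nu_1,\nu_2\in(0,\tfrac14]$ were arbitrary and distinct, the family $\{\theta_\nu\otimes\alpha:\nu\in(0,\tfrac14]\}$ consists of mutually non-cocycle-conjugate \en-semigroups on the hyperfinite II$_\infty$ factor $B(H)\otimes\r\cong\r_\infty$. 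The only real work is to verify rigorously the tensor identity $\A^{\theta\otimes\alpha}(I)=\A^\theta(I)\otimes\A^\alpha(I)$ (together with the corresponding identity for the projections $P^{A,\bullet}$); everything else is an appeal to already-established facts. There is no serious obstacle, as the heavy lifting has been done in the preceding subsections.
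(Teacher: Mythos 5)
Your proposal is correct and follows essentially the same route as the paper: tensor factorisation of the super product systems and their local algebras, the fact that a tensor of type I factorisations is a \textit{CABATIF} iff both factors are, the Araki--Woods criterion applied to the unit of the Clifford super product system to show its factorisation is always a \textit{CABATIF}, and then the separating sequences from \cite{toepcar}. The one identity you flag as "the only real work," $\A^{\theta\ot\alpha}(I)=\A^{\theta}(I)\ot\A^{\alpha}(I)$, is exactly what the paper establishes via Proposition \ref{tensorSPS} and the distributive property of tensors, so nothing further is needed.
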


\begin{rem} If a generalised CCR flow (or a Toeplitz CAR flow) is fixed, it is still open to show that it leads to non-cocycle-conjugate \en-semigroups, when tensored with Clifford flows of different indices.

Let $\theta=\{\theta_t:t\geq 0\}$ be a CCR flow of any index and $\psi=\{\psi_t:t\geq 0\}$ be either any of the generalised CCR flow or a Toeplitz CAR flow discussed above, which leads to a type III \en-semigroup on type I factor. Then $\theta\ot \alpha$ is not cocycle conjugate to $\psi\ot \alpha$, since $\theta\ot \alpha$ is multi-spatial, but $\psi\ot \alpha$ does not have any multi-units.  
\end{rem}

%all these are type III
%open to for fixed GCCR, TCAR on type I and vary clifford.
%CCR\ot clifford not same as GCCR or TCAR \ot clifford
% remark on tensor products of clifford \ot CCR etc. 
% extendability is cocycle conjugacy invariant, so CAR is different.

\section{CCR flows on hyperfinite type III factors}\label{CCR section}
  
In this section we investigate a class of \en-semigroups on hyperfinite type III factors arising from quasifree representations of the CCR algebra. The structure of these representations was worked out in the early papers \cite{araki}, \cite{Araki2}, \cite{dell antonio}, \cite{holevo}, \cite{Araki Yamagami}, \cite{Araki Woods}; in order to make the paper reasonably self-contained we include the relevant details.
 
For a complex Hilbert space $\Kil$, there exists a universal $C^*$-algebra generated by unitaries $\{w_v:~v\in \Kil\}$, subject to
 $$w_uw_v=e^{-i\im\ip{u}{v}}w_{u+v} \qquad (u,v\in \Kil),$$
 known as the algebra of canonical commutation relations, or CCR algebra, denoted by $CCR(\Kil)$ (see e.g. \cite{petz}). 
%Let $\Gamma(\Hil):=\bigoplus_{n=0}^\infty{H^{\vee n}}$ be the the symmetric Fock space over $\Hil$, i.e. the sum of symmetric tensor powers of $\Hil$, and define the exponential vectors $\varepsilon(u):=\sum_{n=0}^\infty{u\opower{n}/\sqrt{n!}}$, for each $u\in\Hil$, and the vacuum $\Omega:=\varepsilon(0)$. The exponential vectors are linearly independent and total in $\Gamma(\Hil)$ and this gives the well-known isomorphism $\Gamma(\Hil_1)\ot\Gamma(\Hil_2)\to \Gamma(\Hil_1\op\Hil_2)$, $\varepsilon(u)\ot\varepsilon(v)\mapsto \varepsilon(u+v)$. Define the \emph{Fock representation} of $CCR(\Hil)$, by setting $\pi(w_u)=W_0(u)$, where $W_0(u)$ is the Weyl operator
 %$$W_0(u)\varepsilon(v):=e^{-\frac{1}{2}\norm{u}^2-\ip{u}{v}}\varepsilon(u+v) \qquad (u,v\in\Hil).$$
 
From here onwards, in the last two sections of this paper, $\kil$ will denote a separable complex Hilbert space, with associated conjugation $j$. 
We denote the conjugation on $\Kil=L^2(\Rplus;\kil)$ also by $j$, obtained as $$(jf)(s):=jf(s) ~\forall s\geq0.$$ Let $A\geq1$ be a complex-linear operator on $\Kil$ such that $T=\frac{1}{2}(A-1)$ is injective. The state on $CCR(\Kil)$ determined  by
 $$\varphi_A(w_f)=e^{-\frac{1}{2}\ip{f}{Af}}=e^{-\frac{1}{2}\norm{\sqrt{1+2T}f}^2}$$
 is known as the quasifree state with symbol $A$.
 The corresponding GNS representation, on $\Gamma_s(\Kil) \otimes \Gamma_s(\Kil)$, is given by
 $$\pi_A(w(f))=W_A(f):=W_0(\sqrt{1+T}f)\otimes W_0(j\sqrt{T}f).$$
 It follows from \cite{araki} that  this representation generates a factor $\m_A=\{\pi_A(w(f)): f \in \Kil\}''$, for which the vacuum vector $\Omega=\varepsilon(0) \otimes \varepsilon(0)$ in $\Gamma_s(\Kil)\ot\Gamma_s(\Kil)=\Gamma_s(\Kil\op\Kil)$ is cyclic and separating. Under this representation $\Omega$ induces the state $\varphi_A$. The factor is type I iff $A^2-1$ is trace class and otherwise it is type III (see \cite{holevo}).

\begin{rems}\label{generalisation remarks}
 %(i) A simple generalisation is provided by assuming that $A$ is only real-linear, in which case one still obtains a quasifree state via $\varphi_A(w_f)=e^{-\frac{1}{2}\re\ip{f}{Af}}.$
 %For these more general states, we again denote the corresponding von Neumann algebra by $\m_A$ and the Weyl operators by $W_A(f)$.
The case where $A-1$ is not injective can be handled as follows. Since $A-1$ is self-adjoint, the kernel is orthogonal to the range and the operator splits into a direct sum $A_0\oplus A_1$, where $A_0=1$ and $A_1-1$ is injective. The von Neumann algebra obtained from the GNS representation splits into a tensor product $\m_{A_0}\ot\m_{A_1}$, where $\m_{A_0}$ is the type I factor from the Fock representation of $CCR(\Ker(A-1))$ and $\m_{A_1}$ is as above (see \cite{araki} for details). 
\end{rems}

To discuss the Tomita-Takesaki operators for the state $\varphi_A$ we will need to define the second quantisation for unbounded operators. For a closed, densely defined operator $X$ on $\Kil$, set
 $$\Dom \Gamma_0(X)=\Lin\{\varepsilon(f):~f\in\Dom X\},\qquad \Gamma_0(X)\varepsilon(f)=\varepsilon(Xf).$$
 Then $\Gamma_0(X)$ is densely defined and clearly closable, and we denote its closure by $\Gamma(X)$. 
 %In case $X$ is a contraction then $\Gamma(X)$ extends as a bounded operator to whole of Fock space, and this is the second quantisation as discussed e.g. in \cite{BrR}. We will also need to second-quantise antilinear operators, but this is a straightforward after noting that, for a pair of antilinear operators $X_1$, $X_2$, the tensor product $X_1\otimes X_2$ is well-defined as an antilinear operator.
 
 \begin{lem}
 The modular conjugation and modular operator for $(\m_A,\Omega)$ are given by
 $$J_\Omega=\Gamma\begin{bmatrix} 0 & -j \\ -j & 0 \end{bmatrix}, \quad \Delta^{1/2}_\Omega=\Gamma\begin{bmatrix}\sqrt{T}\sqrt{1+T}^{-1} & 0 \\ 0 & j\sqrt{1+T}\sqrt{T}^{-1}j \end{bmatrix}.$$
\end{lem}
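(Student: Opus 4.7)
The strategy is to identify the claimed operators with the polar decomposition of the closed Tomita operator $S_\Omega$ (the closure of $x\Omega\mapsto x^*\Omega$ for $x\in\m_A$) by comparing both sides on the total set $\{\pi_A(w(f))\Omega : f\in\Kil\}$, and then to invoke uniqueness of the polar decomposition. Throughout I would use the canonical identification $\Gamma_s(\Kil)\otimes\Gamma_s(\Kil)\cong \Gamma_s(\Kil\oplus\Kil)$, so that $\Gamma$ applied to a block matrix on $\Kil\oplus\Kil$ acts in the obvious way on simple tensors of exponential vectors.

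For the key calculation, from $W_0(u)\varepsilon(0)=e^{-\|u\|^2/2}\varepsilon(u)$ one finds
\begin{equation*}
\pi_A(w(f))\Omega = e^{-\langle f,Af\rangle/2}\,\varepsilon(\sqrt{1+T}f)\otimes\varepsilon(j\sqrt{T}f),
\end{equation*}
since $\|\sqrt{1+T}f\|^2+\|\sqrt{T}f\|^2=\langle f,(1+2T)f\rangle=\langle f,Af\rangle$ is real. Combined with $\pi_A(w(f))^*=\pi_A(w(-f))$, this gives $S_\Omega(\pi_A(w(f))\Omega)=e^{-\langle f,Af\rangle/2}\varepsilon(-\sqrt{1+T}f)\otimes\varepsilon(-j\sqrt{T}f)$. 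Applying the candidate $\Delta^{1/2}_\Omega$ instead cancels $\sqrt{T}\sqrt{1+T}^{-1}\sqrt{1+T}=\sqrt{T}$ in the first tensor factor and $j\sqrt{1+T}\sqrt{T}^{-1}j\cdot j\sqrt{T}=j\sqrt{1+T}$ in the second (using $j^2=1$ and that $T$ and $1+T$ commute), giving $\varepsilon(\sqrt{T}f)\otimes\varepsilon(j\sqrt{1+T}f)$. Applying the antilinear candidate $J_\Omega$, which sends $\varepsilon(u)\otimes\varepsilon(v)$ to $\varepsilon(-jv)\otimes\varepsilon(-ju)$, then yields $\varepsilon(-\sqrt{1+T}f)\otimes\varepsilon(-j\sqrt{T}f)$ with the same real prefactor. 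Hence $J_\Omega\Delta^{1/2}_\Omega$ and $S_\Omega$ coincide on this total set.

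To conclude I would verify the abstract properties forcing this to be the polar decomposition. The matrix $\begin{bmatrix}0 & -j \\ -j & 0\end{bmatrix}$ is an antiunitary involution on $\Kil\oplus\Kil$, so $J_\Omega$ is an antiunitary involution on $\Gamma_s(\Kil\oplus\Kil)$; the block-diagonal matrix appearing in $\Delta^{1/2}_\Omega$ is positive self-adjoint via the functional calculus for $T$ (its $(2,2)$-block is the $j$-conjugate of the positive operator $\sqrt{1+T}\sqrt{T}^{-1}$), so $\Delta^{1/2}_\Omega$ is positive self-adjoint; a direct computation on exponential vectors further yields $J_\Omega \Delta^{1/2}_\Omega J_\Omega=\Delta^{-1/2}_\Omega$, as required of a modular pair. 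Since $\Lin\{\pi_A(w(f))\Omega : f\in\Kil\}$ is a core for $S_\Omega$, uniqueness of the polar decomposition identifies the two operators on all of $\Dom(S_\Omega)$. The main subtlety is domain control when $0$ lies in the spectrum of $T$, so that $\sqrt{T}^{-1}$ is unbounded; this is harmless because in the above computation $\sqrt{T}^{-1}$ only acts on vectors already in the range of $\sqrt{T}$, and the identification is then pushed to the closures using totality.
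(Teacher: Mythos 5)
Your computation correctly reproduces the paper's key calculation: $J\Delta^{1/2}$ maps $W_A(f)\Omega$ to $W_A(-f)\Omega=S_\Omega W_A(f)\Omega$, so the two closed (anti-)linear operators agree on $D=\Lin\{W_A(f)\Omega:f\in\Kil\}$, which is a core for $S_\Omega$. What this gives, however, is only the inclusion $S_\Omega=\ol{S_\Omega|_D}=\ol{(J\Delta^{1/2})|_D}\subseteq J\Delta^{1/2}$. Uniqueness of the polar decomposition of $S_\Omega$ applies once one has the \emph{equality} $S_\Omega=J\Delta^{1/2}$; from the inclusion alone one cannot conclude that $\Dom(\Delta^{1/2})=\Dom(S_\Omega)$, and a priori $\Dom(J\Delta^{1/2})$ could strictly contain $\Dom(S_\Omega)$, in which case $\Delta^{1/2}$ would be a proper positive self-adjoint extension of something and you would not be entitled to identify it with $\Delta_\Omega^{1/2}$. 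The "subtlety" you flag (unboundedness of $\sqrt{T}^{-1}$) is indeed harmless, but it is not the subtlety that actually needs handling here.

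The paper closes this gap by a second calculation on the commutant side. Having identified $J$ with $J_\Omega$, it notes that the Weyl operators $W_A'(f)=J_\Omega W_A(-f)J_\Omega=W_0(\sqrt{T}f)\ot W_0(j\sqrt{1+T}f)$ generate $\m_A'$, and the vectors $W_A'(f)\Omega$ span a core for $F_\Omega=S_\Omega^*$. A computation analogous to the first shows $F_\Omega\subseteq\Delta^{1/2}J_\Omega$; taking (conjugate-linear) adjoints then yields $S_\Omega=F_\Omega^*\supseteq J_\Omega\Delta^{1/2}$, so that the two inclusions force $S_\Omega=J_\Omega\Delta^{1/2}$ and uniqueness of the polar decomposition finishes the job. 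Your additional checks ($J$ antiunitary involution, $\Delta^{1/2}$ positive self-adjoint, $J\Delta^{1/2}J=\Delta^{-1/2}$) are correct and are necessary properties of a modular pair, but they do not by themselves upgrade the one-sided inclusion $S_\Omega\subseteq J\Delta^{1/2}$ to equality. You need the dual estimate coming from the commutant, or some substitute for it.
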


 \begin{proof}
  First note that both $\sqrt{1+T}\sqrt{T}^{-1}$ and $\sqrt{T}\sqrt{1+T}^{-1}$ are closed and densely defined, so our candidate operators - call them $J$ and $\Delta^{1/2}$ - make sense. Since $J$ is an anti-involution, and $\Delta^{1/2}$ a positive operator, we only need to check that their product is $S_\Omega$, and the result will follow from uniqueness of the polar decomposition. Define the operator $\Delta^{1/2}_0\subseteq\Delta^{1/2}$ as the positive closed operator with core spanned by $\{W_{A}(f)\Omega:~f\in\Kil\}$. On this set we have
 \begin{align*} & J\Delta^{1/2} W_A(f)\Omega  =e^{-\frac{\|\sqrt{1+T}f\|^2}{2}-\frac{ \|\sqrt{T}f\|^2}{2}}\varepsilon\left(\begin{bmatrix}0&-j\\-j&0\end{bmatrix}\begin{pmatrix}\sqrt{T}f \\ j\sqrt{1+T}f\end{pmatrix}\right)\\ & =e^{-\frac{\|\sqrt{1+T}f\|^2}{2}-\frac{ \|\sqrt{T}f\|^2}{2}}\varepsilon\left(\begin{pmatrix}-\sqrt{1+T} f  \\ -j \sqrt{T} f   \end{pmatrix}\right) =W_A(-f)\Omega. \end{align*} 
%\begin{align*}  J\Delta^{1/2} W_A(f)\Omega & = e^{-\frac{\|\sqrt{1+T}f\|^2}{2}-\frac{ \|\sqrt{T}f\|^2}{2}}\varepsilon\left(\begin{bmatrix}0&-j\\-j&0\end{bmatrix}\begin{pmatrix}\sqrt{T}f \\ & =e^{-\frac{\|\sqrt{1+T}f\|^2}{2}-\frac{ \|\sqrt{T}f\|^2}{2}}\varepsilon\left(\begin{bmatrix}0&-j\\-j&0\end{bmatrix}\begin{pmatrix}\sqrt{T}f j\sqrt{1+T}g\end{pmatrix}\right)\\=W_A(-f)\Omega \end{align*}
Since this set form a core for $S_\Omega$, $J\Delta^{1/2}_0=S_\Omega\subseteq J\Delta^{1/2}$. It follows that $\Delta^{1/2}_0=\Delta^{1/2}_\Omega$, $J=J_\Omega$ and hence the operators
  $$W_A'(f):=J_\Omega W_A(-f)J_\Omega=W_0(\sqrt{T}f)\ot W_0(j\sqrt{1+T}f)$$
  generate $\m_A'$. Also the linear span of the vectors $W_A'(f)\Omega$ form a core for $F_\Omega\supseteq (J_\Omega\Delta^{1/2})^*=\Delta^{1/2}J_\Omega$ and another similar calculation as above reveals that $F_\Omega\subseteq\Delta^{1/2}J_\Omega$, so $\Delta^{1/2}=\Delta^{1/2}_0=\Delta^{1/2}_\Omega$ as required.
 \end{proof}

  We introduce the useful notation
  $$\Sigma_A:=\begin{bmatrix}\sqrt{1+T}&0\\0&j\sqrt{T}j\end{bmatrix},\quad \iota(f):=\begin{pmatrix}f \\\ jf\end{pmatrix}$$
  so that $W_A(f)=W_0(\Sigma_A\iota(f))$. Also let
  $$\Sigma_A':=\begin{bmatrix}\sqrt{T}&0\\0&j\sqrt{1+T}j\end{bmatrix},$$
  so that $W_A'(f)=W_0(\Sigma_A'\iota(f))$.
  
  We wish to define analogues of Arveson's CCR flows on these factors. This requires the following simple Lemma.

  \begin{lem}\label{Toeplitz semigroup lemma} 
   Let $X\in B(L^2(\Rplus;\kil))$ be a positive, injective Toeplitz operator, i.e. $T_t^*XT_t=X$ for all $t\geq0$, where $(T_t)_{t\geq0}$ is the semigroup of right shifts on $L^2(\Rplus;\kil)$. Then the operators $\sqrt{X}T_t\sqrt{X}^{-1}$ extend to a family of isometries which is a strongly continuous semigroup.
  \end{lem}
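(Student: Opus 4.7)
The injectivity of the bounded positive operator $X$ forces $\sqrt{X}$ to be injective as well, so its range $\mathcal{D}:=\Ran(\sqrt{X})$ is dense in $L^2(\Rplus;\kil)$ and $\sqrt{X}^{-1}$ is a well-defined (generally unbounded) operator with domain $\mathcal{D}$. My first step is therefore to interpret the formal expression $\sqrt{X}T_t\sqrt{X}^{-1}$ as an honest densely defined operator with domain $\mathcal{D}$: for $f=\sqrt{X}g\in\mathcal{D}$, set $S_t^0 f := \sqrt{X}T_tg$. One must check this is unambiguous, which is immediate from injectivity of $\sqrt{X}$.

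Next I would verify that $S_t^0$ is isometric on $\mathcal{D}$ using only the Toeplitz relation. Indeed, for $f=\sqrt{X}g\in\mathcal{D}$,
\[
\norm{S_t^0 f}^2=\norm{\sqrt{X}T_tg}^2=\ip{T_tg}{XT_tg}=\ip{g}{T_t^*XT_tg}=\ip{g}{Xg}=\norm{\sqrt{X}g}^2=\norm{f}^2.
\]
Since $\mathcal{D}$ is dense, $S_t^0$ extends uniquely to an isometry $S_t$ on all of $L^2(\Rplus;\kil)$.

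For the semigroup property, I would check it first on the dense subspace $\mathcal{D}$: for $f=\sqrt{X}g\in\mathcal{D}$,
\[
S_s S_t f = S_s(\sqrt{X}T_tg)=\sqrt{X}T_sT_tg=\sqrt{X}T_{s+t}g=S_{s+t}f,
\]
and then invoke density together with the boundedness of $S_s$, $S_t$, and $S_{s+t}$ to conclude $S_sS_t=S_{s+t}$ everywhere. It also follows that $S_0=1$ from $T_0=1$.

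Finally, for strong continuity I would combine the strong continuity of $(T_t)_{t\geq0}$ with a standard $3\varepsilon$/density argument. On the dense set $\mathcal{D}$, strong continuity of $(T_t)$ and boundedness of $\sqrt{X}$ give $S_t(\sqrt{X}g)=\sqrt{X}T_tg\to\sqrt{X}g$ as $t\to 0^+$, and since each $S_t$ is an isometry (hence the family is uniformly bounded), strong convergence on $\mathcal{D}$ upgrades to strong convergence on the whole space. Combined with the semigroup law this yields strong continuity at every $t\geq0$. No step here is really an obstacle; the only subtle point to be careful about is not to pretend $\sqrt{X}^{-1}$ is bounded --- the identity $S_t\sqrt{X}=\sqrt{X}T_t$ on $\Dom(\sqrt{X}^{-1}$-preimages$)$ is the clean intertwining relation that does all the work.
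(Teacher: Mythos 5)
Your proof is correct and matches the paper's approach essentially step for step: define the operator on the dense domain $\Ran(\sqrt X)=\Dom(\sqrt X^{-1})$, use the Toeplitz identity $T_t^*XT_t=X$ to verify isometry there, extend by density, and check the semigroup law and strong continuity on the dense domain using boundedness of $\sqrt X$ and strong continuity of $(T_t)$. The only difference is cosmetic — you spell out the density/$3\varepsilon$ arguments that the paper leaves implicit.
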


  \begin{proof}
   Since $\sqrt{X}\geq0$ is injective, $\sqrt{X}^{-1}$ is closed and densely defined. For any $f\in\Dom(\sqrt{X}^{-1})$, we have
   $$\norm{\sqrt{X}T_t\sqrt{X}^{-1}f}^2=\ip{\sqrt{X}^{-1}f}{T_t^*XT_t\sqrt{X}^{-1}f}=\ip{\sqrt{X}^{-1}f}{\sqrt{X}f}=\norm{f}^2,$$
   so that $\sqrt{X}T_t\sqrt{X}^{-1}$ admits a unique isometric extension $Y_t$. For any $f\in\Dom(\sqrt{X}^{-1})$ it is clear that $Y_sY_t f=Y_{s+t}f$ and $Y_tf\to f$ as $t\to 0$, so the family $(Y_t)_{t\geq0}$ is a strongly continuous semigroup of isometries. \end{proof}

For any Toeplitz operator $X$,  we  denote the isometric extension of  $\sqrt{X}T_t\sqrt{X}^{-1}$ by $T^X_t$. The following Proposition assures the existence of \en-semigroups which we call as Toeplitz CCR flow given by the Toeplitz operator $A$. 

  \begin{prop}\label{en construction prop}
   Let $A\geq1$ be a Toeplitz operator on $L^2(\Rplus;\kil)$ such that $A-1$ is injective. Then there exists a unique \en-semigroup $\alpha^A=\{\alpha_t^A: t\geq 0\}$ on $\m_A$ defined by $\alpha_t^A(W_A(f))=W_A(T_tf)$, where $(T_t)_{t\geq0}$ is the semigroup of right shifts on $L^2(\Rplus;\kil)$. Further $\alpha^{A_1\oplus A_2}=\alpha^{A_1}\ot \alpha^{A_2}$.
  \end{prop}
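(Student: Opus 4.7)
The plan is to build $\alpha^A$ via a spatial implementation constructed from Lemma \ref{Toeplitz semigroup lemma}. First I would observe that both $T$ and $1+T$ are positive injective Toeplitz operators on $\Kil=L^2(\Rplus;\kil)$: the identity $T_t^*AT_t=A$ restricts to $T_t^*TT_t=T$ and $T_t^*(1+T)T_t=1+T$, injectivity of $T$ is a hypothesis, and $1+T\geq 1$. The lemma then produces strongly continuous isometric semigroups $T_t^T$ and $T_t^{1+T}$ satisfying $T_t^T\sqrt{T}=\sqrt{T}T_t$ and $T_t^{1+T}\sqrt{1+T}=\sqrt{1+T}T_t$. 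Setting
\[
V_t:=\Gamma(T_t^{1+T})\otimes\Gamma(jT_t^Tj)
\]
gives a strongly continuous semigroup of isometries on $\Gamma_s(\Kil)\otimes\Gamma_s(\Kil)$ fixing $\Omega$; using the standard identity $\Gamma(W)W_0(g)=W_0(Wg)\Gamma(W)$ for any isometry $W$ together with the two intertwinings above, a direct computation yields the key relation $V_t W_A(f)=W_A(T_tf)V_t$ for all $f\in\Kil$.

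Next I would note that because $T_t$ is a $\Comp$-linear isometry preserving imaginary parts of inner products and satisfying $\langle T_tf,AT_tg\rangle=\langle f,Ag\rangle$, it induces a Bogoliubov $*$-endomorphism $\alpha_t^0$ of $CCR(\Kil)$ determined by $w(f)\mapsto w(T_tf)$, and the quasifree state $\varphi_A$ is invariant under $\alpha_t^0$. Applying the Weyl intertwining iteratively to products of Weyl operators acting on $\Omega$ and using $V_t\Omega=\Omega$ yields $V_t\pi_A(x)\Omega=\pi_A(\alpha_t^0(x))\Omega$ for every $x$ in the polynomial $*$-algebra generated by $\{w(f):f\in\Kil\}$. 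I would then define $\alpha_t^A:\m_A\to\m_A$ by declaring $\alpha_t^A(m)$ to be the unique element of $\m_A$ with $\alpha_t^A(m)\Omega=V_tm\Omega$, using that $\Omega$ is cyclic and separating. The main technical obstacle is showing that $V_tm\Omega$ actually lies in $\m_A\Omega$ for every $m\in\m_A$, and that the assignment is normal; this is a classical state-preserving extension argument. Given $m\in\m_A$, Kaplansky density furnishes a bounded net $\pi_A(x_i)\to m$ $*$-strongly with $\|x_i\|\leq\|m\|$; then $\pi_A(\alpha_t^0(x_i))\Omega=V_t\pi_A(x_i)\Omega\to V_tm\Omega$ while $\|\pi_A(\alpha_t^0(x_i))\|\leq\|x_i\|\leq\|m\|$, so an ultraweak cluster point $m'\in\m_A$ exists with $m'\Omega=V_tm\Omega$, and uniqueness of $m'$ follows because $\Omega$ separates $\m_A$. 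Multiplicativity and $*$-preservation of $\alpha_t^A$ come from $V_t^*V_t=1$ and the corresponding identities for $\alpha_t^0$ on generators, unitality is immediate, and uniqueness of $\alpha_t^A$ itself follows from normality plus the weak density of the Weyl operators. A noteworthy feature making the argument necessary is that although $V_t$ implements $\alpha_t^A$ from the left on $\Omega$, the isometry $V_t$ itself typically does not lie in $\m_A$, so one cannot simply conjugate.

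Finally, the semigroup property $\alpha_{s+t}^A=\alpha_s^A\circ\alpha_t^A$ reduces by normality to $T_{s+t}=T_sT_t$ evaluated on Weyl generators, and $\sigma$-weak continuity reduces to strong continuity of $t\mapsto T_tf$. For the tensor decomposition, the Fock factorization $\Gamma_s(L^2(\Rplus;\kil_1)\oplus L^2(\Rplus;\kil_2))\cong\Gamma_s(L^2(\Rplus;\kil_1))\otimes\Gamma_s(L^2(\Rplus;\kil_2))$, applied to each of the two Fock tensor factors of the GNS space and combined with a swap of the middle two factors, identifies $\m_{A_1\oplus A_2}$ with $\m_{A_1}\otimes\m_{A_2}$ and $W_{A_1\oplus A_2}(f_1\oplus f_2)$ with $W_{A_1}(f_1)\otimes W_{A_2}(f_2)$. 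Since the right shift on $L^2(\Rplus;\kil_1\oplus\kil_2)$ is the direct sum of the component shifts, both $\alpha_t^{A_1\oplus A_2}$ and $\alpha_t^{A_1}\otimes\alpha_t^{A_2}$ send $W_{A_1}(f_1)\otimes W_{A_2}(f_2)$ to $W_{A_1}(T_tf_1)\otimes W_{A_2}(T_tf_2)$, and the uniqueness clause established above forces equality everywhere.
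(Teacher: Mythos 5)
Your proof is correct, but it takes a noticeably longer and more self-contained route than the paper. The paper observes that the block-diagonal semigroup $Y_t=\left[\begin{smallmatrix}T_t^{1+T}&0\\0&jT_t^{T}j\end{smallmatrix}\right]$ on $\Kil\oplus\Kil$ supplied by Lemma \ref{Toeplitz semigroup lemma} gives, via Arveson's Proposition 2.1.3, a unique CCR flow $\sigma$ on all of $B(\Gamma_s(\Kil\oplus\Kil))$ with $\sigma_t(W_0(g))=W_0(Y_tg)$; since $Y_t\Sigma_A\iota(f)=\Sigma_A\iota(T_tf)$, the algebra $\m_A$ is $\sigma$-invariant, and one simply defines $\alpha^A_t=\sigma_t|_{\m_A}$. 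Normality, unitality, the semigroup law, and $\sigma$-weak continuity are all inherited from $\sigma$, and uniqueness follows from $\sigma$-weak density of the Weyl operators. You instead work with the second-quantized isometry $V_t=\Gamma(T_t^{1+T})\otimes\Gamma(jT_t^Tj)$ (essentially $\Gamma(Y_t)$), prove the intertwining $V_tW_A(f)=W_A(T_tf)V_t$ directly, and then carry out a state-preserving GNS extension (Kaplansky density, weak-$*$ compactness of the ball, separating property of $\Omega$) to build $\alpha^A_t$ on $\m_A$ from scratch. This reproves part of what Arveson's proposition already packages, so it is considerably longer; on the other hand it is more elementary and does not require first extending to $B(H)$. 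One small thing your version gets right that the paper's written proof does not address at all is the final assertion $\alpha^{A_1\oplus A_2}=\alpha^{A_1}\otimes\alpha^{A_2}$: your Fock-factorization argument, together with the uniqueness clause, is exactly what is needed there, and the paper leaves it implicit.
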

  
  \begin{proof} 
  Thanks to Lemma \ref{Toeplitz semigroup lemma}, we have the semigroup of isometries $$Y=(Y_t)_{t\geq0}=\begin{bmatrix} T_t^{{1+T}} & 0 \\ 0 & jT_t^{{T}}j \end{bmatrix}$$ on $\Kiltwo$. 
   It follows from \cite{Arveson book} Proposition 2.1.3 that there exists a unique \en-semigroup $\sigma$ on $B(H)$ satisfying $\sigma_t(W_0(f))=W_0(Y_tf)$ for all $f\in\Kiltwo$. Clearly $\m_A$ is an invariant subalgebra for $\sigma$ and, by density of $\pi_A(CCR(\Kil))$, the restriction $\alpha_t=\sigma_t|_{\m_A}$ is the unique \en-semigroup satisfying the conditions of the Proposition.
  \end{proof}
  
By construction, each of these \en-semigroups has a faithful, normal invariant state $\varphi_A$, hence a canonical unit $S$. By Proposition \ref{invariant state}, a necessary and sufficient condition for the canonical unit to be a multi-unit is that it commute with the modular conjugation $J_\Omega$. The following Proposition characterises the operators $A$ satisfying this condition.

  \begin{prop}\label{Toeplitz and extension property}
  If $A\geq1$ is a Toeplitz operator such that $A-1$ is injective, and $\alpha$ is the \en-semigroup on $\m_A$ induced by $\alpha_t(W_A(f))=W_A(T_tf)$, then the canonical unit commutes with the modular conjugation if and only if $A=I_{L^2(\Rplus)}\otimes R$ for some $R\in B(\kil)$. Moreover, when this is the case there exists a CCR flow $\sigma$ on $B(H)$ extending both $\alpha$ and $\alpha^{\Omega}$.
 \end{prop}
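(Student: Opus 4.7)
The invariance $\varphi_A\circ\alpha_t=\varphi_A$ follows from the Toeplitz condition $T_t^*AT_t=A$, so the canonical unit $S_t$ is a well-defined isometric semigroup. I would first identify it as the second quantisation of an explicit isometric semigroup on $\Kiltwo$. Using $W_A(f)\Omega=e^{-\ip{f}{Af}/2}\varepsilon(\Sigma_A\iota(f))$ together with a short direct verification that $Y_t\Sigma_A\iota(f)=\Sigma_A\iota(T_tf)$, one sees that $S_t=\Gamma(Y_t)$, where
\[Y_t=\begin{bmatrix}T_t^{1+T}&0\\0&jT_t^Tj\end{bmatrix}.\]
Since $J_\Omega=\Gamma(\Theta)$ with $\Theta=\begin{bmatrix}0&-j\\-j&0\end{bmatrix}$ an anti-unitary involution, functoriality gives $J_\Omega S_tJ_\Omega=\Gamma(\Theta Y_t\Theta)$, and a direct matrix calculation (using $j^2=1$) yields $\Theta Y_t\Theta=\mathrm{diag}(T_t^T,\,jT_t^{1+T}j)$. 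By injectivity of $\Gamma$ on isometries, the condition $[S_t,J_\Omega]=0$ for every $t\geq0$ is thus equivalent to the single operator identity $T_t^{1+T}=T_t^T$ for all $t\geq0$.

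Next, I would rewrite this identity as a commutation relation for $A$. Set $g:=\sqrt{T(1+T)^{-1}}=\sqrt{1+T}^{-1}\sqrt{T}$, a bounded positive contraction. Taking $x=\sqrt{T}y$ in the dense core $\Ran(\sqrt{T})$, the equation $T_t^{1+T}x=T_t^Tx$ becomes $\sqrt{1+T}T_tgy=\sqrt{T}T_ty$; left-multiplying by $\sqrt{1+T}^{-1}$ converts this to $gT_ty=T_tgy$. Hence $T_t^{1+T}=T_t^T$ if and only if $[g,T_t]=0$. The scalar map $x\mapsto\sqrt{x/(1+x)}$ is a homeomorphism of the compact spectrum of $T$ onto its image in $[0,1)$, so $g$ and $T$ generate the same abelian von Neumann algebra; thus $[g,T_t]=0\Leftrightarrow[T,T_t]=0\Leftrightarrow[A,T_t]=0$. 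Taking adjoints then also gives $[A,T_t^*]=0$, so $A$ lies in the commutant on $L^2(\Rplus;\kil)=L^2(\Rplus)\ot\kil$ of the $W^*$-algebra generated by $\{T_t,T_t^*:t\geq0\}$. Because $T_t$ acts trivially on the $\kil$-fibre, this commutant equals $\Comp I_{L^2(\Rplus)}\ot B(\kil)$ (the commutant of the shift and its adjoint on $L^2(\Rplus)$ is $\Comp I$: the range projections $T_tT_t^*$ generate the maximal abelian multiplication algebra $L^\infty(\Rplus)$, and commuting with all shifts leaves only the constants). Therefore $A=I_{L^2(\Rplus)}\ot R$ for some $R\geq I$ in $B(\kil)$. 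The converse is immediate: for such $A$, both $\sqrt{1+T}$ and $\sqrt{T}$ act only on the $\kil$-fibre while $T_t$ acts only on the $L^2(\Rplus)$-factor, so $T_t^{1+T}=T_t^T=T_t$.

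Finally, when $A=I\ot R$ the extension $\sigma$ is defined as the CCR flow on $B(\Gamma_s(\Kiltwo))$ associated with the diagonal shift $\wt T_t:=T_t\op T_t$ on $\Kiltwo\cong L^2(\Rplus;\kil\op\kil)$, i.e.\ $\sigma_t(W_0(\xi))=W_0(\wt T_t\xi)$. Since $\Sigma_A$ and $\Sigma_A'$ act only on the $\kil$-fibres, they commute with $\wt T_t$; combining this with $\wt T_t\iota(f)=\iota(T_tf)$ (which uses $jT_t=T_tj$), one computes
\[\sigma_t(W_A(f))=W_0(\wt T_t\Sigma_A\iota(f))=W_0(\Sigma_A\iota(T_tf))=W_A(T_tf)=\alpha_t(W_A(f)),\]
and analogously $\sigma_t(W_A'(f))=W_A'(T_tf)=\alpha_t^{\Omega}(W_A'(f))$. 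By normality and density of the Weyl operators, $\sigma|_{\m_A}=\alpha$ and $\sigma|_{\m_A'}=\alpha^{\Omega}$. The most delicate step in this plan is the equivalence $T_t^{1+T}=T_t^T\Leftrightarrow[g,T_t]=0$, which requires care with unbounded functional calculus (because $\sqrt{T}^{-1}$ is generally unbounded) and is handled by passing to the bounded surrogate $g$ and working on the dense core $\Ran(\sqrt{T})$.
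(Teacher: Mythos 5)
Your proof is correct and follows essentially the same route as the paper: reduce commutation of the canonical unit with $J_\Omega$ to the identity $T_t^{1+T}=T_t^{T}$, convert that to commutation of the bounded operator $\sqrt{T}\sqrt{1+T}^{-1}$ with the right shift, conclude $A=I\ot R$ from irreducibility of the shift semigroup on $L^2(\Rplus)$, and build the extension from the diagonal shift exactly as in the paper. Your two local refinements — identifying $S_t=\Gamma(Y_t)$ and using functoriality of second quantisation under conjugation by $\Theta$ (rather than differentiating exponential vectors), and passing from $[g,T_t]=0$ to $[A,T_t]=0$ via the fact that $g$ and $T$ generate the same abelian von Neumann algebra (rather than via the spectral projections of $g$) — are correct and, if anything, slightly cleaner.
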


  \begin{proof}
  Let $S$ be the canonical unit defined by $S_tx\Omega:=\alpha_t(x)\Omega$ for all $t\geq0$, $x\in\m_A$. Since $T$ is a Toeplitz operator we have $\|\sqrt{1+T}f\|=\|\sqrt{1+T}T_tf\|$ and $\|j \sqrt{T}f\|=\|j \sqrt{T}T_tf\|$ for all $f \in \Kil$. By evaluating on $W_A(f)\Omega$ we get
  $$S_t\varepsilon\begin{pmatrix}\sqrt{1+T}f\\ j\sqrt{T}f\end{pmatrix}=\varepsilon\begin{pmatrix}\sqrt{1+T} T_tf\\ j\sqrt{T} T_tf\end{pmatrix}.$$
  By differentiating $s \mapsto S_t\varepsilon\begin{pmatrix}\sqrt{1+T}sf\\ j\sqrt{T}sf\end{pmatrix}$ at $s=0$ we obtain
%   $\lim_{s\to0}s^{-1}(W_A(sf)\Omega-\Omega)$ we obtain
  $S_t \Sigma_A\iota(f)=\Sigma_A\iota(T_tf),$ and then
 by looking at $\Sigma_A\iota(f)-i\Sigma_A\iota(if)$,
  \begin{equation*} S_t\begin{pmatrix}\sqrt{1+T}f\\ 0\end{pmatrix}=\begin{pmatrix}\sqrt{1+T}T_tf\\ 0\end{pmatrix}. \end{equation*}
  Thus, first by verifying on the domain of $\sqrt{1+T}^{-1}$ and then by extending, we get
  \begin{equation}\label{first T equation}S_t\begin{pmatrix}f\\ 0\end{pmatrix}=\begin{pmatrix}T^{{1+T}}_tf\\ 0\end{pmatrix}~\forall  f\in H.\end{equation}  On the other hand, if $S_tJ_\Omega=J_\Omega S_t$ we get
  $$S_t\Sigma'\iota(f)=S_tJ_\Omega\Sigma\iota(-f)=J_\Omega\Sigma\iota(-T_tf)=\Sigma'\iota(T_tf).$$
  Arguing as before we get  $ S_t\begin{pmatrix}\sqrt{T}f\\0\end{pmatrix}=\begin{pmatrix} \sqrt{T}T_tf\\0 \end{pmatrix}$
  %\begin{equation*}  S_t\begin{pmatrix}\sqrt{T}f\\0\end{pmatrix}=\begin{pmatrix} \sqrt{T}T_tf\\0 \end{pmatrix}. \end{equation*} 
  and consequently 
  \begin{equation}\label{second T equation} S_t\begin{pmatrix}f\\ 0\end{pmatrix}=\begin{pmatrix}T^{{T}}_tf\\ 0\end{pmatrix}~\forall  f\in H.\end{equation}
  %Using the fact that $\sqrt{T}f=\sqrt{1+T}\sqrt{1+T}^{-1}\sqrt{T}f$, 
  It follows from equations (\ref{first T equation}) and (\ref{second T equation}) that
  %$$\sqrt{T}T_t=\sqrt{1+T}T_t\sqrt{1+T}^{-1}\sqrt{T}$$so that 
  $T_t^{{1+T}}f=T_t^{{T}}f~\forall f \in H,$ which implies \begin{equation}\label{sqrtT} T_t^{{1+T}}\sqrt{T}f=\sqrt{T}T_tf~\forall f \in H.\end{equation} Now if $f \in \Dom(\sqrt{1+T}^{-1})$ then, thanks to equation \ref{sqrtT}, $$\sqrt{1+T}T_t\sqrt{T}\sqrt{1+T}^{-1}f=\sqrt{T}T_tf,$$ and in particular $\sqrt{T}T_tf \in \Dom(\sqrt{1+T}^{-1})$. It follows that $$T_t \sqrt{T}\sqrt{1+T}^{-1}\subseteq \sqrt{T}\sqrt{1+T}^{-1}T_t.$$  
  
So every spectral projection of $\sqrt{T}\sqrt{1+T}^{-1}$ commutes with $T_t$ and hence it is of the form $1_{L^2(\Rplus)}\ot E$ for some projection $E$ in $B(\kil)$. This consequently implies that $\sqrt{1+T}^{-1}\sqrt{T}=1_{L^2(\Rplus)}\ot X$, for some densely defined closed operator (in fact self-adjoint) operator $X$ on $\kil$. Then $(1+T)^{-1}T=1_{L^2(\Rplus)}\ot X^2$. For  $f \in \Dom((1+T)^{-1})$ we have $$(1+T)^{-1}f=(1-T(1+T)^{-1})f=1_{L^2(\Rplus)}\ot (1-X^2)f.$$  This implies that $(1-X^2)$ has a bounded inverse and $$T= 1_{L^2(\Rplus)}\ot \left(1-(1-X^2)^{-1}\right).$$
  
  For the second statement, since $\sqrt{1+T}$ and $j\sqrt{T}$ commute with $T_t$ for all $t\geq0$, the \en-semigroup constructed in Proposition \ref{en construction prop} is the restriction of the usual CCR flow $\theta$ on $B(H)$ of index $2\dim(\kil)$, given by the semigroup of isometries $\begin{bmatrix} T_t & 0 \\ 0 & T_t \end{bmatrix}$. To see that this restricts to $\alpha^\Omega$ on $\m_A'$, we observe that
  $$\alpha^\Omega_t(W_A'(f))=J_\Omega \alpha_t(W_A(-f))J_\Omega=W_A'(T_tf),$$
  but this is equal to
  $$W_A'(\Sigma_A'(T_t\op T_t)\iota(f))=W_A'((T_t\op T_t)\Sigma_A'\iota(f))=\theta_t(W_A'(f)),$$
  as required. 
  \end{proof}
 
  For the rest of the paper we restrict to this class of \en-semigroups, where the canonical unit commutes with the modular conjugation. Since the Toeplitz part of  $A$ is trivial, we just call these \en-semigroups as just CCR flows. We denote the CCR flow given by $A= 1\ot R$ by $\alpha^{(R)}$. Notice that, since $L^2(\Rplus)$ is infinite dimensional, $Tr(I\otimes (R^2-1))<\infty$ iff $Tr(R^2-1)=\norm{\sqrt{R^2-1}}_{HS}=0$, i.e. $R^2-1=0$. By our ($A-1$ is injective) assumption $R\neq1$, so $\m_A$ is a type III factor. 
 
  The second half of the proposition shows that the super-product system for $\alpha,\alpha^\Omega$ is isomorphic to the completely spatial product system of index $2\dim\kil$, hence $\Ind_c(\alpha)=2\dim\kil$. If $\dim\kil=n$, we say that the corresponding \en-semigroup is a CCR flow on the type III factor $\m_A$ of rank $n$. The following Corollary is an immediate consequence of Proposition \ref{coupling ind invariant}.
  
\begin{cor}\label{CCR-rank}
CCR flows on hyperfinite type III factors associated with operators of the form $A_i=1\ot R_i,~i=1,2$  are not cocycle conjugate, if $R_1$ and $R_2$ have different ranks.
\end{cor}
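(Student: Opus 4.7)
The proof is essentially immediate from results already established. The paragraph immediately preceding the corollary notes that the super-product system for $\alpha^{(R)}$ and its dual is isomorphic to the completely spatial Arveson product system of index $2\dim\kil$, so $\Ind_c(\alpha^{(R)})=2\dim\kil$. Alternatively, I would derive this directly by combining Proposition \ref{Toeplitz and extension property} with Proposition \ref{extn}: the former provides a canonical extension of $\alpha^{(R)}$ to a CCR flow on $B(H)$ driven by the shift semigroup on $\Kil\oplus\Kil$, which has Powers-Arveson index $2\dim\kil$, and the latter then identifies the coupling index of $\alpha^{(R)}$ with that Powers-Arveson index.

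Under the standing convention that $R-1$ is injective on the ambient $\kil$ (as required for the type III setting), $\dim\kil$ coincides with the rank of $R$, and hence $\Ind_c(\alpha^{(R_i)})=2\,\mathrm{rank}(R_i)$ for $i=1,2$. Since cocycle-conjugate \en-semigroups share the same coupling index by Proposition \ref{coupling ind invariant}, the assumption that $R_1$ and $R_2$ have distinct ranks forces $\Ind_c(\alpha^{(R_1)})\neq\Ind_c(\alpha^{(R_2)})$, and therefore $\alpha^{(R_1)}$ and $\alpha^{(R_2)}$ cannot be cocycle conjugate.

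The argument is essentially bookkeeping; all the real work was already carried out in Proposition \ref{Toeplitz and extension property}, so there is no substantial obstacle. The one mild caveat is keeping the notion of ``rank'' consistent when $R_i-1$ has a kernel, but Remarks \ref{generalisation remarks} handle that case by splitting off a type I Fock summand, and the same comparison of coupling indices on the remaining type III tensor factor gives the conclusion.
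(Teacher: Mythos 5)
Your proof is correct and follows exactly the same route as the paper: the coupling index $\Ind_c(\alpha^{(R)})=2\dim\kil=2\,\mathrm{rank}(R)$ is read off from Proposition \ref{Toeplitz and extension property} (via Proposition \ref{extn}), and Proposition \ref{coupling ind invariant} makes it a cocycle-conjugacy invariant. Your caveat about $R-1$ having a kernel is unnecessary here since the standing assumption that $A-1$ is injective already forces $R-1$ injective, hence $\mathrm{rank}(R)=\dim\kil$, but it does no harm.
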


  In order to classify these semigroups further we must determine when the algebras $\m_A$ are isomorphic. For this we require the following lemma.
  
  \begin{lem}\label{tensor lemma}
   Let $X,Y$ be closed, densely defined operators of the form $X=\sum_{i=1}^\infty{\lambda_iP_i}$, $Y=\sum_{j=1}^\infty{\mu_jQ_j}$ where the $\{P_i\}_{i=1}^\infty$ and $\{Q_j\}_{j=1}^\infty$ are families of mutually orthogonal projections. Then $\sigma(X\ot Y)=\ol{\sigma(X)\sigma(Y)}$.
  \end{lem}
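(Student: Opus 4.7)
The plan is to show that both $\sigma(X\otimes Y)$ and $\overline{\sigma(X)\sigma(Y)}$ coincide with the set $\overline{\{\lambda_i\mu_j:i,j\in\Nat\}}$, and then compare.

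First I would establish that $\sigma(X)=\overline{\{\lambda_i\}}$ and $\sigma(Y)=\overline{\{\mu_j\}}$; this is standard for operators diagonal over an orthogonal family of projections, since for $\lambda$ at positive distance $d$ from $\{\lambda_i\}$ the bounded operator $\sum_i(\lambda_i-\lambda)^{-1}P_i$ inverts $X-\lambda$ with norm $1/d$. The identification $\overline{\sigma(X)\sigma(Y)}=\overline{\{\lambda_i\mu_j\}}$ is then immediate: one inclusion is obvious, and the other follows by a diagonal sequence argument using continuity of complex multiplication (given $\lambda\in\sigma(X)$, $\mu\in\sigma(Y)$, pick $\lambda_{i_k}\to\lambda$ then $\mu_{j_k}\to\mu$, so $\lambda_{i_k}\mu_{j_k}\to\lambda\mu$).

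Next I would identify $\sigma(X\otimes Y)$. The key structural observation is that $\{P_i\otimes Q_j\}_{i,j}$ is a family of mutually orthogonal projections summing strongly to $1$ on $H_1\otimes H_2$, and $X\otimes Y$ acts on $P_iH_1\otimes Q_jH_2$ as multiplication by the scalar $\lambda_i\mu_j$. Hence $X\otimes Y$ has the spectral decomposition
\[ X\otimes Y \;=\; \sum_{i,j}\lambda_i\mu_j\,(P_i\otimes Q_j), \]
and reapplying the first step verbatim, now indexed by $(i,j)$, gives $\sigma(X\otimes Y)=\overline{\{\lambda_i\mu_j\}}$.

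The one technicality I anticipate is justifying the displayed equation above, i.e.\ identifying the closure of the algebraic tensor product $X\odot Y$ on $\Dom(X)\odot\Dom(Y)$ with the spectral sum on the right. This is not hard, however: both closed operators agree on the dense subspace spanned by elementary tensors $\xi\otimes\eta$ with $\xi\in P_iH_1$, $\eta\in Q_jH_2$, and this subspace is a core for each of them (for the spectral sum because the projections $P_i\otimes Q_j$ have total range dense in $H_1\otimes H_2$). Once this identification is made, the resolvent $(X\otimes Y-\lambda)^{-1}=\sum_{i,j}(\lambda_i\mu_j-\lambda)^{-1}(P_i\otimes Q_j)$ exists and is bounded whenever $\lambda\notin\overline{\{\lambda_i\mu_j\}}$, handling the remaining containment automatically and completing the proof.
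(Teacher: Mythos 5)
Your proof is correct. It rests on the same underlying decomposition as the paper's proof --- the orthogonal family $\{P_i\otimes Q_j\}$ diagonalizes $X\otimes Y$ with entries $\lambda_i\mu_j$ --- but the two arguments finish quite differently. The paper treats the spectrum in three pieces: it computes the eigenvalues of $X\otimes Y$ coefficientwise, shows $X\otimes Y-\lambda$ is bounded below when $\lambda\notin\overline{\sigma(X)\sigma(Y)}$, and then eliminates the remaining possibility (closed but non-dense range) by passing to the adjoint $X^*\otimes Y^*$ and rerunning the eigenvalue computation. You instead prove a single sub-lemma --- a diagonal operator $\sum_k\nu_kR_k$ has spectrum $\overline{\{\nu_k\}}$, via the explicit resolvent $\sum_k(\nu_k-\lambda)^{-1}R_k$ --- and apply it three times, to $X$, $Y$ and $X\otimes Y$. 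This buys you two things: you avoid the adjoint/dense-range dichotomy entirely, and you make explicit the identification of the closed operator $X\otimes Y$ with the spectral sum $\sum_{i,j}\lambda_i\mu_j(P_i\otimes Q_j)$, a point the paper uses silently when it expands $\norm{(X\otimes Y-\lambda)v}^2$ in the basis $\{e_i\otimes f_j\}$. The price is the core argument you flag: to see that the finitely supported vectors form a core for the closure of the algebraic tensor product one needs the truncation $\xi_N\otimes\eta_N\to\xi\otimes\eta$ in graph norm (your parenthetical justifies the core property only for the spectral sum side). That step is routine, but it is the one place your write-up should be expanded before the two closed operators can be declared equal.
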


  \begin{proof}
   Let $\{e_i\}_{i=1}^\infty$ be an eigenbasis for $X$ and $\{f_j\}_{j=1}^\infty$ an eigenbasis for $Y$ and write $v=\sum_{i,j=1}^\infty{v_{ij}e_i\ot f_j}$. If $v$ is nonzero and $(X\otimes Y)v=\lambda v$ then
   $$\sum_{i,j=1}^\infty{\lambda v_{ij} e_i\ot f_j}=\sum_{i,j=1}^\infty{\lambda_i\mu_j v_{ij} e_i\ot f_j}$$
   so that, for each $i,j$, either $v_{ij}=0$ or $\lambda_i\mu_j=\lambda$, hence $\lambda\in\sigma(X)\sigma(Y)$. If $\lambda\notin\ol{\sigma(X)\sigma(Y)}$ then since $\inf_{i,j} |\lambda-\lambda_i\mu_j|=\varepsilon>0$, we have
   $$ \norm{(X\ot Y-\lambda)v}^2=\sum_{i,j=1}^\infty | \lambda_i\mu_j-\lambda |^2|v_{ij}|^2\geq \varepsilon^2\norm{v}^2, $$
   so $X\ot Y-\lambda$ is bounded below. Thus the only possible values in the spectrum of $X\ot Y$ not in $\ol{\sigma(X)\sigma(Y)}$ are those for which $X\ot Y-\lambda$ does not have dense range. Equivalently $\ol{\lambda}$ is an eigenvalue for $X^*\ot Y^*$, so by the preceding argument $\ol{\lambda}=\ol{\lambda}_i\ol{\mu}_j$ for some $i,j$ and $\lambda\in\sigma(X)\sigma(Y)$.
  \end{proof}

  The following theorem may be gleaned from \cite{Araki Woods}, for the reader's convenience we include the details.
 
 \begin{thm}
  Let $A=I\otimes R \geq 1$ be such that $T=(A-1)/2$ is injective. Then there are the following three possibilities.
  \begin{rlist}
   \item $A$ has discrete spectrum and there exists $\lambda\in (0,1)$ such that the eigenvalues of $(1+T)^{-1}T$ all have the form $\lambda_i=\lambda^{d_i}$ for some $d_i\in\Nat$.
   \item $A$ has discrete spectrum, but is not of the form (i).
  \item $A$ has nonempty purely continuous spectrum (see \cite{Kato} X.1.1).
  \end{rlist}
  In case (i) $\m_A$ is the hyperfinite $III_\lambda$ factor, whereas in all other cases $\m_A$ is the hyperfinite $III_1$ factor.
 \end{thm}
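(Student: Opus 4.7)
The strategy is to compute the spectrum of $\Delta_\Omega$, identify it with Connes' $S$-invariant of $\m_A$, and then invoke the classification of injective type III factors. From the formula for $\Delta_\Omega^{1/2}$ proved above, $\Delta_\Omega=\Gamma(X)$ with $X=\mu\oplus j\mu^{-1}j$ where $\mu:=T(1+T)^{-1}$. Since $\Gamma(X)=\bigoplus_{n\geq 0}X^{\otimes n}$ on the symmetric Fock space, iterated application of Lemma \ref{tensor lemma} (together with the standard spectral-mapping theorem for tensor products of commuting self-adjoint operators when continuous spectrum appears) shows that $\sigma(\Delta_\Omega)$ equals the closure of the multiplicative group $G\subset(0,\infty)$ generated by $\sigma(\mu)\cup\sigma(\mu)^{-1}$. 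Because $A=I\otimes R$ on the infinite-dimensional space $L^2(\Rplus)\otimes\kil$, Lemma \ref{tensor lemma} also gives $\sigma(\mu)=\sigma\bigl((R-1)(R+1)^{-1}\bigr)\subset[0,1)$.

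I would then analyse $G$ case by case, using the structure theorem that every closed subgroup of $(\Real,+)$ is either $\{0\}$, $\alpha\ZZ$, or $\Real$. In case (i) the logarithms of the eigenvalues $\lambda^{d_i}$ of $\mu$ lie in the cyclic group $\log\lambda\cdot\ZZ$, so the closure of $G$ is $\lambda^{\gcd\{d_i\}\cdot\ZZ}$; after replacing $\lambda$ by $\lambda^{\gcd\{d_i\}}$ this becomes $\lambda^{\ZZ}$. Case (ii) is precisely the statement that no cyclic subgroup of $(0,\infty)$ contains all the eigenvalues, so the subgroup of $\Real$ generated by their logarithms is not cyclic; being nontrivial, its closure must be all of $\Real$ and $\overline{G}=(0,\infty)$. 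In case (iii) the continuous spectrum of $\mu$ is uncountable (continuous spectrum of a self-adjoint operator has no isolated points), so $G$ is uncountable, and again $\overline{G}=(0,\infty)$.

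Finally, $\m_A$ is hyperfinite: decomposing $\Kil=L^2(\Rplus;\kil)$ as an orthogonal sum of finite-dimensional subspaces along a spectral (or approximate spectral) decomposition of $R$, and using that $A=I\otimes R$ acts block-diagonally in such a decomposition, exhibits $\m_A$ as an ITPFI factor in the sense of Araki-Woods. One then identifies $\overline{G}\cup\{0\}$ with $S(\m_A)$: in case (i) the state $\varphi_A$ is almost periodic (its modular group has discrete spectrum), so Connes' periodic-state theory gives $S(\m_A)=\sigma(\Delta_\Omega)$ directly, while in cases (ii) and (iii) the same conclusion follows from the asymptotic-ratio-set computation of \cite{Araki Woods}. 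Uniqueness of the hyperfinite type $III_\lambda$ factors (Connes for $\lambda\in(0,1)$, Haagerup for $\lambda=1$) then identifies $\m_A$ as asserted.

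The main obstacle is this last identification $S(\m_A)=\overline{G}\cup\{0\}$. A priori the Connes invariant is an intersection over all faithful normal states, and the inclusion $S(\m_A)\subseteq\sigma(\Delta_\Omega)$ can be strict for a general state; one must invoke either Connes' periodic-state theorem (in case (i)) or the explicit ITPFI asymptotic-ratio-set formula (in cases (ii) and (iii)) to close the gap. Both ingredients are classical in the quasifree CCR setting treated in \cite{Araki Woods}, and for a fully self-contained argument one could alternatively reduce to finite-rank situations via Remark \ref{generalisation remarks} and a perturbation argument.
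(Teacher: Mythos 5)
Your argument is correct and follows essentially the same route as the paper: compute $\sigma(\Delta_\Omega)$ from the tensor-power structure of $\Gamma(\mu\oplus j\mu^{-1}j)$ via Lemma \ref{tensor lemma}, get hyperfiniteness and the lower bound on the Connes invariant from the Araki--Woods ITPFI and asymptotic-ratio-set results, and use $r_\infty(\m_A)=S(\m_A)$ for ITPFI factors. The one soft spot is case (i): you propose closing the gap between $S(\m_A)$ and $\sigma(\Delta_\Omega)$ with Connes' periodic/almost-periodic state theory, which gives $S(\m_A)=\sigma(\Delta_{\varphi_A})$ only when the centralizer of $\varphi_A$ is a factor -- a hypothesis you do not verify. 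The paper instead closes case (i) by the same mechanism you invoke for cases (ii) and (iii): since $A=I\ot R$ with $L^2(\Rplus)$ infinite dimensional, every eigenvalue of $(1+T)^{-1}T$ has infinite multiplicity, so every spectral point is a limit point and lies in $r_\infty(\m_A)$ by Araki--Woods; combined with the upper bound $S(\m_A)\subseteq\sigma(\Delta_\Omega)=\{0\}\cup\{\lambda^n:n\in\ZZ\}$ this determines $S(\m_A)$ without any centralizer argument. You already have both ingredients, so the fix is cosmetic; your explicit treatment of the $\gcd$ of the exponents $d_i$ is in fact slightly more careful than the paper's.
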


 \begin{proof}
  By definition $A$ is one of the three types described above, so it remains to show the factors are as claimed.
  In Section 12 of \cite{Araki Woods}, the following is observed:
  \begin{itemize}
   \item[(1.)] If $A$ has discrete spectrum then $\m_A$ is an infinite tensor product of factors of type I (ITPFI), so hyperfinite.
   \item[(2.)] If $A$ has discrete spectrum and $\lambda$ is a limit point of $\sigma((1+T)^{-1}T)$, then $\lambda\in r_\infty(\m_A)$, the asymptotic ratio set of $\m_A$.
   \item[(3.)] If $A$ has non-empty purely continuous spectrum then $\m_A$ is isomorphic to an ITPFI and $r_\infty(\m_A)=\Rplus$.
  \end{itemize}
  By \cite{Connes} $r_\infty(\m_A)=S(\m_A)$ for ITPFI factors, so the third point is equivalent to $\m_A$ being hyperfinite type III$_1$. If $A$ has discrete spectrum then, as $A=I\otimes R$, all eigenvalues have infinite multiplicity, so all points in the spectrum are limit points. Thus, if $A$ has discrete spectrum and satisfies (ii), then by (2.) $r_\infty(\m_A) \neq \{0\}\cup\{\lambda^n:~n\in\mathbb{Z}\}$ for any $\lambda\in(0,1)$, and clearly $r_\infty(\m_A)\neq\{0,1\}$, so $S(\m_A)=\Rplus$ and $\m_A$ is type III$_1$. If $A$ satisfies (i) then, again by (2.), $r_\infty(\m_A)\supseteq \{0\}\cup\{\lambda^n:~n\in\mathbb{Z}\}$ and we are left to show that the modular spectrum of $\m$ contains nothing further. We simply show $\sigma(\Delta_\Omega)\supseteq \{0\}\cup\{\lambda^n:~n\in\mathbb{Z}\}$. Since $\Delta_\Omega$ is a sum of tensor powers of $(1+T)^{-1}T\oplus T^{-1}(1+T)$, by Lemma \ref{tensor lemma} its spectrum is the closure of $\bigcup_{n\in\mathbb{Z}}^\infty{\sigma((1+T)^{-1}T)^n}$, that is $\{0\}\cup\{\lambda^n:~n\in\mathbb{Z}\}$. 
 \end{proof}
 
 \begin{rem}\label{types remark}
  When $\kil$ is one dimensional, only (i) can occur. When $\kil$ has finite dimension (ii) occurs iff $(1_T)^{-1}T$ has eigenvalues $\lambda_i,\lambda_j$ with $\log\lambda_i /\log\lambda_j\notin\mathbb{Q}$. In infinite dimensions there are further examples of case (ii) coming from sequences of rational powers with strictly increasing denominators, e.g. $(\lambda^{n/(n+1)})_{n\in\Nat}$. Clearly case (iii) can only occur if $\kil$ is infinite dimensional.
 \end{rem}

 In particular, thanks to Corollary \ref{CCR-rank}, $A=\frac{1+\lambda}{1-\lambda}I_{L^2(\Rplus;\kil)}$ gives infinitely many non-cocycle-conjugate \en-semigroups on each hyperfinite III$_\lambda$ factor with $0<\lambda<1$ distinguished by their rank. Distinguishing between two CCR flows of equal rank is more complicated and we take up a detailed analysis in the next section.
 
 \section{Characterising cocycle conjugacy for CCR flows}\label{CCR section2}
 
  In this section we show that there are uncountably many non-cocycle conjugate \en-semgiroups on each hyperfinite III$_\lambda$ factor with $\lambda\in (0,1]$. The proof relies upon the precise form of the gauge group and a detailed analysis of its fate under cocycle perturbations.
  
  \begin{prop}
   Let $A=I\ot R\geq1$ be such that $A-1$ is injective and consider the corresponding CCR flow $\alpha$ on $\m_A$. 
   Then every element of the gauge group $G(\alpha)$ has the form 
   $$U_t=e^{i\lambda t}W_A(1_{(0,t)}\ot \xi) \qquad (t\geq0)$$
   for some $\lambda\in\Real$, $\xi\in\kil$. As a topological group, $G(\alpha)$ is isomorphic to the central extension of $(\kil,+)$ by the $\Real$-valued 2-cocycle $\omega(\xi,\eta)=-\im\ip{\xi}{\eta}$.
  \end{prop}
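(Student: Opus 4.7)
The plan has three main steps. First I would identify $\alpha_t(\m_A)'\cap\m_A$ with the local subalgebra $\m_A(0,t):=\{W_A(f):~f\in L^2((0,t);\kil)\}''$. Since $A=I\otimes R$ acts fibrewise on $L^2(\Rplus;\kil)$, both $\sqrt{1+T}$ and $\sqrt{T}$ preserve supports, and a direct Fock-space computation using the Weyl relations for $W_0$ yields the intrinsic Weyl relation
$$ W_A(f)W_A(g)=e^{-i\im\ip{f}{g}}W_A(f+g). $$
Weyl operators with disjointly supported arguments therefore commute, giving $\m_A(0,t)\subseteq\alpha_t(\m_A)'\cap\m_A$. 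The reverse inclusion comes from the tensor factorisation $\m_A=\m_A(0,t)\otimes\alpha_t(\m_A)$, which is obtained from $\Gamma_s(\Kil^{\oplus 2})\cong\Gamma_s(L^2((0,t);\kil)^{\oplus 2})\otimes\Gamma_s(L^2((t,\infty);\kil)^{\oplus 2})$ together with the fact that $\m_A$ is a factor, so the two commuting subalgebras are each other's relative commutants inside $\m_A$.

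Setting $V_t^{(\xi,\lambda)}:=e^{i\lambda t}W_A(1_{(0,t)}\otimes\xi)$, strong continuity is immediate and $V_t^{(\xi,\lambda)}\in\m_A(0,t)$; the cocycle identity follows from the intrinsic Weyl relation applied to the disjointly supported pair $1_{(0,s)}\otimes\xi$ and $T_s(1_{(0,t)}\otimes\xi)=1_{(s,s+t)}\otimes\xi$. For the converse, I would take an arbitrary gauge cocycle $U=(U_t)_{t\geq0}$ and exploit the product-state property of $\varphi_A$ (which holds because $A$ is local and the Fock vacuum is a product vector). Combined with the cocycle identity this gives $\varphi_A(U_{s+t})=\varphi_A(U_s)\varphi_A(U_t)$, so strong continuity forces $\varphi_A(U_t)=e^{\mu t}$ for some $\mu\in\Comp$. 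I would next analyse the automorphism $\mathrm{Ad}_{U_t}$ of the factor $\m_A(0,t)$: combining the cocycle equation with right-shift covariance shows that it acts on Weyl generators by a phase of the form
$$ U_tW_A(f)U_t^*=e^{2i\im\ip{1_{(0,t)}\otimes\xi_t}{f}}W_A(f), $$
for some $\xi_t\in L^2((0,t);\kil)$, and consistency of these phases across different time scales forces $\xi_t=1_{(0,t)}\otimes\xi$ for a single fixed $\xi\in\kil$. Then $U_tW_A(1_{(0,t)}\otimes\xi)^*$ commutes with every Weyl generator in the factor $\m_A(0,t)$, so equals a scalar $c(t)$, and the cocycle identity together with continuity forces $c(t)=e^{i\lambda t}$ for some $\lambda\in\Real$.

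The hard part will be identifying $\xi_t$ as having the homogeneous form $1_{(0,t)}\otimes\xi$ for a single $\xi\in\kil$; this requires carefully exploiting both the right-shift covariance of the cocycle and the compatibility of the local factorisations at different times (conceivably via a generator argument at $t=0^+$ or by a density argument on dyadic partitions). Once the parametrisation $U\leftrightarrow(\xi,\lambda)$ is established, the Weyl relation gives
$$ V_t^{(\xi,\lambda)}V_t^{(\eta,\mu)}=e^{-it\im\ip{\xi}{\eta}}V_t^{(\xi+\eta,\lambda+\mu)}, $$
which realises $G(\alpha)$ algebraically as the central extension of $(\kil,+)$ by $\Real$ with 2-cocycle $\omega(\xi,\eta)=-\im\ip{\xi}{\eta}$, while the product topology on $\kil\times\Real$ matches the topology induced by strong continuity of the Weyl representation.
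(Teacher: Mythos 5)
Your identification of $\alpha_t(\m_A)'\cap\m_A$ with the local algebra $\m_A(0,t)$ is sound: since $A=I\ot R$ acts fibrewise, $\Sigma_A$ preserves supports, the Fock space factorises over $(0,t)\cup(t,\infty)$, and the distributive property of tensors together with the fact that $\alpha_t(\m_A)$ is a factor gives the relative commutant. The verification that the $V_t^{(\xi,\lambda)}$ are gauge cocycles is also fine. The difficulty is entirely in the converse, and there the proposal has a genuine gap at a point \emph{earlier} than the one you flag. You assert that ``combining the cocycle equation with right-shift covariance shows'' that $\mathrm{Ad}_{U_t}$ multiplies each Weyl generator of $\m_A(0,t)$ by a phase of the form $e^{2i\im\ip{1_{(0,t)}\ot\xi_t}{f}}$. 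No argument is given, and none is readily available: a priori $U_t$ is just some unitary in the type III factor $\m_A(0,t)$, and the inner automorphism it implements has no reason to send $W_A(f)$ to a scalar multiple of itself. The displayed phase formula is exactly what one would get \emph{after} knowing $U_t$ is a multiple of $W_A(1_{(0,t)}\ot\xi_t)$, so as written the argument is circular. This step is the entire content of the proposition; the subsequent homogenisation of $\xi_t$ to $1_{(0,t)}\ot\xi$ (which you call the hard part) is comparatively routine once the phase action is known.

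The paper sidesteps this by a different route: since $\alpha$ is canonically extendable, every gauge cocycle of $\alpha$ is also a gauge cocycle of the extending CCR flow $\theta$ on $B(H)$ of index $2\dim\kil$, whose gauge group Arveson has already computed (it consists of $e^{i\lambda t}W_0(1_{(0,t)}\ot\xi)\,\Gamma(\cdots V\cdots)$ with $V$ a unitary on $\kil^{\oplus2}$). One then only has to decide which of these explicit cocycles lie in $\m_A$, which reduces, via commutation with $W_A'(1_{(0,t)}\ot\eta)$ evaluated on the vacuum and linear independence of exponential vectors, to the two linear conditions forcing $V=1$ and $\xi\in\Sigma_A\iota(\kil)$. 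Arveson's classification of $G(\theta)$ is itself a nontrivial theorem resting on the structure theory of completely spatial product systems, and that is precisely the input your intrinsic approach would have to reconstruct by hand. To repair your proof you would either need to supply an independent argument that gauge cocycles act on the one-particle structure by Bogoliubov phases (e.g.\ by analysing the induced automorphisms of the local net), or simply invoke the extension to $B(H)$ as the paper does.
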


  \begin{proof} Let $\theta$ be the CCR flow on $B(H)$ mentioned in Proposition \ref{Toeplitz and extension property}, which extends both $\alpha$ and $\alpha^\Omega$. 
 % For $m'\in \m_A'$ we have $\theta_t(m')=\alpha_t^\Omega(m')\in \m_A'$. So 
  Since $\alpha_t(\m)'\cap \m\subseteq \theta_t(B(H))'$, every gauge cocycle for $\alpha$ is also a gauge cocycle for $\theta$, and 
$G(\alpha)$ is the subgroup of $G(\theta)$ consisting of cocycles living in $\m_A$. From \cite{Arveson book}, Section 3.8, it follows that $G(\theta)$ consists of cocycles of the form
 $$U_t(\lambda, \xi, V)=e^{i\lambda t}W_0(1_{(0,t)}\ot \xi)(\Gamma(I_{L^2[0,t]}\ot V)\ot \Gamma(I_{L^2([t,\infty);\noisetwo)}))~~~ \forall t\geq0,$$
   where $\lambda\in\Real$, $\xi \in\noisetwo$ and $V\in\mathcal{U}(\noisetwo)$. %Let $U_t(\lambda, \xi, V)=W_0(1_{(0,t)}\ot \xi)(\Gamma(I_{L^2[0,t]}\ot V)\ot \Gamma(I_{L^2([t,\infty);\noisetwo)}))$ for $\xi \in\noisetwo$ and $V\in\mathcal{U}(\noisetwo)$, then, 
   
If $U_t(\lambda, \xi, V) \in \m_A$ then for any $\eta\in\noise$, we have $$W_A'(1_{(0,t)}\ot \eta)U_t(\lambda, \xi, V)=U_t(\lambda, \xi, V)W_A'(1_{(0,t)}\ot \eta).$$
Evaluating on $\Omega$ we get 
$$W_A'(1_{(0,t)}\ot \eta)W_0(1_{(0,t)}\ot \xi)\Omega=W_0(1_{(0,t)}\ot \xi)W_0((I\ot V) \Sigma'\iota(1_{(0,t)}\ot \eta))\Omega.$$  Thanks to the linear independence of exponential vectors, comparing both sides
$$ \begin{pmatrix}\sqrt{R}\eta\\ j\sqrt{1+R}\eta \end{pmatrix} =V\begin{pmatrix}\sqrt{R}\eta \\j\sqrt{1+R}\eta  \end{pmatrix}; ~~~
\left<\begin{pmatrix}\sqrt{R}\eta \\j\sqrt{1+R}\eta  \end{pmatrix}, \xi\right> = \left<\xi, V\begin{pmatrix}\sqrt{R}\eta \\j\sqrt{1+R}\eta \end{pmatrix}\right>,
$$ for all $\eta\in\kil$. 
The first equation implies that the unitary $V$ is identity on the real liner subspace $L = \left\{\begin{pmatrix}\sqrt{R}\eta\\ j\sqrt{1+R}\eta \end{pmatrix}: \eta \in \kil\right\}$. But the complex Hilbert space spanned by $L$ is whole of $\kil\oplus \kil$ (consider $\begin{pmatrix}\sqrt{R}\eta\\ j\sqrt{1+R}\eta \end{pmatrix}\pm i\begin{pmatrix}\sqrt{R}i\eta\\ j\sqrt{1+R}i\eta \end{pmatrix}$ and $range ( R )$ is dense). The other equation implies that the imaginary part of the inner product of $\xi$ with any element in $L$ is $0$, which means $\xi$ is of the form $\begin{pmatrix}\sqrt{1+R}\eta'\\ j\sqrt{R}\eta' \end{pmatrix}$ for some $\eta' \in \kil$. 
  \end{proof}
 
 \begin{defn}
 A continuous real linear operator $Z:H\mapsto H$ is said to be a symplectic automorphism if  $\im\ip{Zf}{Zg}=\im\ip{f}{g}$ for all $f,g \in H$.
 \end{defn}

 \begin{prop}\label{Z}
  If $R_1\neq ZR_2Z^*$ for any symplectic automorphism $Z$, then the CCR flow corresponding to $A_1=I\ot R_1$ is not cocycle conjugate to the CCR flow corresponding to $A_2=I\ot R_2$.
 \end{prop}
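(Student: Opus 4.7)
The plan is to lift a hypothetical cocycle conjugacy to the canonical extensions, exploit the fact that both extensions coincide with the same CCR flow on $B(H)$, and read off a symplectic automorphism of $\kil$ relating $R_1$ and $R_2$ from the resulting noise-space unitary, contradicting the hypothesis.

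First I would invoke Proposition \ref{Toeplitz and extension property}: both $\alpha^{(R_1)}$ and $\alpha^{(R_2)}$ are canonically extendable, and their canonical extensions coincide with the single CCR flow $\theta$ on $B(H)$ of index $2\dim\kil$, where $H=\Gamma_s(L^2(\Rplus,\kil)\oplus L^2(\Rplus,\kil))$. A cocycle conjugacy implemented by a unitary $V$ and an $\alpha^{(R_1)}$-cocycle $U$ therefore lifts, via the construction in Proposition \ref{extn}, to a cocycle conjugacy of $\theta$ with itself via $V$ and the extended cocycle $\widetilde{U}_t=U_t\,J_\Omega U_t J_\Omega$, with $V\m_{A_1}V^*=\m_{A_2}$. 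Next, from $V\theta_tV^*=\mathrm{Ad}_{\widetilde U_t}\circ\theta_t$ on $B(H)$, the unitary $V$ induces an automorphism of the exponential Arveson product system of $\theta$, which is the type I product system of index $2\dim\kil$. By the classification of such automorphisms (Sections 2.5 and 3.8 of \cite{Arveson book}), after absorbing the gauge-cocycle correction into $\widetilde U$, the unitary $V$ can be taken to be a second quantisation $\Gamma(I_{L^2(\Rplus)}\otimes U_0)$ for some $U_0\in\mathcal{U}(\kil\oplus\kil)$.

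Let $\Sigma_{R_i}$ denote the fibre version of $\Sigma_{A_i}$ and $L_{R_i}:=\Sigma_{R_i}\iota(\kil)\subseteq\kil\oplus\kil$. Since $\m_{A_i}$ is generated by the Weyl operators $W_0(f)$ with $f\in L^2(\Rplus)\otimes L_{R_i}$, the identity $V\m_{A_1}V^*=\m_{A_2}$ translates into $U_0L_{R_1}=L_{R_2}$ as real subspaces of $\kil\oplus\kil$. I would then define $Z:\kil\to\kil$ by $U_0(\Sigma_{R_1}\iota(\xi))=\Sigma_{R_2}\iota(Z\xi)$, which is a real-linear bijection, and verify using the two identities
\begin{align*}
\im\langle \Sigma_{R_i}\iota(\xi),\Sigma_{R_i}\iota(\eta)\rangle&=\im\langle\xi,\eta\rangle,\\
\re\langle \Sigma_{R_i}\iota(\xi),\Sigma_{R_i}\iota(\eta)\rangle&=\re\langle\xi,R_i\eta\rangle,
\end{align*}
together with the unitarity of $U_0$, that $\im\langle Z\xi,Z\eta\rangle=\im\langle\xi,\eta\rangle$ (so $Z$ is a symplectic automorphism) and $\re\langle\xi,R_1\eta\rangle=\re\langle Z\xi,R_2Z\eta\rangle$ for all $\xi,\eta\in\kil$. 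The latter identity is exactly the operator equation $R_1=Z^*R_2Z$ for the real-linear adjoint $Z^*$, which coincides with the statement's condition up to the convention chosen for the adjoint of a symplectic map, giving the required contradiction.

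The hard part will be the reduction step: showing that after a gauge-cocycle absorption the unitary $V$ really becomes a noise-level second quantisation $\Gamma(I\otimes U_0)$, while simultaneously arranging that the absorbed cocycle does not destroy the identification $V\m_{A_1}V^*=\m_{A_2}$. This requires careful bookkeeping of how the gauge cocycles for $\theta$ (which include a unitary ``past'' component on $L^2((0,t),\kil\oplus\kil)$, as explicit in the proof of the previous proposition) interact with the quasi-free subalgebras $\m_{A_i}$.
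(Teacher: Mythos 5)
Your target relation $R_1=Z^*R_2Z$ and your computation of the symplectic structure carried by $\xi\mapsto\Sigma_{R_i}\iota(\xi)$ are both correct observations, but the route through the canonical extension breaks down at exactly the point you flag as ``the hard part'', and that part is essentially the whole proof. The reduction ``after absorbing the gauge-cocycle correction into $\widetilde U$, the unitary $V$ can be taken to be a second quantisation $\Gamma(I\ot U_0)$'' is not available: an automorphism of the exponential product system of index $2\dim\kil$ is given by a gauge cocycle with three components $(\lambda,\xi,U_0)$, and the translation component $W_0(1_{(0,t)}\ot\xi)$ does not assemble into any time-independent unitary on $H$ (formally it would be $W_0(1_{(0,\infty)}\ot\xi)$, which does not exist for $\xi\neq0$); it can only be pushed into the time-dependent cocycle, which modifies $\widetilde U_t$ but leaves $V$ untouched. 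Conversely, any genuine replacement of $V$ by $VW$ destroys the identity $V\m_{A_1}V^*=\m_{A_2}$ unless $W$ normalises $\m_{A_1}$, which is precisely what cannot be assumed. (Even granting the reduction, the step from $V\m_{A_1}V^*=\m_{A_2}$ to $U_0L_{R_1}=L_{R_2}$ needs the injectivity of Araki's second quantisation map on standard real subspaces, which you do not address.)

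More seriously, the strategy proves too much, which is a sign the reduction cannot hold: a cocycle conjugacy only yields a \emph{normal isomorphism} of the relative commutants $\alpha^i_t(\m_i)'\cap\m_i$, hence quasi-equivalence of the restricted quasi-free states, and by Araki--Yamagami quasi-equivalence is a Hilbert--Schmidt condition on $\sqrt{I\ot R_1}-\sqrt{(I\ot Z)^*(I\ot R_2)(I\ot Z)}$, not an exact equality; the equality $R_1=Z^*R_2Z$ is recovered only because $L^2([0,t])$ is infinite dimensional, so a nonzero Hilbert--Schmidt difference of Toeplitz form is impossible. Your argument, by forcing the conjugacy to be implemented exactly by $\Gamma(I\ot U_0)$, has no mechanism for this Hilbert--Schmidt ambiguity. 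The paper extracts $Z$ from a different place: by the preceding proposition the gauge group of $\alpha^{(R_i)}$ consists exactly of the cocycles $e^{i\lambda t}W_{A_i}(1_{(0,t)}\ot\xi)$, a cocycle conjugacy carries one gauge group onto the other via the strongly continuous maps $\phi_t=Ad_{VW_t}$, and the group law forces the induced map $\xi\mapsto Z\xi$ to be symplectic; normality of $\phi_t$ then gives quasi-equivalence, and Araki--Yamagami plus infinite multiplicity give the equality. I would redirect your effort to that argument, which never needs to control the implementing unitary on the type I extension.
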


  \begin{proof}
 Let $\alpha^1$ and $\alpha^2$ be the CCR flows corresponding to $R_1$ and $R_2$ acting standardly on $\m_1$ and $\m_2$ respectively. Suppose that there exists a unitary $V$ and an $\alpha^1$-cocycle $(W_t)_{t\geq0}$  implementing cocycle conjugacy so that $Ad_VW_t\alpha^1_tAd_{V^*}=\alpha^2_t,$ for all $t\geq 0.$ Recall the algebras $\semiflowalg_{\alpha^i}(t)=\alpha^i_t(\m_i)'\cap\m_i, ~i=1,2$ defined at the end of Section \ref{prelims}. Note that the isomorphism $\phi_t=Ad_{VW_t}:\semiflowalg_{\alpha^1}(t)\to\semiflowalg_{\alpha^2}(t)$ is strongly continuous. For $i=1,2$ consider the topological group
  $$G_t(\alpha^i):=\{ (u_s)_{s\in [0,t]}:~ (u_s)_{s\geq0}\in G(\alpha^i)\}$$
  which is canonically isomorphic to the gauge group. Then the map $(u_s)_{s\in[0,t]}\to (\phi_t(u_s))_{s\in[0,t]}$ induces an isomorphism $G_t(\alpha^1)\to G_t(\alpha^2)$. Indeed, the only non-obvious aspect is to check that $W_tu_sW_t^*=W_su_sW_s^*$, for each $u\in G_t(\alpha^1)$ and $s\in [0,t]$, which follows from the cocycle property and the fact that $u_s\in\alpha^1_s(\m_1)'$. Denote $c_{t]}=c \ot 1_{(0,t)}$ for any $c \in \kil, t \in \Rplus$.
   Since $\phi_t$ is linear and strongly continuous, there exist continuous maps $\theta:\kil\to\Real$ and $Z:\kil\to\kil$ satisfying $\phi_t(W_{A_1}(c_{s]}))=e^{is\theta(c )}W_{A_2}(Z(c )_{s]})$ for all $c\in \kil$. These induce a group homomorphism, so we must have
  $$e^{is(\theta(c)+\theta(d)-\im\ip{Z(c )}{Z(d)})}W_{A_2}(Z(c )_{s]}+Z(d)_{s]})=e^{is(\theta(c+d)-\im\ip{c}{d})}W_{A_2}(Z(c+d)_{s]}),$$
  hence
  $$ \theta(c)+\theta(d)-\im\ip{Z(c )}{Z(d)}=\theta(c+d)-\im\ip{c}{d}$$
  for all $c,d\in\kil$.
  The imaginary part of the inner products are antisymmetric under an exchange of $c$ and $d$, whereas the other terms are clearly symmetric, thus it follows that $Z$ is a symplectic automorphism and $\theta$ a real linear functional. 
  
  By the Riesz representation theorem there exists $x\in\kil$ with $\theta(c)=\re\ip{x}{c}$ for all $c\in\kil$, and we can form a functional $\Psi$ on $L^2([0,t];\kil)$ by setting $\Psi(f):=\re\ip{1_{[0,t]}\ot x}{f}$. Since $W_{A_j}(c_{[r,s]})=W_{A_j}(-c_{r]})W_{A_j}(c_{s]})$ for each $j=1,2$, $c\in\kil$, $0\leq r\leq s\leq t$ we have, for any step function $f\in L^2([0,t];\kil)$, $\phi_t(W_{A_1}(f))=e^{i\Psi(f)}W_{A_2}((I\otimes Z)f)$, by the homomorphism property of $\phi_t$. Thus, if $f\in L^2([0,t];\kil)$ is the limit of a sequence of step functions $(f_n)_{n=1}^\infty$ then
  $$\phi_t(W_{A_1}(f))=\slim_{n\to\infty}\phi_t(W_{A_1}(f_n))=\slim_{n\to\infty}e^{i\Psi(f_n)}W_{A_2}((I\otimes Z)f_n)=e^{i\Psi(f)}W_{A_2}((I\ot Z)f).$$  
  Now using canonical commutation relations we get 
  $$\left(Ad(W_{A_2}(-i(1_{[0,t]}\otimes x)/2))\phi_t\right)(W_{A_1}(f)) =W_{A_2}((I\ot Z)f).$$
   %$$e^{i\Psi(f)}W_{A_2}((I\ot Z)f)= W_{A_2}(i(1_{[0,t]}\otimes x)/2)W_{A_2}((I\ot Z)f ) W_{A_2}(i(1_{[0,t]}\otimes x)/2)^*.$$
  Since $\phi_t$ is normal this implies that the representations of $CCR(L^2([0,t];\kil))$ given by
  $$w_f\mapsto W_{A_1}(f)\quad\text{and}\quad w_f\mapsto W_{A_2}((I\ot Z)f) \qquad (f\in L^2([0,t];\kil))$$
  are quasi-equivalent. 
  In particular the restriction of $\varphi_{A_1}$ to $CCR(L^2([0,t];\kil))$ is quasi-equivalent to the state
  $$CCR(L^2([0,t];\kil))\ni w_f\mapsto\ip{\Omega}{W_{A_2}((I\ot Z)f)\Omega}=e^{-\frac{1}{2}\re\ip{f}{(I\ot Z^{*})A_2(I\ot Z)f}}.$$
  Thus, by \cite{Araki Yamagami} it must be the case that
  $$\sqrt{I_{L^2([0,t])}\ot R_1}-\sqrt{(I_{L^2([0,t])}\ot Z^{*})(I_{L^2([0,t])}\ot R_2)(I_{L^2([0,t])}\ot Z)}$$
  is Hilbert-Schmidt. But $L^2([0,t])$ is infinite dimensional, so we must have
  $\sqrt{R_1}=\sqrt{Z^{*}R_2Z}$, i.e.
  $$R_1=Z^*R_2Z,$$
  as required.
 \end{proof}

  This condition suggests that there should be a large number of distinct CCR flows on the hyperfinite III$_\lambda$ factor, for each rank $n\geq 2$. To show this, we need to analyse the relation $R_1=Z^*R_2Z$ in more detail.
  
  As a real Hilbert space, $\kil$ is isomorphic to a direct sum $\kil_\Real\oplus \kil_{\Real}$ and under this identification multiplication by $i$ becomes multiplication by $\left[\begin{smallmatrix} 0&-1\\1&0\end{smallmatrix}\right]$. Using this we see that a real-linear operator $X=\left[\begin{smallmatrix} X_1&X_2\\X_3&X_4\end{smallmatrix}\right]\in B(\kil_\Real\oplus\kil_\Real)$ is complex linear iff $X_1=X_4$ and $X_2=-X_3$, and it is a positive complex-linear operator iff it is of the form $X=\left[\begin{smallmatrix} X_1&0\\0&X_1\end{smallmatrix}\right]$, for some positive operator $X$ on $\kil_\Real$. In \cite{KRP} (see Proposition 22.1)  it is shown that for a symplectic automorphism $Z$ there exist unitaries $U_1,U_2$ on $\kil$ and a positive operator $Z_1$ on $\kil_\Real$  such that
  $$U_1^*ZU_2^*=\begin{bmatrix} Z_1&0\\0&Z_1^{-1}\end{bmatrix}.$$
  Setting $U_1^*R_2U_1=\left[\begin{smallmatrix} X&0\\0&X\end{smallmatrix}\right]$ and $U_2R_1U_2^*=\left[\begin{smallmatrix} Y&0\\0&Y\end{smallmatrix}\right]$ we obtain
  $$\begin{bmatrix}Y&0\\0&Y\end{bmatrix}=\begin{bmatrix} Z_1&0\\0&Z_1^{-1}\end{bmatrix}\begin{bmatrix}X&0\\0&X\end{bmatrix}\begin{bmatrix} Z_1&0\\0&Z_1^{-1}\end{bmatrix},$$
  i.e. $Y=Z_1XZ_1$ and $Y=Z_1^{-1}XZ_1^{-1}$, which leads to
  \begin{equation}\label{XYZ equation} Z_1^2XZ_1^2=X\text{ and }Z_1^2YZ_1^2=Y. \end{equation}
  To analyse these conditions we use the following proposition.

  \begin{prop}\label{functional calc prop}
   Suppose $B,R\in B(\kil)$ are positive operators with $R\geq1$. If $BRB=R$ then $B=1$.
  \end{prop}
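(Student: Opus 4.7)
The plan is to show that $B$ is invertible, derive a dual identity $B R^{-1} B = R^{-1}$, iterate both identities, and then use a spectral squeeze to pin the spectrum of $B$ down to the single point $\{1\}$.

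First I would establish invertibility of $B$. If $B\xi = 0$ then $R\xi = BRB\xi = 0$, which forces $\xi = 0$ since $R \geq 1$; thus $B$ is injective. Moreover, for any $\xi \in \kil$,
$$\|\xi\|^2 \leq \langle \xi, R\xi\rangle = \langle B\xi, RB\xi\rangle \leq \|R\| \|B\xi\|^2,$$
so $B$ is bounded below. Being self-adjoint, injective and bounded below, $B$ has a bounded inverse.

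Next, taking inverses of $BRB = R$ gives $B^{-1}R^{-1}B^{-1} = R^{-1}$, and multiplying this by $B$ on the left and then on the right yields the dual identity $B R^{-1} B = R^{-1}$. A straightforward induction in $n$ (using that if $B^n X B^n = X$ then $B^{n+1} X B^{n+1} = B(B^n X B^n) B = B X B$) now produces
$$B^n R B^n = R \qquad \text{and} \qquad B^n R^{-1} B^n = R^{-1} \qquad (n \geq 1).$$
Inserting $R^{-1} \leq I \leq R$ into the right-hand sides gives the two-sided bound
$$R^{-1} \leq B^{2n} \leq R \qquad (n \geq 1).$$

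The upper bound yields $\|B\|^{2n} \leq \|R\|$ for every $n$, hence $\|B\| \leq 1$ and $B \leq I$. Writing the spectral decomposition $B = \int_0^1 \lambda \, dE(\lambda)$, dominated convergence for spectral integrals shows that $B^{2n} \to E(\{1\})$ strongly. Passing to the strong limit in the lower bound gives $R^{-1} \leq E(\{1\})$. Because $R \leq \|R\| I$ forces $R^{-1} \geq \|R\|^{-1} I > 0$, the projection $E(\{1\})$ dominates a positive multiple of the identity, which compels $E(\{1\}) = I$. Thus the spectrum of $B$ is concentrated at $\{1\}$, and $B = 1$.

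The main delicate point is the final spectral squeeze, namely justifying the strong convergence $B^{2n} \to E(\{1\})$ under $0 \leq B \leq I$ and pushing the operator inequality through to the limit; however, this is a routine consequence of the spectral theorem. The real work of the argument is the derivation of the dual identity $BR^{-1}B = R^{-1}$, which is what makes the squeeze symmetric.
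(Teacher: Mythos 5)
Your proof is correct, and it follows a genuinely different route from the paper's. You iterate the functional equation to get $B^n R B^n = R$ and the dual $B^n R^{-1} B^n = R^{-1}$, deduce the operator sandwich $R^{-1}\leq B^{2n}\leq R$, and squeeze: the upper bound forces $\|B\|\leq 1$, the spectral theorem gives $B^{2n}\to E_B(\{1\})$ strongly, and passing the lower bound $R^{-1}\geq \|R\|^{-1}I$ to the limit forces the projection $E_B(\{1\})$ to equal $I$. The paper instead observes $R^{-1}BR=B^{-1}$, deduces $B^nR=RB^{-n}$ and hence $e^{zB}R=Re^{zB^{-1}}$ for all $z\in\Comp$, and from the equality of Laplace transforms concludes $\langle u,E_B(\mathfrak{X})Rv\rangle=\langle u,RE_B(\mathfrak{X}^{-1})v\rangle$ for all Borel $\mathfrak{X}$; applying this with $\mathfrak{X}\subseteq(0,1)$ and using $R\geq 1$ kills $E_B$ on $(0,1)$ and, by the $\mathfrak{X}\leftrightarrow\mathfrak{X}^{-1}$ symmetry, on $(1,\infty)$ as well. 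Your argument is somewhat more elementary --- it replaces the Fourier/Laplace comparison of spectral measures by operator anti-monotonicity of inversion and dominated convergence for spectral integrals --- while the paper's establishes the finer fact that the spectral measure of $B$ transforms under inversion exactly as $R$ conjugates it. Both reach the same conclusion; each is self-contained.

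Two minor points worth tightening in your write-up. First, when you bound $\langle\xi,R\xi\rangle=\langle B\xi,RB\xi\rangle\leq\|R\|\|B\xi\|^2$ to show $B$ is bounded below, self-adjointness of $B$ is being used to pass from injective and bounded below to invertible (a one-sided bound on a non-self-adjoint operator only gives injectivity with closed range), so state that explicitly. Second, the step $R\leq\|R\|I\Rightarrow R^{-1}\geq\|R\|^{-1}I$ rests on the operator anti-monotonicity of the inverse on positive invertible operators; it is true, but it is a genuine operator inequality and deserves a one-line citation or justification rather than being treated as a scalar estimate.
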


  \begin{proof}
   First note that if $BRB=R$, then $(R^{-1}BR)B=1$ and $B(RBR^{-1})=1$, so $B$ is invertible with $R^{-1}BR=RBR^{-1}=B^{-1}$. We claim that $\sigma(B)=\sigma(B^{-1})=\sigma(B)^{-1}$. To see this, note that if $B^{-1}-\lambda I$ has inverse $Q$, then
   $$(B-\lambda I) RQR^{-1}=R(R^{-1}BR - \lambda I) QR^{-1}=R(B^{-1}-\lambda I)QR^{-1}=1$$
   and, similarly $RQR^{-1}(B-\lambda I)=1$, so $\lambda\notin\sigma(B)$. Conversely, if $\lambda\notin\sigma(B)$, an almost identical argument suffices to show $\lambda\notin\sigma(B^{-1})$.
   
   Now note that $BR=RB^{-1}$ implies $B^nR=RB^{-n}$ for all $n\in\Nat$, so that $e^{z B}R=Re^{zB^{-1}}$ for all $z\in\Comp$ and hence
   $$\int_{\Real}{e^{z\lambda}d\ip{u}{E_B(\lambda)Rv}}=\int_{\Real}{e^{z\lambda}d\ip{u}{RE_B(\lambda^{-1})v}}$$
   for all $z\in\Comp$, $u,v\in\kil$, where $E_B$ is the spectral measure for $B$. Since their Fourier transforms coincide, the corresponding measures are equal, i.e.
   $$\ip{u}{E_B(\mathfrak{X})Rv}=\ip{u}{RE_B(\mathfrak{X}^{-1})v}$$
   for all $u,v\in\kil$, and Borel sets $\mathfrak{X}\subseteq\sigma(B)$, and where $\mathfrak{X}^{-1}=\{s^{-1}:~s\in\mathfrak{X}\}$. Let $\mathfrak{X}$ be a Borel subset of $(0,1)\cap\sigma(B)$, so that $E_B(\mathfrak{X})$ and $E_{B}(\mathfrak{X}^{-1})$ are orthogonal. Then if $E_B(\mathfrak{X})v=v$, we have
   $$\ip{v}{Rv}=\ip{E_B(\mathfrak{X})v}{Rv}=\ip{v}{RE_B(\mathfrak{X}^{-1})v}=0,$$
   but $R\geq1$, so this implies $v=0$. Since $E_B(\mathfrak{X})=E_B(\mathfrak{X}^{-1})=0$ for all Borel subsets of $(0,1)$ we can infer that $B=1$.
   \end{proof}
   
   Now we are able to give a complete classification of CCR flows when $R-1$ is injective.
   
   \begin{thm}
    Let $R_1,R_2\geq1$ be bounded operators with $R_1-1$ and $R_2-1$ injective. The CCR flows $\alpha^{(R_1)}$ and $\alpha^{(R_2)}$ are cocycle conjugate if and only if there exists a unitary $U$ such that $R_1=UR_2U^*$. When this is true, $\alpha^{(R_1)}$ is conjugate to $\alpha^{(R_2)}$.
   \end{thm}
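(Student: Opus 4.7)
The plan is to handle the two implications separately, with the nontrivial direction consisting of upgrading the symplectic equivalence produced by Proposition \ref{Z} to a complex-linear unitary equivalence.

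For the easy direction, suppose $R_1=UR_2U^*$ for some complex-linear unitary $U$ on $\kil$. Set $\widetilde U=I_{L^2(\Rplus)}\otimes U$, which is a unitary on $\Kil$ that commutes with the right shift $T_t$. A direct computation gives $\widetilde U A_2\widetilde U^*=A_1$, so by the functional calculus $\widetilde U\sqrt{1+T_2}\widetilde U^*=\sqrt{1+T_1}$ and $\widetilde U\sqrt{T_2}\widetilde U^*=\sqrt{T_1}$, where $T_i=(A_i-1)/2$. Let $V=\Gamma(\widetilde U)\otimes\Gamma(j\widetilde U j)$, a unitary on $\Gamma_s(\Kil)\otimes\Gamma_s(\Kil)$. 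Using the factorised form $W_{A_i}(f)=W_0(\sqrt{1+T_i}f)\otimes W_0(j\sqrt{T_i}f)$ and the intertwining relations above, one checks $VW_{A_2}(f)V^*=W_{A_1}(\widetilde U f)$. Hence $\mathrm{Ad}_V$ maps $\m_{A_2}$ onto $\m_{A_1}$; since $\widetilde U$ commutes with $T_t$, it also intertwines $\alpha^{(R_2)}$ with $\alpha^{(R_1)}$. This proves the ``if'' direction and shows that the equivalence produced is in fact a conjugacy, not merely a cocycle conjugacy.

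For the converse, assume $\alpha^{(R_1)}$ and $\alpha^{(R_2)}$ are cocycle conjugate. By Proposition \ref{Z} there exists a symplectic automorphism $Z$ of $\kil$ with $R_1=Z^*R_2Z$. Apply the Parthasarathy decomposition \cite{KRP} to write
\[
U_1^* Z U_2^*=\begin{bmatrix} Z_1 & 0\\ 0 & Z_1^{-1}\end{bmatrix}
\]
for complex-linear unitaries $U_1,U_2$ on $\kil$ and a positive self-adjoint operator $Z_1$ on $\kil_\Real$. Put $X,Y\geq 0$ on $\kil_\Real$ so that $U_1^*R_2U_1=\left[\begin{smallmatrix}X&0\\0&X\end{smallmatrix}\right]$ and $U_2R_1U_2^*=\left[\begin{smallmatrix}Y&0\\0&Y\end{smallmatrix}\right]$; these are legitimate because $R_1,R_2$ are positive complex-linear. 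Substituting into $R_1=Z^*R_2Z$ yields the block identity already displayed as equation \eqref{XYZ equation}, in particular
\[
Z_1^2\,X\,Z_1^2 = X.
\]
Since $R_2\geq 1$ we have $X\geq 1$, and $B:=Z_1^2\geq 0$ satisfies $BXB=X$, so Proposition \ref{functional calc prop} forces $B=1$, i.e.\ $Z_1=1$. Consequently $Z=U_1U_2$ is a complex-linear unitary and $R_1=(U_1U_2)^*R_2(U_1U_2)$, establishing the unitary equivalence with $U=U_1U_2$.

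The main obstacle, which is already cleared by the groundwork developed in the paper, is step (c)--(d): passing from the merely symplectic data $R_1=Z^*R_2Z$ to a genuinely complex unitary equivalence. This hinges entirely on the rigidity supplied by Proposition \ref{functional calc prop} applied to the positive block $Z_1$ of the Parthasarathy decomposition, together with the injectivity hypotheses on $R_i-1$ which ensure that the decompositions above involve densely defined (indeed bounded) $X$, $Y\geq1$. Once $Z_1=1$ is established, the rest is bookkeeping, and combining with the forward direction yields the stated stronger conclusion that cocycle-conjugate CCR flows are in fact conjugate.
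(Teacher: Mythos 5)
Your proof is correct and follows essentially the same route as the paper: the hard direction is word-for-word the paper's argument (Proposition \ref{Z}, the Parthasarathy decomposition of $Z$, equation \eqref{XYZ equation}, and Proposition \ref{functional calc prop} forcing $Z_1=1$). For the easy direction you construct the implementing unitary $\Gamma(\widetilde U)\otimes\Gamma(j\widetilde Uj)$ explicitly, whereas the paper obtains the intertwining isomorphism $\theta$ by observing that the two quasi-free states coincide; both yield the same conjugacy, so this is only a cosmetic variation.
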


   \begin{proof}
    If $R_1$ and $R_2$ give cocycle conjugate \en-semigroups, thanks to  Proposition \ref{Z}, $R_1=Z^*R_2Z$ for some symplectic automorphism $Z=U_1\left[\begin{smallmatrix} Z_1 &0\\ 0& Z_1^{-1}\end{smallmatrix}\right]U_2$, where $Z_1$ is a positive operator on $\kil_\Real$. As before if we set $U_1^*R_2U_1=\left[\begin{smallmatrix} X&0\\0&X\end{smallmatrix}\right]$ and $U_2R_1U_2^*=\left[\begin{smallmatrix} Y&0\\0&Y\end{smallmatrix}\right]$
 then $Z_1^2XZ_1^2=X$ and $Z_1^2YZ_1^2=Y$ so by Proposition \ref{functional calc prop}, $Z_1^2=1$. Since $Z_1$ is positive it follows $Z_1=1$.
 
    Conversely, suppose that there exists a unitary $U$ such that $R_1=UR_2U^*$ and let $A_j=I\ot R_j$ for $j=1,2$. Then the quasi-free states given by $A_1$ and $(I\ot U^*)A_2(I \ot U)$ are quasi-equivalent, indeed they are same states. This implies that the representations of $CCR(L^2([0,t];\kil))$ given by
  $$w_f\mapsto W_{A_1}(f)\quad\text{and}\quad w_f\mapsto W_{A_2}((I\ot U^*)f) \qquad (f\in L^2([0,t];\kil))$$
  are quasi-equivalent.   Let $\theta: \m_{A_1}\mapsto \m_{A_2}$ be the isomorphism satisfying $$\theta(W_{A_1}(f))=W_{A_2}((I\ot U^*)f),$$ then,  since $(I \ot U)$ commutes with $T_t$, we have
  $$\left(\theta\alpha_t^{R_1}\theta^{-1}\right)(W_{A_2}(f))= W_{A_2}((I\ot U^*)T_t (I\ot U) f)= W_{A_2}(T_tf)=\alpha_t^{R_2}(W_{A_2}(f)).$$
   \end{proof}
   
   \begin{rems}
    (1) If $0<\lambda<1$ then there exists exactly one rank 1 CCR flow on the hyperfinite $III_\lambda$ factor. 
    
    If $n\geq2$ then there exists a countable infinity of non-cocycle conjugate CCR flows on the hyperfinite $III_\lambda$ factor with rank $n$. These are given, for instance, by choosing natural numbers $1=d_1\leq\ldots\leq d_n$ and then
    $$T(1+T)^{-1}=I\otimes \diag(\lambda^{d_1},\ldots,\lambda^{d_n}),$$
    so that the quasifree representation corresponding to 
    $$R=\diag\left(\frac{1+\lambda^{d_1}}{1-\lambda^{d_1}},\ldots,\frac{1+\lambda^{d_n}}{1-\lambda^{d_n}}\right)$$
    generates a hyperfine $III_\lambda$ factor. Each distinct choice of the $d_i$s gives different eigenvalues for $R$ by injectivity of the map $[0,1)\to \Rplus$, $x\mapsto (1+x)/(1-x)$. 
    
    Using a similar argument we see that there exist uncountably many CCR flows of infinite rank on the hyperfinite III$_\lambda$ factor; one for each distinct sequence of integers $1,d_1, d_2,\ldots$ up to permutations. (To see this collection is uncountable, note that every strictly increasing sequence gives a different example).
    
    \medskip

    (2) The hyperfinite $III_1$ factor admits no CCR flows of rank 1. For any rank $n\geq2$, the hyperfinite type III$_1$ factor admits uncountably many non-cocycle conjugate CCR flows. For $n$ finite this is seen by noting that each distinct sequence of numbers $\lambda_1\leq \ldots\leq \lambda_n$ in $(0,1)$ for which at least one pair $(\lambda_i,\lambda_j)$ satisfies
    $$\log(\lambda_i)/\log(\lambda_j)\notin \mathbb{Q}$$
    defines a CCR flow on the hyperfinite III$_1$ factor with 
    $$R=\diag\left(\frac{1+\lambda_1}{1-\lambda_1},\ldots,\frac{1+\lambda_n}{1-\lambda_n}\right).$$
    When $n=\infty$ there exist further examples as indicated by Remark \ref{types remark}.
    
    \medskip
    
    (3) For a positive contraction $S$ on $\kil$, satisfying $Ker(S)=\{0\}=Ker(I-S)$,  consider the quasi-free state on the CAR algebra $\A(L^2((0,\infty), \kil))$, given by $A=I \ot S$. When $S \neq \frac{1}{2}$, the von Neumann algebra $\m_A=\pi_A(\A(L^2((0,\infty), \kil)))$ is a type III factor. The association $$\alpha_t(\pi_A(a(f))) \mapsto \pi_A(a(T_tf))$$ extends to an \en-semigroup on $\m_A$, which is in  standard form. 
It can be proven that this $\alpha$ is equi-modular, that is it satisfies the conditions in Proposition \ref{invariant state}. Hence the  vacuum unit is a multi-unit for $\alpha$.  
The relative commutant $\alpha_t(\m_A)'\cap \m_A$ equals to $\pi_A(\A(L^2((0,t), \kil))_e)''$, the von Nuemann algebra generated by the even products in $\A(L^2((0,t), \kil))$.  This fact about the relative commutants, possibly known to experts, can be found in \cite{Bk}. Since $\alpha_t(\m_A)'\cap \m_A$ and $\alpha_t(\m_A)$ together do not generate $\m_A$, it follows that $\alpha_t$ is not canonically extendable (see Theorem 3.7, \cite{BISS}).   
%(Actually the above statement on relative commutants  can be proven for any quasi-free state given by a positive contraction $A$ satisfying $Tr(A_t-A_t^2)=\infty$ for all $t>0$, where $A_t$ is the restriction of $A$ to $L^2((0,t), \kil)$. Consequently, when $A$ is also a Toeplitz operator, the corresponding \en-semigroup is not canonically extendable.) These problems will discussed in our future work.
    
    The CCR  flows on type III factors given by operators of the form $1\ot R$ are canonically extendable, as proved in Proposition \ref{Toeplitz and extension property}. By Proposition \ref{extn}, these CCR flows are not cocycle conjugate to any of the above mentioned CAR flows.
   \end{rems}

% in II_\infty case, if you vary tensor powers of Clifford flow, they are different
%change \Gamma with \Gamma_s in last sections.

\end{document}